\newcommand{\hHom}{{\mathcal Hom}}
\newcommand{\myRed}[1]{\textcolor{red}{#1}}
\newcommand{\myBlue}[1]{\textcolor{blue}{#1}}
\newcommand{\CorrectionBlue}[1]{\textcolor{black}{#1}}
\newcommand{\CorrectionRed}[1]{\textcolor{black}{#1}}
\definecolor{Coneblue}{RGB}{0,255,255}
\definecolor{Coneorange}{RGB}{255,102,0}
\newtheorem{thm}{Theorem}[section]
\newtheorem{cor}[thm]{Corollary}
\newtheorem{lem}[thm]{Lemma}
\newtheorem{prop}[thm]{Proposition}
\newtheorem{defn}[thm]{Definition}
\theoremstyle{definition}
\newtheorem{prop-defn}[thm]{Proposition and Definition}
\theoremstyle{remark}
\newtheorem{rem}[thm]{Remark}
\newtheorem{rems}[thm]{Remarks}
\newtheorem{example}[thm]{Example}
\newtheorem{examples}[thm]{Examples}
\newtheorem{Question}[thm]{Question}
\newtheorem*{Question*}{\bf Question}
\numberwithin{equation}{section}
\newcommand{\thistheoremname}{}
\newtheorem*{genericprop*}{\thistheoremname}
\newenvironment{namedprop*}[1]
  {\renewcommand{\thistheoremname}{#1}%
   \begin{genericprop*}}
  {\end{genericprop*}}
\newtheorem*{genericlem*}{\thistheoremname}
\newenvironment{namedlem*}[1]
  {\renewcommand{\thistheoremname}{#1}%
   \begin{genericlem*}}
  {\end{genericlem*}}
  \newtheorem*{genericthm*}{\thistheoremname}
\newenvironment{namedthm*}[1]
  {\renewcommand{\thistheoremname}{#1}%
   \begin{genericthm*}}
  {\end{genericthm*}}
  \newtheorem*{genericcond*}{\thistheoremname}
\newenvironment{namedcond*}[1]
  {\renewcommand{\thistheoremname}{#1}%
   \begin{genericcond*}}
  {\end{genericcond*}}
\newcommand{\Real}{\mathbb R}
\newcommand{\eps}{\varepsilon}
\newcommand{\Id}{{\rm Id}}
\newcommand{\Hom}{{\rm Hom}}
\newcommand{\Ham}{{\rm  Ham}}
\newcommand{\Mor}{{\rm Mor}}
\newcommand{\C}{{\mathcal C}}
\newcommand{\F}{{\mathcal F}}
\newcommand{\G}{{\mathcal G}}
\newcommand{\calS}{{\mathcal S}}
\newcommand{\I}{{\mathcal I}}
\newcommand{\cI}{{\mathcal I}^{\bullet}}
\newcommand{\K}{{\mathcal K}}
\newcommand{\T}{{\mathcal T}}
\newcommand{\J}{{\mathcal J}}
\newcommand \id {{\rm id}}
\newcommand{\supp}{{\rm supp}}
\def \dispdot {}
\newcommand{\cD}{\ensuremath{\mathcal{D}}}
\newcommand{\SD}{\mathsf{D}}
\newcommand {\gr}{\mathrm {gr}}
\newcommand {\gra}{\mathrm {graph}}
\newcommand {\area}{\mathrm {area}}
\newcommand {\GFQI}{G.F.Q.I. }
\DeclareMathAlphabet{\mathpzc}{OT1}{pzc}{m}{it}
\newcommand{\DHam }{\mathfrak{DHam}}
\newcommand {\LL}{{\mathcal L}}
\definecolor{stephane}{rgb}{0,0,1}
\definecolor{claude}{rgb}{0,1,0}
\renewcommand{\to}[1][]{\xrightarrow[]{#1}}
\newcommand{\from}[1][]{\xleftarrow[]{#1}}
\newcommand{\isoto}[1][]{\xrightarrow[#1]%
{{\raisebox{-.6ex}[0ex][-.6ex]{$\mspace{1mu}\sim\mspace{2mu}$}}}}
\newcommand{\isofrom}[1][]{\xleftarrow[#1]%
{{\raisebox{-.6ex}[0ex][-.6ex]{$\mspace{1mu}\sim\mspace{2mu}$}}}}
\newcommand{\fai}[1]{K_{#1}}
\newcommand{\N}{\mathbb{N}}
\newcommand{\Q}{\mathbb{Q}}
\newcommand{\hocolim}{\operatorname{hocolim}}
\newcommand{\holim}{\operatorname{holim}}
\newcommand{\rsect}{\operatorname{R\Gamma}}
\newcommand{\RHom}{\operatorname{RHom}}
\newcommand{\rhom}{\operatorname{R\mathcal{H}om}}
\newcommand{\homst}{\operatorname{\mathcal{H}om}^\cstar}
\newcommand{\Ab}{\operatorname{Ab}}
\title{The singular support of sheaves is $\gamma$-coisotropic}
\author{St\'ephane  Guillermou and Claude Viterbo }
\thanks{SG: UMR 6629 du CNRS - Laboratoire de Mathématiques Jean LERAY, Nantes Université, France.
 Part of this paper was written as the author was a member of
  UMR 5582 du CNRS - Institut Fourier, Université Grenoble Alpes, France.
  \\ CV: Université Paris-Saclay, CNRS, Laboratoire de mathématiques d’Orsay, 91405, Orsay, France. Part of this paper was written as the author was a member of DMA, \'Ecole Normale Sup\'erieure, 45 Rue d'Ulm, 75230 Cedex 05, FRANCE. \\ Both authors acknowledge support from ANR MICROLOCAL (ANR-15-CE40-0007) and ANR COSY (ANR-21-CE40-0002).The first author also acknowledges support from  the Centre Henri Lebesgue  ANR-11-LABX-0020-01.}	
\begin{document}
\def \Z {\mathbb Z}
\def \H {\mathcal H}
\def \cI {\I^{\bullet}}
\def \cJ {\J^{\bullet}}
\def \Char {{\rm Char}}
\def \card {{\rm card}}
\def \cstar {\ast}
\def \convstar {\varoast}

\maketitle
\today ,\;\; \currenttime

\tableofcontents
\section{Introduction}\index{$\LL(T^*N)$} \index{$\DHam_c (T^*N)$}  \index{$\widehat \DHam_c (T^*N)$}  \index{$\widehat \LL (T^*N)$}
In \cite{Viterbo-STAGGF}, a metric, denoted $\gamma$,  was introduced on the set ${\mathfrak L} (T^*N)$ of Lagrangians Hamiltonianly isotopic to the zero section in $T^*N$, where $N$ is a compact manifold and on $\DHam_c (T^*N)$ the group of Hamiltonian maps with compact support for $N= {\mathbb R}^n$ or $T^n$ (and in  \cite{Viterbo-Montreal} it was extended to $\DHam_c (T^*N)$ for general compact $N$). The metric was generalized by Schwartz and Oh to general symplectic manifolds $(M, \omega)$ using Floer cohomology  for the Hamiltonian case (see \cite{Schwarz, Oh-spectrum}), and by \cite{Leclercq}  for the Lagrangian case\footnote{Assuming  $[\omega]\pi_2(M,L)=0$, $\mu_L \pi_2(M,L)=0$, where $\mu_L$ is the Maslov class of $L$. }. 
The notion of $\gamma$-coisotropic sets in a symplectic manifold was defined  in \cite{Viterbo-gammas}. A similar definition was due to Usher \cite{Usher} in the setting of Hofer distance under the name of ``locally rigid''. Notice that this yields a weaker notion : $\gamma$-coisotropic implies locally rigid in the sense of Usher.  

In the context of sheaves on manifolds, there is a notion of coisotropic sets in the sense of Kashiwara-Schapira (see \cite{K-S}, definition 6.5.1, p.~271 and Definition \ref{Def-cone-coisotropic}),  that the authors call {\bf involutivity}. To avoid any confusion with other notions, we shall use the term cone-coisotropic for their definition as it is defined using the contingent and paratingent cones (see  \cite{Bouligand} and Section \ref{Section-8}). 
With this notion, Kashiwara and Schapira proved

\begin{thm} [\cite{K-S}, theorem 6.5.4, p. 272]\label{Thm-K-S-involutive}
Given  a sheaf $\F \in D^b(N)$, its singular support $SS(\F)$  is cone-coisotropic. 
 \end{thm}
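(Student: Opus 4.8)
The plan is to check cone-coisotropy of $S:=SS(\F)$ at every point $p_0$ lying outside the zero section (the case of points of the zero section is analogous, using only that $S$ contains $\supp(\F)$). As formulated with the contingent (Bouligand) and paratingent cones of \cite{K-S}, the statement to prove at $p_0$ is: whenever a covector $\theta\in T^*_{p_0}(T^*N)$ annihilates the contingent cone $C_{p_0}(S)$ from one side --- equivalently, $S$ is asymptotically contained near $p_0$ in a closed half-space with conormal $\theta$ --- the associated Hamiltonian vector $H_\theta$ lies in $C_{p_0}(S)$. Two earlier facts from \cite{K-S} drive the reduction: $S$ is closed and conic, and $S$ is functorial under quantized contact transformations, i.e. a homogeneous symplectomorphism $\chi$ between neighbourhoods of $p_0$ and $\chi(p_0)$ carries $S$ locally onto the microsupport of another sheaf. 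Since the symplectic structure, the assignment $\theta\mapsto H_\theta$, and the contingent cone are all natural, the assertion $H_\theta\in C_{p_0}(S)$ is invariant under such $\chi$, so it suffices to verify it after normalising the pair $(p_0,\theta)$.

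Conicity disposes of one case immediately: the whole ray $\mathbb R_{>0}\,p_0$ lies in $S$, so its velocity at $p_0$, the radial vector $R_{p_0}$, lies in $C_{p_0}(S)$; hence $\langle\theta,R_{p_0}\rangle=0$ automatically, and if $\theta$ is proportional to the tautological $1$-form at $p_0$ then $H_\theta=R_{p_0}\in C_{p_0}(S)$ and there is nothing to prove. Otherwise $H_\theta(p_0)$ is not radial, and --- because any homogeneous symplectomorphism preserves the radial direction --- a homogeneous Darboux/normal-form argument yields $\chi$, quantizable by \cite{K-S}, with $\chi(p_0)=(0;dx_n)$ in suitable coordinates $(x,\xi)$ on $T^*\mathbb R^n$ and $H_{\chi_*\theta}(\chi(p_0))=\pm\partial_{x_1}$; equivalently $\chi_*\theta=\pm d\xi_1$, so $\chi(S)$ lies near $(0;dx_n)$ inside a $C^1$ hypersurface whose tangent space there is $\{\xi_1=0\}$. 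Renaming, the problem reduces to: $p_0=(0;dx_n)\in S$, $S$ is contained near $p_0$ in a $C^1$ hypersurface tangent to $\{\xi_1=0\}$, and one must show $\pm\partial_{x_1}\in C_{p_0}(S)$.

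Now the analytic core. Combining conicity with the tangency, the covectors occurring in $S$ near $p_0$ have direction within $o(1)$ of $dx_n$; so over a small ball $B$ in the $x$-variables, $S$ meets no covector in the directions $\pm dx_1$ --- but only \emph{microlocally}, i.e. only over covectors close to $dx_n$. To upgrade this I would invoke the microlocal cut-off of \cite[\S6.1]{K-S}: replace $\F$ by $\widetilde\F$ agreeing with $\F$ near $p_0$ and whose microsupport over $B$ is \emph{globally} contained in an arbitrarily small conic neighbourhood of the ray through $(0;dx_n)$, hence avoids $\pm dx_1$ over all of $B$. The non-characteristic deformation lemma (``microlocal Morse lemma'', \cite[\S2.7,\S5.4]{K-S}), applied in small sub-boxes with the functions $\pm x_1$, then forces $\widetilde\F$ to be locally constant in the $x_1$-direction on $B$, so $SS(\widetilde\F)$ over $B$ is invariant under $x_1$-translation; since $\widetilde\F\simeq\F$ near $p_0$ we get $p_0+t\,\partial_{x_1}\in S$ for $|t|$ small, i.e. $\pm\partial_{x_1}=H_\theta(p_0)\in C_{p_0}(S)$, which by the reduction proves the theorem. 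I expect the crux to be exactly this passage from microlocal control of $SS(\F)$ near $p_0$ to a genuine propagation statement: the cut-off lemma is indispensable because $S$ is only known in a neighbourhood of $p_0$, and one must also be careful that the functions used in the deformation are merely $C^1$, that the contingent/paratingent cones are handled in Bouligand's full generality rather than assuming $S$ is $C^2$-smooth, and that the implications ``$S$ in a hypersurface'' $\Rightarrow$ ``$S$ avoids $\pm dx_1$'' hold uniformly as $B$ shrinks.
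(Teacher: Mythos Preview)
Your outline follows the classical Kashiwara--Schapira proof from \cite[Thm.~6.5.4]{K-S}: normalize $(p_0,\theta)$ by a quantized contact transformation, invoke the microlocal cut-off lemma, and propagate via the non-characteristic deformation lemma. A couple of points are imprecise --- the hypothesis in Definition~\ref{Def-cone-coisotropic} is that the \emph{paratingent} cone $C^+(p_0,S)$ (which is symmetric under $v\mapsto -v$) lies in a hyperplane, not a one-sided half-space condition on the contingent cone; and this does \emph{not} place $S$ inside a $C^1$ hypersurface, only inside the graph over $H$ of a continuous function with vanishing G\^ateaux derivative (compare the first lemma in \S8.1) --- so the propagation step needs more care than your sketch indicates. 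But the architecture is indeed that of the original proof.

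The paper, however, takes a completely different route: it does not reprove the theorem by microlocal analysis at all. Instead it first establishes the strictly stronger Theorem~\ref{Thm-sheaf-gamma} (that $SS(\F)$ is $\gamma$-coisotropic), using the sheaf quantization $\K_\varphi$ of Hamiltonian isotopies, the $\gamma_\tau$-distance on $D(N\times\mathbb R)$, and the semicontinuity of singular supports under $\gamma_g$-limits (Propositions~\ref{Prop-quantization-continuity} and~\ref{Prop-g-continuity}); then Proposition~\ref{Prop-gamma-to-cone}, an elementary displacement argument in \S8.1, shows $\gamma$-coisotropic $\Rightarrow$ cone-coisotropic. Your approach stays entirely inside classical microlocal sheaf theory and needs no spectral invariants; the paper's approach buys a genuinely stronger conclusion --- $\gamma$-coisotropy is invariant under symplectic \emph{homeomorphisms}, whereas cone-coisotropy is only $C^1$-invariant --- and, as the authors remark, a more conceptual derivation of the involutivity theorem.
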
 

The aim of this paper is to prove that

 \begin{thm} \label{Thm-sheaf-gamma}
 Given  a sheaf $\F \in D^b(N)$, its singular support $SS(\F)$ is $\gamma$-coisotropic. 
 \end{thm}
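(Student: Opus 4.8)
We outline a proof. Being $\gamma$-coisotropic is a local property on $T^*N$, invariant under the $\mathbb{R}_{>0}$-action on the fibres, so it suffices to verify it at each $p_0=(x_0;\xi_0)\in SS(\F)$ with $\xi_0\neq 0$, the zero-section points then following by conicity (there the germ of $SS(\F)$ contains $0_N$). Note that the purely tangential cone-coisotropy of Theorem~\ref{Thm-K-S-involutive} does not suffice here, as it ignores the metric structure: the argument must genuinely use that $SS(\F)$ is a singular support. Working in a Darboux chart near $p_0$ and recalling the definition of \cite{Viterbo-gammas}, $\gamma$-coisotropy of $SS(\F)$ at $p_0$ means that no Hamiltonian diffeomorphism $\psi$ supported near $x_0$ with $\gamma$-norm below an explicit threshold can satisfy $\psi(SS(\F))\cap SS(\F)\cap V=\emptyset$ for a neighbourhood $V\ni p_0$; since the hypotheses are local we may also replace $\F$ near $x_0$ by a sheaf on a small torus with the same microlocal germ, so that $\gamma$ is available on the full group of compactly supported Hamiltonian diffeomorphisms.

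\emph{Quantization and distance bound.} Suppose such a $\psi=\phi^1_H$ existed. By the sheaf quantization of Hamiltonian isotopies of Guillermou--Kashiwara--Schapira there is a kernel $\fai{H}\in D^b(N\times N)$, invertible for composition, whose functor $\Phi_H=\fai{H}\circ(-)$ satisfies $SS(\Phi_H\F)=\psi(SS(\F))$ away from the zero section; set $\G:=\Phi_H\F$. The crux is then the inequality $d(\F,\G)\leq C\,\gamma(\psi)$ for a universal constant $C$, where $d$ is the interleaving distance on sheaves over $N\times\mathbb{R}$ attached to translation in the Tamarkin variable (mutual morphisms factoring the canonical structure maps through the shifts $T_c$, total shift $\leq C\,\gamma(\psi)$). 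This comes from the structure of the GKS kernels: the amount by which $\fai{H}$ moves microsupports, measured by the associated contact isotopy on $T^*(N\times\mathbb{R})$, is governed by the spectral/oscillation data of $H$, which in the generating-function normalisation underlying $\gamma$ is $\gamma(\psi)$ up to a constant; composing with $\F$ does not increase it.

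\emph{Stability and conclusion.} Now invoke the continuity of $SS$ for $d$: if $d(\F,\G)<\delta$ then, away from the zero section, $SS(\G)$ lies in the $\delta$-neighbourhood of $SS(\F)$ for the metric on the cosphere bundle of the chart (and symmetrically). Indeed a $\delta$-interleaving carries a nonzero microlocal germ of $\G$ at a point $q$ to a nonzero microlocal germ of $\F$ within distance $\delta$ of $q$, using the microlocal cut-off lemma and the Morse-type estimates of \cite{K-S} run inside the microlocal sheaf category $\mu sh$, so as to avoid any constructibility hypothesis on $\F$. With $\G=\Phi_H\F$ and $q$ ranging over $\psi(SS(\F))$ near $p_0$, this contradicts $\psi(SS(\F))\cap SS(\F)\cap V=\emptyset$ as soon as $\gamma(\psi)$ is smaller than the displacement distance produced; bookkeeping the constants yields precisely the quantitative inequality of the definition, proving Theorem~\ref{Thm-sheaf-gamma}.

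\emph{The main obstacle.} The heart of the matter is the distance bound together with the non-constructible half of the stability step: linking the a priori global, Floer-theoretic number $\gamma(\psi)$ to the purely microlocal shape of $SS(\F)$, and doing so for an arbitrary object of $D^b(N)$ with no tameness assumption, which forces the interleaving and stability estimates to be carried out directly in $\mu sh$ rather than via finite barcodes.
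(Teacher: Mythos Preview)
Your strategy has the right broad shape---quantize $\psi$, bound the interleaving distance between $\K_\psi^\convstar(\F)$ and $\K_\id^\convstar(\F)$ by $\gamma(\psi)$, then extract a microsupport constraint---and the distance bound is indeed correct (it is Lemma~\ref{lem:distance-composition} together with Proposition~\ref{prop:gamma-et-gammag2}). The argument breaks down at the ``stability'' step.

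Two minor points first. You have misstated the definition of $\gamma$-coisotropic: the condition to violate is $\psi(SS(\F))\cap B(p_0,\eta)=\emptyset$, not $\psi(SS(\F))\cap SS(\F)\cap V=\emptyset$. Also, for a non-homogeneous $\psi$ on $T^*N$ the GKS kernel lives in $D^b(N\times N\times\Real)$, not $D^b(N\times N)$; one works with $\K_\psi^\convstar(\F)\in D^b(N\times\Real)$ and recovers $\psi(SS(\F))$ via $\rho(x,t;\xi,\tau)=(x;\xi/\tau)$.

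The genuine gap is your stability claim: ``if $d(\F,\G)<\delta$ then $SS(\G)$ lies in the $\delta$-neighbourhood of $SS(\F)$ (and symmetrically)''. This is false. On $\Real$ with $g=\tau$, take $\F=k_{[0,\infty)}$ and $\G=\F\oplus k_{[M,M+\varepsilon)}$: then $\gamma_\tau(\F,\G)\leq\varepsilon$ (the extra summand is $\varepsilon$-torsion), yet $SS^\bullet(\G)$ contains the fibre over $t=M$, at distance $M$ from $SS^\bullet(\F)$; swapping $\F$ and $\G$ kills the other inclusion as well. The reason is structural: a $\gamma_\tau$-interleaving involves only translations $T_{c*}$ in the Tamarkin variable, which move $SS$ only in the $t$-direction and say nothing about the $N$- or $\xi$-directions. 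Your proposed justification via microlocal cut-off and Morse estimates does not bridge this: an interleaving morphism $u\colon\F\to T_{a*}\G$ does not transport microstalks of $\G$ back to microstalks of $\F$, and the composition $\tau_{0,a+b}$ may well vanish on torsion pieces. Finally, when specialised to $\G=\K_\psi^\convstar(\F)$, your stability claim becomes ``$p_0\in SS(\F)\Rightarrow\psi(SS(\F))\cap B(p_0,C\gamma(\psi))\neq\emptyset$'', which is a \emph{quantitative} strengthening of the very theorem you are proving; invoking it here is circular.

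The paper proceeds differently and avoids all this. Negating $\gamma$-coisotropy produces not one $\psi$ but a \emph{sequence} $\varphi_j$ with $\gamma(\varphi_j)\to0$ and $\varphi_j(SS(\F))\cap B(z_0,\eta)=\emptyset$. Then $\K_{\varphi_j}^\convstar(\F)\to\K_\id^\convstar(\F)=\F\boxtimes k_{[0,+\infty[}$ in $\gamma_\tau$. The replacement for stability is the one-sided, asymptotic inclusion $SS(\gamma_g\text{-}\lim\F_j)\subset\liminf_j SS(\F_j)$ (Proposition~\ref{Prop-g-continuity}), proved by realising the limit as a homotopy colimit (Proposition~\ref{prop:gammalimit=colimit}), hence as a cone on $\bigoplus_{n\geq n_0}\F_n$, and applying the triangle inequality for $SS$ together with Proposition~\ref{Prop-2.3}. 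This yields $SS(\F)\subset\liminf_j\varphi_j(SS(\F))$, and since each $\varphi_j(SS(\F))$ misses $B(z_0,\eta)$, so does $SS(\F)$---the desired contradiction. No quantitative single-$\psi$ bound is needed or claimed.
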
 
 
 Theorem \ref{Thm-sheaf-gamma} implies Theorem  \ref{Thm-K-S-involutive}, since we shall prove the following connection between the two notions
  \begin{prop} \label{Prop-gamma-to-cone}
  Let $V$ be a closed subset in $(M,\omega)$. If $V$ is $\gamma$-coisotropic then it is cone-coisotropic. 
  \end{prop}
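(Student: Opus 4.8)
The plan is to argue by contraposition: assuming $V$ is not cone-coisotropic, I will exhibit, at a point of failure, a one-parameter family of Hamiltonian isotopies — supported in arbitrarily small balls and of arbitrarily small $\gamma$-energy — that ``detach $V$ from itself'' near that point, which contradicts the definition of a $\gamma$-coisotropic set.

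First I would unwind the failure of involutivity. By Definition \ref{Def-cone-coisotropic} there is a point $p\in V$ and a nonzero covector $\theta\in T^*_pM$ supporting $V$ at $p$ — the (paratingent, equivalently Whitney normal) cone $C_p(V)=C_p(V,V)$ is contained in the closed half-space $\{v:\langle\theta,v\rangle\le 0\}$ — while the Hamiltonian vector $X_\theta:=\omega^{-1}\theta$, which automatically lies in $\ker\theta$ (since $\langle\theta,X_\theta\rangle=\omega(X_\theta,X_\theta)=0$), does not belong to $C_p(V)$. Fixing a Darboux chart at $p$ and using that $C_p(V)$ is a \emph{closed} cone, a routine compactness argument upgrades ``$X_\theta\notin C_p(V)$'' to a statement valid at finite scale: there are $\rho_0>0$ and $\eta>0$ so that for all distinct $a,b\in V$ with $|a-p|,|b-p|\le\rho_0$ the unit secant $(b-a)/|b-a|$ stays at distance $\ge\eta$ from $X_\theta/|X_\theta|$. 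Hence, for $\rho\le\rho_0/2$ and all small $s>0$,
\[
\bigl((V\cap\overline{B(p,\rho)})+sX_\theta\bigr)\cap V=\varnothing ,
\]
as any intersection point would give a secant of $V$ near $p$ pointing exactly along $X_\theta$.

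Next I would build the detaching isotopies. Set $f(q):=\langle\theta,q-p\rangle$, a linear Hamiltonian with $X_f\equiv X_\theta$, take a cut-off $\chi_\rho$ supported in $B(p,2\rho)$ and equal to $1$ on $\overline{B(p,\tfrac32\rho)}$, and let $\psi_{\rho,s}:=\phi^s_{\chi_\rho f}$. For $s\,|X_\theta|<\tfrac12\rho$ the orbits issued from $\overline{B(p,\rho)}$ never leave $\{\chi_\rho=1\}$ during $[0,s]$, so $\psi_{\rho,s}$ coincides there with the translation $q\mapsto q+sX_\theta$; thus $\psi_{\rho,s}(V\cap\overline{B(p,\rho)})=(V\cap\overline{B(p,\rho)})+sX_\theta$ is disjoint from $V$ by the previous step. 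At the same time $\gamma(\psi_{\rho,s})\le s\cdot\operatorname{osc}_{B(p,2\rho)}(\chi_\rho f)\le 4\,|\theta|\,\rho\,s$, bounding $\gamma$ by the Hofer norm and hence by (time)$\times$(oscillation of the generating Hamiltonian). Keeping $\rho$ fixed and letting $s\to 0^+$ gives isotopies supported in $B(p,2\rho)$ that detach the piece $V\cap\overline{B(p,\rho)}$ from $V$ with vanishing $\gamma$-energy — which the definition of $\gamma$-coisotropy forbids. Hence $V$ is not $\gamma$-coisotropic, proving the contrapositive.

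The hardest part will be the cone bookkeeping in the first step: one must work with exactly the cone that occurs in Kashiwara--Schapira's definition (the paratingent / Whitney normal cone $C_p(V,V)$), because it is only for that cone that ``$X_\theta$ outside the cone'' translates into the uniform secant-exclusion used to disjoin $V$ from $V+sX_\theta$; for the merely \emph{contingent} cone the translate need not be disjoint from $V$, and one would instead have to flow along a $C^1$ supporting function of $V$ at $p$. The other point requiring care is checking that the family $\{\psi_{\rho,s}\}$ is of precisely the type excluded by the definition of a $\gamma$-coisotropic set in \cite{Viterbo-gammas} (matching the neighbourhood conventions and the normalisation of the $\gamma$-energy against the scale). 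Once these are settled, the remaining ingredients — the cut-off manipulation, the inequality $\gamma\le\operatorname{osc}$, and the control of orbits inside the region where the cut-off is constant — are entirely routine.
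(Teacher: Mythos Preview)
Your proposal has a genuine gap: you have misread what the definition of $\gamma$-coisotropic requires. The definition says $V$ is $\gamma$-coisotropic at $p$ when, for every pair $B(p,\eta)\subset B(p,\varepsilon)$, no $\varphi\in\DHam_c(B(p,\varepsilon))$ of small $\gamma$ can achieve $\varphi(V)\cap B(p,\eta)=\emptyset$. Thus to contradict it you must push $\varphi(V)$ out of a fixed \emph{ball}, not merely detach $\varphi(V)$ from $V$. Your translation by $sX_\theta$ with $s\to 0^+$ does the latter but not the former: since $p\in V$, the translate contains $p+sX_\theta$, which lies in every $B(p,\eta)$ once $s|X_\theta|<\eta$. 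Concretely, take $V=\{q_n=p_n=0\}\subset\mathbb R^{2n}$, $H=\{p_n=0\}$, $X_\theta=\partial_{q_n}$: then $V+s\partial_{q_n}=\{q_n=s,\,p_n=0\}$ is disjoint from $V$ for $s\neq 0$ but meets $B(0,\eta)$ for every $s<\eta$. So your family $\psi_{\rho,s}$ never satisfies $\psi_{\rho,s}(V)\cap B(p,\eta)=\emptyset$, and nothing is contradicted.

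The paper's proof works precisely by supplying the missing idea: one translates in the $H^\omega$ direction by a \emph{large fixed} amount $A$ while keeping $\gamma$ small, using a Hamiltonian that is $C^0$-small but has large normal derivative along $V$ (in the model case, $K(p_n)$ with $K(0)=0$, $K'(0)=A$, $\|K\|_{C^0}$ arbitrarily small). For this one first shows, using $C^+(z,V)\subset H$, that $V$ is locally the graph of a continuous function over a cone in $H$; the flow then sweeps this graph out of a unit ball. Your bound $\gamma(\psi_{\rho,s})\le 4|\theta|\rho s$ forces the displacement to be small whenever $\gamma$ is small, which is exactly what must be decoupled. (Your secondary worry is also real: the failure of cone-coisotropicity gives only $H^\omega\not\subset C^-(p,V)$, not $X_\theta\notin C^+(p,V)$, so your secant-exclusion step does not follow from the hypothesis as stated.)
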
 

\begin{rem} This is related to questions asked by Vichery in section 4 of \cite{Vichery}.
Our result, together with Proposition \ref{Prop-gamma-to-cone}, stating  that a $\gamma$-co\-iso\-tropic set is cone-coisotropic, gives a more natural proof of the Kashiwara-Schapira theorem. Moreover, as opposed to the notion of cone-coiso\-tro\-pic, which is only invariant by $C^1$ symplectic diffeomorphisms, the notion of  $\gamma$-coisotropic set is invariant by homeomorphisms preserving $\gamma$, in particular by symplectic homeomorphisms, i.e. homeomorphisms which are  $C^0$ limits of symplectic diffeomorphisms. 
Along the way we prove a number of results relating the singular support and the spectral norm $\gamma$. 
\end{rem} 

\section{Notation, comments and acknowledgements}
Since our results are local, we shall assume from now on that $N$ is a compact manifold. The following definitions will turn out useful : 
\begin{itemize} 
\item 
The category $D(N)$ is the unbounded derived category of sheaves of
$k$-vector spaces on $N$, for some given field $k$, but except in Appendix \ref{Appendix-B} the  reader may assume we are dealing with its bounded version. For the operations on sheaves associated to a continuous map $f: M \longrightarrow N$ that is  $f^{-1}, f_{*}, f^{!}, f_{!}$  and the two operations $\otimes, \operatorname{\mathcal{H}om}$ and their derived versions, $f^{-1},Rf_{*}, Rf_{!}, f^{!}$ and $\otimes, \rhom$ on
$D(N)$ we refer to \cite{K-S}. 
\item
Choosing a real analytic structure on $N$ we set $D_c(N)$ to be the category of constructible complexes $\F$, i.e. complexes such that, for some subanalytic stratification, the restriction of $\F$ to each stratum is locally constant and of finite rank. 
\item  For $\F\in D(N)$, $SS(\F)$ is the singular support (also called microsupport) defined by Kashiwara-Schapira in \cite{K-S}.
We set for short $SS^\bullet(\F)= SS(\F)\cap (T^*N\setminus 0_N)$.
\item
For an open set $U$ in the symplectic manifold $(M, \omega)$ we denote by $\DHam_c(U)$ the set of time one flows of Hamiltonian with compact support contained in $U$. 
\end{itemize}

We thank Pierre Schapira for useful conversations and Nicolas Vichery for Remark~\ref{rem:gamma_g=gamma_tau}.
While writing this paper, we realized that some of the results in  Subsection \ref{subsection-6.3} were independently discovered by Asano and Ike in \cite{Asano-Ike2}.
We thank Yuichi Ike for a careful reading of the manuscript.

\section{Reminders on the singular support of sheaves}

Let $N$ be a manifold, $k$ a field and $D(N)$ the derived category of sheaves of $k$-vector spaces on $N$. For $\F
\in D(N)$ we recall that $SS(\F) \subset T^*N$ is the closed conic subset defined by Kashiwara-Schapira as the
closure of the set of points $(x;\xi)$ such that $(\rsect_{\{f\geq 0\}}\F)_x \not= 0$ for some function $f$ of class
$C^1$ such that $f(x)=0$ and $df_x=\xi$. It is called microsupport or singular support of $\F$.

The functor $\rsect_{\{f\geq 0\}}$ does not commute with infinite direct sums and
$(-)_x$ does not commute with infinite direct products. Here are two variations on
the definition which have these commutation properties.  For the statement of the
lemmas we introduce the following definition.
\begin{defn}\label{Def-lens}  Let $\Omega \subset T^*N \setminus 0_N$ be an
open conic subset.  We call {\em $\Omega$-lens} a locally closed subset $\Sigma$ of $N$
with the following properties: $\overline{\Sigma}$ is compact and there exists an open
neighbourhood $U$ of $\overline{\Sigma}$ and a function $g\colon U \times [0,1] \to \Real$
of class $C^1$ such that
\begin{enumerate}
\item $dg_t(x) \in \Omega$ for all $(x,t) \in U \times [0,1]$, where
  $g_t = g|_{U\times\{t\}}$,
\item $\{g_t<0\} \subset \{g_{t'}<0\}$ if $t\leq t'$,
\item the hypersurfaces $\{g_t=0\}$ coincide on $U\setminus \overline{\Sigma}$,
\item $\Sigma = \{g_1<0\} \setminus \{g_0<0\}$.
\end{enumerate}
 \end{defn} 
We recall that $SS^\bullet(k_{\{g_t<0\}} ) = \{(x;\lambda dg_t(x))$; $g_t(x)=0$, $\lambda>0\}$.
 Using the triangle
$k_{\{g_0<0\}} \to k_{\{g_1<0\}} \to k_\Sigma \to[+1]$  and the ``triangle inequality''  for the singular support (see \cite{K-S})  we obtain
$SS^\bullet(k_\Sigma) \subset \Omega$.
\begin{figure}[ht]
 \begin{overpic}[width=6cm]{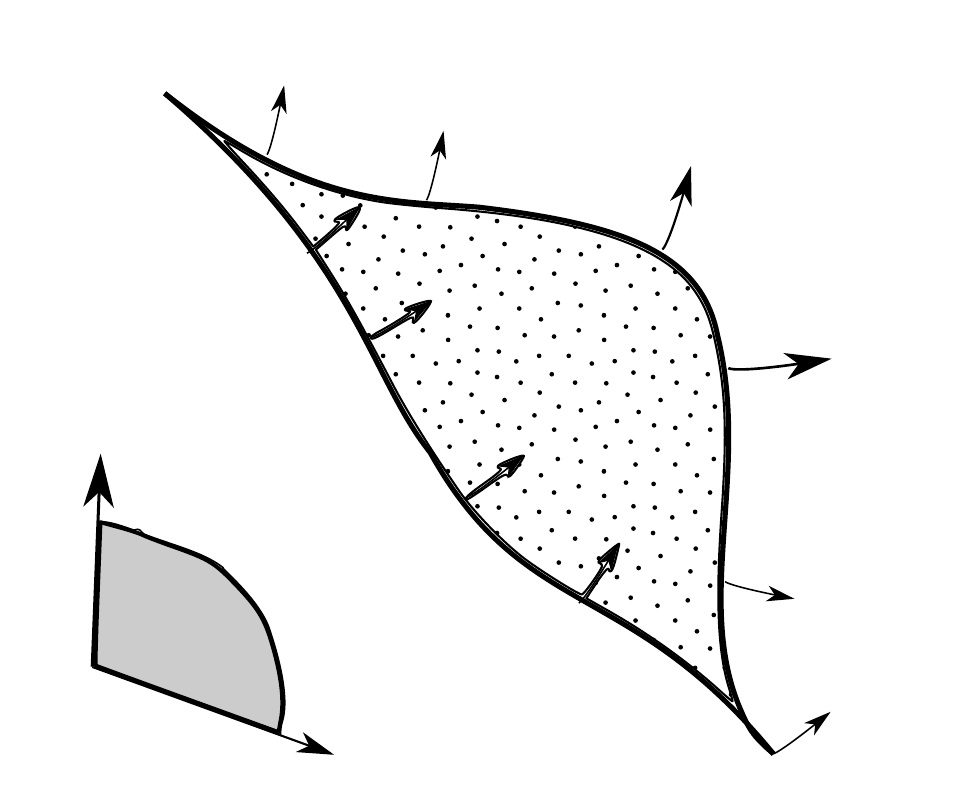}
 \put (78,35) {$g_1=0$} 
  \put (17,45) {$g_0=0$} 
   \put (20,0) {$ {\Omega}$} 
 \end{overpic}
 \caption{A lens. The arrows represent $\nabla g_1$ and $\nabla g_0$.}
 \end{figure}
The next lemma is essentially a reformulation of the definition of the singular support.
\begin{lem}\label{lem:def-micsupp-bis}
  Let $\F \in D(N)$ and let $\Omega \subset T^*N \setminus 0_N$ be an open conic
  subset.  Then $SS(\F) \cap \Omega = \emptyset$ if and only if
  $\RHom(k_\Sigma, \F) \simeq 0$ for any $\Omega$-lens $\Sigma$.
\end{lem}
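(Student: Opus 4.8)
The plan is to prove the two implications separately, in each case reducing to the definition of the microsupport together with the non-characteristic deformation lemma of \cite{K-S}.

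\medskip
Assume first $SS(\F)\cap\Omega=\emptyset$ and let $\Sigma$ be an $\Omega$-lens, with data $U$, $g$ as in Definition~\ref{Def-lens}. Writing $U_t=\{g_t<0\}$, the triangle $k_{U_0}\to k_{U_1}\to k_\Sigma\to[+1]$ together with the identification $\RHom(k_V,\F)\simeq\rsect(V;\F)$ for $V$ open (adjunction for the open inclusion $V\hookrightarrow N$) shows that $\RHom(k_\Sigma,\F)$ is the fibre of the restriction morphism $\rsect(U_1;\F)\to\rsect(U_0;\F)$. So it suffices to prove this morphism is an isomorphism, and I would deduce this from the non-characteristic deformation lemma (\cite{K-S}, Proposition~2.7.2) applied to the increasing family $\{U_t\}_{t\in[0,1]}$. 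Its hypotheses are exactly the axioms of a lens: by (2) and (3) the family is increasing and its members coincide outside the compact set $\overline{\Sigma}$, which supplies the properness; and if $x$ lies in the ``front'' $\overline{U_t}\setminus\bigcup_{s<t}U_s$ then $g_t(x)=0$, hence $(x;dg_t(x))\in\Omega$ by (1), so, $\Omega$ being open and disjoint from $SS(\F)$, the very definition of $SS(\F)$ gives $(\rsect_{\{g_t\geq0\}}\F)_x\simeq0$, which is the non-characteristic condition at $x$. The deformation lemma then yields $\rsect(U_1;\F)\isoto\rsect(U_0;\F)$, hence $\RHom(k_\Sigma,\F)\simeq0$.

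\medskip
Conversely, suppose $SS(\F)\cap\Omega\neq\emptyset$; I will exhibit an $\Omega$-lens $\Sigma$ with $\RHom(k_\Sigma,\F)\not\simeq0$. Since $SS(\F)$ is by definition the \emph{closure} of the set of $(x;\xi)$ for which $(\rsect_{\{f\geq0\}}\F)_x\neq0$ for some $C^1$ function $f$ with $f(x)=0$, $df_x=\xi$, and since $\Omega$ is open, there are a point $(x_0;\xi_0)\in\Omega$ and such a witnessing $f$. After a $C^1$ change of coordinates near $x_0$ I may assume $x_0=0$, $f=x_1$, $\xi_0=dx_1$, and (as $\Omega$ is open and conic) that $\Omega\supset\{|x|<\rho\}\times\Gamma$ for some $\rho>0$ and some open convex cone $\Gamma$ containing $dx_1$. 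I then build a family of $\Omega$-lenses by a ``shear'' trick: for a $C^1$ bump function $\psi\geq0$ with $\psi(0)>0$, $\supp\psi$ a small ball about $0$, and $\norm{d\psi}$ small enough that $dx_1-t\,d\psi(x)\in\Gamma$ for all $x$ and all $t\in[0,1]$, put $g_t(x)=x_1-t\psi(x)$ on $U=\{|x|<\rho\}$. A direct check of (1)--(4) shows that $\Sigma_\psi:=\{0\leq x_1<\psi(x)\}$ is an $\Omega$-lens. Arguing as in the first part (now $\{g_0<0\}=\{x_1<0\}$, so the fibre in question is a local cohomology) one gets $\RHom(k_{\Sigma_\psi},\F)\simeq\rsect\bigl(\{g_1<0\};\rsect_{\{x_1\geq0\}}\F\bigr)\simeq\rsect\bigl(\Sigma_\psi;\rsect_{\{x_1\geq0\}}\F\bigr)$, the last isomorphism because $\rsect_{\{x_1\geq0\}}\F$ is supported on $\{x_1\geq0\}$ and $\{g_1<0\}\cap\{x_1\geq0\}=\Sigma_\psi$. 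As $\psi$ ranges over such bump functions the sets $\Sigma_\psi$ form a neighbourhood basis of $0$ inside $\{x_1\geq0\}$, so taking the colimit gives $\varinjlim_\psi\RHom(k_{\Sigma_\psi},\F)\simeq(\rsect_{\{x_1\geq0\}}\F)_0\neq0$, whence $\RHom(k_{\Sigma_\psi},\F)\not\simeq0$ for some $\psi$, a contradiction.

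\medskip
I expect the crux to be the construction in the second part: one must produce \emph{genuine} $\Omega$-lenses --- so $dg_t(x)\in\Omega$ must hold at every point of $U$, not merely near $x_0$ --- whose associated complexes $\RHom(k_{\Sigma_\psi},\F)$ nevertheless compute the single stalk $(\rsect_{\{f\geq0\}}\F)_{x_0}$ in the limit. The shear $g_t=x_1-t\psi$ threads this needle because its differential is a small, $t$-independent perturbation of $dx_1$ (hence stays in the cone $\Gamma$) while the family of sublevel sets still sweeps across $x_0$ and pinches off outside a compact set. The first implication is more routine, the only care needed being to recognize the lens axioms as precisely the hypotheses of the non-characteristic deformation lemma together with the definition of $SS$.
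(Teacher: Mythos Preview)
Your proof is correct and follows essentially the same approach as the paper's: both directions use the triangle $k_{\{g_0<0\}}\to k_{\{g_1<0\}}\to k_\Sigma\to[+1]$, the identification $\RHom(k_V,\F)\simeq\rsect(V;\F)$ for open $V$, and the non-characteristic deformation lemma for the forward implication; for the converse, both produce a family of $\Omega$-lenses $\Sigma_n$ with $\RHom(k_{\Sigma_n},\F)\simeq\rsect(U_n;\rsect_{\{f\geq0\}}\F)$ and use that the colimit over a neighbourhood basis recovers the (nonzero) stalk. The only difference is that the paper simply asserts the existence of a basis of neighbourhoods $U_n$ making $\{f\geq0\}\cap U_n$ an $\Omega$-lens, whereas you carry out this construction explicitly via the shear $g_t=x_1-t\psi$; your version is thus more detailed but not a genuinely different route.
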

\begin{proof}
  (i) We first assume $SS(\F) \cap \Omega = \emptyset$.  We recall that $\RHom(k_V,\F)
  \simeq \rsect(V;\F)$ for any open subset $V$ of $N$. Using the notations in the
  definition of $\Omega$-lens we have a distinguished triangle $k_{\{g_0<0\}} \to
  k_{\{g_1<0\}} \to k_\Sigma \to[+1]$.  Applying $\RHom(-,\F|_U)$ to this triangle we obtain
  the result, using the non-characteristic deformation lemma (\cite[Prop.~2.7.2]{K-S}) and
  the definition of the singular support.

  \medskip\noindent (ii) We assume that there exists
  $(x_0;\xi_0) \in SS(\F) \cap \Omega$.  Hence we can find $(x_1;\xi_1) \in \Omega$
  (close to $(x_0;\xi_0)$) and a function $f \colon N \to \Real$ of class $C^1$
  such that $f(x_1)=0$, $df(x_1)=\xi_1$ and
  $(\rsect_{\{f\geq 0\}}\F)_{x_1} \not= 0$.  Now we can find a basis of open
  neighbourhoods of $x_1$, say $U_n$, $n\in \N$, such that
  $\Sigma_n := \{f\geq 0\} \cap U_n$ is an $\Omega$-lens.  Then there exists $n$ such
  that $\rsect(U_n ; \rsect_{\{f\geq 0\}}\F) \not=0$ and the result follows from
  \begin{multline*}
    \rsect(U_n ; \rsect_{\{f\geq 0\}}\F)
    \simeq \rsect(N; \rhom(k_{U_n}, \CorrectionBlue{ \rhom(k_{\{f\geq 0\}}, \F) } )) \\
\simeq \rsect(N; \rhom(k_{\Sigma_n},\F)) \simeq  \RHom(k_{\Sigma_n},\F) \dispdot
\end{multline*}
\end{proof}

The functor $\RHom(k_\Sigma,-)$ commutes with direct products but not with direct sums.  Here
is a dual version of the previous lemma using the functor $\rsect(N; k_\Sigma \otimes -)$ which
commutes with direct sums.

\begin{lem}\label{lem:def-micsupp-ter}
  Let $\F \in D(N)$ and let $\Omega \subset T^*N \setminus 0_N$ be an open conic
  subset.  Then $SS(\F) \cap \Omega = \emptyset$ if and only if
  $\rsect(N; k_\Sigma \otimes \F) \simeq 0$ for any $\Omega^a$-lens $\Sigma$, where $\Omega^a$ is the antipodal
  set of $\Omega$, that is, its image by $(x;\xi) \mapsto (x;-\xi)$.
\end{lem}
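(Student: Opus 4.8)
The plan is to repeat the proof of Lemma~\ref{lem:def-micsupp-bis} with the functor $\rsect(N;k_\Sigma\otimes-)$ in place of $\RHom(k_\Sigma,-)$; the replacement of $\Omega$ by $\Omega^a$ is forced by Verdier duality, which interchanges these two functors. Since $N$ is compact and every lens has compact closure, $\rsect(N;-)=\rsect_c(N;-)$ on the sheaves at hand, so $\rsect(N;k_V\otimes\F)\simeq\rsect_c(V;\F)$ for $V$ open; and, by Poincar\'e--Verdier duality together with the $\otimes$--$\rhom$ adjunction, for any locally closed relatively compact $\Sigma$,
\[
 \rsect(N;k_\Sigma\otimes\F)^\vee\;\simeq\;\rsect(N;\rhom(k_\Sigma\otimes\F,\omega_N))\;\simeq\;\RHom\bigl(k_\Sigma,\rhom(\F,\omega_N)\bigr)\;=\;\RHom(k_\Sigma,D_N\F),
\]
where $D_N\F=\rhom(\F,\omega_N)$ and $(-)^\vee$ is the dual over the field $k$; in particular $\rsect(N;k_\Sigma\otimes\F)\simeq0$ iff $\RHom(k_\Sigma,D_N\F)\simeq0$.

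For the implication ``$SS(\F)\cap\Omega=\emptyset$ $\Rightarrow$ $\rsect(N;k_\Sigma\otimes\F)\simeq0$ for every $\Omega^a$-lens $\Sigma$'', I would tensor the triangle $k_{\{g_0<0\}}\to k_{\{g_1<0\}}\to k_\Sigma\to[+1]$ of Definition~\ref{Def-lens} with $\F$ and apply $\rsect(N;-)$, obtaining a triangle
\[
 \rsect_c(\{g_0<0\};\F)\to\rsect_c(\{g_1<0\};\F)\to\rsect(N;k_\Sigma\otimes\F)\to[+1],
\]
so that it is enough to check that the first arrow is an isomorphism. This is the compactly supported form of the non-characteristic deformation lemma: as $t$ grows, the front $\{g_t=0\}$ sweeps with inward conormal $\{-\lambda\,dg_t(x):\lambda>0\}\subset\Omega$, which avoids $SS(\F)$, so $\rsect_c(\{g_t<0\};\F)$ does not depend on $t$. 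Equivalently, by the first paragraph this arrow is the transpose of the restriction $\rsect(\{g_1<0\};D_N\F)\to\rsect(\{g_0<0\};D_N\F)$, which is an isomorphism by \cite[Prop.~2.7.2]{K-S} because $SS(D_N\F)\subset SS(\F)^a$ (triangle inequality and $SS(\omega_N)\subset0_N$) misses $\Omega^a$.

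The converse — that the vanishing of all these groups forces $SS(\F)\cap\Omega=\emptyset$ — is the heart of the argument. If $\F$ is constructible, so that biduality $D_ND_N\F\simeq\F$ and hence $SS(D_N\F)=SS(\F)^a$ hold, it follows at once from the first paragraph and Lemma~\ref{lem:def-micsupp-bis} applied to $D_N\F$ and $\Omega^a$. For general $\F\in D(N)$ I would argue directly, imitating part~(ii) of the proof of Lemma~\ref{lem:def-micsupp-bis} but with the description of the singular support by compactly supported cohomology: $(x_1;\xi_1)\in SS(\F)$ as soon as, for some $C^1$ function $\psi$ with $\psi(x_1)=0$, $d\psi(x_1)=\xi_1$, the groups $\rsect_c(\{\psi>0\}\cap B;\F)$ fail to vanish for small balls $B$ about $x_1$. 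Granting $SS(\F)\cap\Omega\neq\emptyset$, one picks $(x_1;\xi_1)\in\Omega$ and such a $\psi$ with $\rsect_c(\{\psi>0\}\cap B;\F)\neq0$; intersecting $\{\psi>0\}$ with a shrinking system $B_n$ of lens-shaped neighbourhoods of $x_1$ yields $\Omega^a$-lenses $\Sigma_n$ (the defining functions have differential near $-d\psi(x_1)\in\Omega^a$) with $\rsect(N;k_{\Sigma_n}\otimes\F)\simeq\rsect_c(\{\psi>0\}\cap B_n;\F)\neq0$ for $n$ large. The main obstacle is precisely here: establishing (or quoting) this compact-support description of $SS$, and, exactly as in Lemma~\ref{lem:def-micsupp-bis}, choosing the neighbourhoods $B_n$ so that the $\Sigma_n$ really satisfy conditions (1)--(4) of Definition~\ref{Def-lens} — being careful with signs, since the passage $\RHom\rightsquigarrow\rsect(N;-\otimes\F)$ and the choice of open versus closed half-space each turn $\Omega$ into $\Omega^a$.
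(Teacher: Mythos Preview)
Your approach and the paper's are genuinely different. The paper does not use Verdier duality at all. Instead, for a given $\Omega^a$-lens $\Sigma$ it writes $\Sigma$ as the difference of the closed sets $\overline\Sigma$ and $Z=\overline\Sigma\setminus\Sigma$, chooses decreasing systems of open neighbourhoods $U_n\supset\overline\Sigma$ and $V_n\supset Z$ with $V_n\subset U_n$ such that each $U_n\setminus V_n$ is an $\Omega$-lens, and then uses the identification $H^i(N;\F\otimes k_C)\simeq\varinjlim_{W\supset C}H^i(W;\F)$ for closed $C$ to obtain
\[
H^i\rsect(N;k_\Sigma\otimes\F)\;\simeq\;\varinjlim_n\,H^i\RHom(k_{U_n\setminus V_n},\F).
\]
This reduces the statement to Lemma~\ref{lem:def-micsupp-bis} directly, with no passage through $D_N\F$. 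In effect the paper is giving a rigorous proof of the very ``compactly supported form of the non-characteristic deformation lemma'' that you invoke in your argument~(a): rather than asserting $\rsect_c(\{g_t<0\};\F)$ is constant in $t$, it realises these groups as colimits of the $\RHom$-groups to which Lemma~\ref{lem:def-micsupp-bis} already applies.

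The gap you identify in your own argument is real, and it is exactly what the paper's route avoids. For the converse you need to go from vanishing of $\RHom(k_\Sigma,D_N\F)$ back to a statement about $SS(\F)$, which requires $SS(\F)^a\subset SS(D_N\F)$; this inclusion is the nontrivial half and fails without constructibility. Your proposed fix --- a direct ``compact-support description'' of $SS(\F)$ --- is essentially equivalent to what you are trying to prove: knowing $(\rsect_{\{f\ge0\}}\F)_{x_1}\ne0$ does not produce a nonvanishing $\rsect_c(\{\psi>0\}\cap B;\F)$ without again controlling $SS(D_N\F)$ from below, so the argument is circular. The paper sidesteps this by never leaving the pair of functors $\RHom(k_-,\F)$ and $\rsect(N;k_-\otimes\F)$ applied to $\F$ itself, linking them through the colimit over nested lenses rather than through duality. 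This is precisely the generality required for the application to infinite sums in Proposition~\ref{Prop-2.3}, where $\F$ is certainly not constructible.
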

The first author thanks Pierre Schapira for a suggestion simplifying the next proof.
\begin{proof}
  Let $\Sigma$ be an $\Omega^a$-lens. We write it as the difference of the two closed sets $\overline{\Sigma}$ and
  $Z= \overline{\Sigma} \setminus \Sigma$.  We can find two families of open neighbourhoods of $\overline{\Sigma}$,
  say $\{U_n\}_{n\in \N}$, and $Z$, say $\{V_n\}_{n\in \N}$, such that, for all $n$, we have: $V_n \subset U_n$ and
  $U_n\setminus V_n$ is an $\Omega$-lens.  For each $n$ we have the commutative diagram of restriction maps
  $$
  \xymatrix{
    \rsect(U_n; \F) \ar[r]^{u_n} \ar[d]  & \rsect(V_n; \F) \ar[d] \\
   \rsect(N; \F \otimes k_{\overline{\Sigma}}) \ar[r]^u  & \rsect(N; \F \otimes k_Z) \dispdot
   }
  $$
  The cone of $u$ is $\rsect(N; k_\Sigma \otimes \F)$, so we want to prove that $u$ is an isomorphism, that is, $u$
  induces an isomorphism on all cohomology groups.  The cone of $u_n$ is $\RHom(k_{U_n\setminus V_n}, F)$ and the
  previous lemma says that $u_n$ is an isomorphism.  Now the result follows from $H^i(N; \F \otimes
  k_{\overline{\Sigma}}) \simeq \varinjlim_n H^i(U_n; \F)$ and $H^i(N; \F \otimes k_Z) \simeq \varinjlim_n H^i(V_n;
  F)$, for all $i$ (see~\cite[Rem.~2.6.6]{K-S}).
\end{proof}

The previous two lemmas give the following proposition (see~\cite[Ex. 5.7]{K-S}), away from the zero-section.
To deal with the zero-section we use $SS(\F) \cap 0_N = \supp(\F)$.
\begin{prop}\label{Prop-2.3}
  Let $\F_n \in D(N)$, $n\in\N$, be given. Then $SS(\bigoplus_n \F_n) \subset \overline{\bigcup_n SS(\F_n)}$ and
  $SS(\prod_n \F_n) \subset \overline{\bigcup_n SS(\F_n)}$.
\end{prop}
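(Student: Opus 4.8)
The plan is to reduce the statement about a possibly infinite direct sum (resp. product) to the finite-intersection criterion for the singular support furnished by Lemmas~\ref{lem:def-micsupp-bis} and~\ref{lem:def-micsupp-ter}. First I would treat the part away from the zero-section, since on $0_N$ the identity $SS(\G)\cap 0_N=\supp(\G)$ reduces everything to the elementary fact that $\supp(\bigoplus_n\F_n)=\bigcup_n\supp(\F_n)$ and $\supp(\prod_n\F_n)\subset\overline{\bigcup_n\supp(\F_n)}$, which is clear stalkwise.

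For the direct sum, set $\Omega = (T^*N\setminus 0_N)\setminus \overline{\bigcup_n SS(\F_n)}$; this is an open conic subset, and $\Omega^a$ is as well. For each $n$ we have $SS(\F_n)\cap\Omega^a=\emptyset$, so by Lemma~\ref{lem:def-micsupp-ter} we get $\rsect(N; k_\Sigma\otimes\F_n)\simeq 0$ for every $\Omega^a$-lens $\Sigma$. The key point is that the functor $\rsect(N; k_\Sigma\otimes -)$ commutes with arbitrary direct sums: indeed $k_\Sigma\otimes(\bigoplus_n\F_n)\simeq\bigoplus_n(k_\Sigma\otimes\F_n)$ since $\Sigma$ has compact closure and $k_\Sigma$ is a perfect complex with bounded-above, bounded-below cohomology (equivalently, $k_\Sigma$ is constructible with compact support, so tensoring with it commutes with filtered colimits), and $\rsect(N;-)$ on such a manifold — being cohomological dimension finite — commutes with direct sums. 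Hence $\rsect(N; k_\Sigma\otimes\bigoplus_n\F_n)\simeq\bigoplus_n\rsect(N;k_\Sigma\otimes\F_n)\simeq 0$ for all $\Omega^a$-lenses $\Sigma$, and Lemma~\ref{lem:def-micsupp-ter} applied in reverse gives $SS(\bigoplus_n\F_n)\cap\Omega=\emptyset$, i.e. $SS^\bullet(\bigoplus_n\F_n)\subset\overline{\bigcup_n SS(\F_n)}$. Combining with the zero-section case yields the first inclusion.

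For the direct product, I would run the dual argument using Lemma~\ref{lem:def-micsupp-bis}: with $\Omega$ as above, $SS(\F_n)\cap\Omega=\emptyset$ gives $\RHom(k_\Sigma,\F_n)\simeq 0$ for every $\Omega$-lens $\Sigma$; the functor $\RHom(k_\Sigma,-)$ commutes with direct products (being a right adjoint-type construction: $\RHom(k_\Sigma,\prod_n\F_n)\simeq\prod_n\RHom(k_\Sigma,\F_n)$ since $\rsect$ and $\rhom$ both commute with products), so $\RHom(k_\Sigma,\prod_n\F_n)\simeq\prod_n\RHom(k_\Sigma,\F_n)\simeq 0$, and Lemma~\ref{lem:def-micsupp-bis} gives $SS(\prod_n\F_n)\cap\Omega=\emptyset$. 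Together with $\supp(\prod_n\F_n)\subset\overline{\bigcup_n\supp(\F_n)}$ we obtain the second inclusion.

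The main obstacle — really the only non-formal point — is the commutation of $\rsect(N; k_\Sigma\otimes -)$ with infinite direct sums, since this is the reason Lemma~\ref{lem:def-micsupp-ter} was introduced in the first place. One must genuinely use that $\Sigma$ is relatively compact (so that $k_\Sigma$ is ``small'', forcing the tensor product and then the section functor over the relevant neighbourhoods to commute with colimits) rather than an arbitrary locally closed set; for a general locally closed $\Sigma$ the statement can fail, and it is exactly the $\Omega$-lens hypothesis, with $\overline{\Sigma}$ compact, that saves it. Everything else is a bookkeeping combination of the two lemmas with the zero-section identity.
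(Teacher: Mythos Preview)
Your approach is exactly the one the paper intends: apply Lemma~\ref{lem:def-micsupp-ter} for the direct sum (since $\rsect(N; k_\Sigma\otimes -)$ commutes with sums) and Lemma~\ref{lem:def-micsupp-bis} for the direct product (since $\RHom(k_\Sigma,-)$ commutes with products), then handle the zero-section via $SS(\F)\cap 0_N=\supp(\F)$. Two small corrections: in the direct-sum paragraph you wrote $SS(\F_n)\cap\Omega^a=\emptyset$ where you mean $SS(\F_n)\cap\Omega=\emptyset$ (the lemma takes $\Omega$ as input and tests against $\Omega^a$-lenses); and the reason $\rsect(N;k_\Sigma\otimes -)$ commutes with direct sums is not that $N$ has finite cohomological dimension but that $k_\Sigma\otimes\F$ has support in the compact set $\overline{\Sigma}$, so $\rsect(N;-)$ agrees with $\rsect_c(N;-)$ there.
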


\section{Quantization of Hamiltonian isotopies}\label{sec:qhi}
Everything in the next subsection is from \cite{G-K-S} or \cite{K-S}.  Given $\varphi$, a homogeneous Hamiltonian map
from $T^*N\setminus 0_N$ to $T^*N\setminus 0_N$, Guillermou, Kashiwara and Schapira associate to $\varphi$ a Kernel, that is an
element of $D^b(N\times N)$, noted $K_\varphi$ having the following properties\footnote{In fact in \cite{G-K-S} the
  authors associate to a Hamiltonian isotopy $\varphi^s$ a Kernel $\K_\Phi \in D^b(N\times N \times [0,1])$ such that
  ${\K_\Phi}_{\mid N\times N \times \{s\}}=K_{\varphi^s}$. Note that $K_{\varphi^1}$ depends on $\Phi$ not only $\varphi^1$ and our notation is slightly abusive.}.  First $K_\varphi$ is a {\em quantization} of the
homogeneous Lagrangian $$\Lambda_\varphi = \{(z,\varphi(z)) \mid z\in T^*N\setminus 0_{N} \}\subset {\overline {T^*N}}\times T^*N$$ in
the sense that $SS^\bullet(K_\varphi)= \Lambda_\varphi$, where we defined, for a sheaf $\F$ on a manifold $N$,
$$
SS^\bullet(\F)= SS(\F)\cap (T^*N\setminus 0_N)\dispdot
$$
Before we state the second property we recall the {\em composition} of sheaves:
\begin{defn} 
  Let $\F \in D^b(M)$ and $K \in D^b(M\times N)$. We define $$K(\F)=\F \circ K=R{q_N}_!(K\otimes q_M^{-1}\F) \in D^b(N),$$
  where  $q_M, q_N$  are the projections of $M\times N$ on $M$ and $N$ respectively.
\end{defn} 
The second property is then : 
\begin{prop} [See Proposition 7.1.2 (ii) in \cite{K-S} and \cite{G-K-S}, formula (1.12) ]\label{prop:action_noyau}
We have $$SS^\bullet(K_\varphi ( \F))=\varphi (SS^\bullet(\F))$$
\end{prop}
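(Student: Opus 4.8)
The plan is to derive the identity from the general functorial estimates of Kashiwara--Schapira for the microsupport under $f^{-1}$, $\otimes$ and $Rf_!$ (see \cite[\S5.4]{K-S}), combined with the two properties of $K_\varphi$ recalled above: that $SS^\bullet(K_\varphi)=\Lambda_\varphi$, and that $K_\varphi$ is invertible for the composition of kernels (\cite{G-K-S}). Throughout, $q_1,q_2\colon N\times N\to N$ denote the two projections and $k_{\Delta_N}\in D^b(N\times N)$ the constant sheaf on the diagonal, which is a unit for the composition of kernels, so that $\G\circ k_{\Delta_N}\simeq\G$ for $\G\in D^b(N)$ and $k_{\Delta_N}\circ K\simeq K$ for $K\in D^b(N\times N)$; recall also that the composition of kernels is associative, i.e. $(\G\circ K)\circ K'\simeq\G\circ(K\circ K')$.

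First I would prove the inclusion $SS^\bullet(K_\varphi(\F))\subset\varphi(SS^\bullet(\F))$ by estimating $SS$ of $K_\varphi(\F)=R{q_2}_!\bigl(K_\varphi\otimes q_1^{-1}\F\bigr)$ one operation at a time. Since $q_1$ is a submersion, $SS(q_1^{-1}\F)=SS(\F)\times 0_N$, where $0_N$ is the zero section of the cotangent bundle of the second factor. For the tensor product one uses that, away from the zero sections, $SS^\bullet(K_\varphi)=\Lambda_\varphi$ is the graph of a homogeneous symplectomorphism of $T^*N\setminus 0_N$, hence contained in $(T^*N\setminus 0_N)\times(T^*N\setminus 0_N)$; in particular it meets $\bigl(SS(\F)\times 0_N\bigr)^a$ only over the zero section, so the transversal form of the estimate of \cite[\S5.4]{K-S} applies away from the zero sections and bounds $SS^\bullet\bigl(K_\varphi\otimes q_1^{-1}\F\bigr)$ by the fibrewise sum of the two microsupports. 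Finally $N$ is compact, hence $q_2$ is proper and $R{q_2}_!\simeq R{q_2}_*$, and the estimate for direct images bounds $SS^\bullet(K_\varphi(\F))$ by the image of that sum under the correspondence attached to $q_2$. Unwinding the three operations --- keeping track of signs and of the antipodal map hidden in $\overline{T^*N}\times T^*N$, and using the homogeneity of $\varphi$ --- one checks that the set so obtained is exactly $\{\varphi(x;\xi)\mid (x;\xi)\in SS^\bullet(\F)\}=\varphi(SS^\bullet(\F))$; this is the computation of \cite[Prop.~7.1.2]{K-S}.

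For the reverse inclusion I would invoke invertibility. Let $K_{\varphi^{-1}}\in D^b(N\times N)$ be the quantization of $\varphi^{-1}$ provided by \cite{G-K-S}; it satisfies $SS^\bullet(K_{\varphi^{-1}})=\Lambda_{\varphi^{-1}}$ and, being the inverse of $K_\varphi$ for the composition of kernels, $K_\varphi\circ K_{\varphi^{-1}}\simeq k_{\Delta_N}$. By associativity, $K_\varphi(\F)\circ K_{\varphi^{-1}}\simeq\F\circ\bigl(K_\varphi\circ K_{\varphi^{-1}}\bigr)\simeq\F\circ k_{\Delta_N}\simeq\F$. Applying the inclusion just proved, this time to $\varphi^{-1}$ and to the sheaf $K_\varphi(\F)$, we obtain $SS^\bullet(\F)=SS^\bullet\bigl(K_\varphi(\F)\circ K_{\varphi^{-1}}\bigr)\subset\varphi^{-1}\bigl(SS^\bullet(K_\varphi(\F))\bigr)$, and applying $\varphi$ gives $\varphi(SS^\bullet(\F))\subset SS^\bullet(K_\varphi(\F))$. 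Together with the first inclusion, this proves the proposition.

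The only genuine difficulty is a matter of care rather than of idea: one must check that the chain of generic microsupport estimates can be composed so that the final bound is exactly $\varphi(SS^\bullet(\F))$ and not something strictly larger --- i.e. that the intersection entering the tensor-product estimate is contained in the zero section, and that $q_2$ is proper on the relevant support. This is precisely where homogeneity of $\varphi$ (so that $\Lambda_\varphi$ is a closed conic Lagrangian avoiding the zero section) and compactness of $N$ enter, and it is exactly the content of \cite[Prop.~7.1.2(ii)]{K-S} and of formula~(1.12) in \cite{G-K-S}, which one may of course also simply quote.
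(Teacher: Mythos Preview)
The paper does not give its own proof of this proposition; it merely records it as a known result, citing \cite[Prop.~7.1.2(ii)]{K-S} and \cite[(1.12)]{G-K-S}. Your argument is exactly the one found in those references: one inclusion from the functorial microsupport estimates for $q_1^{-1}$, $\otimes$ and $R{q_2}_!$, and the reverse inclusion from the fact that $K_{\varphi^{-1}}$ is a two-sided inverse of $K_\varphi$ for kernel composition. So there is nothing to compare --- your approach is the standard one the paper is quoting.
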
 
Now if $\varphi$ is an exact non-homogeneous symplectic map from $T^*M$ to $T^*N$ (i.e. $PdQ-pdq$ is exact), we
replace $\Lambda_\varphi$ by the homogeneous lift of the graph of $\varphi$, say $\widehat \Lambda_\varphi \subset
\overline{T^*M}\times T^*N \times T^* {\mathbb R} $, given by
$$\widehat \Lambda_\varphi=\left \{(q,-\tau p,Q(q,p),\tau P(q,p), F(q,p), \tau ) \mid \varphi (q,p)=(Q,P), \tau > 0 \right\}$$
where $dF=PdQ-pdq$. 
Note that considering $\widehat \Lambda_\varphi$ as a correspondence in $\overline{T^*M}\times T^*N \times T^*
{\mathbb R} $, if $L$ is an exact Lagrangian  in $T^*M$ and $$\widehat L=\left\{(q,\tau p, f_L(q,p), \tau) \mid (q,p)\in L, \tau > 0\right \}$$
 we have $\widehat \Lambda_\varphi \circ \widehat L=\widehat {\varphi (L)}$.

%

\CorrectionBlue{
  From now on we assume $M=N$ and $\varphi$ is a Hamiltonian map with compact support (which will always be
  the case in this paper), the existence of a quantization $\K_\varphi$ for $\widehat \Lambda_\varphi$ follows
  from the homogeneous case by considering $\widehat \Lambda_\varphi$ as a deformation of $\widehat
  \Lambda_\id$.  However we need to take care of two points.  The first one is that the natural map which
  realizes this deformation, $\widehat\Phi=\id_{T^*N} \times \widehat\varphi$, given by
$$\widehat\Phi(q,p,q',p',t,\tau)=(q,p, Q(q',p'/\tau), \tau P(q',p'/\tau), t+F(q', p'/\tau), \tau)$$
is not defined everywhere outside the zero section (for $p'\not=0$ and $\tau=0$).  The second point is that it
is not enough to deform $\widehat \Lambda_\id$ because the singular support of $k_{\Delta_N \times
  [0,+\infty[}$ is slightly bigger than desired:
$$
SS^\bullet(k_{\Delta_N \times [0,+\infty[}) = \widehat \Lambda_\id \cup \{(q,p,q,-p,t,0) \mid t\geq 0\}\dispdot
$$
(only its intersection with $\{\tau >0\}$ is equal to $\widehat \Lambda_{\id}$).  Taking these points into
account, we can find a homogeneous map $\psi$ acting on $T^*(N^2\times\Real) \setminus 0_{N^2\times\Real}$
such that $\psi$ is the identity map near $\{\tau=0\}$ and $\psi(\widehat \Lambda_\id) = \widehat
\Lambda_\varphi$.  We set
}
$$\K_\varphi = K_\psi (k_{\Delta_N \times [0,+\infty[})$$ and we find
$$
SS^\bullet(\K_\varphi) = \widehat \Lambda_\varphi \cup \{(q,p,q,-p,t,0) \mid t\geq 0\}\dispdot
$$
If we consider the whole isotopy $\varphi_s$ and choose $t_0$ big enough so that $\widehat \Lambda_{\varphi_s}$ is
contained in $T^*(N^2 \times \mathopen]-t_0,t_0[)$ for all $s\in [0,1]$, then we see that $SS^\bullet(\K_{\varphi_s}) \cap
T^*U$ is independent of $s$, where $U = N^2\times (\Real\setminus [-t_0,t_0])$.  By~\cite[Prop.~5.4.5]{K-S} we deduce
that
$$
\K_\varphi|_{N^2 \times ]-\infty,-t_0[} \simeq 0, \qquad
\K_\varphi|_{N^2 \times ]t_0,+\infty[} \simeq k_{\Delta_N \times [0,+\infty[}|_{N^2 \times ]t_0,+\infty[} \dispdot
$$
In other words $\K_\varphi=\K_\id=k_{\Delta_N\times [0,+\infty[}$ outside of a compact set. 

\begin{rem}
  We could obtain the existence of $\K_\varphi$ when $M=N$ and $\varphi$ has compact support without assuming that
  $\varphi$ is Hamiltonian by adapting the result of \cite{Guillermou, Viterbo-Sheaves} (see
  Theorem~\ref{Thm-Quantization}).  In {\em loc. cit.}  the existence of a quantization is proved for a Legendrian of
  $J^1N$ which is the lift of a compact exact Lagrangian of $T^*N$. Here the graph of $\varphi$ is not compact but
  coincides with the diagonal $T^*_{\Delta_N}N^2$ outside a compact set (if we choose $F=0$ at infinity) so it should
  not be too difficult to extend the result in our situation.
\end{rem}

Since we use $\K_\varphi$, defined over $N^2 \times\Real$, instead of sheaf defined over $(N\times\Real)^2$, we
change slightly the composition functor as follows, which becomes a mixture of $\circ$ and the convolution $\cstar$
(see Section \ref{sec:specinv} for the definition)
\begin{defn}\label{def:compo-MNR}
    Let $\F \in D^b(M\times\Real)$ and $\K \in D^b(M\times N\times\Real)$. We define
$$\K^{\convstar}(\F)=\F \convstar \K= Rs_! R{q_{2}}_{!}(q_{1,2}^{-1}\K \otimes q_1^{-1}\F)\in D^b(N\times {\mathbb R} ),$$
where the maps
\begin{gather*}
q_{1,2} \colon M \times N \times {\mathbb R}^2 \to M\times N \times {\mathbb R}, \quad
q_1 \colon M\times N \times {\mathbb R}^2 \to M\times {\mathbb R}, \\
q_2 \colon M\times N \times {\mathbb R}^2 \to N\times {\mathbb R}^2, \qquad
s \colon N\times {\mathbb R}^2 \to N\times {\mathbb R}  
\end{gather*}
are given by $q_{1,2}(x,y,t_1,t_2) = (x,y,t_1)$, $q_1(x,y,t_1,t_2) = (x,t_2)$,
$q_2(x,y,t_1,t_2) = (y,t_1,t_2)$,
$s(y,t_1,t_2) = (y,t_1+t_2)$.
\end{defn}
Now we obtain a non homogeneous version of Proposition~\ref{prop:action_noyau}.
\begin{prop}
  For a compactly supported Hamiltonian map $\varphi$ from $T^*N$ to itself and any $\F\in D^b(N\times\Real)$ with
  $SS(\F) \subset \{\tau \geq 0\}$, we have $SS^\bullet(\K_\varphi^\convstar (\F)) = \widehat\varphi( SS^\bullet(\F) )$.
\end{prop}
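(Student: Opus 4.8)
The plan is to reduce the statement to the microlocal estimates of \cite{K-S} for $Rf_!$, $\otimes$ and $f^{-1}$, applied to the defining formula of $\convstar$, together with the description $SS^\bullet(\K_\varphi)=\widehat \Lambda_\varphi\cup\{(q,p,q,-p,t,0)\mid t\geq 0\}$. Unwinding $\K_\varphi^\convstar(\F)=Rs_!R{q_2}_!(q_{1,2}^{-1}\K_\varphi\otimes q_1^{-1}\F)$ and applying the estimate for $\otimes$ followed by the one for proper direct images, one reads off an inclusion. The $\widehat \Lambda_\varphi$-part of $SS^\bullet(\K_\varphi)$ contributes exactly $\widehat\varphi(SS^\bullet(\F)\cap\{\tau>0\})$: the matching of the $\Real$-covectors forced by $s(x,y,t_1,t_2)=(x,y,t_1+t_2)$ reproduces the shift of the extra variable by the generating function $F$ built into $\widehat\varphi$. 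The second, ``$\tau=0$'', piece of $SS^\bullet(\K_\varphi)$ can only output covectors with $\tau=0$, so that
\[ SS^\bullet(\K_\varphi^\convstar\F)\subseteq \widehat\varphi\bigl(SS^\bullet(\F)\cap\{\tau>0\}\bigr)\cup\{(q,t;\xi,0)\mid \exists\, t'\leq t,\ (q,t';\xi,0)\in SS^\bullet(\F)\}. \]
The hypothesis $SS(\F)\subset\{\tau\geq 0\}$ is used precisely to justify these estimates: $\K_\varphi$ is not compactly supported (it coincides with $k_{\Delta_N\times[0,+\infty[}$ outside a compact set), but the cone condition on $SS(\F)$ forces the $\Real$-covectors in the relevant fibre products to be nonnegative, which is enough to make the supports proper over $N\times\Real$.

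To remove the $\{\tau=0\}$ term I would prove the \emph{lemma}: if $SS(\G)\subset\{\tau\geq 0\}$, then $SS(\G)\cap\{\tau=0\}\setminus 0_{N\times\Real}$ is invariant under $(q,t;\xi,0)\mapsto (q,t+s;\xi,0)$ for all $s\geq 0$. I expect to get this one-sided translation invariance from the non-characteristic deformation lemma \cite[Prop.~2.7.2]{K-S}, sliding a defining function in the $t$-direction, the condition $SS(\G)\subset\{\tau\geq 0\}$ being exactly the non-characteristicity required. Granting the lemma, the second set above equals $SS^\bullet(\F)\cap\{\tau=0\}$. Since $\varphi$ is compactly supported, its homogenized Hamiltonian vanishes on $\{\tau=0\}$ away from the zero section, so $\widehat\varphi$ extends by the identity over $\{\tau=0\}$ and $\widehat\varphi(SS^\bullet(\F))=\widehat\varphi(SS^\bullet(\F)\cap\{\tau>0\})\cup(SS^\bullet(\F)\cap\{\tau=0\})$; hence $SS^\bullet(\K_\varphi^\convstar\F)\subseteq\widehat\varphi(SS^\bullet(\F))$.

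For the reverse inclusion I would run a sandwich argument. Note first that $\K_\varphi^\convstar\F$ again has $SS\subset\{\tau\geq 0\}$ (all $\Real$-covectors produced by the estimate are nonnegative), and $\varphi^{-1}$ is again a compactly supported Hamiltonian map, so the inclusion just established applies to it. The kernels compose correctly: $\K_{\varphi^{-1}}$ composed with $\K_\varphi$ (in the kernel version of $\convstar$) is isomorphic to $\K_\id=k_{\Delta_N\times[0,+\infty[}$ — this follows from $\K_\varphi=K_\psi(k_{\Delta_N\times[0,+\infty[})$ and the composition law for quantized homogeneous isotopies \cite{G-K-S} — while $\K_\id^\convstar(-)$, the Tamarkin-type projector, is the identity on the essential image of $\convstar$-composition with $\K_\varphi$. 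Therefore $\K_{\varphi^{-1}}^\convstar(\K_\varphi^\convstar\F)\simeq\F$ (restricting to that image, or checking that passing from $\F$ to $\K_\id^\convstar\F$ leaves $SS^\bullet$ unchanged), so applying the first inclusion to $\varphi^{-1}$ gives $SS^\bullet(\F)\subseteq\widehat\varphi^{\,-1}(SS^\bullet(\K_\varphi^\convstar\F))$, i.e. $\widehat\varphi(SS^\bullet(\F))\subseteq SS^\bullet(\K_\varphi^\convstar\F)$. Combined with the previous step, this yields the equality.

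The main obstacle is the zero-covector locus $\{\tau=0\}$: verifying the properness and non-characteristicity hypotheses of the composition estimates for the non-compactly-supported kernel $\K_\varphi$ — that is, making precise how the condition $SS(\F)\subset\{\tau\geq 0\}$ substitutes for compact support — and proving the one-sided translation-invariance lemma on which the treatment of $\{\tau=0\}$ rests. The composition identity $\K_{\varphi^{-1}}\circ\K_\varphi\simeq\K_\id$ and the behaviour of the Tamarkin projector on the relevant subcategory are routine but still need to be set up carefully.
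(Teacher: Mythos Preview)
Your overall strategy—microsupport estimates from \cite{K-S} for one inclusion and a sandwich via $\K_{\varphi^{-1}}$ for the other—is exactly the shape of the argument in \cite{G-K-S} for the homogeneous case, and the paper does not write out a separate proof: the statement is presented as the non-homogeneous analogue of the preceding proposition, to be read off from the construction $\K_\varphi=K_\psi(k_{\Delta_N\times[0,+\infty[})$ with $\psi$ a homogeneous isotopy of $T^*(N^2\times\Real)\setminus 0$.

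There is, however, a genuine gap in your treatment of $\{\tau=0\}$. The lemma you propose—that for $SS(\G)\subset\{\tau\geq 0\}$ the set $SS^\bullet(\G)\cap\{\tau=0\}$ is invariant under forward $t$-translation—is \emph{false}. Take $N=\Real$ and $\G=k_Z$ with $Z=\{x\geq 0\}\cup\{t\geq 1\}\subset\Real^2$. The complement is the open quadrant $]-\infty,0[\,\times\,]-\infty,1[$, so $SS(\G)\subset\{\xi\geq 0,\;\tau\geq 0\}$; then $(0,\tfrac12;1,0)\in SS^\bullet(\G)$, but $(0,2)$ lies in the interior of $Z$, hence $(0,2;1,0)\notin SS^\bullet(\G)$. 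Worse, the same $\G$ shows that the equality you are aiming for already fails on $\{\tau=0\}$ for $\varphi=\id$: the operation $\K_\id^\convstar(-)$ is the Tamarkin-type projector $(-)\cstar k_{N\times[0,+\infty[}$, which is \emph{not} the identity on all of $D_{\tau\geq 0}(N\times\Real)$, and a direct computation gives $\K_\id^\convstar(k_Z)\simeq k_{]-\infty,0[\,\times\,[1,+\infty[}$, whose $SS^\bullet\cap\{\tau=0\}$ sits over $\{x=0,\;t\geq 1\}$ rather than over $\{x=0,\;t\leq 1\}$.

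The resolution is that $\widehat\varphi$ is only defined on $T^*(N\times\Real)\setminus\{\tau=0\}$ (the paper says so explicitly just before introducing $\K_\varphi$), so the equality is to be read on $\{\tau\neq 0\}$—and this is all that is ever used: for $\F=\F_L$ one has $SS^\bullet(\F_L)\subset\{\tau>0\}$, and in the proof of the main theorem only $SS(\K_\varphi^\convstar\F)\cap\{\tau>0\}$ enters. Restricted to $\{\tau>0\}$ your argument needs no auxiliary lemma: only the $\widehat\Lambda_\varphi$ piece of $SS^\bullet(\K_\varphi)$ contributes, the composition estimate gives the inclusion, and the sandwich with $\K_{\varphi^{-1}}$ gives equality.
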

In particular, for $L\in \LL(T^*N)$ we have $SS^\bullet(\K_\varphi ( \F_L))= SS^\bullet(\F_{\varphi (L)})$
and, by uniqueness of the quantization (see Theorem \ref{Thm-Quantization}, (\ref{4})),  $\K_\varphi^\convstar (\F_L) \simeq \F_{\varphi (L)}$.

\section{Spectral invariants for sheaves and Lagrangians}\label{sec:specinv}
For $M$ a symplectic manifold, (here we only need $M=T^*N$), if $\Lambda(M)$ is the bundle of Lagrangians subspaces of the tangent bundle to $M$, with fibre the Lagrangian Grassmannian $\Lambda (T_zM)\simeq \Lambda (n)$, we denote by $\widetilde \Lambda (M)$ the bundle induced by the universal cover $\widetilde \Lambda (n) \longrightarrow \Lambda(n)$. 

When using coefficients others than $\mathbb Z/2\mathbb Z$ we assume we have a lifting $\widetilde G_L$ of the Gauss map $G_L: L \longrightarrow \Lambda(T^*N)$ given by $x\mapsto T_xL$ to $\widetilde\Lambda(T^*N)$. This is called a grading. Given a graded $L$, the canonical deck transformation of the covering  induces a new grading and we denote it as $L[1]$ (or $\widetilde L[1]$), and its $k$-th iteration as $L[k]$ (or $\widetilde L[k]$). The grading yields an absolute grading for the Floer homology\footnote{Without this extra piece of information, the Floer cohomology $FH^{k}(L_{1},L_{2})$ is generally defined only up to a shift in grading. However, when $L_{2}=\varphi_{H}^{1}(L_{1})$, there is an absolute grading, {\it a priori} depending on the choice of $H$. However in our situation we do not assume $L_{1} $  is  Hamiltonianly isotopic to $L_{2}$, so the grading is required to have an absolute grading of the Floer cohomology.} and hence for the complex of sheaves in the Theorem stated below. We refer to \cite{Seidel-graded} and \cite{Viterbo-gammas} for more details on this, but point out that we shall never mention explicitly the grading. 

An  exact Lagrangian in $T^*N$ is a pair $(L,f_L)$ such that $df_L=\lambda_{\mid L}$ where $\lambda=pdq$ is the Liouville form. 
For an exact Lagrangian in $T^{*}N$,  a grading always exists since the obstruction to its existence is given by the Maslov class, and for exact Lagrangians in $T^*N$ the Maslov class vanishes, as was proved by  Kragh and Abouzaid (see \cite{Kragh3} and also the sheaf-theoretic proof by \cite{Guillermou}).
When $f_L$ is implicit we only write $L$, for example  $0_N$ means $(0_N, 0)$. 
A Lagrangian brane is a triple  $\widetilde L=(L,f_L, \widetilde G_L)$, where $L$ is a compact Lagrangian. For  $c$ a real constant, we write $T_c\widetilde L=(L, f_L+c, \widetilde G_L)$. We also use the notation $\widetilde L +c$ or $L+c$ if the grading is irrelevant (as it will be most of the time). 
\begin{defn} We set $\LL (T^{*}N)$ to be the set of Lagrangian branes in $T^{*}N$. 
\end{defn} 
Let $L$ be  a Lagrangian brane in $T^*N$ and $$\widehat L=\left\{(q,\tau p, f_L(q,p), \tau) \mid (q,p)\in L, \tau > 0\right \}$$
the homogenized Lagrangian in $T^*(N\times {\mathbb R})$.

Similarly for Hamiltonian maps in an exact manifold $(M, \omega= d\lambda)$ we look for pairs $(\varphi, F)$ such that $\varphi^*\lambda-\lambda = dF$ on $M$.

To state the next result we recall the two adjoint functors, $\cstar$ and $\homst$, on the category
$D(N\times\Real)$ introduced by Tamarkin in~\cite{Tamarkin}. First we set $s, q_1, q_2: N\times ({\mathbb R})^2
\longrightarrow N \times {\mathbb R}$ given by $s(x,t_1,t_2)=(x,t_1+t_2)$, $q_1(x,t_1,t_2)=(x,t_1)$,
$q_2(x,t_1,t_2)=(x,t_2)$
and we define
\begin{align*}
  \F \cstar \G &= Rs_! (q_1^{-1}\F \otimes q_2^{-1}\G) , \\
   \homst(\F, \G) &= Rq_{1*} \rhom(q_2^{-1}\F, s^!\G) \dispdot 
\end{align*}
Then $\homst$ is the adjoint of $\cstar$ in the sense that
$$\Mor_{D(N\times {\mathbb R}) } (\F, \homst(\G, \H))=\Mor_{D(N\times {\mathbb R})} (\F \cstar \G, \H) \dispdot$$

We recall some properties of $\homst$.  We denote by $D_{\tau\geq 0}(N\times\Real)$ the subcategory of
$D(N\times\Real)$ of objects with singular support contained\footnote{This category  contains the ``Tamarkin category''
which is the left orthogonal of $D_{\tau\leq 0}(N\times\Real)$.  The sheaves $\F_L$ belongs to the Tamarkin category.
The category $D_{\tau\geq 0}(N\times\Real)$ is stable by $\cstar$ and $\homst$.  The Tamarkin category is
stable by $\cstar$ but not $\homst$: typically when $N$ is a point we have $\homst(k_{[a,\infty[},
k_{[b,\infty[}) \simeq k_{]-\infty, b-a[}[1]$.} in $\{\tau\geq 0\}$.  A base change formula gives, for any $c\in \Real$, and $\F, \G \in D^{b}(N\times {\mathbb R})$
\begin{equation}\label{eq:homcstar-Hom}
\rsect_{N \times \{-c\}}(N \times\Real; \homst(\F, \G)) \simeq \RHom(\F, T_{c*}(\G)),
\end{equation}
with $T_c\colon N\times\Real \to N\times\Real$, $(x,t) \mapsto (x,t+c)$. 
Note that $\widehat{T_cL}=T_c \widehat L$.  

For $\H$ with singular support in $\{\tau\geq 0\}$, we have a morphism
$$
 \rsect_{N \times \{b\}}(N \times\Real; \H) \to \rsect_{N \times [a,b]}(N \times\Real; \H)
\isofrom \rsect_{N \times \{a\}}(N \times\Real; \H) 
$$
for any $a \leq b$.  We remark that the cone of this morphism is $\rsect_{N \times [a,b[}(N \times\Real; \H)$.
Using~\eqref{eq:homcstar-Hom} we deduce a morphism, say $u$, from $\RHom(\F, T_{c*}(\G))$ to $\RHom(\F, T_{d*}(\G))$
for any $c\leq d$. Taking $\F=\G$, $c=0$ the image of $\id_\G$ gives
\begin{equation}\label{eq:tau-version1}
   \tau_{0,d}\colon \G \to T_{d*}(\G) \qquad \text{for any $d\geq 0$}
\end{equation}
(we come back to this in~\S\ref{sec:Gtopology}).  Then $u$ is nothing but the composition with
$T_{c*}(\tau_{0,d-c})$.

According to \cite{Guillermou} and  \cite{Viterbo-Sheaves} we have
\begin{thm} \label{Thm-Quantization}
To each $L\in \LL(T^*N)$  we can associate $\F_L \in D^b(N\times {\mathbb R} )$ such that 
\begin{enumerate} 
\item\label{1} $SS^\bullet(\F_L)=\widehat L$. 
\item\label{2} $\F_L$ is simple (cf. \cite[Def.~7.5.4]{K-S}), $\F_L=0$ near $N\times
  \{-\infty\}$ and $\F_L=k_{N\times\Real}$ near $N\times \{+\infty\}$.
\item\label{3} We have an isomorphism 
$$FH^\bullet(L_0,L_1;a,b)=H^*_{N\times [a,b[} \left (N\times \Real ; \hHom^{\cstar}(\F_{L_0},\F_{L_1})\right)\dispdot $$
\item \label{4}  $\F_L$ is unique satisfying properties (\ref{1}) and (\ref{2}). 
\item \label{5} There is a natural product map  $$
 \homst(\F_{L_1},\F_{L_2}) \otimes \homst(\F_{L_2},\F_{L_3}) \longrightarrow  \homst(\F_{L_1},\F_{L_3})
$$
inducing in cohomology a map
\begin{gather*}
H^*_{N\times [\lambda , +\infty [}(N\times \Real ; \homst(\F_{L_1},\F_{L_2})) \otimes H^*_{N\times [\mu , +\infty [}(N\times \Real ;  \homst(\F_{L_2},\F_{L_3}))
\\  \Big\downarrow \cup_{\cstar} \\ H^*_{N\times [\lambda + \mu , +\infty [}(N\times \Real ; \homst(\F_{L_1},\F_{L_3}))
\end{gather*}
that coincides through the above identifications with the triangle product in Floer cohomology.\end{enumerate} 
\end{thm}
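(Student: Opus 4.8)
The plan is to transport the sheaf attached to the zero section along a quantized Hamiltonian isotopy and read the five properties off from the behaviour of the microsupport under this transport, together with a uniqueness statement that makes the construction well defined. Concretely, set $\F_{0_N}=k_{N\times[0,+\infty[}$; given $L\in\mathcal{L}(T^*N)$, choose a compactly supported Hamiltonian isotopy $(\varphi_s)_{s\in[0,1]}$ with $\varphi_0=\id$ and $\varphi_1(0_N)=L$, build the kernel $\K_{\varphi_1}$ as in Section~\ref{sec:qhi}, and put $\F_L:=\K_{\varphi_1}^{\convstar}(\F_{0_N})$. A priori this depends on the chosen isotopy, but the uniqueness statement~(\ref{4}) removes that dependence, so I would establish~(\ref{4}) before declaring the construction legitimate.

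For~(\ref{1}) and~(\ref{2}): away from the zero section the non-homogeneous action formula (the proposition preceding the theorem) gives $SS^\bullet(\F_L)=\widehat\varphi_1\bigl(SS^\bullet(\F_{0_N})\bigr)$; since $SS^\bullet(\F_{0_N})$ is the $\{\tau>0\}$-part of $\widehat{0_N}$, the identity $\widehat\Lambda_{\varphi_1}\circ\widehat{0_N}=\widehat{\varphi_1(0_N)}$ (in which the datum $dF=PdQ-pdq$ is exactly what matches $f_L$) identifies it with the $\{\tau>0\}$-part of $\widehat L$, the zero-section part of $SS(\F_L)$ being $\supp(\F_L)$ as always. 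Simplicity holds because $\K_{\varphi_1}$ is simple, being a deformation of the simple kernel $k_{\Delta_N\times[0,+\infty[}$, and composition of simple kernels along a transverse composition preserves simplicity; the normalization near $N\times\{\pm\infty\}$ follows from the fact, recalled in Section~\ref{sec:qhi}, that $\K_{\varphi_1}=k_{\Delta_N\times[0,+\infty[}$ outside a compact set, together with the corresponding property of $\F_{0_N}$.

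For~(\ref{4}): composing with $\K_{\varphi_1^{-1}}^{\convstar}$ is inverse to composing with $\K_{\varphi_1}^{\convstar}$ on the relevant subcategory and preserves~(\ref{1})--(\ref{2}), so one may reduce to $L=0_N$ and must show that any $\F$ satisfying~(\ref{1}) and~(\ref{2}) is isomorphic to $k_{N\times[0,+\infty[}$. On $N\times\,]-\infty,0[$ and $N\times\,]0,+\infty[$ the microsupport of $\F$ lies in the zero section, so $\F$ is locally constant there; connectedness of $N$ and the vanishing near $N\times\{-\infty\}$ force $\F|_{N\times]-\infty,0[}=0$, and the value $k$ near $N\times\{+\infty\}$ forces $\F|_{N\times]0,+\infty[}=k_{N\times]0,+\infty[}$. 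The excision triangle along $\{t=0\}$ then exhibits $\F$ as an extension of $k_{N\times]0,+\infty[}$ by a complex supported on $N\times\{0\}$, and simplicity at the smooth points of $\widehat{0_N}$ (rank-one microlocal stalk) pins this extension down to $\F\simeq k_{N\times[0,+\infty[}$, the grading being fixed by the normalization. This also gives the independence of the isotopy in the construction.

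Properties~(\ref{3}) and~(\ref{5}) are the crux, and the hardest point. Both sides of~(\ref{3}) carry an isotopy invariance: on the sheaf side the kernels $\K_\psi^{\convstar}$ induce canonical isomorphisms $\hHom^{\cstar}(\F_{L_0},\F_{L_1})\simeq\hHom^{\cstar}(\F_{\psi L_0},\F_{\psi L_1})$ compatible with the morphisms~\eqref{eq:tau-version1}, and on the Floer side continuation maps induce canonical isomorphisms $FH^\bullet(L_0,L_1;a,b)\simeq FH^\bullet(\psi L_0,\psi L_1;a,b)$. I would first prove that these two invariances agree; matching the sheaf-theoretic continuation with the Floer continuation, \emph{including the action, resp.\ $\tau$-filtration}, is the technical heart of the proof. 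Granting that, one reduces (after stabilizing $N$ to $N\times\Real^k$ if needed) to the model case in which $L$ is the graph of $df$ with $f$ coming from a generating function quadratic at infinity: there $\F_L$ is the constant sheaf on the epigraph of $f$, hence $\hHom^{\cstar}(\F_{0_N},\F_L)$ is computed from Tamarkin's formula --- the relative version of $\homst(k_{[a,+\infty[},k_{[b,+\infty[})\simeq k_{]-\infty,b-a[}[1]$ --- and $H^*_{N\times[a,b[}(N\times\Real;\hHom^{\cstar}(\F_{0_N},\F_L))$ becomes the relative cohomology of sublevel sets of $f$, which is precisely $FH^\bullet(0_N,L;a,b)$. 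For~(\ref{5}) the product is the one adjoint to composition of kernels; it is associative and unital by construction, and in the generating-function model it becomes the cup product of the relative cohomologies of sublevel sets, so it matches the triangle product under the identification of~(\ref{3}). The anticipated obstacle is not these model computations but the compatibility of the two continuation structures that licenses the reduction to the model.
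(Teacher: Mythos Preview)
The paper does not prove this theorem: the sentence immediately preceding the statement reads ``According to \cite{Guillermou} and \cite{Viterbo-Sheaves} we have'', and no proof is given. So there is no in-paper argument to compare your proposal against.

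That said, your plan is a reasonable route to (\ref{1}), (\ref{2}), (\ref{4}) for the class $\mathcal L(T^*N)$ the paper actually uses, and indeed the paper exploits exactly your formula $\K_\varphi^{\convstar}(\F_L)\simeq\F_{\varphi(L)}$ just after the theorem. It is worth noting, though, that the cited sources proceed differently. Guillermou's construction does not presuppose a Hamiltonian isotopy to $0_N$: it produces $\F_L$ for an arbitrary compact exact Lagrangian by a purely microlocal argument (the doubling trick), and uniqueness is obtained there by a microlocal Morse-type argument rather than by reducing to the zero section. The Floer comparison (\ref{3}) and the product (\ref{5}) are the content of \cite{Viterbo-Sheaves}, where $\F_L$ is built directly from Floer data, so that the filtered isomorphism and the triangle product are essentially definitions rather than a posteriori comparisons. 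Your alternative---construct $\F_L$ via kernels, then compare to Floer theory through a generating-function model after invoking isotopy invariance on both sides---is legitimate, but you have correctly located the genuine difficulty: proving that the sheaf-theoretic continuation (via $\K_\psi^{\convstar}$) and the Floer continuation agree \emph{with their filtrations}. Your outline acknowledges this step without supplying it; that is precisely the work done in the cited references.
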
 
\begin{rems}\leavevmode
\begin{enumerate} 
\item  
Note that we have  $$\F_{T_cL}={T_c}_*(\F_L)$$
\item In case $L$ has a Generating Function Quadratic at infinity, $S(x,\xi)$ defined on the vector bundle $E\overset{\pi} \longrightarrow  N$,  we can take for $\F_L$ the complex $R(\pi\times \id_{\mathbb R} )_*(k_{U_S})$ where $U_S= \{ (x,\xi,t) \mid S(x,\xi) \leq t \}$.
\end{enumerate}  
\end{rems}

\subsection{Definition of spectral invariants}

Now we can give a sheafy definition of the spectral invariants introduced in~\cite{Viterbo-STAGGF}.  The next
formulation appears in~\cite{Vichery} (see also~\cite{Viterbo-gammas}).

In fact the invariants can be defined for objects $\F$ of $D_{\tau\geq 0}(N\times\Real)$ satisfying:
\begin{equation}\label{eq:hyp_faisc_infini}
  \F|_{N\times ]-\infty, -t_0[} \simeq 0,  \quad
  \F|_{N\times ]t_0,+\infty[} \simeq k_{N\times\Real}|_{N\times ]t_0,+\infty[}
  \quad \text{for some $t_0$.}
\end{equation}
We have seen that $\F_L$ satisfies~\eqref{eq:hyp_faisc_infini} for any $L\in \LL(T^*N)$.  For $\F_1$, $\F_2
\in D_{\tau\geq 0}(N\times\Real)$ satisfying~\eqref{eq:hyp_faisc_infini}, $\homst(\F_1,\F_2)$ is
$k_{N\times\Real}[1]$ near $N\times \{-\infty\}$ and $0$ near $N\times \{+\infty\}$.  Using \ref{eq:homcstar-Hom} and
$H^0_{N\times\{c\}}(N\times\Real; k_{N\times\Real}[1]) \simeq k$, we deduce a natural morphism $\F_1 \to
T_{c*}(\F_2)$, for any $c$ big enough, which restricts to $\id \colon k_{N\times\Real} \to k_{N\times\Real}$ near
$N\times \{+\infty\}$. More generally $H^*_{N\times\{c\}}(N\times\Real; k_{N\times\Real}[1]) \simeq
H^*(N;k_N)$ and any class $\alpha \in H^d(N;k_N)$ yields a morphism
$$
u(\alpha, \F_1,\F_2,c) \colon \F_1 \to T_{c*}(\F_2) [d],
$$
for $c$ large enough, which restricts to $\alpha \colon k_{N\times\Real} \to k_{N\times\Real}[d]$ near $N\times
\{+\infty\}$ (using $\Mor(k_N, k_N[d]) \simeq H^d(N;k_N)$). The cup product corresponds to the composition, and for $c$ (and $t_0$) large enough this is summarized by  the following diagram
$$\xymatrix{
H^*(N,k)\simeq R\Gamma(N,k_N)\ar[d]^{\simeq}\ar[r]^-{\simeq}& \RHom(\F_1,{T_c}_*\F_2) \ar[d]^-{\simeq} \\ \RHom(k_{N\times ]t_0, +\infty[}, k_{N\times ]t_0, +\infty[})\ar[r]^-{\simeq} &\RHom\left ({\F_1}_{\mid N\times ]t_0,+\infty[}, {T_c}_*{\F_2}_{\mid N\times ]t_0,+\infty[}\right )&
}
$$
\begin{defn}\label{def:spectral-inv}
  For $\F_1$, $\F_2 \in D_{\tau\geq 0}(N\times\Real)$ satisfying~\eqref{eq:hyp_faisc_infini} and $\alpha \in
  H^d(N;k_N)$ we set
  $$
  c(\alpha, \F_1,\F_2)
  = -\inf \{c ;\; \exists u\colon \F_1 \to T_{c*}(\F_2) [d],\; u|_{N \times ]t_0,+\infty[} = \alpha, \; \text{for $t_0\gg0$}  \}\dispdot
  $$
  (Here $t_0$ is big enough so that both $\F_1$ and $T_{c*}(\F_2)$ satisfy~\eqref{eq:hyp_faisc_infini}).  We also set
  $c_-(\F_1,\F_2) = c(1,\F_1,\F_2)$ and if $N$ is oriented with fundamental class
$\mu_N$, $c_+(\F_1,\F_2) = c(\mu_N, \F_1,\F_2)$. Finally we set  $\gamma (\F_1,\F_2) = c_+(\F_1,\F_2) -
  c_-(\F_1, \F_2)$.

  For $L_1, L_2 \in \LL(T^*N)$ we set $c(\alpha, L_1,L_2) = c(\alpha, \F_{L_1},\F_{L_2})$ and we define
  $c_\pm(L_1,L_2)$, $\gamma(L_1,L_2)$ in the same way. Finally we set $c_\pm(L) = c_\pm(0_N,L)$.
\end{defn}
It is not difficult to see that $$c_-(\F_1,\F_2) = \inf \{c(\alpha,\F_1,\F_2);\; \alpha \in H^*(N;k_N)\}$$ and  $$c_+(\F_1,\F_2) = \sup
\{c(\alpha,\F_1,\F_2);\; \alpha \in H^*(N;k_N)\}$$
Indeed, a class $\alpha$ induces by multiplication $v \colon \F_{2} \to \F_2 [d],\; v|_{N \times ]t_0,+\infty[} =
\alpha$; now if we have $u \colon \F_{1} \to T_{c*}(\F_2) [0],\; u|_{N \times ]t_0,+\infty[} = 1$, then $v\circ
u\colon \F_{1} \to T_{c*}(\F_2) [d]$ satisfies $v\circ u|_{N \times ]t_0,+\infty[} = \alpha$ so $c(1,\F_{1},\F_{2}) \leq c(\alpha,
\F_{1},\F_{2})$.
 
Using the construction of the sheaf $\F_{L}$ by using Floer cohomology as in \cite{Viterbo-Sheaves} it is easy to see that 
the present definition of $c(\alpha, \F_{L_{1}}, \F_{L_{2}})$ coincides with the definition of $c(\alpha, L_{1}, L_{2})$ using Floer cohomology.  
In particular, we have that $c_+(\F_{L_1},\F_{L_2}) = -c_-(\F_{L_2},\F_{L_1})$ and this can be extended to the general case of $\F_1,\F_2$ using \cite{Viterbo-Sheaves}. 

We have given the definition using the right hand side of~\eqref{eq:homcstar-Hom}.  We can also rewrite it with the
left hand side as follows. Let us set $\H = Rt_* \homst(\F, \G))$, where $t \colon N\times \Real \to \Real$
is the projection.  Then $\H$ contains all the information about the spectral invariants. More
precisely~\eqref{eq:homcstar-Hom} is isomorphic to $\rsect_{\{-c\}}(\H)$ and $\H$ has its singular support in $\{\tau\geq
0\}$ so we have the morphisms
\begin{equation}\label{eq:invspec-variante}
\rsect(N;k_N) \simeq \rsect_{\{-t_0\}}(\Real; \H) \isoto \rsect_{[-t_0,-c]}(\Real; \H) \from \rsect_{\{-c\}}(\Real; \H)
\end{equation}
for $t_0\gg0$ and any $c\leq t_0$. To find the spectral invariants we look for the infimum on the $c$ such that a
given class is in the image of the composition of this morphisms. 

\begin{rem} We can see that the infimum in Definition \ref{def:spectral-inv} is actually a
minimum, at least when $\F$ is of the form $\F_L$. Indeed, this follows either by identifying the spectral invariants of the Definition to those obtained by using Floer cohomology, or by appealing to the theory of persistence modules and barcodes (see Appendix \ref{Appendix-barcodes-Floer} Proposition \ref{Prop-A1}).
\end{rem} 
  We notice the following immediate consequence of the definitions.
\begin{prop}\label{Prop-5.4}
  There exists a morphism $u: \F_{L_1} \longrightarrow \F_{L_2}$ such that $u=\Id$ at $+\infty$ if and only if
  $c_-(L_1,L_2)\geq 0$. In particular there is a morphism $k_{N \times [0, +\infty[} \longrightarrow \F$ if and only
  if $c_-(L)\geq 0$.
\end{prop}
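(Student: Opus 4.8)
The plan is to unwind Definition~\ref{def:spectral-inv} and feed it the structure maps $\tau_{0,d}$ of~\eqref{eq:tau-version1}; the statement is essentially formal once one also knows, as recalled in the Remark preceding it, that for sheaves of the form $\F_L$ the infimum defining the spectral invariant is attained.

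First I will introduce the set $S = S(L_1,L_2)$ of those $c \in \Real$ for which there exists a morphism $u \colon \F_{L_1} \to T_{c*}(\F_{L_2})$ with $u|_{N\times]t_0,+\infty[} = \id$ for $t_0 \gg 0$, so that $S$ is nonempty (it contains all large $c$, by the natural morphisms constructed before~\eqref{eq:tau-version1}) and, by definition, $c_-(L_1,L_2) = -\inf S$. I will then record two facts about $S$. First, $S$ is upward closed: given $c \in S$, a witnessing $u$, and $c' \ge c$, the composite $T_{c*}(\tau_{0,c'-c}) \circ u \colon \F_{L_1} \to T_{c*}(\F_{L_2}) \to T_{c*}T_{(c'-c)*}(\F_{L_2}) = T_{c'*}(\F_{L_2})$ again restricts to the identity near $N\times\{+\infty\}$, because $\tau_{0,d}$ does so by its construction; hence $c' \in S$. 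Second, by the Remark preceding the statement (proved via Floer theory or via Proposition~\ref{Prop-A1}), $\inf S$ is attained, i.e. $\inf S \in S$.

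With these in hand the equivalence is immediate. If a morphism $u \colon \F_{L_1} \to \F_{L_2}$ equal to the identity at $+\infty$ exists, then $0 \in S$ (take $c=0$, so that $T_{c*} = \id$), hence $\inf S \le 0$ and $c_-(L_1,L_2) \ge 0$. Conversely, if $c_-(L_1,L_2)\ge 0$ then $\inf S \le 0$; since $\inf S \in S$ and $S$ is upward closed, we get $0 \in S$, which is precisely the existence of the desired morphism $\F_{L_1}\to T_{0*}(\F_{L_2}) = \F_{L_2}$. For the last assertion I will specialise to $L_1 = 0_N$: the sheaf $k_{N\times[0,+\infty[}$ has singular support $\widehat{0_N}$ off the zero section, is simple, vanishes near $N\times\{-\infty\}$ and equals $k_{N\times\Real}$ near $N\times\{+\infty\}$, so by the uniqueness statement~(\ref{4}) of Theorem~\ref{Thm-Quantization} it coincides with $\F_{0_N}$; then the first part applied to the pair $(0_N,L)$ says that a morphism $k_{N\times[0,+\infty[} \to \F_L$ which is the identity at $+\infty$ exists if and only if $c_-(L) = c_-(0_N,L)\ge 0$.

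I do not expect any real obstacle here — this is why the authors describe the proposition as an immediate consequence of the definitions. The only ingredient that is not pure bookkeeping is the attainment of the infimum, and that is borrowed from the cited Remark and Appendix; if one wanted to avoid it, upward closedness of $S$ already yields the equivalence whenever $c_-(L_1,L_2)\ne 0$, and the borderline value $c_-(L_1,L_2)=0$ is exactly where attainment is used.
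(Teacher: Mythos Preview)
Your proof is correct and matches the paper's own treatment: the paper offers no separate argument, simply flagging the proposition as ``the following immediate consequence of the definitions'' right after the Remark on attainment of the infimum. Your writeup just makes explicit the two ingredients the paper leaves implicit --- upward closedness of the set of admissible $c$'s via composition with $\tau_{0,d}$, and the use of that Remark at the borderline $c_-(L_1,L_2)=0$ --- and correctly identifies $\F_{0_N}\simeq k_{N\times[0,+\infty[}$ for the special case.
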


\subsection{The case of Hamiltonian maps }\label{Subsection-5.2}

If $\varphi$ is a Hamiltonian map of $T^*\Real^n$ with compact support, we define the spectral invariants of
$\varphi$ as those of its graph, say $\Gamma_\varphi \subset \overline{T^*\Real^{n}}\times T^*\Real^{n}$,
compactified in the following way: we choose a symplectomorphism $\psi\colon \overline{T^*\Real^{n}}\times
T^*\Real^{n} \isoto T^*\Gamma_{\id}$ such that $\psi(\Gamma_{\id}) = 0_{\Gamma_{\id}}$; then $\psi(\Gamma_\varphi)$
coincides with the zero-section outside a compact set and we can compactify it as $\Gamma_\varphi^c$ in the cotangent
bundle of the sphere. In particular $\Gamma_\id^c = 0_{S^{2n}}$ and we set
\CorrectionRed{
\begin{defn} \label{Def-5.7}
The invariants of $\varphi$, $c_+(\varphi)$, $c_-(\varphi)$ are defined as those of $(\Gamma_\varphi^c,0_{S^{2n}})$.
\end{defn}
}
  This is well-defined since another choice
of $\psi$ would give a pair $(\Gamma_\varphi'^c,0_{S^{2n}})$ symplectomorphic to $(\Gamma_\varphi^c,0_{S^{2n}})$.
 
On the other hand we have seen in~\S\ref{sec:qhi} that $\varphi$ has an associated sheaf $\K_\varphi \in
D(\Real^{2n+1})$. In the next paragraph we explain how we can turn the above definition of $c_+(\varphi)$,
$c_-(\varphi)$ into Definition~\ref{def:inv-spec-pour-Hamiltonian} which uses $\K_\varphi$, $\K_\id$ and not the
intermediate symplectomorphism $\psi$.

\subsubsection*{Support in a Darboux chart}\label{Subsection-Floer-Darboux-chart}
Let now $S_\varphi(q,P,\xi)$ be a generating function quadratic at infinity for $\Gamma_\varphi$ over $\Delta_{ {\mathbb
    R}^{2n}}$, that is,
$$
\left \{
\begin{array}{ll}
p-P= \frac{\partial S_\varphi}{\partial q}(q,P;\xi) \\
Q-q= \frac{\partial S_\varphi}{\partial P}(q,P;\xi)\\
\frac{\partial S_\varphi}{\partial \xi}(q,P;\xi)=0\\
\end{array}
\right.
$$
Since $\varphi$ is compactly supported, so is $S_\varphi$ in the $q,P$ variables:  for $ \vert q \vert + \vert P \vert $ large
enough, $S_\varphi(q,P;\xi)=B(\xi)$ where $B$ is a non-degenerate quadratic form. We can then extend $S_\varphi$ to a generating function quadratic at infinity on a bundle over $S^{2n}$ (see
\cite{Viterbo-STAGGF}), and $c_\pm(\varphi)$ are defined as
\begin{align*}
c_+(\varphi)&= \inf \{ t \mid T\cup \mu_{S^{2n}} \neq 0\; \text{in}\; H^*(S^t,S^{-\infty})\} , \\
c_-(\varphi)&= \inf \{ t \mid T\cup 1 \neq 0\; \text{in}\; H^*(S^t,S^{-\infty})\},
\end{align*}
where $T$ is the Thom class associated to the negative bundle of $B$ and $S^{-\infty}$ means $S^{-c}$ for $c$ large
enough.  Note that these coincide with the spectral invariants defined using Floer cohomology (see \cite{Viterbo-FCFH2}).

Now if $\varphi$ is defined on $T^*N$ but has support in a Darboux chart, it can be considered either as a compact
supported Hamiltonian map on $T^*N$ or on ${\mathbb R}^{2n}$ and we can define the spectral invariants in two ways
(using Floer cohomology).  By an easy  argument (see Appendix \ref{Appendix-local-Floer}) we can see that these two definitions of
$c_\pm(\varphi)$ do coincide.

\subsubsection*{Sheaf definition of spectral invariants for a Hamiltonian map}
Now we can give another formulation of the definition of $c_\pm(\varphi)$.  Let $N$ be a compact manifold and let
$\varphi$ be a Hamiltonian map of $T^*N$ with compact support. We consider its homogenized graph $\widehat
\Lambda_\varphi \subset \overline{T^*N}\times T^*N \times T^* {\mathbb R} $ as in~\S\ref{sec:qhi}.  We have
associated to $\varphi$ its quantization $\K_\varphi$, with singular support $\widehat\Lambda_\varphi$, and it
coincides with $\K_\id = k_{\Delta_N \times [0,+\infty[}$ outside a compact subset of $N^2\times\Real$.  Now
$\homst(\K_\id, \K_\varphi)$ has similar properties as $\homst(\F_{L_1},\F_{L_2})$, namely, it is
$k_{\Delta_N\times\Real}[1]$ near $N^2\times \{-\infty\}$ and $0$ near $N^2\times \{+\infty\}$.  As in the case of
compact Lagrangians we have $H^*_{N^2\times\{c\}}(N^2\times\Real; k_{\Delta_N\times\Real}[1]) \simeq H^*(N;k_N)$ and
any class $\alpha \in H^d(N;k_N)$ yields a morphism
$$
u(\alpha, \varphi ,c) \colon \K_\id \to T_{c*}(\K_\varphi) [d],
$$
for $c$ big enough, which restricts to $\alpha \colon k_{\Delta_N\times\Real} \to k_{\Delta_N\times\Real}[d]$ near
$N\times \{+\infty\}$ (using $\Hom(k_{\Delta_N}, k_{\Delta_N}[d]) \simeq H^d(N;k_N)$.

\begin{defn}\label{def:inv-spec-pour-Hamiltonian}
  For a compactly supported Hamiltonian map $\varphi$ of $T^*N$ and $\alpha \in H^d(N;k_N)$ we set
  $$
  c(\alpha, \varphi)
  = -\inf \{c ;\; \exists u\colon \K_\id \to T_{c*}(\K_\varphi) [d],\; u|_{N^2 \times ]t_0,+\infty[} = \alpha,  \; \text{for $t_0\gg0$} \} \dispdot
  $$
  (Here $t_0$ is large enough so that over $N^2 \times ]t_0,+\infty[$ we have $T_{c*}(\K_\varphi) \simeq
  k_{N\times\Real}$.)  We also set $c_-(\varphi) = c(1,\varphi)$, \CorrectionRed{$c_+(\varphi) = -c_-( \varphi^{-1})$} and
  $\gamma(\varphi) = c_+(\varphi) - c_-(\varphi)$.  If $\psi$ is another compactly supported Hamiltonian map, we set
  $\gamma(\varphi, \psi) = \gamma(\varphi \psi^{-1})$.
\end{defn}

We have $c_-(\varphi)=\inf \{c(\varphi,\alpha);\; \alpha \in H^\star(N;k_N)\}$.
Since $\Gamma(\varphi)$ coincides with the zero section outside a compact set, normalizing the primitive of $\lambda$ on $\Gamma(\varphi)$ to be zero outside a compact set, we have
$c_-(\varphi)\leq 0 \leq c_+(\varphi)$ as in \cite{Viterbo-STAGGF}.

\subsubsection*{Equivalence between  Definitions~\ref{Def-5.7} and \ref{def:inv-spec-pour-Hamiltonian}} 
We first remark that we don't really need to compactify $\Gamma_\id$ into the sphere to obtain the invariants.  Let
$D_1$ be the subcategory of $D_{\tau\geq 0}(\Gamma_\id \times \Real)$ of complexes isomorphic to $k_{\Gamma_\id
  \times [c,+\infty[}$ (for some $c$) and $D_2$ be the subcategory of $D_{\tau\geq 0}(S^{2n} \times \Real)$ of
complexes isomorphic to $k_{S^{2n} \times [c,+\infty[}$ near $\{\infty\} \times [c,+\infty[$.  Then the
compactification and the restriction map induce functors $\alpha \colon D_1 \to D_2$ and $\beta\colon D_2 \to D_1$
such that $\beta \circ \alpha = \id$, $\alpha(\tau_{a,b}(\F)) = \tau_{a,b}(\alpha(\F))$.  We deduce that
$c_-(\alpha(\F_1), \alpha(\F_2)) = c_-(\F_1,\F_2)$ (we remark that $c_-$ makes sense even in the non compact
case). In particular
\begin{equation}\label{eq:inv-spect-compactification}
  c_-(\varphi) = c_-(\F_{\Gamma_\varphi^c}, \F_{0_{S^{2n}}}) = c_-(\F_{\Gamma_\varphi}, \F_{0_{\Real^{2n}}}) \dispdot
\end{equation}
Now the symplectomorphism $\psi$ induces an equivalence of categories between $\K_\psi^\convstar \colon D_{\tau\geq
  0}(\Real^{2n+1}) \isoto D_{\tau\geq 0}(\Gamma_\id \times \Real)$ which commutes with the translation $T_{c*}$ in
the last variables.  We have $\K_\psi^\convstar( \K_\varphi) ) \simeq \F_{\Gamma_\varphi}$ and we obtain
from~\eqref{eq:inv-spect-compactification} that the spectral invariant $c_-(\varphi)$ we have defined above coincides
with the one in Definition~\ref{def:inv-spec-pour-Hamiltonian} below.  To obtain $c_+$ we use $c_+(\F_1,\F_2) =
-c_-(\F_2,\F_1)$.

The existence of $\K_\psi$ doesn't really follows from~\S\ref{sec:qhi} because $\psi$ is not compactly supported.
But the choice of $\psi$ is irrelevant and we can choose $\psi \colon \overline{T^*\Real^{n}} \times T^*\Real^{n} \to
T^*\Delta_{\Real^{2n}}$ 
 given by
$\psi(q_1,q_2,p_1,p_2) = (q_1,p_2,p_1+p_2,q_1-q_2)$ . In this case we can give a direct construction of $\K_\psi$ as
follows.  Let us define $\Sigma^+ \subset \Real^{4n+1}$ by $\Sigma^+ = \{(q_1,q_2,Q_1,Q_2,t) \mid Q_1=q_1,\; t \geq
(q_1-q_2)Q_2 \}$, whose boundary is the projection of the lift $\widehat \Lambda_\psi$ of the graph of $\psi$ to the
base.  Then we can check that $\K_\psi = k_{\Sigma^+}$ is the quantization of $\psi$.

This justifies that both definitions coincide for a Hamiltonian map of $T^*S^n$ obtained from a compactly supported
map of $T^*\Real^n$.  We saw in a previous  paragraph that the case of a Hamiltonian map of $T^*N$ with support in a
Darboux chart is reduced to this case.

Note the following analogue of Proposition \ref{Prop-5.4}
\begin{lem} \label{Lemma-5.5}
Let $\varphi$ be a compact supported  Hamiltonian map. Then there is a morphism   $  \K_\id \longrightarrow  \K_\varphi $ in $D^b(N\times N \times {\mathbb R} )$ equal to $\Id$ at $+\infty$ if and only if $c_-(\varphi)=0$.
Analogously there is a morphism $ \K_\varphi \longrightarrow \K_\id $ in $D^b(N\times N \times {\mathbb R} )$ if and only if $c_+(\varphi)=0$.
\end{lem}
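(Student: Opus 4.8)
The plan is to mimic the proof of Proposition~\ref{Prop-5.4}, which treats the Lagrangian case, exploiting the fact that $\homst(\K_\id,\K_\varphi)$ has exactly the same ``boundary behaviour'' as $\homst(\F_{L_1},\F_{L_2})$: it is $k_{\Delta_N\times\Real}[1]$ near $N^2\times\{-\infty\}$ and $0$ near $N^2\times\{+\infty\}$, and its singular support lies in $\{\tau\ge 0\}$. First I would unwind Definition~\ref{def:inv-spec-pour-Hamiltonian} in the case $\alpha=1\in H^0(N;k_N)$: by definition $c_-(\varphi)=c(1,\varphi)=-\inf\{c : \exists\,u\colon \K_\id\to T_{c*}(\K_\varphi),\ u|_{N^2\times]t_0,+\infty[}=\id\}$. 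Since we already know $c_-(\varphi)\le 0$ (because $\Gamma(\varphi)$ agrees with the zero section at infinity), the condition $c_-(\varphi)=0$ is equivalent to saying that the infimum above equals $0$, i.e.\ that a morphism $u\colon \K_\id\to T_{c*}(\K_\varphi)$ restricting to the identity at $+\infty$ exists for all $c>0$ but, a priori, not necessarily for $c=0$.

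So the real content is to upgrade ``for all $c>0$'' to ``for $c=0$''. Here I would invoke the persistence/barcode structure, exactly as in the Remark following Definition~\ref{def:spectral-inv} and Appendix~\ref{Appendix-barcodes-Floer}: the infimum defining $c(1,\varphi)$ is attained, so $c_-(\varphi)=0$ already guarantees a morphism at level $c=0$, i.e.\ a genuine morphism $\K_\id\to\K_\varphi$ in $D^b(N\times N\times\Real)$ which is the identity at $+\infty$. Conversely, the existence of such a morphism at level $c=0$ forces the infimum to be $\le 0$, hence (combined with $c_-(\varphi)\le 0$ always, and with $c_-(\varphi)\ge$ the value $0$ witnessed by such a morphism) $c_-(\varphi)=0$. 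Concretely, a morphism $u\colon\K_\id\to\K_\varphi$ with $u|_{+\infty}=\id$ is, via \eqref{eq:homcstar-Hom} with $c=0$, precisely an element of $\rsect_{N^2\times\{0\}}(N^2\times\Real;\homst(\K_\id,\K_\varphi))$ mapping to $1$ under the restriction-at-$+\infty$ identification with $H^0(N;k_N)$; this is exactly the defining condition for $c=0$ to be admissible in Definition~\ref{def:inv-spec-pour-Hamiltonian}, so $c_-(\varphi)=-\inf\{c:\dots\}\ge 0$, whence $c_-(\varphi)=0$.

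For the second statement I would simply apply the first to $\varphi^{-1}$ together with the duality $c_+(\varphi)=-c_-(\varphi^{-1})$ (stated just before the Lemma) and the fact that $\K_{\varphi^{-1}}$ is the ``adjoint'' kernel of $\K_\varphi$, so that a morphism $\K_\varphi\to\K_\id$ equal to the identity at $+\infty$ corresponds under this adjunction to a morphism $\K_\id\to\K_{\varphi^{-1}}$ equal to the identity at $+\infty$; by the first part the latter exists iff $c_-(\varphi^{-1})=0$, i.e.\ iff $c_+(\varphi)=0$. The main obstacle is the passage from ``morphism exists for all $c>0$'' to ``morphism exists for $c=0$'': this is where one genuinely needs that the infimum in the definition of the spectral invariant is a minimum, which rests on the persistence-module/barcode description (finiteness of the barcode, constructibility of $\homst(\K_\id,\K_\varphi)$) rather than on formal nonsense; once that is granted the rest is a direct translation through \eqref{eq:homcstar-Hom}.
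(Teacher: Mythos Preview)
Your proposal is correct and follows essentially the same approach as the paper, which presents Lemma~\ref{Lemma-5.5} as the analogue of Proposition~\ref{Prop-5.4} without writing out a separate proof. You have correctly identified the one nontrivial point: the passage from ``morphism exists for all $c>0$'' to ``morphism exists for $c=0$'' requires that the infimum in Definition~\ref{def:inv-spec-pour-Hamiltonian} is actually attained, which the paper handles via the Remark after Definition~\ref{def:spectral-inv} and Proposition~\ref{Prop-A1}; your reduction of the $c_+$ statement to the $c_-$ statement via $c_+(\varphi)=-c_-(\varphi^{-1})$ and composition with $\K_{\varphi^{-1}}$ is also the natural route.
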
 
\begin{proof} 
\CorrectionRed{
The first statement follows from Proposition \ref{Prop-5.4}. For the second one, we have by assumption a morphism
$\K_\id   \longrightarrow \K_{\varphi^{-1}} $. Composing by $\K_\varphi$ we get a morphism $ \K_\varphi \longrightarrow \K_\id $. Analogously, if we have such a morphism, composing by $\K_{\varphi^{-1}}$ we get $c_+(\varphi)=0$.
}
\end{proof} 
 \subsection{Defining \texorpdfstring{$\gamma$}{gamma}-coisotropic subsets} 
 For an open set $U$ in the symplectic manifold $(M, \omega)$ we denote by $\DHam_c(U)$ the set of time one flows of Hamiltonian with compact support contained in $U$. 
 
 \begin{defn} [see definition 6.1 in \cite{Viterbo-gammas}]
   Let  $V$  be a set in $(M, \omega)$. We say that $V$ is {\bf $\gamma$-coisotropic at $x\in V$} if
   \CorrectionBlue{ there exists $\eps>0$ such that, for all $\eta$ with $0< \eta< \eps$, there is a $\delta>0$ such
     that, for all $\varphi$ in $\DHam_{c}(B(x, \eps))$ satisfying $\varphi(V) \cap B(x, \eta)=\emptyset$, we
     have $\gamma (\varphi)> \delta$. }

We shall say that $V$ is {\bf $\gamma$-coisotropic} if it is closed, non-empty and $\gamma$-coisotropic at each $x\in V$. 
 \end{defn} 
 We refer to  \cite{Viterbo-gammas}, in particular section 6,  for properties and examples of $\gamma$-coisotropic subsets. In particular we remind the reader of the following result from 
 \cite{Viterbo-gammas}.
 \begin{prop} 
 Let  $\varphi$ be a homeomorphism preserving $\gamma$, for example a $C^0$-limit of Hamiltonian maps (usually called Hamiltonian homeomorphisms) and $V$ a $\gamma$-coisotropic set. Then, 
 the image by $\varphi$ of $V$ is a $\gamma$-coisotropic set. 
 \end{prop}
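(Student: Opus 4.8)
The plan is to show that $\Phi(V)$ is $\gamma$-coisotropic at each of its points, the guiding idea being that conjugation by the homeomorphism $\Phi$ turns a displacement of $\Phi(V)$ near a point $x=\Phi(y)$ into a displacement of $V$ near $y$, whereupon one invokes the $\gamma$-coisotropy of $V$ at $y$. Two soft facts make this work: the spectral norm $\gamma$ is invariant under conjugation by symplectomorphisms, and the preimage under $\Phi$ of a closed ball is compact (being the continuous image of a compact set under $\Phi^{-1}$). Before starting I would reduce to the case where $V$ is closed: for an open ball $B$ one has $\Phi(V)\cap B=\emptyset\iff\overline{\Phi(V)}\cap B=\emptyset\iff\Phi(\overline V)\cap B=\emptyset$ (as $\Phi$ is a homeomorphism), and a short argument shows $\overline V$ is $\gamma$-coisotropic whenever $V$ is; so replacing $V$ by $\overline V$ alters neither hypothesis nor conclusion, and $\Phi(V)$ is then a nonempty closed set.

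Fix $y\in V$, put $x=\Phi(y)$, and fix nested balls $B(x,\eta)\subset B(x,\varepsilon)$. First I would choose the data at $y$. Since $\Phi^{-1}$ is continuous, $\Phi^{-1}(B(x,\eta))$ is an open neighbourhood of $y$, so pick $\eta''>0$ with $\overline{B(y,\eta'')}\subset\Phi^{-1}(B(x,\eta))$; and since $\Phi^{-1}(\overline{B(x,\varepsilon)})$ is compact, pick $\varepsilon'>0$ with $\Phi^{-1}(\overline{B(x,\varepsilon)})\subset B(y,\varepsilon')$. Here it is essential that the definition of $\gamma$-coisotropy allows the outer ball $B(y,\varepsilon')$ to be taken arbitrarily large, since a homeomorphism can distort $B(x,\varepsilon)$ wildly and its preimage, though relatively compact, need not be small. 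Applying the $\gamma$-coisotropy of $V$ at $y$ to the pair $B(y,\eta'')\subset B(y,\varepsilon')$ yields $\delta>0$ such that every $\theta\in\DHam_c(B(y,\varepsilon'))$ with $\theta(V)\cap B(y,\eta'')=\emptyset$ satisfies $\gamma(\theta)>\delta$. I claim this $\delta$ witnesses $\gamma$-coisotropy of $\Phi(V)$ at $x$ for the pair $B(x,\eta)\subset B(x,\varepsilon)$.

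So let $\psi\in\DHam_c(B(x,\varepsilon))$ with $\psi(\Phi(V))\cap B(x,\eta)=\emptyset$. The naive conjugate $\Phi^{-1}\psi\Phi$ is only a homeomorphism, so instead I would write $\Phi=\lim_k\psi_k$ in $C^0$ with $\psi_k$ compactly supported Hamiltonian diffeomorphisms (so also $\psi_k^{-1}\to\Phi^{-1}$ in $C^0$) and set $\theta_k=\psi_k^{-1}\psi\psi_k\in\DHam_c$. Two routine point-set verifications remain. First, $\supp\theta_k=\psi_k^{-1}(\supp\psi)$, and since $\Phi^{-1}(\supp\psi)$ is a compact subset of the open ball $B(y,\varepsilon')$, uniform convergence of $\psi_k^{-1}$ gives $\supp\theta_k\subset B(y,\varepsilon')$ for $k$ large. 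Second, $\psi_k(V)$ lies in an arbitrarily small neighbourhood of $\Phi(V)$ for $k$ large, hence $\psi(\psi_k(V))$ lies in an arbitrarily small neighbourhood of the closed set $\psi(\Phi(V))$, which is disjoint from $B(x,\eta)$; combined with $\psi_k(B(y,\eta''))\subset B(x,\eta)$ for $k$ large — itself a consequence of $\Phi(\overline{B(y,\eta'')})\subset B(x,\eta)$ and the uniform convergence of $\psi_k$ — this forces $\theta_k(V)=\psi_k^{-1}(\psi(\psi_k(V)))$ to be disjoint from $B(y,\eta'')$ for $k$ large. Then $\gamma(\theta_k)>\delta$, while $\gamma(\theta_k)=\gamma(\psi_k^{-1}\psi\psi_k)=\gamma(\psi)$ because $\psi_k$ is symplectic; hence $\gamma(\psi)>\delta$, as required.

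The main point of difficulty — though "obstacle" overstates it — is precisely the non-smoothness of the conjugate $\Phi^{-1}\psi\Phi$, which necessitates the detour through the Hamiltonian approximants $\psi_k$ and makes the argument rely on two inputs beyond pure topology: the conjugation-invariance of $\gamma$ under symplectomorphisms, and the uniform convergence $\psi_k^{-1}\to\Phi^{-1}$ (automatic for homeomorphisms of a compact manifold, and built into the notion of Hamiltonian homeomorphism in general). For a homeomorphism $\Phi$ that merely "preserves $\gamma$" without being presented as a $C^0$-limit of Hamiltonian diffeomorphisms, one runs the identical argument after noting that conjugation by $\Phi$ and the norm $\gamma$ both extend to the relevant completion of $\DHam_c$, so that $\gamma(\Phi^{-1}\psi\Phi)=\gamma(\psi)$ directly, the displacement condition transporting verbatim through $\Phi^{-1}$.
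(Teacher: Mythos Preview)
The paper does not give its own proof of this proposition; it is stated as a reminder of a result from \cite{Viterbo-gammas}, so there is nothing in the paper to compare your argument against.

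Your strategy---transport the displacement through $\Phi^{-1}$ via Hamiltonian approximants $\psi_k$ and use conjugation-invariance of $\gamma$---is the natural one and is essentially correct. One place where your point-set verification is a bit too quick: from ``$\psi(\psi_k(V))$ lies in a small neighbourhood of $\psi(\Phi(V))$, which is disjoint from $B(x,\eta)$'' you cannot directly conclude disjointness from $B(x,\eta)$, since $\psi(\Phi(V))$ may touch $\partial B(x,\eta)$. What saves the argument is the stronger fact you already recorded, namely that $\Phi(\overline{B(y,\eta'')})$ is a \emph{compact} subset of the open ball $B(x,\eta)$; it therefore has positive distance $d_0$ from $B(x,\eta)^c \supset \psi(\Phi(V))$, and for $k$ large both $\psi_k(\overline{B(y,\eta'')})$ and $\psi(\psi_k(V))$ stay within $d_0/3$ of their respective limits, which forces the desired disjointness. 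It also helps to note that points of $V$ outside the compact set $\psi_k^{-1}(\supp\psi) \cup \overline{B(y,\eta'')}$ are fixed by $\theta_k$ and lie outside $B(y,\eta'')$, so one only needs the uniform estimates on a fixed compact piece of $V$.

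Your closing remark about the general ``homeomorphism preserving $\gamma$'' case is appropriately brief given that the statement itself leaves the precise meaning to \cite{Viterbo-gammas}; the honest content is that conjugation by such $\Phi$ is assumed to send $\DHam_c$ into the domain of $\gamma$ and to preserve $\gamma$ there, after which the transport argument is immediate.
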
 
 
Note that the completion of $\LL (T^*N)$ for $\gamma$  is denoted by $\widehat{\LL} (T^*N)$. In \cite{Viterbo-gammas}
the $\gamma$-support of an element $L$  in $\widehat{\LL} (T^*N)$ was defined, and it was proved that it must be a $\gamma$-coisotropic subset.

\section{The  \texorpdfstring{$\gamma_g$}{gamma-g}-metric on sheaves}
\label{sec:Gtopology}

We shall define the analogue of the $\gamma$ distance between sheaves.  It is also a sheaf analogue of the barcode
distance (see \cite{K-S-distance} or \cite{Asano-Ike}) and relies on the morphism $\tau$ introduced by Tamarkin
(see~\cite{Tamarkin} or~\cite[Prop. 4.8]{G-K-S}) that we have already encountered in~\eqref{eq:tau-version1}.  

To
define $\tau$ in general we recall that the subcategory $D_{\tau\geq 0}(N\times\Real)$ of $D(N\times\Real)$ 
can be characterized as follows (see~\cite[Prop.~5.2.3, 3.5.4]{K-S}).  
Let $P$ be the endofunctor of $D(N\times\Real)$
defined by $$P(\F) = R s_*(\F\boxtimes k_{[0,+\infty[})$$ with $s\colon N\times\Real^2 \to N\times\Real$, $(x,t_1,t_2)
\mapsto (x,t_1+t_2)$ (this is almost $\F \cstar k_{N \times [0,+\infty[}$ up to replacing from $Rs_!$ to $Rs_*$).  Then the natural morphism 
$k_{[0,+\infty[} \to k_{\{0\}}$ induces a morphism $P(\F) \to \F$ and $D_{\tau\geq 0}(N\times\Real)$ is  the set of
$\F$ such that $P(\F) \to \F$ is an isomorphism.  Now we have in general $R s_*(\F\boxtimes k_{\{c\}}) \simeq T_{c*}(\F)$,
where $T_c(x,t) = (x,t+c)$ and we obtain in the same way, using the morphism $k_{[0,+\infty[} \to k_{\{c\}}$, a
morphism $P(\F) \to T_{c*}(\F)$ for any $c\geq 0$.  In particular we have a morphism of endofunctors defined on
$D_{\tau\geq 0}(N\times\Real)$ for any $c\geq 0$
$$
\tau_{0,c}(\F) \colon \F \to T_{c*}(\F) \dispdot
$$
We shall call this morphism the Tamarkin morphism.

The case of $D_{\tau\geq 0}(N\times\Real)$ is in fact sufficient for this paper but we can consider a slightly more
general case without modifying the proofs. Let $g \colon T^*N \setminus 0_N \to  {\mathbb R} $ be a
$1$-homogeneous autonomous Hamiltonian function and let $D_{g\geq 0}(N)$ be the full subcategory of $D(N)$ formed by
the $\F$ with $SS^\bullet(\F) \subset \{g \geq 0\}$.  We let $\varphi^t = \varphi_g^t \colon T^*N\setminus 0_N \to
T^*N \setminus 0_N$ be the corresponding homogeneous flow.  We set for short $\fai{\varphi}^a(\F) = \F \circ
K_{\varphi^a}$.

We consider the lifting of the graph of the whole isotopy $\varphi^t$ to Lagrangian in $T^*(N \times N \times {\mathbb R} )$  given by
$$
\left\{(q,p, Q_{t}(q,p),P_{t}(q,p), t, g(q,p))\right\} ,
$$
where $\varphi^{t}(q,p)=(Q_{t}(q,p),P_{t}(q,p))$ and the associated kernel $K_\varphi \in D(N^2\times\Real)$. Then
$K_\varphi(\F) \in D(N\times\Real)$ is such that $K_\varphi(\F)|_{N \times \{a\}} = \fai{\varphi}^a(\F)$.  Now, if
$\F \in D_{g\geq 0}(N)$, then $K_\varphi(\F) \in D_{\tau\geq 0}(N\times\Real)$.  Indeed a point of
$SS^\bullet(K_\varphi(\F))$ is written $(q,p, Q,P, t,\tau)$ with $(q,p) \in SS^\bullet(\F)$ and $\tau = g(q,p)$.  We
thus have the Tamarkin morphism $\tau_{0,c}(K_\varphi(\F)) \colon K_\varphi(\F) \to T_{c*}(K_\varphi(\F))$ for $c\geq 0$. Restricting to
$N\times\{a\}$ and setting $b=a+c$ we obtain
\begin{equation}
  \label{eq:morphismetau}
  \tau_{a,b}(\F) \colon \fai{\varphi}^a(\F) \to \fai{\varphi}^b(\F),
  \qquad \text{for $a\leq b$.}
\end{equation}
The morphisms $\tau_{a,b}$ are invariant by $\fai{\varphi}^t$, for any $t$, and compatible with composition:
\begin{equation}\label{eq:compatibilite-tau}
  \begin{aligned}
  \fai{\varphi}^t(\tau_{a,b}(\F) )& = \tau_{a+t,b+t}(\fai{\varphi}^t(\F)), \\
  \tau_{a,c}(\F) &= \tau_{b,c}(\F) \circ  \tau_{a,b}(\F), \quad \text{ for } a\leq b\leq c \dispdot
  \end{aligned}
\end{equation}
\begin{examples} \label{Example-6.1 } 
\leavevmode
\begin{enumerate} 
\item Let $N= {\mathbb R} $ and $g$ be the norm given by the standard metric, so
  $g(t,\tau)= |\tau|$. Then $\varphi^a(t,\tau)=(t+a\frac{\tau}{ \vert \tau \vert}, \tau)$.  As a result
  $K_{\varphi^a}(k_{[x,y[}) = k_{[x+a, y+a[}$ and $SS(K_{\varphi^a})=\left \{(t, t+a\frac{\tau}{ \vert \tau \vert},\tau, \tau)
  \right \}\subset T^*( {\mathbb R}^{2})$.
\item 
  In the case of $N\times\Real$ and $g=\tau$ we recover the case we introduced in the first paragraph (and originally due to  Tamarkin).
\item \label{Example-6.1-3} 
  If $g$ is non negative we have $D_{g\geq 0}(N) = D(N)$. Typically $g$ is the norm given by a Riemannian metric and
  $\varphi_g$ is the normalized geodesic flow.  In this case, denoting by $B(x,r)$ the open ball at $x$ of radius
  $r$, we find
  $$
  \fai{\varphi}^a(k_{B(x,r)}) \simeq
  \begin{cases}
    k_{B(x,r+a)} & \text{for $-r < a < r_{inj} -r$,} \\
     k_{\overline{B(x,-r-a)}}[-\dim N]  \quad & \text{for $-r_{inj}-r < a\leq -r$,}
   \end{cases}
   $$
   and 
    $$
  \fai{\varphi}^a(k_{\overline{B(x,r)}}) \simeq
  \begin{cases}
    k_{\overline{B(x,r-a)}} & \text{for $ r-r_{inj} \leq a \leq r $,} \\
     k_{{B(x,a-r)}}[\dim N]  \quad & \text{for $r < a <r+r_{inj}$,}
   \end{cases}
   $$
  where $r_{inj}$ is the injectivity radius. 
  \end{enumerate} 
\end{examples}

\subsection{The \texorpdfstring{$\gamma_g$-topology}{gamma-{g}-topology}}

The following definition is the analogue of the spectral metric (in the case $g=\tau$) (and a sheaf analogue of the barcode distance). It is introduced in \cite{K-S-distance} or
\cite{Asano-Ike} (up to a small modification).
\begin{defn}\label{def:dist-gammag}
  Let $g \colon T^*N \setminus 0_N \to {\mathbb R} $ be a $1$-homogeneous autonomous Hamiltonian function and $\varphi=\varphi_g^t$ its  flow.
  For $\F,\G \in D_{g\geq 0}(N)$, we let $\gamma_g(\F, \G) \in [0,\infty]$ be the infimum of the $c\geq 0$ for which
  there exist $a,b \geq 0$ such that $a+b=c$ and such that there are morphisms
  $$
  u \colon \F \to \fai{\varphi}^a(\G),
  \qquad
  v \colon \G \to \fai{\varphi}^b(\F)
  $$
  satisfying $\fai{\varphi}^a(v) \circ u = \tau_{0,a+b}(\F)$ and
  $\fai{\varphi}^b(u) \circ v = \tau_{0,a+b}(\G)$.
\end{defn}
The authors thank N.~Vichery for the next remark.
\begin{rem}\label{rem:gamma_g=gamma_tau}
  The case $D_{g\geq 0}(N)$ is not really more general than the case $D_{\tau\geq 0}(N\times\Real)$. Indeed the
  functor $K_\varphi(-) \colon D(N) \to D(N\times\Real)$ is fully faithful and we have $T_{a*} (K_\varphi(-))
  \simeq K_\varphi(\fai{\varphi}^a(-))$.  Hence, for $\F,\G \in D_{g\geq 0}(N)$ we have $\gamma_g(\F, \G) =
  \gamma_\tau(\K_\varphi(\F), \K_\varphi(\G))$.
\end{rem}

The definition of $\gamma_{g}$ in~\cite{K-S-distance} imposes $a=b$ which leads to a bigger distance, say $\gamma'_g$, with
$\gamma_g \leq \gamma'_g \leq 2 \gamma_g$.  The definition in~\cite{Asano-Ike} only asks $\fai{\varphi}^a(v) \circ u
= \tau_{0,a+b}(\F)$ and $\fai{\varphi}^b(u') \circ v' = \tau_{0,a+b}(\G)$, for two possibly different morphisms $u'$,
$v'$ (see the arXiv version for another definition). This leads to a smaller distance but we don't know if it is equivalent to $\gamma_g$.
Lemma~\ref{lem:comparaison-distances} below says that $\gamma_g$ is equivalent to 
$\inf\{\gamma_g(\C(u),0)$; $u\colon \F \to \G\}$, where $\C(u)$ denotes the cone of $u$.

\begin{lem}\label{lem:proprietes-gammag}
  For $\F,\G,\H \in D_{g\geq 0}(N)$ we have
\begin{enumerate} 
  \item \label{lem:proprietes-gammag-i} $\gamma_g(\F, \G) = \gamma_g(\fai{\varphi}^t(\F), \fai{\varphi}^t(\G)) = \gamma_g(\G,\F)$.

  \item  \label{lem:proprietes-gammag-ii}  $\gamma_g(\F, \H) \leq \gamma_g(\F, \G) + \gamma_g(\G, \H)$.

  \item  \label{lem:proprietes-gammag-iii}  $\gamma_g(0,\F) = \inf \{t\geq 0$; $\tau_{0,t}(\F) =0\}$.

  \item  \label{lem:proprietes-gammag-iv}  If $\F$ satisfies $\gamma_g(\F,0) = 0$, then $\F \simeq 0$.
  
  \item \label{lem:proprietes-gammag-v} For any distinguished triangle $\F \to[u] \G \to[v] \H \smash{\to[+1]}$, we have $$\gamma_g(0, \G) \leq
    \gamma_g(0,\F) + \gamma_g(0, \H)$$

  \item  \label{lem:proprietes-gammag-vi}  For composable morphisms $u$, $v$ with cones $\C(u)$, $\C(v)$, $\C(v\circ u)$, we have
    $\gamma_g(0,\C(v\circ u)) \leq \gamma_g(0,\C(u)) + \gamma_g(0,\C(v))$.
\end{enumerate} 
\end{lem}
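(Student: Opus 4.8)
The plan is to prove the six items essentially in order, since the later ones rely on the earlier ones. Items \ref{lem:proprietes-gammag-i} and \ref{lem:proprietes-gammag-ii} are formal: invariance under $\fai{\varphi}^t$ and symmetry in $\F,\G$ are immediate from the symmetric shape of Definition \ref{def:dist-gammag} together with the compatibility relations \eqref{eq:compatibilite-tau}, and the triangle inequality follows by composing the witnessing morphisms — given $u\colon\F\to\fai{\varphi}^a(\G)$, $v\colon\G\to\fai{\varphi}^b(\F)$ for the pair $(\F,\G)$ and $u'\colon\G\to\fai{\varphi}^{a'}(\H)$, $v'\colon\H\to\fai{\varphi}^{b'}(\G)$ for $(\G,\H)$, one forms $\fai{\varphi}^a(u')\circ u\colon\F\to\fai{\varphi}^{a+a'}(\H)$ and the analogous composite in the other direction, and checks using \eqref{eq:compatibilite-tau} that the round-trip composites are the correct Tamarkin morphisms $\tau_{0,a+a'+b+b'}$. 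For item \ref{lem:proprietes-gammag-iii}, when $\G=0$ one of the two morphisms in the definition is forced to be zero, so the condition $\fai{\varphi}^b(u)\circ v=\tau_{0,a+b}(\F)$ with $v=0$ (or symmetrically $u=0$) reduces to $\tau_{0,c}(\F)=0$; taking the infimum over such $c$ gives the formula. Note here one must allow $a$ or $b$ to be $0$, which is permitted.

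For item \ref{lem:proprietes-gammag-iv}: if $\gamma_g(\F,0)=0$ then by \ref{lem:proprietes-gammag-iii} $\tau_{0,t}(\F)=0$ for all $t>0$, and since $\F\in D_{g\geq 0}(N)$ we have $P(K_\varphi(\F))\isoto K_\varphi(\F)$; unwinding the construction of $\tau_{0,t}$ as coming from $P(\F)=Rs_*(\F\boxtimes k_{[0,+\infty[})$ and the morphisms $k_{[0,+\infty[}\to k_{\{t\}}$, vanishing of all these morphisms forces $K_\varphi(\F)$ (hence $\F$, by full faithfulness of $K_\varphi(-)$, Remark \ref{rem:gamma_g=gamma_tau}) to be zero — concretely, one passes to stalks/sections and uses that $\varinjlim_t$ over the maps $\tau_{0,t}$ computes the sheaf back from its $P$-resolution. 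Item \ref{lem:proprietes-gammag-v} is the key homological-algebra step: given a distinguished triangle $\F\to\G\to\H\to[+1]$, I would produce, from witnesses for $\gamma_g(0,\F)$ and $\gamma_g(0,\H)$ (i.e. $\tau_{0,a}(\F)=0$, $\tau_{0,b}(\H)=0$), a morphism showing $\tau_{0,a+b}(\G)=0$: apply $\fai{\varphi}^b$ to the triangle, use that $\tau_{0,b}(\H)=0$ means the composite $\G\to\H\to\fai{\varphi}^b(\H)$ is zero, hence $\tau_{0,b}(\G)\colon\G\to\fai{\varphi}^b(\G)$ factors through $\fai{\varphi}^b(\F)$ via some lift, then postcompose with $\fai{\varphi}^b(\tau_{0,a}(\F))=\tau_{b,a+b}(\fai{\varphi}^b(\F))$; wait — more carefully, one factors $\tau_{0,b}(\G)$ through $\fai{\varphi}^b(\F)\to\fai{\varphi}^b(\G)$, then applies $\fai{\varphi}^a(-)$ and composes with $\tau$'s so that the image of the lift dies because $\tau_{0,a}(\F)=0$, yielding $\tau_{0,a+b}(\G)=\tau_{b,a+b}(\G)\circ\tau_{0,b}(\G)=0$. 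Finally item \ref{lem:proprietes-gammag-vi} follows from \ref{lem:proprietes-gammag-v} applied to the octahedral axiom: there is a distinguished triangle $\C(u)\to\C(v\circ u)\to\C(v)\to[+1]$, so $\gamma_g(0,\C(v\circ u))\leq\gamma_g(0,\C(u))+\gamma_g(0,\C(v))$ by \ref{lem:proprietes-gammag-v}.

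The main obstacle I expect is item \ref{lem:proprietes-gammag-v} — the bookkeeping of which lift factors through what, and in particular checking that the round-trip Tamarkin morphisms come out exactly right (not merely up to a sign or a shift) requires care with the relations \eqref{eq:compatibilite-tau} and the naturality of $\tau_{a,b}$ in its sheaf argument. A secondary subtlety is that the factorization "$\tau_{0,b}(\G)$ lifts through $\fai{\varphi}^b(\F)$" uses that composing $\G\to\H$ with $\H\to\fai{\varphi}^b(\H)$ is null, which is exactly the statement $\tau_{0,b}(\H)=0$ combined with naturality of $\tau$ with respect to the morphism $\G\to\H$; one should also note that the lift is generally not unique, but any lift works since at the next stage it is killed by $\tau_{0,a}(\F)=0$. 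Everything else is formal once Remark \ref{rem:gamma_g=gamma_tau} lets us reduce, if convenient, to the Tamarkin case $g=\tau$ on $N\times\Real$.
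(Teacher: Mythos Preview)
Your plan for items \ref{lem:proprietes-gammag-i}, \ref{lem:proprietes-gammag-ii}, \ref{lem:proprietes-gammag-iii}, \ref{lem:proprietes-gammag-v}, \ref{lem:proprietes-gammag-vi} matches the paper's proof essentially verbatim (the paper simply cites \cite{Asano-Ike} for \ref{lem:proprietes-gammag-ii}, but your composition argument is the standard one). Your factorization argument for \ref{lem:proprietes-gammag-v} is exactly the paper's argument, and your worry about bookkeeping there is unfounded: the only input is naturality of $\tau_{0,\bullet}$ and the axiom that a morphism to the third vertex of a triangle which becomes zero after composing with the connecting map factors through the second vertex.

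The genuine gap is in \ref{lem:proprietes-gammag-iv}. Your claim that ``$\varinjlim_t$ over the maps $\tau_{0,t}$ computes the sheaf back from its $P$-resolution'' is not justified, and it is not clear how to make it precise without circularity (the obvious candidate, that $\F\simeq\hocolim \fai{\varphi}^{-\varepsilon_n}(\F)$, is Lemma~\ref{lem:F=holim-translates}, whose proof already uses parts \ref{lem:proprietes-gammag-v}--\ref{lem:proprietes-gammag-vi} of the present lemma). The paper takes a different, concrete route: after reducing via Remark~\ref{rem:gamma_g=gamma_tau} to $N=M\times\Real$, $g=\tau$, one restricts to a slice $\{x_0\}\times\Real$ to reduce to $N=\Real$, then switches from $g=\tau$ to $g=|\tau|$ (the normalized geodesic flow), which agrees with $\tau$ on $D_{\tau\geq0}(\Real)$ but is defined on all of $D(\Real)$. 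If $\F\not\simeq 0$ one picks a nonzero germ to produce morphisms $k_{]-\varepsilon,\varepsilon[}\to[u]\F\to[v]k_{\{0\}}$ with $v\circ u$ the canonical map; applying $\tau^g_{0,\delta}$ and using the explicit formula $\fai{\varphi}^\delta(k_{\{0\}})\simeq k_{]-\delta,\delta[}[1]$ from Example~\ref{Example-6.1 }(\ref{Example-6.1-3}), one sees directly that $\tau^g_{0,\delta}(\F)\neq 0$. The point is that the switch to $g=|\tau|$ lets you test against the skyscraper $k_{\{0\}}$, which is not in $D_{\tau\geq0}(\Real)$. So you have misidentified the hard step: \ref{lem:proprietes-gammag-v} is routine, while \ref{lem:proprietes-gammag-iv} needs a real idea.
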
 
\begin{proof} 
  (\ref{lem:proprietes-gammag-i}) follows from~\eqref{eq:compatibilite-tau} and~(\ref{lem:proprietes-gammag-iii}) follows from the definition of $\gamma_g$.  The triangular
  inequality~(\ref{lem:proprietes-gammag-ii}) is proved in~\cite[Prop. 4.11]{Asano-Ike}.

  \smallskip Let us prove~(\ref{lem:proprietes-gammag-iv}).  By Remark~\ref{rem:gamma_g=gamma_tau} we can assume that $N = M\times\Real$ and $g =
  \tau$.  Let $x_0 \in M$ be given and let $i\colon \{x_0\}\times\Real \to M\times\Real$ be the inclusion.  We have
  $\tau_{a,b}(i^{-1}\F) = i^{-1}(\tau_{a,b}(\F))$ and it follows that $\gamma_g(i^{-1}\F,i^{-1}\G) \leq
  \gamma_g(\F,\G)$. If $\F\not= 0$, there exists $x_0$ such that $i^{-1}(\F) \not=0$.  Hence we can assume that
  $N=\Real$. We can also take $g = |\tau|$ (it is differentiable on $T^*\Real\setminus 0_\Real$) instead
  of $g = \tau$. The corresponding $\tau$ morphisms, say $\tau^g_{a,b}$, coincide with $\tau_{a,b}$ on $D_{\tau\geq
    0}(\Real)$ but they are defined on $D(\Real)$. Up to shifting $\F$ in degrees and translating, we can assume that
  $H^0\F_0 \not= 0$. Hence there exists $u \in H^0(\mathopen]-\varepsilon, \varepsilon[; \F)$ with $u_0 \not=0$.
  Interpreting $u$ as a morphism and choosing a map $v: \F_0 \longrightarrow k$ non-vanishing on the germ defined by $u$, we get that the composition of $v$ and $u$;   $k_{]-\varepsilon,
    \varepsilon[} \to[u] \F \to[v] k_{\{0\}}$  is the canonical morphism. Now the morphisms
  $\tau^g_{0,\delta}(-)$, for $0<\delta< \varepsilon$ give the commutative diagram (using Example~\ref{Example-6.1 } (\ref{Example-6.1-3}) ) where the vertical arrows are given by $\tau^g_{0,\delta}$
  $$
  \xymatrix{
    k_{]-\varepsilon, \varepsilon[} \ar[r]^u \ar[d] & \F \ar[r]^v \ar[d] & k_{\{0\}} \ar[d]_w \\
    k_{]-\varepsilon+\delta, \varepsilon-\delta[} \ar[r] & T_{\delta *}(\F) \ar[r]  & k_{]-\delta,\delta[}[1] \dispdot
  }
  $$
  \CorrectionBlue{ We check that $w\circ v \circ u \not=0$; this then implies that $\tau^g_{0,\delta}(\F) \not=0$, as
    required. Since $w\not=0$, we have a distinguished triangle $k_{]-\delta,\delta[} \to k_{]-\delta,0]}
    \oplus k_{[0,\delta[} \to k_{\{0\}} \to[w] k_{]-\delta,\delta[}[1]$ (in other words $w$ gives a non trivial
    Yoneda extension).  If $w \circ (v\circ u)$ were $0$, then $v\circ u$ would factorize through
    $k_{]-\delta,0]} \oplus k_{[0,\delta[}$. But $\Hom(k_{]-\varepsilon, \varepsilon[}, k_{]-\delta,0]}) =
    \Hom(k_{]-\varepsilon, \varepsilon[}, k_{[0,\delta[}) = 0$ because $\varepsilon>\delta$ and we cannot have
    such a factorization.}    
  
\smallskip
Let us prove~(\ref{lem:proprietes-gammag-v}) (a similar but more detailed argument will be given in the proof
of Lemma~\ref{lem:comparaison-distances}-(i)). If $\tau_{0,a}(\F) =0$, then $\tau_{0,a}(\G) \circ u =
\fai{\varphi}^a(u) \circ \tau_{0,a}(\F) = 0$ and $\tau_{0,a}(\G)$ factorizes through $v$.  In the same way, if
$\tau_{0,b}(\H) =0$, then $\tau_{0,b}(\G)$ factorizes through \CorrectionBlue{$\fai{\varphi}^b(u)$}. Hence
$\tau_{0,a+b}(\G)$ factorizes via $\fai{\varphi}^b(v) \circ \fai{\varphi}^b(u) =0$. We conclude
with~(\ref{lem:proprietes-gammag-iii}).

  We deduce~(\ref{lem:proprietes-gammag-vi}) from~(\ref{lem:proprietes-gammag-v}) since, by the octaedron axiom, the three cones appear in a distinguished triangle.
\end{proof} 

If $N$ is real analytic and $\F,\G$ are constructible and satisfy $\gamma_g(\F,\G) = 0$, then $\F \simeq \G$
(see~\cite{Petit-Schapira-Waas} and also Proposition~\ref{prop:unicite-limite} for limits of constructible sheaves).  However $\gamma_g$ is only a pseudo-metric as shown by Berkouk and Ginot (see \cite{Berkouk-Ginot}, proposition 6.9) and the following example.
\begin{example}
  We define two sheaves on the real line $\F = \bigoplus_{x\in \Q}k_{[x,\infty[}$ and
  $\G = \bigoplus_{x\in \Q^\times}k_{[x,\infty[}$, where
  $\Q^\times = \Q \setminus \{0\}$.  We remark that
  $\fai{\varphi}^t(\F) \simeq T_{t*}(\F)$, where $T_t(x) = x +t$.  For a given
  $\varepsilon >0$ we choose a bijection $f \colon \Q \to \Q^\times$ such that
  $x\leq f(x) \leq x+\varepsilon$, for all $x\in \Q$, and we define
  $u = \bigoplus_{x\in \Q} u_x \colon \F \to \G$,
  $v = \bigoplus_{x\in \Q^\times} v_x\colon \G \to T_{\varepsilon *}\F$ where
  $u_x \colon k_{[x,\infty[} \to k_{[f(x),\infty[}$ and
  $v_x \colon k_{[x,\infty[} \to k_{[f^{-1}(x)+\varepsilon,\infty[}$ are the natural
  morphisms.  Using $u,v$ we see that $\gamma_g(\F,\G) \leq \varepsilon$. Hence
  $\gamma_g(\F,\G) =0$.
\end{example}
\begin{defn} 
  We denote by $\gamma_g$ the topology associated to the pseudo-distance
  $\gamma_g$. In other words a sequence $(\F_j)_{j\geq 1}$ in $D_{g\geq 0}(N)$ $\gamma_g$-converges to $\F$
  if and only if $\lim_j \gamma_g(\F_j, \F)=0$. We may write abusively $\gamma_g-\lim \F_j = \F$
  although $\gamma_g-\lim \F_j$ is not well-defined.
\end{defn} 
\begin{rem}
In view of the Lemma \ref{lem:comparaison-distances}, a sequence $(\F_n)_{n\in\N}$ converges to $\F$ if and only if
there exist a sequence of positive numbers $(\varepsilon_n)_{n\in\N}$ converging to
$0$ and morphisms $u_n \colon \F_n \to \fai{\varphi}^{\varepsilon_n}(\F)$ such that
$\gamma_g(0, \C(u_n))$ converges to $0$.
\end{rem}

Part~(\ref{lem:comparaison-distances-ii}) in the next lemma is analogue to~\cite[Lem.~4.14]{Asano-Ike}.

\begin{lem}\label{lem:comparaison-distances}
  Let $u\colon \F \to \F'$ be a morphism in $D_{g\geq 0}(N)$ and let $\C$ be the cone of $u$.
\begin{enumerate} 
\item \label{lem:comparaison-distances-i} If there exists $v \colon \F' \to \fai{\varphi}^\varepsilon(\F)$ such that
  $v\circ u=\tau_{0,\varepsilon}$ and $\fai{\varphi}^\varepsilon(u) \circ v=\tau_{0,2\varepsilon}$, then $\gamma_g(\C,0) \leq 2\varepsilon$.
\item  \label{lem:comparaison-distances-ii} If $\gamma_g(\C,0) < \varepsilon$, then there exists
  $v \colon \F' \to \fai{\varphi}^{2\varepsilon}(\F)$ such that $v\circ u= \tau_{0,2\varepsilon}$ and
  $\fai{\varphi}^{2\varepsilon}(u) \circ v=\tau_{0,4\varepsilon}$.
  \end{enumerate} 
\end{lem}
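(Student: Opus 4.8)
The plan is to treat the two directions essentially independently, each one being a manipulation with the Tamarkin morphisms $\tau_{a,b}$ and the octahedron/triangulated structure, exactly in the spirit of the proof of Lemma \ref{lem:proprietes-gammag}(\ref{lem:proprietes-gammag-v}). Throughout I will freely use the compatibilities \eqref{eq:compatibilite-tau}, the fact that $\fai{\varphi}^t$ is a (triangulated) autoequivalence commuting with $\tau$, and the characterization $\gamma_g(0,\Hh) = \inf\{t\geq 0 : \tau_{0,t}(\Hh)=0\}$ from Lemma \ref{lem:proprietes-gammag}(\ref{lem:proprietes-gammag-iii}).

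\emph{Direction (i).} Embed $u$ into a distinguished triangle $\F \to[u] \F' \to[p] \C \to[+1]$. Given $v$ with $v\circ u=\tau_{0,\varepsilon}(\F)$, I want to show $\tau_{0,2\varepsilon}(\C)=0$. First, from $v\circ u = \tau_{0,\varepsilon}(\F)$ and the fact that $p\circ u = 0$, together with the hypothesis $\fai{\varphi}^\varepsilon(u)\circ v = \tau_{0,2\varepsilon}(\F')$, one builds a morphism $\C \to \fai{\varphi}^\varepsilon(\F)$ factoring $\tau_{0,\varepsilon}$-type data: precisely, since $\fai{\varphi}^\varepsilon(u)\circ v\circ u = \tau_{0,2\varepsilon}(\F')\circ u = \fai{\varphi}^{2\varepsilon}(u)\circ \tau_{0,2\varepsilon}(\F)$... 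I would instead argue directly that $\tau_{0,\varepsilon}(\C) = p' \circ w$ for some $w\colon \C \to \fai{\varphi}^\varepsilon(\F')$, where $p'=\fai{\varphi}^\varepsilon(p)$, because $p'\circ \fai{\varphi}^\varepsilon(u) = 0$ lets $\tau_{0,\varepsilon}(\C)$ (which satisfies $\tau_{0,\varepsilon}(\C)\circ p = \fai{\varphi}^\varepsilon(p)\circ\tau_{0,\varepsilon}(\F')$ and $\tau_{0,\varepsilon}(\F')$ factors through... ) lift appropriately. Composing with $\fai{\varphi}^\varepsilon$ of the previous step and using $\fai{\varphi}^\varepsilon(u)\circ v = \tau_{0,2\varepsilon}(\F')$ one gets $\tau_{0,2\varepsilon}(\C) = \tau_{0,\varepsilon}(\fai{\varphi}^\varepsilon(\C))\circ\tau_{0,\varepsilon}(\C) = 0$, because the relevant composition passes through $p'\circ\fai{\varphi}^\varepsilon(u)=0$. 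By Lemma \ref{lem:proprietes-gammag}(\ref{lem:proprietes-gammag-iii}) this gives $\gamma_g(0,\C)\leq 2\varepsilon$.

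\emph{Direction (ii).} Now suppose $\gamma_g(\C,0)<\varepsilon$, so by Lemma \ref{lem:proprietes-gammag}(\ref{lem:proprietes-gammag-iii}) we have $\tau_{0,\varepsilon}(\C)=0$. Apply $\fai{\varphi}^{\varepsilon}$ to the triangle and consider $\tau_{0,\varepsilon}(\F') \colon \F' \to \fai{\varphi}^\varepsilon(\F')$. Since $\fai{\varphi}^\varepsilon(p)\circ\tau_{0,\varepsilon}(\F') = \tau_{0,\varepsilon}(\C)\circ p = 0$, the morphism $\tau_{0,\varepsilon}(\F')$ factors as $\fai{\varphi}^\varepsilon(u)\circ v_0$ for some $v_0\colon \F'\to \fai{\varphi}^\varepsilon(\F)$. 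Then $\fai{\varphi}^\varepsilon(u)\circ v_0\circ u = \tau_{0,\varepsilon}(\F')\circ u = \fai{\varphi}^\varepsilon(u)\circ\tau_{0,\varepsilon}(\F)$, so $\fai{\varphi}^\varepsilon(u)\circ(v_0\circ u - \tau_{0,\varepsilon}(\F))=0$; hence $v_0\circ u - \tau_{0,\varepsilon}(\F)$ factors through $\fai{\varphi}^\varepsilon(\C)[-1]\to\fai{\varphi}^\varepsilon(\F)$, and applying $\tau_{0,\varepsilon}$ once more (which kills $\fai{\varphi}^\varepsilon(\C)[-1]$ since $\tau_{0,\varepsilon}(\fai{\varphi}^\varepsilon(\C)[-1]) = \fai{\varphi}^\varepsilon(\tau_{0,\varepsilon}(\C))[-1]=0$) we get that $v := \tau_{\varepsilon,2\varepsilon}(\F)\circ v_0 = \fai{\varphi}^\varepsilon(\tau_{0,\varepsilon}(\F))\circ v_0$ satisfies $v\circ u = \tau_{0,2\varepsilon}(\F)$. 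The identity $\fai{\varphi}^{2\varepsilon}(u)\circ v = \tau_{0,4\varepsilon}(\F')$ follows by the same bookkeeping: $\fai{\varphi}^{2\varepsilon}(u)\circ v = \fai{\varphi}^\varepsilon(\fai{\varphi}^\varepsilon(u)\circ v_0)\circ\tau_{\varepsilon,2\varepsilon}(\F')$-type rearrangement reduces it to $\fai{\varphi}^{2\varepsilon}(\tau_{0,\varepsilon}(\F'))\circ \fai{\varphi}^\varepsilon(\tau_{0,\varepsilon}(\F')) = \tau_{2\varepsilon,4\varepsilon}(\F')\circ\tau_{\varepsilon,2\varepsilon}(\F')$ composed with $\tau_{0,\varepsilon}(\F')$, i.e.\ $\tau_{0,4\varepsilon}(\F')$, using \eqref{eq:compatibilite-tau}.

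\emph{Main obstacle.} The essential difficulty is purely triangulated: the lifts/factorizations through a cone that I invoke are only defined \emph{up to} a term coming from $\C[-1]$ (non-uniqueness of the cone fill-in), so each equality I want — $v\circ u=\tau_{0,2\varepsilon}$ and $\fai{\varphi}^{2\varepsilon}(u)\circ v = \tau_{0,4\varepsilon}$ — holds only after composing with one further $\tau_{0,\varepsilon}$, which is exactly why the statement loses a factor of $2$ in each direction. Getting the indices to line up so that the stray $\C[-1]$-terms are annihilated (using $\tau_{0,\varepsilon}(\C)=0$ and its images under $\fai{\varphi}^t$) is the only place real care is needed; everything else is formal once the triangle $\F\to\F'\to\C\to[+1]$ and \eqref{eq:compatibilite-tau} are on the table.
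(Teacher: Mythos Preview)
Your overall strategy is the same as the paper's: work with the morphism of triangles
\[
\xymatrix{
\F \ar[r]^u \ar[d] & \F' \ar[r]^{p} \ar[d] & \C \ar[r]^{\beta} \ar[d] & \F[1] \ar[d] \\
\fai{\varphi}^\varepsilon(\F) \ar[r] & \fai{\varphi}^\varepsilon(\F') \ar[r]^{p'} & \fai{\varphi}^\varepsilon(\C) \ar[r]^{\beta'} & \fai{\varphi}^\varepsilon(\F)[1]
}
\]
(vertical arrows $\tau_{0,\varepsilon}$) and factor $\tau_{0,\varepsilon}(\C)$ using the two ``diagonals'' $f=\tau_{0,\varepsilon}(\C)\circ p$ and $g=\beta'\circ\tau_{0,\varepsilon}(\C)$. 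However, in part~(i) your justification is mixed up.

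\medskip
\textbf{The gap in (i).} To obtain $\tau_{0,\varepsilon}(\C)=p'\circ w$ you must show $\beta'\circ\tau_{0,\varepsilon}(\C)=0$, i.e.\ $g=0$. The fact ``$p'\circ\fai{\varphi}^\varepsilon(u)=0$'' that you invoke is just the triangle identity and does \emph{not} yield this. What you actually need is the \emph{first} hypothesis $v\circ u=\tau_{0,\varepsilon}(\F)$: then $g=\tau_{0,\varepsilon}(\F[1])\circ\beta=v[1]\circ u[1]\circ\beta=0$ since $u[1]\circ\beta=0$. Conversely, your parenthetical ``$\tau_{0,\varepsilon}(\C)\circ p=p'\circ\tau_{0,\varepsilon}(\F')$ and $\tau_{0,\varepsilon}(\F')$ factors through $\fai{\varphi}^\varepsilon(u)$'' establishes $f=0$ (using the \emph{second} hypothesis), which gives the \emph{other} factorization $\tau_{0,\varepsilon}(\C)=h\circ\beta$; this is the route the paper takes. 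Either route works, but each step uses one of the two hypotheses and you have conflated them. Once this is sorted out your second step is fine: e.g.\ $\tau_{0,2\varepsilon}(\C)=\fai{\varphi}^\varepsilon(h)\circ\fai{\varphi}^\varepsilon(\beta)\circ\tau_{0,\varepsilon}(\C)=\fai{\varphi}^\varepsilon(h)\circ g=0$.

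\medskip
\textbf{Part (ii).} Your argument here is correct and in fact slightly more economical than the paper's: you use only the single factorization $\tau_{0,\varepsilon}(\F')=\fai{\varphi}^\varepsilon(u)\circ v_0$ and set $v=\tau_{\varepsilon,2\varepsilon}(\F)\circ v_0$, whereas the paper produces a second factorization $\tau_{0,\varepsilon}(\F)=w'\circ u$ and sets $v=\fai{\varphi}^\varepsilon(w')\circ\tau_{0,\varepsilon}(\F')$. Your check of $v\circ u=\tau_{0,2\varepsilon}(\F)$ via the $\C[-1]$-error term is correct. For the other composition you are overcomplicating things: by naturality,
\[
\fai{\varphi}^{2\varepsilon}(u)\circ v
=\fai{\varphi}^{2\varepsilon}(u)\circ\tau_{\varepsilon,2\varepsilon}(\F)\circ v_0
=\tau_{\varepsilon,2\varepsilon}(\F')\circ\fai{\varphi}^\varepsilon(u)\circ v_0
=\tau_{\varepsilon,2\varepsilon}(\F')\circ\tau_{0,\varepsilon}(\F')
=\tau_{0,2\varepsilon}(\F'),
\]
and this is what you should obtain (the ``$\tau_{0,4\varepsilon}$'' in the statement is a typo; the map lands in $\fai{\varphi}^{2\varepsilon}(\F')$, so only $\tau_{0,2\varepsilon}$ makes sense).
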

\begin{proof}
  (\ref{lem:comparaison-distances-i}) We consider the morphism of triangles given by the functorial
  morphism~\eqref{eq:morphismetau},
  $$
  \xymatrix{
    \F \ar[r]^u \ar[d] &  \F' \ar[r]^\alpha \ar[d] \ar@{.>}[dl]_v \ar[dr]^f
    &   \C \ar[r]^\beta \ar[d] \ar[dr]^g &  \F[1] \ar[d]  \\
    \fai{\varphi}^\varepsilon(\F) \ar[r] &  \fai{\varphi}^\varepsilon(\F') \ar[r]
    &   \fai{\varphi}^\varepsilon(\C) \ar[r] &  \fai{\varphi}^\varepsilon(\F)[1]   \dispdot
  }
  $$
  Here all small triangles commute (which defines $f$ and $g$). We have
  $f = \fai{\varphi}^\varepsilon(\alpha) \circ \tau_{0,\varepsilon}(\F') =
  \fai{\varphi}^\varepsilon(\alpha \circ u) \circ v = 0$. In the same way $g$ also
  vanishes.  Since $\tau_{0,\varepsilon}(\C) \circ \alpha = f$ vanishes,
  $\tau_{0,\varepsilon}(\C)$ factorizes through $\beta$ as
  $\tau_{0,\varepsilon}(\C) = h \circ \beta$.  Hence
  $\tau_{0,2\varepsilon}(\C) = \fai{\varphi}^\varepsilon(\tau_{0,\varepsilon}(\C))
  \circ \tau_{0,\varepsilon}(\C) = \fai{\varphi}^\varepsilon(h) \circ
  \fai{\varphi}^\varepsilon(\beta) \circ \tau_{0,\varepsilon}(\C) =
  \fai{\varphi}^\varepsilon(h) \circ g$ vanishes, which means
  $\gamma_g(\C,0) \leq 2\varepsilon$.

  \medskip\noindent (\ref{lem:comparaison-distances-ii}) Since $\tau_{0,\varepsilon}(\C) =0$, both morphisms $f$ and
  $g$ vanish.  Since $\fai{\varphi}^\varepsilon(\alpha) \circ \tau_{0,\varepsilon}(\F') =f$ vanishes,
  $\tau_{0,\varepsilon}(\F')$ factorizes as $\tau_{0,\varepsilon}(\F') = \fai{\varphi}^\varepsilon(u) \circ w$ for
  some $w$.

  In the same way, the vanishing of $g$ gives a factorization $\tau_{0,\varepsilon}(\F) = w' \circ u$ but we cannot
  say that $w=w'$.

  We set $v = \fai{\varphi}^\varepsilon(w') \circ \tau_{0,\varepsilon}(\F') \colon \F' \to
  \fai{\varphi}^{2\varepsilon}(\F)$.  In the following computation we omit abusively to write
  $\fai{\varphi}^\varepsilon$ and write $\tau$ for whatever $\tau_{-,-}(-)$.  A diagram chase on the diagram below
  gives $v \circ u = w' \circ \tau \circ u = w' \circ u \circ \tau = \tau \circ \tau = \tau$ and $u \circ v = u \circ
  w' \circ \tau = u \circ w' \circ u \circ w =u \circ \tau \circ w = \tau \circ u \circ w = \tau \circ \tau = \tau$,
  as required.
  $$
  \xymatrix{
    \F \ar[r]^u \ar[d] &  \F' \ar[d] \ar[dl]_{w}^\circlearrowleft  \\
    \fai{\varphi}^\varepsilon(\F) \ar[r]^u \ar[d]
    &  \fai{\varphi}^\varepsilon(\F') \ar[d] \ar[dl]^{w'}_\circlearrowright  \\
 \fai{\varphi}^{2\varepsilon}(\F) \ar[r]_u &  \fai{\varphi}^{2\varepsilon}(\F')
}
$$
\end{proof}

\subsection{Link with spectral invariants}

In this paragraph we work on $N\times\Real$ and choose $g=\tau$.  So we work with the category
$D_{\tau\geq0}(N\times\Real)$ as in~\S\ref{sec:specinv}.  We prove Propositions~\ref{prop:gamma-et-gammag1}
and~\ref{prop:gamma-et-gammag2} which explain the relations between the $\gamma_\tau$-distance and the spectral
invariants.  

We first give a useful variation on the Morse theorem for sheaves.  The following result is mentioned
in~\cite[\S6.1]{nadler2016noncharacteristic}.

\begin{prop}\label{prop:homFtGt_indpdtt}
  Let $M$ be a manifold and $I$ an open interval of $\Real$.  Let $\F \in D(M), \G \in D(M \times I)$ and set $\G_t=
  \G|_{M\times \{t\}}$ for $t \in I$.  We assume
  \begin{enumerate}
  \item\label{prop:homFtGt_indpdtt-i} there exists a compact set $C$ of $M$ such that $\G|_{(M\setminus C)\times I}$ is of the form $\G'
    \boxtimes k_I$ for some $\G' \in D(M\setminus C)$,
  \item\label{prop:homFtGt_indpdtt-ii}  $SS^\bullet(\G) \cap (0_M \times T^*I) = \emptyset$
  \item \label{prop:homFtGt_indpdtt-iii} $(SS^\bullet(\F) \times T^*I) \cap SS^\bullet(\G) = \emptyset$. 
    \end{enumerate}
  Then $\RHom(\F, \G_t)$ does not depend on $t\in I$.
\end{prop}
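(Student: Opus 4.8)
The plan is to reduce the statement to the non-characteristic deformation lemma of Kashiwara--Schapira (\cite[Prop.~2.7.2]{K-S}), applied to the sheaf $\rhom(q_M^{-1}\F, \G)$ on $M\times I$, where $q_M\colon M\times I \to M$ is the projection. First I would set $\Hh = \rhom(q_M^{-1}\F, \G) \in D(M\times I)$ and observe that, by base change and the compatibility of $\rhom$ with restriction to $M\times\{t\}$, one has $\Hh|_{M\times\{t\}} \simeq \rhom(\F, \G_t)$, hence $\rsect(M; \Hh|_{M\times\{t\}}) \simeq \RHom(\F, \G_t)$. So it suffices to show that $\rsect(M\times\{t\}; \Hh)$ is independent of $t$, and for this I would apply the deformation lemma along the family of half-spaces $\{M\times ]{-\infty},t]\}$ (or rather, since $I$ is an interval, the sets $Z_t = M\times (I\cap ]{-\infty},t])$), using assumption~(\ref{prop:homFtGt_indpdtt-i}) to handle properness of the projection $M\times I \to I$ restricted to $\supp(\Hh)$ off the compact set $C$.

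The two hypotheses to feed the deformation lemma are: a support/properness condition and a non-characteristic condition on the ``moving walls'' $M\times\{t\}$. For properness: outside $C\times I$ the sheaf $\G$, hence $\Hh$, is a constant family $\Hh'\boxtimes k_I$ in the $I$-direction, so $\supp(\Hh)\cap((M\setminus C)\times I)$ is a product and the projection to $I$ is proper there (it is just $(M\setminus C)\times I \to I$ composed appropriately — more precisely the relevant claim is that $\rsect$ over the closed sets $Z_t$ behaves continuously, and the constant-at-infinity condition is exactly what is needed, cf.\ the hypotheses of \cite[Prop.~2.7.2]{K-S}). For the non-characteristic condition, I must check that the conormal $T^*_{M\times\{t\}}(M\times I)$, which is $0_M\times T^*_{\{t\}}I = \{(x,0;t,\tau):\tau\in\Real\}$, meets $SS(\Hh)$ only in the zero section. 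By the standard estimate on the microsupport of $\rhom$, $SS^\bullet(\Hh) \subset SS(q_M^{-1}\F)^a \widehat{+} SS(\G)$, i.e.\ it lies in $\{(x,-\eta+\zeta_M; t, \tau) : (x;\eta)\in SS(\F)\cup 0_M,\ (x;\zeta_M,\ (t;\tau))\in SS(\G)\}$ (with appropriate conic/cone closure). A point of $SS^\bullet(\Hh)$ over the conormal of $M\times\{t\}$ would have $M$-component of the covector equal to zero; this forces either a point of $SS^\bullet(\G)$ with zero $M$-component — excluded by~(\ref{prop:homFtGt_indpdtt-ii}) — or a cancellation $\zeta_M = \eta$ with $(x;\eta)\in SS^\bullet(\F)$, which together with the $I$-component gives a point of $(SS^\bullet(\F)\times T^*I)\cap SS^\bullet(\G)$ — excluded by~(\ref{prop:homFtGt_indpdtt-iii}). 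Thus $SS(\Hh)$ over each wall $M\times\{t\}$ sits in the zero section, which is precisely the non-characteristic hypothesis.

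With both inputs verified, the non-characteristic deformation lemma yields $\rsect(Z_t;\Hh) \isoto \rsect(Z_{t'};\Hh)$ for $t\le t'$, and a symmetric argument (deforming from the other side) gives that $\rsect(M\times\{t\};\Hh)$ is independent of $t$; combined with the identification $\rsect(M\times\{t\};\Hh)\simeq\RHom(\F,\G_t)$ this is the claim. The main obstacle I anticipate is bookkeeping the microsupport estimate for $\rhom(q_M^{-1}\F,\G)$ carefully enough to see that hypotheses (\ref{prop:homFtGt_indpdtt-ii}) and (\ref{prop:homFtGt_indpdtt-iii}) together cover \emph{all} ways a covector in $SS(\Hh)$ could become purely in the $I$-direction — in particular handling the closure operations in $SS(q_M^{-1}\F)\widehat{+}SS(\G)$ and the points of $SS(\F)$ lying on the zero section — and making sure the properness/constancy-at-infinity condition is stated in exactly the form required by \cite[Prop.~2.7.2]{K-S}. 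Everything else is a routine application of standard six-functor and microsupport estimates from \cite{K-S}.
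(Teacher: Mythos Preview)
Your setup and the core microsupport computation match the paper: both form $\Hh = \rhom(\F \boxtimes k_I, \G)$ and verify, using hypotheses~(\ref{prop:homFtGt_indpdtt-ii}) and~(\ref{prop:homFtGt_indpdtt-iii}), that $SS(\Hh) \cap (0_M \times T^*I) \subset 0_{M \times I}$. The difference is in how you conclude. You propose the non-characteristic deformation lemma on half-spaces $Z_t = M\times (I\cap\,]-\infty,t])$ and then a ``symmetric argument'' to pass to slices; the paper instead pushes forward along the projection $q\colon M\times I \to I$, uses hypothesis~(\ref{prop:homFtGt_indpdtt-i}) to justify the base change $\rsect(M; i_t^{-1}\Hh) \simeq (Rq_*\Hh)_t$, and then notes that the microsupport bound forces $SS(Rq_*\Hh) \subset 0_I$, so $Rq_*\Hh$ is locally constant. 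The paper's route is more direct: your jump from ``$\rsect(Z_t;\Hh)$ constant in $t$'' to ``$\rsect(M; i_t^{-1}\Hh)$ constant in $t$'' is not automatic, and making it precise (shrinking intervals to a point and commuting sections with the limit) is essentially the base-change-plus-local-constancy argument the paper writes down.

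One further point where you are too quick: the identification $\Hh|_{M\times\{t\}} \simeq \rhom(\F, \G_t)$ is not a generic base-change fact for $\rhom$. The paper obtains it via the adjunction $i_t^!\rhom(-,-) \simeq \rhom(i_t^{-1}(-), i_t^!(-))$ together with $i_t^{-1}\G \simeq i_t^!\G[1]$ and $i_t^{-1}\Hh \simeq i_t^!\Hh[1]$, both of which come from \cite[Prop.~5.4.13]{K-S} using precisely the non-characteristic condition you established. So the microsupport bound does double duty, and you should invoke it explicitly for this identification as well, not only for the deformation step.
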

\begin{proof}
  We let $i_t \colon M \times \{t\} \to M\times I$, $t\in I$, be the inclusion.  We set $\H = \rhom(\F\boxtimes
  k_I,\G)$.  By~\cite[Prop.~5.4.14]{K-S},  condition~(\ref{prop:homFtGt_indpdtt-iii}) gives $SS(\H) \subset \Lambda := (SS(\F) \times 0_I)^a +
  SS(\G)$.  We can see moreover that $\Lambda$ is also non-characteristic for all $i_t$ (that is, $\Lambda \cap (0_M
  \times T^*I) \subset 0_{M\times I}$).  Using condition (\ref{prop:homFtGt_indpdtt-ii}) $i_t^{-1}\G \simeq i_t^!\G [1]$ and $i_t^{-1}\H \simeq i_t^!\H [1]$
  by~\cite[Prop.~5.4.13]{K-S} and we deduce $\rhom(\F,\G_t) \simeq i_t^{-1}\H$ by the adjunction formula
  $i_t^!\rhom(-,-) \simeq \rhom(i_t^{-1}(-), i_t^!(-))$. Hence $\RHom(\F, \G_t) \simeq \rsect(M; i_t^{-1}\H)$.
  By the condition (\ref{prop:homFtGt_indpdtt-i}) $\H|_{(M\setminus C)\times I}$ is of the form $\H' \boxtimes k_I$, for some $\H' \in
  D(M\setminus C)$, and we can use the base change formula and the bound for the singular support of a direct image.
  Hence $\rsect(M; i_t^{-1}\H) \simeq (Rq_*(\H))_t$, where $q \colon M \times I \to I$ is the projection and it is
  enough to check that $Rq_*(\H)$ is constant.  Since $\Lambda \cap (0_M \times T^*I) \subset 0_{M\times I}$, we have
  $SS(Rq_*(\H)) \subset 0_{I}$ and the result follows.
\end{proof}
\begin{cor}\label{cor:restriction-a-infini}
  Let $\F \in D_{\tau\geq 0}(N\times\Real)$ and $t_0$
  satisfy~\eqref{eq:hyp_faisc_infini}.  We assume that $SS(\F) \cap SS(T_{c*}(\F)) \subset 0_{N\times\Real}$ for any
  $c\not=0$.  Then the restriction map
  $$
  \RHom(\F, T_{c*}(\F)) \to \RHom(\F|_U, T_{c*}(\F)|_U) \simeq \RHom(k_U, k_U) \simeq \rsect(N; k_N)
  $$
  is an isomorphism for any $c> 0$, where $U = N\times ]c+t_0,+\infty[$.  In particular, if $u \colon \F \to
  T_{c*}(\F)$ is $\id$ near $+\infty$, then $u = \tau_{0,c}(\F)$.
\end{cor}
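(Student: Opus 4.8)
The plan is to deduce Corollary~\ref{cor:restriction-a-infini} from Proposition~\ref{prop:homFtGt_indpdtt} by manufacturing a suitable one-parameter family $\G$ on $(N\times\Real)\times I$ interpolating between $\F$ and $T_{c*}(\F)$, and then reading off the final sentence from the isomorphism statement. First I would fix $c>0$ and consider, for $t$ in a neighbourhood $I$ of $[0,c]$, the sheaf $\G$ on $(N\times\Real)\times I$ whose restriction to $(N\times\Real)\times\{t\}$ is $T_{t*}(\F)$; concretely $\G = (T\times\id_I)^{-1}\F$ where $T\colon (N\times\Real)\times I \to N\times\Real$, $T(x,s,t)=(x,s+t)$ — or rather $(x,s,t)\mapsto(x,s-t)$ so that the fibre over $t$ is $T_{t*}(\F)$. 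Then $\G_t = T_{t*}(\F)$ and I need to check the three hypotheses of Proposition~\ref{prop:homFtGt_indpdtt} with $M = N\times\Real$, the ambient manifold there, and with the role of ``$\F$'' in that proposition played by our $\F$.

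Next I would verify the hypotheses. Hypothesis~(\ref{prop:homFtGt_indpdtt-i}): by~\eqref{eq:hyp_faisc_infini}, $\F$ is $0$ near $N\times\{-\infty\}$ and $k_{N\times\Real}$ near $N\times\{+\infty\}$; after translating by $t\in I$ the sheaf $\G$ still has this product form outside a compact set of $N\times\Real$ (the compact set is $N\times[-t_0',t_0']$ for $t_0'$ slightly larger than $t_0+\sup I$), because on the two half-lines $\F$ is pulled back from $N$. Hypothesis~(\ref{prop:homFtGt_indpdtt-ii}): $SS^\bullet(\G)$ is the image of $SS^\bullet(\F)\times 0_I$ under the lift of the translation, so a point of it has the form $(x,s,t;\xi,\sigma,\sigma)$ with $(x,s;\xi,\sigma)\in SS^\bullet(\F)$; its component in $0_M\times T^*I$ would force $(\xi,\sigma)=0$, contradicting that we are off the zero section — so indeed $SS^\bullet(\G)\cap(0_M\times T^*I)=\emptyset$. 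Hypothesis~(\ref{prop:homFtGt_indpdtt-iii}) is exactly where the standing assumption $SS(\F)\cap SS(T_{c*}(\F))\subset 0_{N\times\Real}$ for all $c\neq 0$ enters: a point of $(SS^\bullet(\F)\times T^*I)\cap SS^\bullet(\G)$ gives a covector $(x,s;\xi,\sigma)$ lying in both $SS^\bullet(\F)$ and $SS^\bullet(T_{t*}(\F))$ for some $t$; if $t\neq 0$ this is empty by hypothesis, and one must separately note that $t=0$ is allowed — but for $t=0$ the family is ``constant at that instant'' and contributes nothing to the variation. I should be slightly careful here: the conclusion of Proposition~\ref{prop:homFtGt_indpdtt} as stated needs~(\ref{prop:homFtGt_indpdtt-iii}) for the whole family; so I would take $I$ to avoid $0$, prove constancy of $\RHom(\F,\G_t)$ on $(0,\sup I)$, and then let $c\to 0^+$ using that at $c=0$, $\RHom(\F,\F)$ already has the right restriction (or simply start the interval at a point $c_1\in(0,c)$ and use continuity/constancy to connect to $c$, noting $\RHom(\F,T_{c_1*}\F)$ still restricts to $\rsect(N;k_N)$ for $c_1$ small).

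Having established that $t\mapsto \RHom(\F,T_{t*}(\F))$ is constant on the relevant interval, I would identify the value: the restriction map $\RHom(\F,T_{c*}(\F))\to\RHom(\F|_U,T_{c*}(\F)|_U)$ with $U=N\times\,]c+t_0,+\infty[$ is, by the constancy, the same as the restriction for $t$ near $0$ — and for $t$ near $0$ this map is an isomorphism because both $\F$ and $T_{t*}(\F)$ are $k_{N\times\Real}$ over $U$ and the ``Morse-theoretic'' argument (the family being non-characteristic for the evaluation maps, as checked above) shows nothing is lost by restricting. More precisely the proof of Proposition~\ref{prop:homFtGt_indpdtt} shows $\RHom(\F,\G_t)\simeq(Rq_*\H)_t$ with $Rq_*\H$ constant, and the restriction to $U$ is computed by the product piece $\H'\boxtimes k_I$, giving $\RHom(k_U,k_U)\simeq\rsect(N;k_N)$; the constancy identifies the global $\RHom$ with this restriction, so the restriction map is an isomorphism. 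For the last sentence: if $u\colon\F\to T_{c*}(\F)$ restricts to $\id$ near $+\infty$, then under the isomorphism just proved $u$ and $\tau_{0,c}(\F)$ have the same image (namely $1\in\rsect(N;k_N)$, since $\tau_{0,c}$ is also $\id$ near $+\infty$ by its construction in~\eqref{eq:tau-version1}), hence $u=\tau_{0,c}(\F)$. The main obstacle I anticipate is the bookkeeping at the endpoint $t=0$ (ensuring hypothesis~(\ref{prop:homFtGt_indpdtt-iii}) and the conclusion genuinely apply on a half-open interval reaching down toward $0$, or equivalently connecting the value at a small $c_1>0$ to the desired $c$), together with making fully precise the claim that ``$\id$ near $+\infty$'' pins down the class in $\rsect(N;k_N)\cong\RHom(k_U,k_U)$ — i.e. that the degree-$0$ part of $\rsect(N;k_N)$ injects via the relevant restriction, which is where one uses that $U$ retracts appropriately and $\F|_U$ is constant.
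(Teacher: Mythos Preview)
Your overall strategy matches the paper's: build $\G$ on $(N\times\Real)\times I$ via pullback along $(x,s,t)\mapsto(x,s-t)$ so that $\G_t=T_{t*}(\F)$, then apply Proposition~\ref{prop:homFtGt_indpdtt}. Your verification of hypotheses~(\ref{prop:homFtGt_indpdtt-i})--(\ref{prop:homFtGt_indpdtt-iii}) is correct, and your observation that $t=0$ must be excluded is exactly what the paper does: it simply takes $I=\mathopen]0,d]$ for some $d>0$. So your ``main obstacle'' about the endpoint $t=0$ dissolves---there is no limiting procedure toward $0$ needed, only constancy on $\mathopen]0,d]$.

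Where your argument goes wrong is in the identification of the restriction map as an isomorphism. You try to go toward \emph{small} $t$ (``the restriction for $t$ near $0$''), but for small $t$ there is no reason the restriction $\RHom(\F,T_{t*}\F)\to\RHom(\F|_U,T_{t*}\F|_U)$ is an isomorphism: the interesting part of $T_{t*}\F$ still lies outside $U$. Your ``more precisely'' paragraph invokes the product piece $\H'\boxtimes k_I$ from the proof of Proposition~\ref{prop:homFtGt_indpdtt}, but that piece is only used there to justify base change for $Rq_*$; it does not compute the restriction map to $U$.

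The paper goes in the opposite direction: having shown $\RHom(\F,T_{c*}\F)\simeq\RHom(\F,T_{d*}\F)$ for all $0<c\le d$, one takes $d$ \emph{large} (say $d>c+2t_0$). Then $T_{d*}\F$ vanishes on the complement of $U=N\times\mathopen]c+t_0,+\infty[$, so $\rhom(\F,T_{d*}\F)$ is supported in $U$, where $\F|_U=k_U$; this immediately identifies $\RHom(\F,T_{d*}\F)$ with $\rsect(N\times\Real;T_{d*}\F)\simeq\rsect(N;k_N)$ and makes the compatibility with restriction to $U$ transparent. The last sentence about $u=\tau_{0,c}(\F)$ then follows as you indicate.
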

When $SS^\bullet(\F)$ is the cone over a Legendrian, the hypothesis means that there is no Reeb chord.  In particular
the hypothesis is satisfied for the sheaves $\F_L$, $L \in \LL(T^*N)$.
\begin{proof}
  We are going to apply Proposition \ref{prop:homFtGt_indpdtt} with $M=N\times \Real$,  $I = ]0,d]$ for $d>0$ and  $\G = \delta^{-1}(\F)$, where $\delta \colon N\times\Real^2 \to N\times\Real$ is the map $$(x,t_1,t_2)
  \mapsto (x,t_1-t_2)$$ so that $\G|_{N\times\Real\times \{c\}} \simeq T_{c*}(\F)$.  
  
  For $a<b$, $\G$ is constant on $N
  \times \mathopen]-\infty,a-t_0\mathclose[ \times [a,b]$ and $N \times \mathopen]b+t_0, +\infty\mathclose[ \times [a,b]$. Hence $\G$ satisfies~(\ref{prop:homFtGt_indpdtt-i}) in
  Proposition~\ref{prop:homFtGt_indpdtt}.  It is not difficult
  to check~(\ref{prop:homFtGt_indpdtt-ii}) since its singular support is contained in the conormal of $t_1=t_2$ that is $\tau_1+\tau_2=0$. For ~(\ref{prop:homFtGt_indpdtt-iii}) it follows from the hypothesis on $SS(\F)$.  The proposition gives $\RHom(\F, T_{c*}(\F))
  \simeq \RHom(\F, T_{d*}(\F))$, for any $0< c \leq d$.
\end{proof}

\begin{prop}\label{prop:gamma-et-gammag1}
  Let $L_1, L_2 \in \LL(T^*N)$.  Then
  $$
\gamma_\tau(\F_{L_1}, \F_{L_2}) =
\begin{cases}
  c_+(L_1,L_2) & \text{if $c_-(L_1,L_2) >0$,} \\
  -c_-(L_1,L_2) & \text{if $c_+(L_1,L_2) <0$,} \\
  c_+(L_1,L_2) -  c_-(L_1,L_2) \quad & \text{else.}
\end{cases}
$$  
\end{prop}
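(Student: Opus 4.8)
The plan is to express everything in terms of the functor $\H=Rt_*\homst(\F_{L_1},\F_{L_2})\in D_{\tau\geq0}(\Real)$ introduced after~\eqref{eq:homcstar-Hom}, and then to translate both sides of the claimed identity into statements about the morphisms $\tau_{a,b}(\H)\colon T_{a*}\H\to T_{b*}\H$. By Remark~\ref{rem:gamma_g=gamma_tau} and the fact that $K_\varphi(-)$ is fully faithful, I can work throughout in $D_{\tau\geq0}(N\times\Real)$; moreover, since for $g=\tau$ we have $\fai{\varphi}^a(-)\simeq T_{a*}(-)$, the morphisms $v\colon\G\to\fai{\varphi}^b(\F)$ in Definition~\ref{def:dist-gammag} are precisely morphisms $\G\to T_{b*}\F$. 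The key computational input is Corollary~\ref{cor:restriction-a-infini}: for the sheaves $\F_L$ there are no Reeb chords, so a morphism $\F_{L_1}\to T_{c*}(\F_{L_1})$ that is the identity near $+\infty$ is forced to be $\tau_{0,c}(\F_{L_1})$, and similarly for $\F_{L_2}$. This rigidity is what makes the conditions ``$\fai{\varphi}^a(v)\circ u=\tau_{0,a+b}(\F)$'' essentially automatic once one knows the behaviour near $+\infty$.

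First I would unwind the definition of $c(\alpha,\F_{L_1},\F_{L_2})$: a morphism $u\colon\F_{L_1}\to T_{c*}(\F_{L_2})[d]$ which is $\alpha$ near $+\infty$ exists iff $c\geq-c(\alpha,L_1,L_2)$, and dually $v\colon\F_{L_2}\to T_{b*}(\F_{L_1})$ which is $\id$ near $+\infty$ exists iff $b\geq c_+(L_1,L_2)=-c_-(L_2,L_1)$, using $c_+(\F_1,\F_2)=-c_-(\F_2,\F_1)$. So the smallest $a$ for which $u\colon\F_{L_1}\to T_{a*}(\F_{L_2})$ is $\id$ at $+\infty$ is $a_0:=-c_-(L_1,L_2)$ if this is $\geq0$ and $0$ otherwise, and the smallest $b$ for which $v\colon\F_{L_2}\to T_{b*}(\F_{L_1})$ is $\id$ at $+\infty$ is $b_0:=c_+(L_1,L_2)$ if this is $\geq0$ and $0$ otherwise. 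Given such $u,v$ with $a\geq a_0$, $b\geq b_0$, the composite $\fai{\varphi}^a(v)\circ u\colon\F_{L_1}\to T_{(a+b)*}(\F_{L_1})$ is $\id$ near $+\infty$, hence by Corollary~\ref{cor:restriction-a-infini} equals $\tau_{0,a+b}(\F_{L_1})$; symmetrically $\fai{\varphi}^b(u)\circ v=\tau_{0,a+b}(\F_{L_2})$. Therefore such a pair $(u,v)$ realizes $\gamma_\tau(\F_{L_1},\F_{L_2})\leq a+b$, and minimizing $a+b$ over $a\geq a_0$, $b\geq b_0$ gives $\gamma_\tau(\F_{L_1},\F_{L_2})\leq a_0+b_0$, which is exactly the right-hand side in all three cases (in the ``else'' case $c_-\leq0\leq c_+$ one gets $a_0+b_0=c_+-c_-$; if $c_->0$ then $a_0=0$, $b_0=c_+$, giving $c_+$; if $c_+<0$ then $b_0=0$, $a_0=-c_-$, giving $-c_-$).

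For the reverse inequality $\gamma_\tau(\F_{L_1},\F_{L_2})\geq a_0+b_0$, suppose $a,b\geq0$ with $a+b<a_0+b_0$ and morphisms $u\colon\F_{L_1}\to T_{a*}(\F_{L_2})$, $v\colon\F_{L_2}\to T_{b*}(\F_{L_1})$ satisfy the compatibility with the Tamarkin morphisms. Restricting to $N\times]t_0,+\infty[$, where all sheaves are $k_{N\times\Real}$ and the Tamarkin morphisms restrict to $\id$, the identities $\fai{\varphi}^a(v)\circ u=\tau_{0,a+b}(\F_{L_1})$ and $\fai{\varphi}^b(u)\circ v=\tau_{0,a+b}(\F_{L_2})$ force $u|_U$ and $v|_U$ to be mutually inverse, hence both equal to $\id$ near $+\infty$ (they are endomorphisms of $k_N$ whose product is $\id$, so invertible; and one can normalize/compose to make them $\id$, or argue directly that $c(1,-)$ is what matters). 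But then $a\geq a_0$ and $b\geq b_0$ by the definition of the spectral invariants recalled above, contradicting $a+b<a_0+b_0$. Hence $\gamma_\tau(\F_{L_1},\F_{L_2})\geq a_0+b_0$, completing the proof.

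The main obstacle I anticipate is the bookkeeping in the reverse inequality: one must be careful that the morphisms $u,v$ in the definition of $\gamma_\tau$ are only required to be compatible with $\tau$ and are not a priori ``$\id$ at infinity'', so one has to extract the normalization from the relations $\fai{\varphi}^a(v)\circ u=\tau$ and $\fai{\varphi}^b(u)\circ v=\tau$ by restricting to $U=N\times]t_0,+\infty[$ and invoking $\End(k_N)$-invertibility together with Corollary~\ref{cor:restriction-a-infini}. A secondary subtlety is handling the three cases of the sign of $c_\pm$ uniformly; the cleanest route is to treat $a_0=\max(0,-c_-(L_1,L_2))$ and $b_0=\max(0,c_+(L_1,L_2))$ once and for all and observe $a_0+b_0$ matches each branch by inspection, using $c_-(L_1,L_2)\leq c_+(L_1,L_2)$ which excludes the fourth sign combination.
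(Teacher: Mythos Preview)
Your proposal is correct and follows essentially the same route as the paper: for the upper bound you produce $u,v$ that are $\id$ near $+\infty$ and invoke Corollary~\ref{cor:restriction-a-infini} to identify the composites with $\tau_{0,a+b}$, and for the lower bound you restrict to $N\times\mathopen]t_0,+\infty[$, use $\Hom(k_{N\times\Real},k_{N\times\Real})\simeq k$ to write $u|_U=\lambda_u\id$, $v|_U=\lambda_v\id$ with $\lambda_u\lambda_v=1$, then normalize and conclude $a\geq\max\{0,-c_-\}$, $b\geq\max\{0,c_+\}$. Your packaging via $a_0=\max(0,-c_-)$, $b_0=\max(0,c_+)$ is a clean way to unify the three cases, and your parenthetical remark correctly identifies that ``mutually inverse'' only gives invertible scalars, which one then rescales to $\id$; note that Corollary~\ref{cor:restriction-a-infini} is used only in the upper-bound direction, not in the lower bound.
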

\begin{proof}
  We recall that $\F_{L_i} \simeq k_{N\times\Real}$ near $+\infty$ and that $-c_-(L_1,L_2)$ is the infimum on the $a$
  such that there exists $u\colon \F_{L_1} \to T_{a*}(\F_{L_2})$ restricting to $\id_{k_{N\times\Real}}$ near
  $+\infty$.  Similarly $c_+(L_1,L_2)$ is the infimum on the $b$ such that there exists $v\colon \F_{L_2} \to
  T_{b*}(\F_{L_1})$ restricting to $\id_{k_{N\times\Real}}$ near $+\infty$.  We then have $T_{a*}(v) \circ u =
  \tau_{0,a+b}(\F_{L_1})$ and $T_{b*}(u) \circ v = \tau_{0,a+b}(\F_{L_2})$ by Corollary~\ref{cor:restriction-a-infini}.
  Hence $u$, $v$ satisfy Definition~\ref{def:dist-gammag}, except that here $a,b$ may be negative.  Assuming moreover
  $a,b\geq 0$ we find that $\gamma_\tau(\F_{L_1}, \F_{L_2})$ is less than the right hand side of the equality in the
  statement.
  
  Conversely, if $\gamma_\tau(\F_{L_1}, \F_{L_2}) <c$, there exists $a,b,u,v$ as above with $a+b = c$, $a,b\geq0$ but
  $u$, $v$ only satisfy the condition on the composition.  Near $+\infty$ we only know that they are $u = \lambda_u
  \id_{k_{N\times\Real}}$, $v = \lambda_v \id_{k_{N\times\Real}}$ (indeed $\Hom(k_{N\times\Real}, k_{N\times\Real})
  \simeq k$).  Since $T_{a*}(v) \circ u = \tau_{0,a+b}(\F_{L_1})$, we have $\lambda_u \lambda_v =1$.  Hence
  $\lambda_u$, $\lambda_v$ are not zero and, replacing $u$, $v$ by a non zero multiple, we can assume that
  $u=v=\id_{k_{N\times\Real}}$ near $+\infty$.  We deduce $a \geq \max\{-c_-(L_1,L_2),0\}$, $b \geq
  \max\{c_+(L_1,L_2),0\}$. 
\end{proof}
\CorrectionRed{
\begin{rem} \label{Rem-6.13}
Note that in \cite{Viterbo-gammas} the metric on $\LL(T^*N)$ is  defined by $c(L_1,L_2)= \max\{0, c_+(L_1,L_2)\}- \min\{0,  c_-(L_1,L_2)\} $. We thus get 
 \end{rem} 
\begin{prop}\label{Prop-6.13} The map $L \mapsto \F_L$ is then an isometric embedding. 
\end{prop} 
}

The proof of Proposition~\ref{prop:gamma-et-gammag1} works exactly the same way for Hamiltonian maps, replacing the
condition ``$\F_{L_i}\simeq k_{N\times\Real}$ near $+\infty$'' by ``$\K_\varphi \simeq k_{\Delta_N \times
  [0,+\infty[}$ near $+\infty$''. Since we know that $c_-(\varphi)\leq 0 \leq c_+(\varphi)$ by \cite{Viterbo-STAGGF}
we obtain the following statement:

\begin{prop}\label{prop:gamma-et-gammag2}
  Let $\varphi_1$, $\varphi_2$ be compactly supported in a \CorrectionRed{Darboux chart} Hamiltonian maps of $T^*N$.  Then $\gamma_\tau(\K_{\varphi_1},
  \K_{\varphi_2}) = \gamma(\varphi_1, \varphi_2)$.
\end{prop}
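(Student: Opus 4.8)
The plan is to repeat, essentially verbatim, the argument proving Proposition~\ref{prop:gamma-et-gammag1}, with $\F_{L_i}$ replaced by $\K_{\varphi_i}$ and the germ condition ``$\F_{L_i}\simeq k_{N\times\Real}$ near $+\infty$'' replaced by ``$\K_{\varphi_i}\simeq k_{\Delta_N\times[0,+\infty[}$ near $+\infty$'' (Section~\ref{sec:qhi}). First I would reduce to the case $\varphi_2=\id$. The kernel $\K_{\varphi_2^{-1}}$ is an inverse of $\K_{\varphi_2}$ for the composition of kernels (convolved in the $\Real$ factor), so composing with $\K_{\varphi_2^{-1}}$ is an auto-equivalence of the relevant subcategory of $D_{\tau\geq0}(N^2\times\Real)$, with inverse $-\circ\K_{\varphi_2}$; it commutes with $T_{c*}$ and hence with the Tamarkin morphisms $\tau_{0,c}$, so it preserves $\gamma_\tau$ and sends the pair $(\K_{\varphi_1},\K_{\varphi_2})$ to $(\K_{\varphi_1}\circ\K_{\varphi_2^{-1}},\K_{\varphi_2}\circ\K_{\varphi_2^{-1}})=(\K_{\varphi_1\varphi_2^{-1}},\K_\id)$. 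Since $\gamma(\varphi_1,\varphi_2)=\gamma(\varphi_1\varphi_2^{-1})$ by Definition~\ref{def:inv-spec-pour-Hamiltonian}, it then suffices to prove $\gamma_\tau(\K_\varphi,\K_\id)=\gamma(\varphi)$ for a single compactly supported $\varphi$.

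Next I would run the proof of Proposition~\ref{prop:gamma-et-gammag1} for the pair $(\K_\varphi,\K_\id)$. Using the duality behind $c_+(\varphi)=-c_-(\varphi^{-1})$ together with Lemma~\ref{Lemma-5.5} (and its translated versions), the number $-c_-(\varphi)$ is the infimum of the $b$ for which there is a morphism $v\colon\K_\id\to T_{b*}\K_\varphi$ equal to $\id$ near $+\infty$, and $c_+(\varphi)$ is the infimum of the $a$ for which there is a morphism $u\colon\K_\varphi\to T_{a*}\K_\id$ equal to $\id$ near $+\infty$. Since $\K_\varphi$ agrees with $\K_\id$ outside a compact set of $N^2\times\Real$ we have $c_-(\varphi)\leq0\leq c_+(\varphi)$, so both infima are $\geq0$. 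For $\gamma_\tau(\K_\varphi,\K_\id)\leq\gamma(\varphi)$ one feeds such $u$ (with $a$ close to $c_+(\varphi)$) and $v$ (with $b$ close to $-c_-(\varphi)$), both $a,b\geq0$, into Definition~\ref{def:dist-gammag} and checks that the composites $T_{a*}(v)\circ u\colon\K_\varphi\to T_{(a+b)*}\K_\varphi$ and $T_{b*}(u)\circ v\colon\K_\id\to T_{(a+b)*}\K_\id$, being the identity near $+\infty$, equal $\tau_{0,a+b}(\K_\varphi)$ and $\tau_{0,a+b}(\K_\id)$ respectively; for $\K_\id=k_{\Delta_N\times[0,+\infty[}$ this is immediate because $\Hom(k_{\Delta_N\times\Real},k_{\Delta_N\times\Real})\simeq k$, while for $\K_\varphi$ it is the analogue of Corollary~\ref{cor:restriction-a-infini} discussed below. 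Conversely, if $\gamma_\tau(\K_\varphi,\K_\id)<c=a+b$ with witnesses $u,v$, then near $+\infty$ they are scalar multiples of $\id$, the composition condition forces the product of the two scalars to be $1$, so after rescaling $u,v$ are the identity near $+\infty$, whence $a\geq c_+(\varphi)$ and $b\geq-c_-(\varphi)$ and $c\geq\gamma(\varphi)$. This gives $\gamma_\tau(\K_\varphi,\K_\id)=c_+(\varphi)-c_-(\varphi)=\gamma(\varphi)$; equivalently, it is exactly the third (``else'') case of the $(\K_\varphi,\K_\id)$-version of Proposition~\ref{prop:gamma-et-gammag1}, which is the relevant one precisely because $c_-(\varphi)\leq0\leq c_+(\varphi)$.

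The one step requiring genuine care, and the point where the kernels $\K_\varphi$ behave slightly differently from the sheaves $\F_L$, is the analogue of Corollary~\ref{cor:restriction-a-infini} for $\K_\varphi$: that a morphism $\K_\varphi\to T_{c*}\K_\varphi$ with $c>0$ which is the identity near $+\infty$ must be $\tau_{0,c}(\K_\varphi)$. Its proof copies that of Corollary~\ref{cor:restriction-a-infini} through the deformation Proposition~\ref{prop:homFtGt_indpdtt}, but the non-characteristic hypothesis is not met literally: recall that $SS^\bullet(\K_\varphi)=\widehat\Lambda_\varphi\cup\{(q,p,q,-p,t,0)\mid t\geq0\}$, so $SS(\K_\varphi)\cap SS(T_{c*}\K_\varphi)$ contains points with $\tau=0$. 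One handles this by observing that $\widehat\Lambda_\varphi$, being the conification of the graph of $\varphi$ with its single-valued primitive $F$, carries no nontrivial Reeb chords, so over the region $\{\tau>0\}$ the hypothesis of Corollary~\ref{cor:restriction-a-infini} does hold, while the offending $\{\tau=0\}$ component is exactly the one already present for $\K_\id$ and, being concentrated on $\{\tau=0\}$, contributes nothing to the direct-image computation of $\RHom(\K_\varphi,T_{c*}\K_\varphi)$ carried out at the end of that argument. I expect this verification to be the main obstacle; everything else is a transcription of the material in Sections~\ref{sec:specinv}--\ref{sec:Gtopology}.
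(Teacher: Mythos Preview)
Your proposal follows exactly the paper's approach: the paper's entire proof is the sentence that the argument for Proposition~\ref{prop:gamma-et-gammag1} ``works exactly the same way'' after replacing $\F_{L_i}\simeq k_{N\times\Real}$ near $+\infty$ by $\K_{\varphi}\simeq k_{\Delta_N\times[0,+\infty[}$ near $+\infty$, and then using $c_-(\varphi)\leq 0\leq c_+(\varphi)$ to land in the ``else'' case. Your reduction to $\varphi_2=\id$ is the natural reading of this given that $\gamma(\varphi_1,\varphi_2)=\gamma(\varphi_1\varphi_2^{-1})$ and that $c_\pm$ is only defined for a single map, and the concern you raise about the $\{\tau=0\}$ component of $SS^\bullet(\K_\varphi)$ when invoking the analogue of Corollary~\ref{cor:restriction-a-infini} is a point the paper simply does not address; your sketch (or, alternatively, composing with $\K_{\varphi^{-1}}^\convstar$ to reduce to $\K_\id$ via Lemma~\ref{lem:fonctorialite-tau}) is an adequate way to fill it.
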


We end this section by the useful result that $\convstar$ is Lipschitz for the $\gamma_\tau$-distance.

\begin{lem}\label{lem:fonctorialite-tau}
  Let $M$, $N$, $P$ be manifolds and $\K_1 \in D_{\tau\geq0}(M\times N\times\Real)$, $\K_2 \in D_{\tau\geq0}(N\times
  P\times\Real)$.  Then, for any $a\in\Real$, we have natural isomorphisms $T_{a*}(\K_1 \convstar \K_2) \simeq
  T_{a*}(\K_1) \convstar \K_2 \simeq \K_1 \convstar T_{a*}(\K_2)$ and, through these isomorphisms, we have the equalities,
  for any $a\leq b$, $$\tau_{a,b}(\K_1 \convstar \K_2) = \tau_{a,b}(\K_1) \convstar \K_2 = \K_1 \convstar \tau_{a,b}(\K_2)$$
\end{lem}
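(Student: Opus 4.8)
\emph{Proof sketch.}
Write the $\convstar$-composition over $N$ as $\K_1\convstar\K_2 = R\pi_!(p_1^{-1}\K_1\otimes p_2^{-1}\K_2)$, where on $M\times N\times P\times\Real^2$ (coordinates $(x,y,z,t_1,t_2)$) the maps $p_1,p_2$ read off $(x,y,t_1)$ and $(y,z,t_2)$ and $\pi(x,y,z,t_1,t_2)=(x,z,t_1+t_2)$; since $N$ is compact, $\pi$ factors so that $R\pi_! = Rs_!\circ R(q_N)_!$ with $q_N$ the proper projection forgetting $N$ and $s$ the addition of the two $\Real$-factors, so that $\convstar$ is assembled from $f^{-1}$, $\otimes$ and the proper direct images $R(q_N)_!,Rs_!$. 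For the first assertion, if $\sigma_a$ denotes the translation by $a$ in the $t_1$-coordinate then $\pi\circ\sigma_a = T_a\circ\pi$, $p_1\circ\sigma_a = T_a\circ p_1$, $p_2\circ\sigma_a = p_2$; since $\sigma_a$ is an isomorphism ($R\sigma_{a!}=R\sigma_{a*}$, which commutes with $f^{-1}$ and $\otimes$), pushing $\sigma_a$ through the formula yields a natural isomorphism $T_{a*}(\K_1\convstar\K_2)\simeq T_{a*}(\K_1)\convstar\K_2$, and translating $t_2$ instead gives $\simeq \K_1\convstar T_{a*}(\K_2)$.

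For the second assertion, \eqref{eq:compatibilite-tau} gives $\tau_{a,b}(\F)=T_{a*}(\tau_{0,b-a}(T_{-a*}\F))$, so by the first assertion we are reduced to proving $\tau_{0,c}(\K_1\convstar\K_2)=\tau_{0,c}(\K_1)\convstar\K_2$ for $c\geq 0$. Recall that $\tau_{0,c}(\F)$ is the composite $\F\xleftarrow{\;\sim\;}P(\F)\to T_{c*}\F$, where $P(\F)=Rs_*(\F\boxtimes k_{[0,+\infty[})$ ($s$ adjoining a new $\Real$-factor) and the two arrows are obtained by applying $Rs_*(\F\boxtimes(-))$ to the canonical morphisms $\rho_0\colon k_{[0,+\infty[}\to k_{\{0\}}$ and $\rho_c\colon k_{[0,+\infty[}\to k_{\{c\}}$ of $D(\Real)$, using $Rs_*(\F\boxtimes k_{\{c\}})=T_{c*}\F$. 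Setting $Q_{\mathcal A}(-)=Rs_*((-)\boxtimes\mathcal A)$, it therefore suffices to construct, for $\mathcal A$ equal to $k_{[0,+\infty[}$, to $k_{\{0\}}$ or to $k_{\{c\}}$, isomorphisms $Q_{\mathcal A}(\K_1\convstar\K_2)\simeq Q_{\mathcal A}(\K_1)\convstar\K_2$ that are natural in $\mathcal A$ along $\rho_0$ and $\rho_c$; naturality then matches the two defining arrows of $\tau_{0,c}$ on the two sides. Unwinding, both $Q_{\mathcal A}(\K_1\convstar\K_2)$ and $Q_{\mathcal A}(\K_1)\convstar\K_2$ become---after base change along the (smooth) projections $p_i$ and the projection formula pulling $\mathcal A$ inside the direct images---a direct image of the single sheaf $p_1^{-1}\K_1\otimes p_2^{-1}\K_2\otimes r^{-1}\mathcal A$ on $M\times N\times P\times\Real^3$ ($r$ the new coordinate) along the common map $(x,y,z,t_1,t_2,t_3)\mapsto(x,z,t_1+t_2+t_3)$; the two expressions differ only in the order of the operations $R(q_N)_!$ (which, $N$ being compact, commutes with everything), the $Rs_!$-convolution of $t_1$ with $t_2$, and the $Rs_*$-``convolution'' of the $\mathcal A$-factor. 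These commute here because $R(q_N)_!(p_1^{-1}\K_1\otimes p_2^{-1}\K_2)$ has microsupport in $\{\tau_1\geq 0,\ \tau_2\geq 0\}$ (as $SS(\K_i)\subset\{\tau\geq 0\}$) while $SS(\mathcal A)\subset\{\tau_3\geq 0\}$, so that $P$ acts as the identity on every sheaf occurring; all the isomorphisms used are functorial in $\mathcal A$. The equality $\tau_{a,b}(\K_1\convstar\K_2)=\K_1\convstar\tau_{a,b}(\K_2)$ follows in the same way with the two $\Real$-factors interchanged.

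\emph{Expected main obstacle.} Everything except the last step is bookkeeping with base change and the projection formula. The delicate point is that the non-proper direct image $Rs_*$ built into $P$ does not commute with the proper direct images defining $\convstar$ in general (for instance $P(k_{\Real})=k_{\Real}$ whereas $k_{\Real}\cstar k_{[0,+\infty[}=0$, and $Rs_*$ does not commute past $Rs_!$ for sheaves of mixed microsupport); the point is that the microlocal positivity $SS(\K_i)\subset\{\tau\geq 0\}$---equivalently, that $P$ is the identity on $D_{\tau\geq 0}$---is exactly what legitimates the rearrangement, and organising the iterated integral so that this is transparent, while keeping functoriality in $\mathcal A$, is the only genuine work.
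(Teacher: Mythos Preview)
Your first paragraph (the isomorphisms $T_{a*}(\K_1\convstar\K_2)\simeq T_{a*}(\K_1)\convstar\K_2\simeq\K_1\convstar T_{a*}(\K_2)$ via commuting the translation with the sum map) is fine and matches the paper.  For the equality of morphisms your route diverges from the paper's and, as written, has a gap at exactly the point you flag.

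You want an isomorphism $Q_{\mathcal A}(\K_1\convstar\K_2)\simeq Q_{\mathcal A}(\K_1)\convstar\K_2$ that is \emph{natural in $\mathcal A$} along $\rho_0$ and $\rho_c$.  Your proposed construction is to unwind both sides to a common direct image on $\Real^3$; but this requires commuting the non-proper $Rs_*$ hidden in $Q_{\mathcal A}$ past the proper pushforwards defining $\convstar$, and (as your own example $P(k_\Real)\not\simeq k_\Real\cstar k_{[0,\infty[}$ shows) this rearrangement fails.  The fallback ``$P$ acts as the identity on every sheaf occurring'' only tells you that both $P(\K_1\convstar\K_2)$ and $P(\K_1)\convstar\K_2$ are isomorphic to $\K_1\convstar\K_2$ as \emph{objects}.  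If you define the horizontal isomorphism between them so that the $\rho_0$-square commutes (both verticals being the canonical isomorphisms $P\simeq\id$), then the commutativity of the $\rho_c$-square is precisely the equality $\tau_{0,c}(\K_1\convstar\K_2)=\tau_{0,c}(\K_1)\convstar\K_2$ you are trying to prove.  So the argument is circular as it stands: ``$P\simeq\id$'' gives the right objects but not the compatibility of the two $\rho_c$-arrows.

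The paper avoids the $Rs_*$/$Rs_!$ interchange altogether by an interpolation.  Dropping the auxiliary manifolds, on $\Real^2$ one has $\K_1\boxtimes\K_2\in D_{g_u\geq 0}(\Real^2)$ for every $g_u=u\tau_1+(1-u)\tau_2$, $u\in[0,1]$, hence Tamarkin morphisms $\tau^u_{a,b}$.  The flow of $g_1$ moves only $t_1$, so $Rs_!(\tau^1_{a,b})=\tau_{a,b}(\K_1)\convstar\K_2$; symmetrically $Rs_!(\tau^0_{a,b})=\K_1\convstar\tau_{a,b}(\K_2)$.  One then assembles the whole family into a single $\tau$-morphism for $g=u\tau_1+(1-u)\tau_2$ on $\K_1\boxtimes\K_2\boxtimes k_{[0,1]}\in D(\Real^2\times[0,1])$; pushing forward by the sum gives a morphism on $(\K_1\convstar\K_2)\boxtimes k_{[0,1]}$ whose restriction at each $u\in[0,1]$ is $Rs_!(\tau^u_{a,b})$.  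Since restriction at any point of $[0,1]$ induces the \emph{same} isomorphism $\Hom(\F\boxtimes k_{[0,1]},\G\boxtimes k_{[0,1]})\isoto\Hom(\F,\G)$ (the inverse of pullback along the projection), all the $Rs_!(\tau^u_{a,b})$ coincide.  The advantage of this approach is that it only uses $Rs_!$ and the general $\tau$-construction for a varying $g$, never the problematic $Rs_*$-$Rs_!$ commutation.
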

\begin{rem} 
The  notation $ \tau_{a,b}(\K_1) \convstar \K_2$ means that we apply the functor $\convstar \K_2$ to the morphism
$\tau_{a,b}(\K_1)$. This means that $( \tau_{a,b}(\K_1) \convstar \K_2)$ is a morphism $$\tau_{a,b}(\K_1) \convstar \K_2: {T_a}_*(\K_1)\convstar \K_2 \longrightarrow {T_b}_*(\K_1)\convstar \K_2 $$
\end{rem} 
\begin{proof}
  In the proof the manifolds $M, N, P$ play an auxiliary role and we can consider that our sheaves live on $\Real$ to
  simplify the notations.  Hence $\K_1\convstar \K_2 = Rs_!(\K_1\boxtimes \K_2)$ where $s\colon \Real^2 \to \Real$ is the
  sum.  The isomorphisms then follow from the equalities $T_a \circ s = s \circ (T_a \times \id_\Real) = s \circ
  (\id_\Real \times T_a)$.  However to prove the equalities of morphisms we need to be more precise.

  Let $(t_1,t_2,\tau_1,\tau_2)$ be the coordinates on $T^*\Real^2$. Then the singular support of $\K_1\boxtimes \K_2$ is
  contained in $\{\tau_1\geq 0\} \times \{\tau_2\geq 0\}$ and, setting $g_u = u \tau_1 + (1-u)\tau_2$, $\K_1\boxtimes
  \K_2$ belongs to $D_{g_u\geq 0}(\Real^2)$ for any $u\in [0,1]$.  Let us write $\tau_{a,b}^u$ for the morphism
  $\tau_{a,b}$ induced by $g_u$.  Then $\tau_{a,b}^1$ gives $\tau_{a,b}(\K_1) \convstar \K_2$ by applying $Rs_!$ and
  $\tau_{a,b}^0$ gives $\K_1 \convstar \tau_{a,b}(\K_2)$. It remains to see that $\tau_{a,b}^u$ gives the same morphism
  for all $u$.  We define $\K = \K_1 \boxtimes \K_2 \boxtimes k_{[0,1]} \in D(\Real^2\times [0,1])$ and $g = u \tau_1
  + (1- u)\tau_2$.  Then $K \in D_{g\geq 0}(\Real^2\times[0,1])$ (here we are cheating a little bit because the
  singular support is not defined on a manifold with boundary -- however we can replace $[0,1]$ by
  $\mathopen]-\varepsilon,1+\varepsilon[$ and $u$ by a smooth function with values in $[0,1]$ which is $0$ on
  $\mathopen]-\varepsilon,0]$ and $1$ on $[1,1+\varepsilon[$).  We thus obtain a morphism $\tau_{a,b}^g \colon
  T'_{a*}(\K) \to T'_{b*}(\K)$ where $T'_c(t_1,t_2,u) = (t_1+uc, t_2+(1-u)c,u)$.  Let $s'\colon \Real^2\times[0,1]
  \to \Real\times [0,1]$ be the sum of the first two variables.  Then $s' \circ T'_c = T_c \times \id_{[0,1]}$ and
  $\tau_{a,b}^g$ induces $\tau'_{a,b} \colon (T_{a*}(\K_1 \convstar \K_2)) \boxtimes k_{[0,1]} \to (T_{b*}(\K_1 \convstar
  \K_2)) \boxtimes k_{[0,1]}$ which restricts to $\tau_{a,b}^s$ along $\Real\times\{s\}$.  Now restricting along
  $\Real\times\{s\}$ always induces the same isomorphism $\Hom(\F\boxtimes k_{[0,1]}, \G\boxtimes k_{[0,1]}) \isoto
  \Hom(\F,\G)$, for all $\F$, $\G$, namely, the inverse of the isomorphism induced by the pull-back along the
  projection to $[0,1]$. This concludes the proof. 
\end{proof}

We deduce an analogue of~\cite{Petit-Schapira}, theorem 2.4.1, in our situation.
\begin{lem}\label{lem:distance-composition}
  Let $\K_1, \K_2 \in D_{\tau\geq0}(M\times N\times\Real)$ and $\F \in D_{\tau\geq0}(M\times\Real)$. Then $\K_j\convstar
  \F \in D_{\tau\geq0}(N\times\Real)$ and we have
$$ \gamma_\tau (\K_1^\convstar( \F), \K_2^\convstar(\F)) \leq \gamma_\tau(\K_1,\K_2)\dispdot $$ 
\end{lem}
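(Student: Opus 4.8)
The plan is to unwind the definition of $\gamma_\tau$ and transport the data witnessing $\gamma_\tau(\K_1,\K_2) < c$ through the functor $(-)^\convstar(\F)$, using Lemma~\ref{lem:fonctorialite-tau} to control how the Tamarkin morphisms $\tau_{a,b}$ interact with $\convstar$. First I would note that the claim $\K_j \convstar \F \in D_{\tau\geq0}(N\times\Real)$ follows from the bound on the singular support of a direct image (as in the proof of Corollary~\ref{cor:restriction-a-infini} and the general remarks in \S\ref{sec:qhi}), since the singular support of $q_{1,2}^{-1}\K_j \otimes q_1^{-1}\F$ has non-negative $\tau$-component and $Rs_!$ preserves this. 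This is routine; I would only spell out that $\convstar$ is a composition of pullbacks, a tensor product, and a proper-ish pushforward along the fibers of the sum map, each of which behaves well on the $\tau\geq 0$ side.

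Next, for the main inequality, suppose $\gamma_\tau(\K_1,\K_2) < c$. By Definition~\ref{def:dist-gammag} there exist $a,b\geq 0$ with $a+b = c$ and morphisms $u\colon \K_1 \to T_{a*}(\K_2)$, $v\colon \K_2 \to T_{b*}(\K_1)$ such that $T_{a*}(v)\circ u = \tau_{0,a+b}(\K_1)$ and $T_{b*}(u)\circ v = \tau_{0,a+b}(\K_2)$ (here $\fai{\varphi}^a = T_{a*}$ since $g=\tau$). Applying the functor $(-)^\convstar(\F) = (-)\convstar \F$ we obtain morphisms
$$
u \convstar \F \colon \K_1 \convstar \F \to (T_{a*}\K_2)\convstar \F, \qquad
v \convstar \F \colon \K_2 \convstar \F \to (T_{b*}\K_1)\convstar \F.
$$
By Lemma~\ref{lem:fonctorialite-tau} we have canonical isomorphisms $(T_{a*}\K_2)\convstar \F \simeq T_{a*}(\K_2 \convstar \F)$, and similarly for the other term, so $u\convstar\F$ and $v\convstar\F$ become morphisms $\K_1\convstar\F \to T_{a*}(\K_2\convstar\F)$ and $\K_2\convstar\F \to T_{b*}(\K_1\convstar\F)$. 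Applying the functor to the two composition identities, and again invoking Lemma~\ref{lem:fonctorialite-tau} to identify $\tau_{a,b}(\K_i)\convstar\F$ with $\tau_{a,b}(\K_i\convstar\F)$ through these isomorphisms, gives $T_{a*}(v\convstar\F)\circ(u\convstar\F) = \tau_{0,a+b}(\K_1\convstar\F)$ and symmetrically. Hence $a,b,u\convstar\F,v\convstar\F$ witness $\gamma_\tau(\K_1\convstar\F,\K_2\convstar\F)\leq c$, and letting $c \downarrow \gamma_\tau(\K_1,\K_2)$ finishes the argument.

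The one point requiring care — and the place where I would be most careful — is the bookkeeping of the canonical isomorphisms of Lemma~\ref{lem:fonctorialite-tau}: one must check that the two identities $T_{a*}(v\convstar\F)\circ(u\convstar\F) = \tau_{0,a+b}(\K_1\convstar\F)$ really come out of applying a functor to the two identities in the definition of $\gamma_\tau$, compatibly with the chosen isomorphisms $(T_{c*}\K)\convstar\F \simeq T_{c*}(\K\convstar\F)$, rather than only up to some uncontrolled automorphism. This is exactly what Lemma~\ref{lem:fonctorialite-tau} is designed to supply (it asserts the \emph{equality}, not mere existence, of $\tau_{a,b}(\K_1\convstar\K_2)$ with $\tau_{a,b}(\K_1)\convstar\K_2$ through the natural isomorphisms), so the obstacle is really just diagram-chasing discipline: one feeds the identity morphisms and $\tau$'s through $(-)\convstar\F$, rewrites every occurrence of $(T_{c*}\K_i)\convstar\F$ as $T_{c*}(\K_i\convstar\F)$ using the natural iso, and checks the two sides agree. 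I would present this as a short verification, referring to Lemma~\ref{lem:fonctorialite-tau} for the functoriality of $\tau$ and concluding with the passage to the infimum.
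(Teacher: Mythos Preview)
Your proposal is correct and follows essentially the same approach as the paper: take data $a,b,u,v$ witnessing $\gamma_\tau(\K_1,\K_2)<c$, apply the functor $(-)\convstar\F$, and invoke Lemma~\ref{lem:fonctorialite-tau} to identify both the translations and the Tamarkin morphisms on the composed side. The paper's own proof is in fact much terser than yours (it simply says ``the morphisms induce $u\circ\id_\F$, $v\circ\id_\F$ which also satisfy the conditions of the definition by Lemma~\ref{lem:fonctorialite-tau}''), so your additional care with the canonical isomorphisms is more than what the paper provides, but entirely in the same spirit.
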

\begin{proof}
  We have $\gamma_\tau(\K_1,\K_2) < c$, for a given $c$, if and only if there exist $a,b\geq 0$ with $a+b=c$ and
  morphisms $u\colon \K_1 \to T_{a*}(\K_2)$, $v\colon \K_2 \to T_{b*}(\K_1)$ satisfying the conditions in
  Definition~\ref{def:dist-gammag}.  The morphisms induce $u \circ \id_\F$, $v \circ \id_\F$ which also satisfies the
  conditions of the definition by Lemma~\ref{lem:fonctorialite-tau}.  
\end{proof}

\subsection{\texorpdfstring{$\gamma_g$-limits}{gamma-{g}-limits} and colimits}\label{subsection-6.3}
In this paragraph we shall prove that any sequence $(\F_n)_{n\in\N}$ which $\gamma_g$-converges to $\F$ can be turned
into an inductive system whose homotopy colimit is $\F$.  We recall the notion of homotopy colimit in $D(N)$
(see~\cite[Def.~2.1]{Bokstedt-Neeman})
\begin{defn} Let $(\F_n, f_n)_{n\in \N}$, with $f_n\colon \F_n \to \F_{n+1}$, be an
inductive system in $D(N)$; then $\hocolim_n \F_n$ is the cone of the morphism $$(\id -f) \colon \bigoplus_{n\geq 0}
\F_n \to \bigoplus_{n\geq 0} \F_n$$ where $f = \bigoplus_{n\geq 0} f_n$.  
\end{defn} 
We thus have a distinguished
triangle
\begin{equation}\label{eq:def_hocolim}
\bigoplus_{n\geq 0} \F_n \to[\id-f] \bigoplus_{n\geq 0} \F_n
\to[p] \hocolim_n \F_n \to[+1] \dispdot
\end{equation}
However, like any cone, $\hocolim_n \F_n$ is only defined up to (non unique) isomorphism and the morphism $p$ itself
is not uniquely defined.  For a given triangle~\eqref{eq:def_hocolim} and for any $n_0\in \N$, we have a morphism
$i_{n_0} \colon \F_{n_0} \to \hocolim_n \F_n$ defined as $i_{n_0} = p \circ j_{n_0}$ where $j_{n_0}$ is the obvious
morphism to the sum.  For another choice of triangle, with $p'$ instead of $p$, there exists an automorphism $\psi$
of $\hocolim_n \F_n$ such that $p' = \psi\circ p$. Then $i'_{n_0} = p' \circ j_{n_0}$ is ``conjugate'' to $i_{n_0}$
in the sense $i'_{n_0} = \psi \circ i_{n_0}$.

We recall the ``nine diagram'' from \cite[Prop.~1.1.11]{BBD} (see~\cite[Ex.~10.6]{K-S06} where it is stated precisely 
which arrows are given and which ones are obtained),  a variant of the octaedron axiom. 
We assume to be given a commutative square as on the top left corner of the diagram below and four distinguished triangles (two vertical and two horizontal) given by the unmarked solid arrows. The arrows marked  $[1]$ are just translations of already defined solid arrows. \newline
The proposition claims that the dotted arrows can be chosen to make all squares commutative, except for the one with $Z^{2}$ at its top left corner, which is anticommutative, and the new line and the new column are distinguished triangles.
$$
\xymatrix@R=7mm{
  X^0 \ar[r] \ar[d] &  X^1 \ar[r] \ar[d] &  X^2 \ar[r] \ar@{.>}[d] &X^{0}[1] \ar[d]_{[1]}  \\
  Y^0 \ar[r] \ar[d] &  Y^1 \ar[r] \ar[d] &  Y^2 \ar[r] \ar@{.>}[d]  &Y^{0}[1] \ar[d]_{[1]}  \\
  Z^0 \ar@{.>}[r] \ar[d] &  Z^1  \ar[d] \ar@{.>}[r] & Z^{2}  \ar@{.>}[r] \ar@{.>}[d] \ar@{}[rd]|-{\ominus}&Z^{0}[1]\ar[d]_{-[1]} \\
X^{0}[1] \ar[r]^{[1]}&X^{1}[1] \ar[r]^{[1]}&X^{2}[1] \ar[r]^{-[1]} &X^{0}[2]
 }
$$

\begin{rem} \label{Rem-nine-diagram}
Using the nine diagram built on the square

$$\xymatrix@R=5mm@C=10mm{
\bigoplus_{n\geq n_0} \F_n  \ar[r]^{\id-f} \ar[d] & \bigoplus_{n\geq n_0} \F_n \ar[d] \\
\bigoplus_{n\geq 0} \F_n  \ar[r]^{\id-f}  & \bigoplus_{n\geq 0} \F_n 
}$$
we can check that $\hocolim_{n\geq 0} \F_n \isoto \hocolim_{n\geq n_0} \F_n$ is indeed an isomorphism.
With the above notations, we take $X^0=X^1=\bigoplus_{n\geq n_0} \F_n , Y^0=Y^1= \bigoplus_{n\geq 0} \F_n, Z^0=Z^1=\bigoplus_{0\leq n\leq n_0-1} \F_n$. The map  from $Z^0$ to $Z^1$ is also equal to $\id-f$, but as is an upper triangular finite-dimensional matrix with $\Id$ on the diagonal, so it is invertible. This implies $Z^2=0$.  As a result the  vertical arrow from $X^2$ to $Y^2$ is an isomorphism (in $D(N)$). 
\end{rem} 

A more general statement of the following Lemma can be found in
Lemma~13.33.4 of \cite[\href{https://stacks.math.columbia.edu/tag/0A5K}{Section 0A5K}]{stacks-project}.
 It tells us, as we might expect, that the limit of a subsequence coincides with the limit of the original sequence. 

\begin{lem}\label{lem:sous-systeme-inductif}
  Let $(\F_n, f_n)_{n\in \N}$ be an inductive system in $D(N)$ and $\sigma \colon \N \to \N$ an increasing map.  Then
  the compositions of the $f_n$'s define an inductive system $(\F_{\sigma(n)}, f'_n)$ and there exists an isomorphism
  $j\colon \hocolim_n \F_{\sigma(n)} \isoto \hocolim_n \F_n$ such that $j \circ i'_{\sigma(n)} = i_{\sigma(n)}$,
  where $i'_{\sigma(n)}$ is the morphism $i_\bullet$ for $(\F_{\sigma(n)})$.
\end{lem}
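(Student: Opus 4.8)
The statement to prove, Lemma~\ref{lem:sous-systeme-inductif}, says that the homotopy colimit of an inductive system in $D(N)$ is unchanged (canonically, compatibly with the structure morphisms $i_\bullet$) when one passes to a subsystem indexed by an increasing map $\sigma\colon\N\to\N$. The plan is to realize the two telescopes as cones of self-maps of direct sums and to compare them by an explicit homotopy equivalence. First I would reduce to the case where $\sigma$ is \emph{strictly} increasing and, by composing the $f_n$'s, reduce further to the case where $\sigma$ is the inclusion of a cofinite-free subset — actually the cleanest reduction is to assume $\sigma(n)=2n$ is not needed; instead one handles a general strictly increasing $\sigma$ directly. Set $G = \bigoplus_{n\geq 0}\F_n$ with shift map $f = \bigoplus f_n$ (so $\hocolim_n \F_n = \C(\id-f)$), and $G' = \bigoplus_{n\geq 0}\F_{\sigma(n)}$ with the analogous $f' = \bigoplus f'_n$ where $f'_n = f_{\sigma(n+1)-1}\circ\cdots\circ f_{\sigma(n)}$, so that $\hocolim_n\F_{\sigma(n)} = \C(\id - f')$.

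The key step is to produce a morphism of distinguished triangles from~\eqref{eq:def_hocolim} for $(\F_{\sigma(n)})$ to~\eqref{eq:def_hocolim} for $(\F_n)$ which is an isomorphism on the two outer terms $G'\to G$, and hence induces the desired isomorphism $j$ on the third term. The natural inclusion $\iota\colon G'\hookrightarrow G$ sending the $n$-th summand $\F_{\sigma(n)}$ identically into the $\sigma(n)$-th summand of $G$ does \emph{not} commute with $\id-f$ on the nose, but it does up to a chain homotopy: concretely, one checks $\iota\circ(\id-f') = (\id-f)\circ\iota + (\id-f)\circ h$ (or a similar identity) for an explicit ``partial telescope'' map $h\colon G'\to G$ whose $n$-th component spreads $\F_{\sigma(n)}$ across the summands $\F_{\sigma(n)}, \F_{\sigma(n)+1},\dots,\F_{\sigma(n+1)-1}$ via iterated $f_j$'s. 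Since in $D(N)$ morphisms of triangles only need to commute strictly, I would instead bundle this into the ``nine diagram'' / mapping-telescope formalism exactly as in Remark~\ref{Rem-nine-diagram}: apply the nine diagram to the commutative square relating $G'\xrightarrow{\id-f'} G'$ to $G\xrightarrow{\id-f}G$ via suitable vertical maps built from $\iota$ and $h$, observe that the ``cokernel'' column $Z^\bullet$ is acyclic because the map $Z^0\to Z^1$ is again of the form $\id - (\text{strictly triangular nilpotent-on-each-graded-piece})$, hence invertible, forcing $Z^2=0$, and conclude that $X^2 = \hocolim\F_{\sigma(n)} \isoto \hocolim\F_n = Y^2$. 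The compatibility $j\circ i'_{\sigma(n)} = i_{\sigma(n)}$ then follows by tracing the summand $\F_{\sigma(n)}$ through $\iota$ and the nine-diagram morphism, since $\iota\circ j'_n = j_{\sigma(n)}$ at the level of the structure inclusions into the direct sums, and the induced maps on cones are $i'_{\sigma(n)}$ and $i_{\sigma(n)}$ respectively.

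Alternatively — and this is probably the path I would actually write — one cites and mildly reworks the cited Stacks Project Lemma 13.33.4: it is a general fact that a cofinal subsystem of a system of maps in a triangulated category with countable direct sums has an isomorphic homotopy colimit, and the compatibility with $i_\bullet$ is part of the construction. So the proof reduces to: (1) note $\sigma$ increasing implies $\{\sigma(n)\}$ is cofinal in $\N$ (after discarding repetitions, which doesn't change the telescope by Remark~\ref{Rem-nine-diagram}'s type of argument applied to finite stabilization); (2) invoke the Stacks Project lemma for the isomorphism $j$; (3) verify $j\circ i'_{\sigma(n)} = i_{\sigma(n)}$ by unwinding the explicit $j$ given there (the telescope comparison map is the identity on each term $\F_{\sigma(n)}$ up to the $\tau$-irrelevant homotopy, so it respects the canonical maps from the terms).

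\medskip
\noindent\textbf{Main obstacle.} The genuine difficulty is purely bookkeeping: $\id - f'$ and $\id - f$ are related by $\iota$ only up to a homotopy, so one cannot just write down a strict morphism of the defining triangles~\eqref{eq:def_hocolim}; one must either work with an honest chain-level telescope (leaving the derived category, which the paper avoids) or feed the non-strictly-commuting square into the nine diagram and argue that the discrepancy term is contractible because it is ``unipotent upper-triangular'' — exactly the trick already used in Remark~\ref{Rem-nine-diagram}. Making sure the induced map on cones is \emph{the} map $j$ with the stated property $j\circ i'_{\sigma(n)} = i_{\sigma(n)}$ (rather than $j$ up to an automorphism of the hocolim) is the subtle point, and is handled by checking the compatibility at the level of the structure inclusions $j_n$ into the direct sums before passing to cones.
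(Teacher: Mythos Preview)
The paper does not prove this lemma; it simply refers to Lemma~13.33.4 (Tag~0A5K) of the Stacks Project immediately before the statement. Your ``alternative'' of citing and lightly unpacking that reference is therefore exactly what the paper does.

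Your first route---a direct nine-diagram argument \`a la Remark~\ref{Rem-nine-diagram}---goes further than the paper and is essentially how the cited Stacks lemma is proved. One phrase needs tightening: the nine diagram requires a \emph{strictly} commuting top-left square, so you cannot literally ``feed the non-strictly-commuting square'' into it. The cure you gesture at with $h$ works once made explicit: take as left vertical the spreading map $\Phi\colon G'\to G$ sending $x\in\F_{\sigma(n)}$ to $(x,\,f_{\sigma(n)}x,\,\dots,\,f_{\sigma(n+1)-2}\cdots f_{\sigma(n)}x)$ in $\F_{\sigma(n)}\oplus\cdots\oplus\F_{\sigma(n+1)-1}$, and as right vertical the plain inclusion $\iota$; a telescoping computation gives $\iota\circ(\id-f')=(\id-f)\circ\Phi$ on the nose. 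Both verticals are split monomorphisms with cokernel $\bigoplus_{m\notin\sigma(\N)}\F_m$, and the induced map $Z^0\to Z^1$ is $\id$ minus a locally nilpotent shift, hence invertible exactly as in Remark~\ref{Rem-nine-diagram}; so $Z^2=0$ and $j$ is an isomorphism. Your check of $j\circ i'_{\sigma(n)}=i_{\sigma(n)}$ via $\iota\circ j'_n=j_{\sigma(n)}$ is then correct.
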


A family of morphisms $u_n \colon \F_n \to \G$ such that $u_{n+1} \circ f_n = u_n$ induces a morphism $u\colon
\hocolim_n \F_n \to \G$ such that $u \circ i_n = u_n$ for all $n$.  Indeed we have $\bigoplus_n u_n \circ (\id-f) =
0$, hence $\bigoplus_n u_n$ factorizes as $\bigoplus_n u_n = u \circ p$. Again this morphism $u$ is not uniquely
defined.

\begin{lem}\label{lem:gammalim=hocolim0}
  Let $(\F_n, f_n)_{n\in \N}$, with $f_n\colon \F_n \to \F_{n+1}$, be an inductive system in $D(N)$.  We let
  $\C(f_n)$ be the cone of $f_n$ and we assume that the sum $\sum_n \gamma_g(\C(f_n),0)$ is finite.  Then the sequence
  $(\F_n)_{n\in \N}$ $\gamma_g$-converges to $\hocolim_k \F_k$.  More precisely, if we let $i_n \colon \F_n \to
  \hocolim_k \F_k$ be the morphism defined after~\eqref{eq:def_hocolim}, we have $\gamma_g(\C(i_n),0) \leq 2
  \sum_{k=n}^\infty \gamma_g(\C(f_k),0)$.
\end{lem}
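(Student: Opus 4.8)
The plan is to first reduce to the case $n=0$, which is harmless since by Lemma~\ref{lem:sous-systeme-inductif} (or Remark~\ref{Rem-nine-diagram}) the homotopy colimit of the tail $(\F_k)_{k\geq n}$ is canonically identified with $\hocolim_k \F_k$ in a way compatible with the maps $i_\bullet$, and the hypothesis $\sum_k \gamma_g(\C(f_k),0) < \infty$ is inherited by any tail; so it suffices to bound $\gamma_g(\C(i_0),0)$ by $2\sum_{k=0}^\infty \gamma_g(\C(f_k),0)$. Write $\varepsilon_k = \gamma_g(\C(f_k),0)$ and $S = \sum_k \varepsilon_k$.

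The core idea is to compare the morphism $i_0 \colon \F_0 \to \hocolim_k \F_k$ with the ``Tamarkin morphism'' $\tau_{0,t}(\F_0)$ for $t$ slightly bigger than $S$, using the telescope description of the homotopy colimit. The key technical input is Lemma~\ref{lem:comparaison-distances}: each $f_k$ has $\gamma_g(\C(f_k),0) = \varepsilon_k$, so part~(\ref{lem:comparaison-distances-ii}) produces, for any $\delta_k > \varepsilon_k$, a morphism $v_k \colon \F_{k+1} \to \fai{\varphi}^{2\delta_k}(\F_k)$ with $v_k \circ f_k = \tau_{0,2\delta_k}(\F_k)$ and $\fai{\varphi}^{2\delta_k}(f_k) \circ v_k = \tau_{0,4\delta_k}(\F_{k+1})$. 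First I would choose $\delta_k > \varepsilon_k$ with $\sum_k \delta_k < S + \eta$ for an arbitrarily small $\eta > 0$ (possible when $S < \infty$; the case $S = \infty$ is vacuous). Composing the $v_k$'s appropriately (each shifted by the flow $\fai{\varphi}$) gives, for every $m$, a morphism $w_m \colon \F_m \to \fai{\varphi}^{2(\delta_0 + \cdots + \delta_{m-1})}(\F_0)$ which is compatible with the structure maps of the inductive system after applying the flow; the compatibility means exactly that $w_{m+1}$ and $\fai{\varphi}^{(\cdots)}(w_m) \circ$ (something involving $v_m$) agree up to the Tamarkin morphisms, using~\eqref{eq:compatibilite-tau}. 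Because the total shift stays bounded by $2(S+\eta)$, all the $w_m$ factor through $\fai{\varphi}^{2(S+\eta)}(\F_0)$ via the further morphisms $\tau_{\bullet,2(S+\eta)}(\F_0)$, yielding a \emph{coherent} family $\F_m \to \fai{\varphi}^{2(S+\eta)}(\F_0)$, i.e.\ one compatible with the $f_m$ up to the flow. Such a coherent family induces a morphism from the homotopy colimit, $\Phi \colon \hocolim_k \F_k \to \fai{\varphi}^{2(S+\eta)}(\F_0)$ — here one uses that $\fai{\varphi}^{2(S+\eta)}$ is an equivalence, so $\fai{\varphi}^{2(S+\eta)}(\F_0)$ can absorb the colimit after transporting everything back by the inverse flow — with $\Phi \circ i_m = \tau_{\bullet}(\ldots)$ compatible with $w_m$; in particular $\Phi \circ i_0 = \tau_{0,2(S+\eta)}(\F_0)$.

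Once we have a morphism $\Phi$ with $\Phi \circ i_0 = \tau_{0,2(S+\eta)}(\F_0)$, we feed the pair $(i_0, \Phi)$ — or rather a suitably flow-shifted version — into Lemma~\ref{lem:comparaison-distances}(\ref{lem:comparaison-distances-i}): we need not only $v \circ u = \tau_{0,\varepsilon}$ but also $\fai{\varphi}^\varepsilon(u) \circ v = \tau_{0,2\varepsilon}$, so one must also construct the composition in the other direction, $\fai{\varphi}^{2(S+\eta)}(i_0) \circ \Phi = \tau_{0,4(S+\eta)}(\hocolim_k \F_k)$. This follows from the same telescope bookkeeping using the second identity $\fai{\varphi}^{2\delta_k}(f_k)\circ v_k = \tau_{0,4\delta_k}(\F_{k+1})$ at each stage, together with the fact that $\tau$ morphisms on $\hocolim$ are compatible with those on the terms $\F_k$ (because $\fai{\varphi}^t$, and hence $\tau_{0,t}$, commutes with direct sums and cones). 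Then Lemma~\ref{lem:comparaison-distances}(\ref{lem:comparaison-distances-i}) gives $\gamma_g(\C(i_0),0) \leq 2\cdot 2(S+\eta)$ — which is off by a factor; so instead one should be slightly more economical and arrange the shifts to total $S+\eta$ rather than $2(S+\eta)$, i.e.\ use Lemma~\ref{lem:comparaison-distances}(\ref{lem:comparaison-distances-ii}) to get $v_k$ with shift $\delta_k$ chosen between $\varepsilon_k$ and a value summing to just over $\sum \varepsilon_k$, at the cost of the weaker second compatibility; then the final application of part~(\ref{lem:comparaison-distances-i}) (or a direct verification of Definition~\ref{def:dist-gammag}) yields the clean bound $\gamma_g(\C(i_0),0) \leq 2(S+\eta)$, and letting $\eta \to 0$ finishes.

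\emph{The main obstacle.} The genuine difficulty is bookkeeping the (non-canonical) factorizations coherently: the morphism $p$ in~\eqref{eq:def_hocolim}, and hence $i_n$, is not unique, and the factorizations produced by Lemma~\ref{lem:comparaison-distances}(\ref{lem:comparaison-distances-ii}) (the maps $w, w'$ there) are themselves non-unique, so one cannot expect strict commutativity — only commutativity ``up to the $\tau$ morphisms.'' Organizing the telescope so that these mismatches propagate controllably (rather than accumulating) and so that the resulting family is coherent enough to descend to the homotopy colimit is the delicate step; concretely I expect to phrase it as: replace the inductive system $(\F_k, f_k)$ by the isomorphic-in-pro system $(\fai{\varphi}^{-2(\delta_0+\cdots+\delta_{k-1})}\F_k, \cdot)$ wherein the composed maps $v_k$ become literal morphisms to $\F_0$ commuting with the transition maps up to $\tau$, and then invoke the universal property of $\hocolim$ for a family compatible up to $\tau$ — which is legitimate because $\tau_{0,t}$ is a morphism of functors and commutes with the cone defining $\hocolim$. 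This is the same kind of argument already used in the proof of Lemma~\ref{lem:comparaison-distances}, just iterated and summed.
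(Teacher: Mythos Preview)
Your plan has a genuine gap at the step where you need the \emph{second} composition
\(\fai{\varphi}^{T}(i_0)\circ\Phi=\tau_{0,T}(\hocolim_k\F_k)\).  The construction of
\(\Phi\) from the compatible family \((u_m)\) goes through exactly as you say, and the first
identity \(\Phi\circ i_0=\tau_{0,T}(\F_0)\) is fine.  Your ``telescope bookkeeping'' for the
second identity, using \(\fai{\varphi}^{2\delta_k}(f_k)\circ v_k=\tau\), only shows that
\(\fai{\varphi}^{T}(i_0)\circ\Phi\) and \(\tau_{0,T}(\hocolim_k\F_k)\) agree after precomposing
with every \(i_m\) (equivalently, after precomposing with \(p\)).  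But two morphisms out of a
homotopy colimit that agree on \(p\) need not be equal: their difference factors through the
connecting map \(\partial\colon\hocolim_k\F_k\to(\bigoplus_k\F_k)[1]\), which is the familiar
\(\varprojlim{}^{1}\) obstruction.  There is no ``universal property of \(\hocolim\) for families
compatible up to \(\tau\)'' in a bare triangulated category, so your proposed fix does not close
this gap.  (As you yourself noticed, there is also a factor-of-two mismatch coming from the
\(2\delta_k\) shift in Lemma~\ref{lem:comparaison-distances}(\ref{lem:comparaison-distances-ii}),
but that is secondary.)

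The paper sidesteps this completely: it never tries to build a quasi-inverse to \(i_{n_0}\).
Instead it compares the inductive system \((\F_n)_{n\ge n_0}\) with the \emph{constant} system
\((\F_{n_0},\id)_{n\ge n_0}\) via the map of systems \(F=(f_{n_0}^{\,n})\).  The nine diagram built
on the square \(F\circ(\id-s)=(\id-f)\circ F\) produces a distinguished triangle
\(\calS'\to\calS'\to\C(i_{n_0})\to[+1]\) with
\(\calS'=\bigoplus_{n\ge n_0}\C(f_{n_0}^{\,n})\).  Each summand satisfies
\(\gamma_g(\C(f_{n_0}^{\,n}),0)\le\sum_{k\ge n_0}\gamma_g(\C(f_k),0)\) by
Lemma~\ref{lem:proprietes-gammag}(\ref{lem:proprietes-gammag-vi}), so the same bound holds for
\(\calS'\) (since \(\tau_{0,t}\) acts diagonally on a direct sum), and then
Lemma~\ref{lem:proprietes-gammag}(\ref{lem:proprietes-gammag-v}) applied to that triangle gives the
factor~\(2\).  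The point is that this argument only uses the \emph{existence} of triangles, never
an equality between two morphisms out of the colimit.
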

\begin{proof}
  Let $n_0$ be given.  We set for short $\calS = \bigoplus_{n\geq n_0} \F_n$, $\C = \hocolim_n \F_n$.  We  checked
  that $\C$ is the cone of $u = \id - \bigoplus_{n\geq n_0} f_n \colon \calS \to \calS$.  We also consider the
  similar sum but associated to the constant sequence $\G_n=\F_{n_0}$ for all $n\geq n_0$  and  $\mathcal T_{} = \bigoplus_{n\geq n_0} \G_{n}$. We define $w
  \colon \mathcal T_{} \to \mathcal T_{}$  as $\id-s$ where $s_n: \G_n \longrightarrow \G_{n+1}$ is induced by $\Id_{\F_{n_0}}$.  The cone of $w$ is $\F_{n_0}$.  We let
  $f_n^m \colon \F_n \to \F_m$ be the composition of the $f_k$'s and we define $F = \bigoplus_{n\geq n_0} f_{n_0}^n
  \colon \T\to \calS$.  Then $F \circ w = u \circ F$ and the nine diagram recalled above gives the
  commutative (except for one anticommuting square) diagram of distinguished triangles
  $$
  \xymatrix{
    \mathcal T \ar[r]^{w} \ar[d]_F &  \mathcal T\ar[r]^{q} \ar[d]_F & \F_{n_0} \ar[d]^{j_{n_0}} \ar[r]^{+1} & \\
    \calS \ar[r]^u \ar[d] &  \calS \ar[r]^p \ar[d] & \C \ar[d] \ar[r]^{+1} & \\
    \calS' \ar[r] \ar[d]^{+1} &  \calS' \ar[r] \ar[d]^{+1} & \C(j_{n_0}) \ar[r]^{+1} \ar[d]^{+1}  &\\
&&&
}
$$
where $\calS' = \bigoplus_{n\geq n_0} \C(f_{n_0}^n)$,  $j_{n_0}$ is conjugate to $i_{n_0}$ (unfortunately we cannot say they are equal because of the ambiguity in the definition of the cone) and $\mathcal C(j_{n_0})$ is the cone of $j_{n_0}$.  

We set
$r  \colon \F_{n_0} \to \mathcal T$ be the inclusion of the first factor.  The morphisms $r$ and $w$ give
a splitting $\mathcal T \simeq \F_{n_0} \oplus \mathcal T$ and it follows that $q \circ r$ is an
isomorphism.  Since $i_{n_0}$ is ``conjugate'' to $p \circ F \circ r$ (see
after~\eqref{eq:def_hocolim}), it is also conjugate to $j_{n_0}$ and
$\C(j_{n_0}) \simeq \C(i_{n_0})$.

Now we claim that $\gamma_g(\C(f_n^m),0) \leq \sum_{k=n}^{m-1} \gamma_g(\C(f_k),0)$.
We prove this by induction on $m-n$.  When $m=n+1$ this is clear.  Since
$f_n^{m+1} = f_m \circ f_n^m$, we have, using Lemma \ref{lem:proprietes-gammag} ( \ref{lem:proprietes-gammag-vi}), 
$\gamma_g(\C(f_n^{m+1}),0) \leq \gamma_g(\C(f_n^m),0) + \gamma_g(\C(f_m),0)$.

In particular
$\gamma_g(\C(f_{n_0}^n),0) \leq \varepsilon_{n_0} :=\sum_{k=n_0}^\infty \gamma_g(\C(f_k),0)$, for all
$n\geq n_0$, and then $\gamma_g(\calS',0) \leq \varepsilon_{n_0}$.  By the triangle in the last
row of the above diagram and Lemma \ref{lem:proprietes-gammag} ( \ref{lem:proprietes-gammag-v}) we deduce $\gamma_g(\C(j_{n_0}),0) \leq 2 \varepsilon_{n_0}$, as required.
\end{proof}

\begin{prop}\label{prop:completudefaisceaux}
  Any Cauchy sequence in $D_{g\geq 0}(N)$ with respect to $\gamma_g$ is convergent.
\end{prop}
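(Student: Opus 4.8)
The plan is to reduce the completeness statement to Lemma~\ref{lem:gammalim=hocolim0} by extracting a rapidly converging subsequence and then upgrading convergence of a subsequence to convergence of the whole sequence via Lemma~\ref{lem:comparaison-distances}. First I would take a Cauchy sequence $(\F_n)_{n\in\N}$ and choose an increasing $\sigma\colon\N\to\N$ such that $\gamma_g(\F_{\sigma(n)},\F_{\sigma(n+1)}) < 2^{-n}$ for all $n$; this is possible by the Cauchy condition. The key point is to turn the bare inequality $\gamma_g(\F_{\sigma(n)},\F_{\sigma(n+1)})<2^{-n}$ into an \emph{actual morphism}: by Lemma~\ref{lem:comparaison-distances}-(\ref{lem:comparaison-distances-ii}) (or directly from Definition~\ref{def:dist-gammag}), for each $n$ there is $\varepsilon_n\leq 2^{-n+2}$ and a morphism $g_n\colon \F_{\sigma(n)}\to\fai{\varphi}^{\varepsilon_n}(\F_{\sigma(n+1)})$ with $\gamma_g(\C(g_n),0)\leq 2\varepsilon_n$. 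Here one should be slightly careful: the target involves a shift by $\varepsilon_n$, so rather than an inductive system with $\F_n\to\F_{n+1}$ one gets morphisms $\F_{\sigma(n)}\to\fai{\varphi}^{\varepsilon_n}(\F_{\sigma(n+1)})$. This is harmless because $\gamma_g(\fai{\varphi}^t(\F),\fai{\varphi}^t(\G))=\gamma_g(\F,\G)$ by Lemma~\ref{lem:proprietes-gammag}-(\ref{lem:proprietes-gammag-i}), and the cumulative shift $\sum_n\varepsilon_n$ converges; one replaces the system by $\H_n=\fai{\varphi}^{-s_n}(\F_{\sigma(n)})$ with $s_n=\sum_{k<n}\varepsilon_k$, so that the maps become honest maps $\H_n\to\H_{n+1}$ and $\gamma_g(\C(\H_n\to\H_{n+1}),0)=\gamma_g(\C(g_n),0)\leq 2\varepsilon_n$, with $\sum_n 2\varepsilon_n<\infty$.

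Next I would apply Lemma~\ref{lem:gammalim=hocolim0} to the inductive system $(\H_n)$: since $\sum_n\gamma_g(\C(\H_n\to\H_{n+1}),0)<\infty$, the sequence $(\H_n)$ $\gamma_g$-converges to $\H:=\hocolim_k\H_k$. Translating back by $\fai{\varphi}^{s_n}$ and using continuity of $\fai{\varphi}^t$ for $\gamma_g$ together with $s_n\to s_\infty<\infty$, one gets that $(\F_{\sigma(n)})$ $\gamma_g$-converges to $\fai{\varphi}^{s_\infty}(\H)$; call this limit $\F$. Finally, to pass from the subsequence to the full sequence, I would invoke the triangle inequality Lemma~\ref{lem:proprietes-gammag}-(\ref{lem:proprietes-gammag-ii}): for arbitrary $m$, pick $n$ large with $\sigma(n)\geq m$ comparable, and estimate $\gamma_g(\F_m,\F)\leq\gamma_g(\F_m,\F_{\sigma(n)})+\gamma_g(\F_{\sigma(n)},\F)$; the first term is small because $(\F_m)$ is Cauchy and the second is small because the subsequence converges. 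Hence $\gamma_g(\F_m,\F)\to 0$.

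The main obstacle I expect is the bookkeeping around the shifts $\fai{\varphi}^{\varepsilon_n}$: the distance $\gamma_g$ is defined so that a bound $\gamma_g(\F,\G)<c$ produces a morphism into a \emph{shifted} copy $\fai{\varphi}^a(\G)$ rather than into $\G$ itself, so the extracted morphisms do not literally form an inductive system in $D(N)$ in the naive sense, and one must conjugate by the flow to straighten this out while controlling the accumulated shift $\sum_n\varepsilon_n$. The other point requiring care is the ``conjugate'' ambiguity in the definition of $\hocolim$ and of the maps $i_n$ (noted repeatedly in the excerpt): this only affects things up to isomorphism and up to composition with an automorphism of the (co)limit, which does not change any $\gamma_g$-distance, so it causes no real trouble — but it should be mentioned. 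Once these are handled, everything else is a direct application of the lemmas already proved, in particular Lemma~\ref{lem:gammalim=hocolim0} which does the analytic heart of the argument.
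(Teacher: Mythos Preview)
Your proposal is correct and follows essentially the same route as the paper: extract a rapidly Cauchy subsequence, use Lemma~\ref{lem:comparaison-distances} to produce morphisms into shifted copies, renormalize by cumulative shifts to obtain an honest inductive system, and apply Lemma~\ref{lem:gammalim=hocolim0}. One small slip: with $\H_n=\fai{\varphi}^{-s_n}(\F_{\sigma(n)})$ and $s_n=\sum_{k<n}\varepsilon_k$ the signs do not line up (you get a map $\H_n\to\fai{\varphi}^{\varepsilon_n - s_n}(\F_{\sigma(n+1)})$, not $\H_{n+1}$); the fix is either to take the positive shift $\H_n=\fai{\varphi}^{s_n}(\F_{\sigma(n)})$ or, as the paper does, to use the tail sum $t_n=\sum_{k\geq n}\varepsilon_k$ and set $\H_n=\fai{\varphi}^{-t_n}(\F_{\sigma(n)})$.
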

\begin{rem} 
The reader should be careful : the distance $\gamma_g$ is degenerate, so the limit is by no means unique. 
However we shall see that the restriction of $\gamma_g$ to the set of limits of constructible sheaves (i.e. in $D_{lc}(N)$)  the distance is non-degenerate (see Appendix \ref{Appendix-B}).
\end{rem} 
\begin{proof}
  Let $(\F_n)_{n\in \N}$ be a Cauchy sequence.  Up to taking a subsequence we can
  assume that $\gamma_g(\F_n,\F_m) \leq 2^{-n}$ for any $n\leq m$.  In particular, by
  Lemma~\ref{lem:comparaison-distances}, there exist $f_n \colon \F_n \to \fai{\varphi}^{2^{-n}}(\F_{n+1})$
  such that $\gamma_g(\C(f_n),0) \leq 2^{-n+1}$. We then set $ \varepsilon_n= \sum_{k\geq n} 2^{-k}$ and $\F'_n=\fai{\varphi}^{- \varepsilon_n} (\F_n)$ so $f_n$ induces a map $f'_n: \F'_n \longrightarrow \F'_{n+1}$ and according to   Lemma~\ref{lem:gammalim=hocolim0}, 
  $\F'_n$ $\gamma_g$ converges to $\hocolim \F'_n$. Since $\gamma_g(\F_n, \F'_n) \leq \varepsilon_n$ we have that $(\F_n)_{n\geq 1}$ has the same limit. 
\end{proof}

We just proved that to a Cauchy sequence we can associate a limit that is a homotopy colimit. In the next Proposition
we prove that conversely any $\gamma_g$-limit of a sequence $(\F_n)_{n\in\N}$ is a homotopy colimit of some (slightly
modified) subsequence.  We first notice an easy consequence of Lemma~\ref{lem:gammalim=hocolim0}.

\begin{lem}\label{lem:F=holim-translates}
  Let $\F\in D(N)$ and let $\varepsilon_n>0$ be a sequence decreasing to $0$.  Then we have an inductive system
  $(\fai{\varphi}^{-\varepsilon_n}(\F), \tau_{-\varepsilon_n, -\varepsilon_{n+1}}(\F))$ and the maps
  $\tau_{-\varepsilon_n, 0}(\F)$ induce an isomorphism $\hocolim \fai{\varphi}^{-\varepsilon_n}(\F) \isoto \F$.
\end{lem}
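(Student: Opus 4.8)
The plan is to build the isomorphism directly from the translation maps and then kill its cone using the formal properties of $\gamma_g$. First I would invoke the compatibility~\eqref{eq:compatibilite-tau} of the Tamarkin morphisms with composition to note that the maps $\tau_{-\varepsilon_n,0}(\F) \colon \fai{\varphi}^{-\varepsilon_n}(\F) \to \F$ satisfy $\tau_{-\varepsilon_{n+1},0}(\F) \circ \tau_{-\varepsilon_n,-\varepsilon_{n+1}}(\F) = \tau_{-\varepsilon_n,0}(\F)$, hence form a compatible family over the inductive system $(\fai{\varphi}^{-\varepsilon_n}(\F), \tau_{-\varepsilon_n,-\varepsilon_{n+1}}(\F))$. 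As recalled just before Lemma~\ref{lem:gammalim=hocolim0}, such a family induces a morphism $u \colon \hocolim_n \fai{\varphi}^{-\varepsilon_n}(\F) \to \F$ with $u \circ i_n = \tau_{-\varepsilon_n,0}(\F)$, where $i_n$ is the canonical map out of the $n$-th term. It then remains to show that $u$ is an isomorphism, and by Lemma~\ref{lem:proprietes-gammag}(\ref{lem:proprietes-gammag-iv}) it suffices to prove $\gamma_g(\C(u),0)=0$.

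The main input is the estimate $\gamma_g(\C(\tau_{0,\delta}(\G)),0) \leq 2\delta$, valid for every $\delta \geq 0$ and $\G \in D_{g\geq 0}(N)$; it follows from Lemma~\ref{lem:comparaison-distances}(\ref{lem:comparaison-distances-i}) applied with $v$ the identity of $\fai{\varphi}^\delta(\G)$ (equivalently, one checks directly that $\tau_{0,\delta}$ annihilates the cone of $\tau_{0,\delta}(\G)$ and invokes Lemma~\ref{lem:proprietes-gammag}(\ref{lem:proprietes-gammag-iii})). Since $\gamma_g$ is $\fai{\varphi}^t$-invariant (Lemma~\ref{lem:proprietes-gammag}(\ref{lem:proprietes-gammag-i})) and, by~\eqref{eq:compatibilite-tau}, the functor $\fai{\varphi}^{\varepsilon_n}$ carries $\tau_{-\varepsilon_n,-\varepsilon_{n+1}}(\F)$ to a morphism of the form $\tau_{0,\delta_n}(-)$ with $\delta_n := \varepsilon_n - \varepsilon_{n+1}$, and $\tau_{-\varepsilon_n,0}(\F)$ to one of the form $\tau_{0,\varepsilon_n}(-)$, I would deduce $\gamma_g(\C(\tau_{-\varepsilon_n,-\varepsilon_{n+1}}(\F)),0) \leq 2\delta_n$ and $\gamma_g(\C(\tau_{-\varepsilon_n,0}(\F)),0) \leq 2\varepsilon_n$. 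As $\sum_n \delta_n = \varepsilon_0 < \infty$ by telescoping, Lemma~\ref{lem:gammalim=hocolim0} applies to the system $(\fai{\varphi}^{-\varepsilon_n}(\F), \tau_{-\varepsilon_n,-\varepsilon_{n+1}}(\F))$ and yields $\gamma_g(\C(i_n),0) \leq 2\sum_{k\geq n} \gamma_g(\C(\tau_{-\varepsilon_k,-\varepsilon_{k+1}}(\F)),0) \leq 4\sum_{k\geq n}\delta_k = 4\varepsilon_n$.

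To conclude, I would apply the octahedron axiom to $\fai{\varphi}^{-\varepsilon_n}(\F) \to[i_n] \hocolim_k \fai{\varphi}^{-\varepsilon_k}(\F) \to[u] \F$; since $u \circ i_n = \tau_{-\varepsilon_n,0}(\F)$, it produces a distinguished triangle $\C(i_n) \to \C(\tau_{-\varepsilon_n,0}(\F)) \to \C(u) \to[+1]$. Rotating this triangle and combining Lemma~\ref{lem:proprietes-gammag}(\ref{lem:proprietes-gammag-v}) with the shift-invariance of $\gamma_g(0,-)$ (immediate from Lemma~\ref{lem:proprietes-gammag}(\ref{lem:proprietes-gammag-iii})) gives $\gamma_g(\C(u),0) \leq \gamma_g(\C(\tau_{-\varepsilon_n,0}(\F)),0) + \gamma_g(\C(i_n),0) \leq 6\varepsilon_n$ for every $n$. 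Since $\varepsilon_n$ decreases to $0$, this forces $\gamma_g(\C(u),0)=0$, hence $\C(u) \simeq 0$ by Lemma~\ref{lem:proprietes-gammag}(\ref{lem:proprietes-gammag-iv}), so $u$ is an isomorphism, which is the assertion. I do not expect a genuine obstacle here: everything reduces to combining the lemmas already proved, the one point needing a little care being that the maps $i_n$ furnished by Lemma~\ref{lem:gammalim=hocolim0} agree with the ones entering the octahedron only up to the ``conjugation'' ambiguity intrinsic to $\hocolim$, which is harmless since it affects none of the $\gamma_g$-estimates.
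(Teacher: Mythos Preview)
Your proof is correct and follows essentially the same route as the paper's: construct the induced map $u$ from the compatible family $\tau_{-\varepsilon_n,0}(\F)$, bound the cones of the transition maps by Lemma~\ref{lem:comparaison-distances}, invoke Lemma~\ref{lem:gammalim=hocolim0} to control $\gamma_g(\C(i_n),0)$, and finish via the octahedron and Lemma~\ref{lem:proprietes-gammag}(\ref{lem:proprietes-gammag-v}). The paper's version is terser and does not track the explicit constants, but the argument is the same; your remark about the conjugation ambiguity of $i_n$ is a valid side point that the paper leaves implicit.
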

\begin{proof}
  Let $\tau\colon \hocolim \fai{\varphi}^{-\varepsilon_n}(\F) \to \F$ be the morphism induced by the maps
  $\tau_{-\varepsilon_n, 0}(\F)$.  With the notation $i_n$ of Lemma~\ref{lem:gammalim=hocolim0} we have $\tau \circ
  i_n = \tau_{-\varepsilon_n, 0}(\F)$. By the octaedron axiom there exists a distinguished triangle relating the
  cones of these maps and Lemma~\ref{lem:proprietes-gammag} implies that $\gamma_g(0,\C(\tau)) \leq
  \gamma_g(0,\C(i_n)) + \gamma_g(0,\C(\tau_{-\varepsilon_n, 0}(\F)))$.  By Lemma~\ref{lem:comparaison-distances}
  $\gamma_g(0,\C(\tau_{-\varepsilon_n, -\varepsilon_{n+1}}(\F))) \leq 2(\varepsilon_n - \varepsilon_{n+1})$ and we
  can apply Lemma~\ref{lem:gammalim=hocolim0}. Hence $\gamma_g(0,\C(i_n))$ goes to $0$ and $\gamma_g(0,\C(\tau))$ is
  as small as desired. So $\C(\tau) \simeq 0$ and $\tau$ is an isomorphism.
\end{proof}

\begin{prop}\label{prop:gammalimit=colimit}
  Let $(\F_n)_{n\in \N}$ be a sequence in $D(N)$ which $\gamma_g$-converges to $\F$.  Then, up to taking a
  subsequence, there exist a sequence of positive numbers $(\varepsilon_n)_{n\in \N}$ converging to $0$ and morphisms
  $f_n \colon \fai{\varphi}^{-\varepsilon_n}(\F_n) \to \fai{\varphi}^{-\varepsilon_{n+1}}(\F_{n+1})$, $u_n \colon
  \fai{\varphi}^{-\varepsilon_n}(\F_n) \to \F$ such that $u_{n+1} \circ f_n = u_n$, for all $n$, and the morphism
  $\hocolim \fai{\varphi}^{-\varepsilon_n}(\F_n) \to \F$ induced by the $u_n$'s (see after~\eqref{eq:def_hocolim}) is
  an isomorphism.
\end{prop}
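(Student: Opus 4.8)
The plan is to extract from the convergence a sequence of morphisms into $\F$ with $\gamma_g$-small cones, to rigidify these into an honest inductive system by absorbing their failure of compatibility into cumulative translations, and then to identify $\F$ with the homotopy colimit by checking that the resulting comparison map has a $\gamma_g$-trivial (hence zero) cone.

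First I would invoke the remark following Lemma~\ref{lem:comparaison-distances}: since $\F_n$ $\gamma_g$-converges to $\F$, there are positive reals $\eta_n \to 0$ and morphisms $v_n\colon \F_n \to \fai{\varphi}^{\eta_n}(\F)$ with $\gamma_g(0,\C(v_n)) \to 0$. Passing to a subsequence I may assume $(\eta_n)$ decreasing and $\gamma_g(0,\C(v_n)) \le 2^{-n}$; applying $\fai{\varphi}^{-\eta_n}$ and using the invariance of $\gamma_g$ under $\fai{\varphi}^t$ (Lemma~\ref{lem:proprietes-gammag}~(\ref{lem:proprietes-gammag-i})) I obtain $u_n\colon \G_n := \fai{\varphi}^{-\eta_n}(\F_n) \to \F$ with $\gamma_g(0,\C(u_n)) \le 2^{-n}$, so that now the target is honestly $\F$.

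The crux — and the step I expect to be the main obstacle — is that the family $(u_n)$ is a priori \emph{not} compatible, so I must manufacture the structure maps. Here the point is that $\gamma_g(0,\C(u_{n+1}))$ is small: by Lemma~\ref{lem:proprietes-gammag}~(\ref{lem:proprietes-gammag-iii}) I can choose $\delta_{n+1}>0$, summable and tending to $0$, with $\tau_{0,\delta_{n+1}}(\C(u_{n+1}))=0$. Writing $\beta_{n+1}\colon \F \to \C(u_{n+1})$ for the third map of the triangle of $u_{n+1}$, naturality of $\tau$ gives $\fai{\varphi}^{\delta_{n+1}}(\beta_{n+1})\circ \tau_{0,\delta_{n+1}}(\F)\circ u_n = \tau_{0,\delta_{n+1}}(\C(u_{n+1}))\circ \beta_{n+1}\circ u_n = 0$, so $\tau_{0,\delta_{n+1}}(\F)\circ u_n$ — not $u_n$ itself — factors, through $\fai{\varphi}^{\delta_{n+1}}$ applied to the triangle of $u_{n+1}$, as $\fai{\varphi}^{\delta_{n+1}}(u_{n+1})\circ f_n$ for some $f_n\colon \G_n \to \fai{\varphi}^{\delta_{n+1}}(\G_{n+1})$. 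Then, setting $s_n = \sum_{k>n}\delta_k$ (so $s_n\downarrow 0$ and $s_n-s_{n+1}=\delta_{n+1}$), $\G'_n := \fai{\varphi}^{-s_n}(\G_n)=\fai{\varphi}^{-(s_n+\eta_n)}(\F_n)$, $f_n'\colon \G'_n\to\G'_{n+1}$ the morphism induced by $\fai{\varphi}^{-s_n}(f_n)$, and $u_n' := \tau_{-s_n,0}(\F)\circ\fai{\varphi}^{-s_n}(u_n)\colon \G'_n\to\F$, a short manipulation using~\eqref{eq:compatibilite-tau} turns the displayed factorization into the exact identity $u_{n+1}'\circ f_n'=u_n'$: the leftover shift is exactly cancelled by the telescoping $s_n$.

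Finally the compatible family $(u_n')$ induces $u'\colon\hocolim\G'_n\to\F$ with $u'\circ i_n=u_n'$, and I would show $u'$ is an isomorphism via Lemma~\ref{lem:proprietes-gammag}~(\ref{lem:proprietes-gammag-iv}), i.e. by proving $\gamma_g(0,\C(u'))=0$. The octahedron axiom applied to $u'\circ i_n=u_n'$ gives a triangle $\C(i_n)\to\C(u_n')\to\C(u')\to[+1]$, hence $\gamma_g(0,\C(u'))\le\gamma_g(0,\C(i_n))+\gamma_g(0,\C(u_n'))$ by Lemma~\ref{lem:proprietes-gammag}~(\ref{lem:proprietes-gammag-v}). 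One has $\gamma_g(0,\C(u_n'))\le\gamma_g(0,\C(u_n))+\gamma_g(0,\C(\tau_{-s_n,0}(\F)))\le 2^{-n}+2s_n\to 0$ (using Lemma~\ref{lem:comparaison-distances} for the $\tau$-cone); and applying the nine diagram recalled above to the commutative square with horizontal maps $f_n,\tau_{0,\delta_{n+1}}(\F)$ and vertical maps $u_n,\fai{\varphi}^{\delta_{n+1}}(u_{n+1})$ bounds $\gamma_g(0,\C(f_n))=\gamma_g(0,\C(f_n'))$ by $\gamma_g(0,\C(u_n))+\gamma_g(0,\C(u_{n+1}))+2\delta_{n+1}$, which is summable, so Lemma~\ref{lem:gammalim=hocolim0} yields $\gamma_g(0,\C(i_n))\le 2\sum_{k\ge n}\gamma_g(0,\C(f_k'))\to 0$. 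Letting $n\to\infty$ gives $\gamma_g(0,\C(u'))=0$, so $\C(u')\simeq 0$ and $u'$ is an isomorphism; the statement follows with $\varepsilon_n:=s_n+\eta_n$.
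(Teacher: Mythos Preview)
Your proof is correct, but it takes a different route from the paper's.

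The paper's argument is structurally different and shorter. From $\gamma_g(\F_n,\F)\le 2^{-(n+1)}$ it extracts maps \emph{in both directions}, $v_n\colon \F\to\fai{\varphi}^{\beta_n}(\F_n)$ and $w_n\colon\fai{\varphi}^{\beta_n}(\F_n)\to\fai{\varphi}^{2^{-(n+1)}}(\F)$ with $w_n\circ v_n=\tau_{0,2^{-(n+1)}}(\F)$, and then \emph{interleaves} suitably translated copies of $\F$ and $\F_n$ into a single inductive system
\[
\cdots\to\fai{\varphi}^{-2^{-n}}(\F)\to\fai{\varphi}^{-\varepsilon_n}(\F_n)\to\fai{\varphi}^{-2^{-(n+1)}}(\F)\to\cdots
\]
The point is that the composites along the $\F$-subsystem are exactly the $\tau$-morphisms, so by Lemma~\ref{lem:F=holim-translates} its homotopy colimit is $\F$; Lemma~\ref{lem:sous-systeme-inductif} then transports this to the $\F_n$-subsystem, and the compatible maps $u_n$ are simply read off from the interleaved system.

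Your approach, by contrast, never brings in the maps $v_n$ from $\F$ to $\F_n$: you build the structure maps $f_n$ by a cone-factorization argument (the vanishing of $\tau_{0,\delta_{n+1}}(\C(u_{n+1}))$), rigidify via the telescoping shifts $s_n$, and then prove the comparison map is an isomorphism by a direct $\gamma_g$-estimate on its cone using the octahedron, the nine diagram, and Lemma~\ref{lem:gammalim=hocolim0}. This is more computational but is self-contained: you do not invoke Lemma~\ref{lem:sous-systeme-inductif} at all, and the role of Lemma~\ref{lem:F=holim-translates} is effectively absorbed into your final cone estimate (indeed your last paragraph reproves its content in this setting). The paper's interleaving trick buys conciseness and avoids the nine-diagram bound on $\gamma_g(0,\C(f_n))$; your approach makes the quantitative control on every step explicit.
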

\begin{proof}
  We set $\alpha_n = 2^{-n}$.  Up to taking a subsequence we can assume that $\gamma_g(\F_n,\F) \leq \alpha_{n+1}$
  for each $n$. This implies the existence of $0\leq \beta_n \leq \alpha_{n+1}$ and morphisms $v_n \colon \F \to
  \fai{\varphi}^{\beta_n}(\F_n)$, $w_n\colon \fai{\varphi}^{\beta_n}(\F_n) \to
  \fai{\varphi}^{\alpha_{n+1}}(\F)$ such that $w_n\circ v_n = \tau_{0,\alpha_{n+1}}(\F)$.  Translating these
  morphisms by $\fai{\varphi}^\bullet(-)$ and alternating the $\F_n$ and the $\fai{\varphi}^{-\alpha_n}(\F)$ we
  obtain an inductive system
  \begin{multline*}
\cdots\to    \fai{\varphi}^{-\alpha_n}(\F) \to[\fai{\varphi}^{-\alpha_n}(v_n)] \fai{\varphi}^{-\varepsilon_n}(\F_n) 
    \to[\fai{\varphi}^{-\alpha_n}(w_n)] \fai{\varphi}^{-\alpha_{n+1}}(\F) \\
    \to[\fai{\varphi}^{-\alpha_{n+1}}(v_{n+1})] \fai{\varphi}^{-\varepsilon_{n+1}}(\F_{n+1})
     \to[\fai{\varphi}^{-\alpha_{n+1}}(w_{n+1})] \fai{\varphi}^{-\alpha_{n+2}}(\F) \to\cdots
  \end{multline*}
  where $\varepsilon_n = \alpha_n-\beta_n$.  We define $f_n = \fai{\varphi}^{-\alpha_{n+1}}(v_{n+1}) \circ
  \fai{\varphi}^{-\alpha_n}(w_n)$ and $u_n = \tau_{-\alpha_{n+1},0}(\F) \circ \fai{\varphi}^{-\alpha_n}(w_n)$.
\CorrectionBlue{  By Lemma~\ref{lem:sous-systeme-inductif} we have $\hocolim \fai{\varphi}^{-\varepsilon_n}(\F_n) \simeq
  \hocolim \fai{\varphi}^{-\alpha_n}(\F)$ and by Lemma~\ref{lem:F=holim-translates} we have $\hocolim
  \fai{\varphi}^{-\alpha_n}(\F) \isoto \F$, which proves the result. }
\end{proof}

We now use the previous results to bound the microsupport of a $\gamma_g$-limit.  For a sequence
$(X_{j})_{j\geq 0}$ of subsets of a topological space $Z$, its topological upper and lower limits are defined
as
\begin{align*}
  \limsup_{j} X_{j}&=\bigcap_n \overline{\bigcup_{j\geq n}X_{j}} \\
  &=\left\{x\in Z \mid \exists (x_j)_{j\geq 1}, \;x_j\in X_j\;  \text{for infinitely many}\; j, \; \lim_j x_j=x\right \} \\
    \liminf_j X_j &= \left\{x\in Z \mid \exists (x_j)_{j\geq 1}, \; x_j\in X_j,\; \lim_j x_j=x\right \}
\end{align*}
Obviously $\liminf_j X_j\subset \limsup_j X_j$. 
We have
\begin{prop} \label{Prop-g-continuity}
  Let $N$ be a manifold and let $g \colon T^*N \setminus 0_N \to {\mathbb R} $ be a $1$-homogeneous autonomous
  Hamiltonian function.  If $(\F_j)_{j\geq 1}$ is a sequence such that $\gamma_g-\lim \F_j=\F$ then we have
  $$
  SS(\F) \subset \liminf_{j} SS(\F_j)
  $$
\end{prop}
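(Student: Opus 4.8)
The plan is to argue by contradiction. Suppose there is a point $(x_{0};\xi_{0})\in SS(\F)$ that does not lie in $\liminf_{j} SS(\F_{j})$. Unwinding the definition of $\liminf$, we obtain an open neighbourhood of $(x_{0};\xi_{0})$ in $T^{*}M$ which, since the sets $SS(\F_{j})$ are conic, may be taken to be an open conic neighbourhood $\Omega$ of $(x_{0};\xi_{0})$, together with an infinite subset $J\subseteq\N$ such that $SS(\F_{j})\cap\Omega=\emptyset$ for all $j\in J$. When $\xi_{0}\neq 0$ we may also arrange $\Omega\cap 0_{M}=\emptyset$; when $\xi_{0}=0$ the set $\Omega$ contains a full cotangent neighbourhood $\pi^{-1}(B(x_{0},\delta))$ of $x_{0}$ (with $\pi\colon T^{*}M\to M$ the projection) and $SS(\F_{j})\cap\Omega=\emptyset$ just says $\F_{j}|_{B(x_{0},\delta)}\simeq 0$. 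The aim is then to deduce $SS(\F)\cap\Omega'=\emptyset$ for a slightly smaller conic neighbourhood $\Omega'$ of $(x_{0};\xi_{0})$, contradicting $(x_{0};\xi_{0})\in SS(\F)$.

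First I would replace the sequence by the subsequence $(\F_{j})_{j\in J}$, which still $\gamma_{g}$-converges to $\F$, and invoke Proposition~\ref{prop:gammalimit=colimit}: after passing to a further subsequence $(\F_{j_{n}})_{n}$ there are positive numbers $\varepsilon_{n}\to 0$ and an isomorphism $\hocolim_{n}\fai{\varphi}^{-\varepsilon_{n}}(\F_{j_{n}})\isoto\F$. Since $\hocolim$ fits into the distinguished triangle~\eqref{eq:def_hocolim} between two copies of $\bigoplus_{n}\fai{\varphi}^{-\varepsilon_{n}}(\F_{j_{n}})$, the triangle inequality for the singular support together with Proposition~\ref{Prop-2.3} gives $SS(\F)\subseteq\overline{\bigcup_{n}SS(\fai{\varphi}^{-\varepsilon_{n}}(\F_{j_{n}}))}$; and, using $\hocolim_{n\geq 0}\simeq\hocolim_{n\geq n_{0}}$ from Remark~\ref{Rem-nine-diagram}, the same inclusion holds with the union restricted to $n\geq n_{0}$, for every $n_{0}$. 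Finally, by Proposition~\ref{prop:action_noyau} applied to the homogeneous symplectomorphism $\varphi_{g}^{-\varepsilon_{n}}$, and by $SS(-)\cap 0_{M}=\supp(-)$, we have $SS(\fai{\varphi}^{-\varepsilon_{n}}(\F_{j_{n}}))=\varphi_{g}^{-\varepsilon_{n}}(SS^{\bullet}(\F_{j_{n}}))\cup 0_{\,\supp\fai{\varphi}^{-\varepsilon_{n}}(\F_{j_{n}})}$.

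It then suffices to check that for $n$ large enough neither piece meets a fixed smaller conic neighbourhood $\Omega'$ of $(x_{0};\xi_{0})$: once this is done, $\bigcup_{n\geq n_{0}}SS(\fai{\varphi}^{-\varepsilon_{n}}(\F_{j_{n}}))$ lies in the closed set $T^{*}M\setminus\Omega'$, hence so does its closure, hence $SS(\F)\cap\Omega'=\emptyset$. For the conic piece this follows from $SS^{\bullet}(\F_{j_{n}})\cap\Omega=\emptyset$ and continuity of the flow: the $1$-homogeneous Hamiltonian vector field $X_{g}$ is invariant under fibre scaling, so $\varphi_{g}^{t}$ descends to a continuous flow on the cosphere bundle of $M$, whence $\varphi_{g}^{\varepsilon_{n}}(\Omega')\subseteq\Omega$ for $n$ large and therefore $\varphi_{g}^{-\varepsilon_{n}}(SS^{\bullet}(\F_{j_{n}}))\cap\Omega'=\emptyset$. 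For the zero-section piece, when $\xi_{0}\neq 0$ we simply take $\Omega'$ disjoint from $0_{M}$; when $\xi_{0}=0$ we use $\supp\fai{\varphi}^{a}(\G)\subseteq q_{2}(\supp K_{\varphi^{a}}\cap(\supp\G\times M))$, coming from $\fai{\varphi}^{a}(\G)=\G\circ K_{\varphi^{a}}$ with $q_{1},q_{2}$ the projections of $M\times M$, together with the fact that $\supp K_{\varphi^{a}}$ lies in any prescribed neighbourhood of $\Delta_{M}$ once $|a|$ is small; granting this, and since $\varphi_{g}^{-\varepsilon_{n}}$ moves base points near $x_{0}$ by at most $C\varepsilon_{n}$, both $\supp\fai{\varphi}^{-\varepsilon_{n}}(\F_{j_{n}})$ and the base projection of $\varphi_{g}^{-\varepsilon_{n}}(SS^{\bullet}(\F_{j_{n}}))$ stay outside $B(x_{0},\delta/2)$ for $n$ large, so one takes $\Omega'=\pi^{-1}(B(x_{0},\delta/2))$. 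The shrinking claim holds because the whole-isotopy kernel $K_{\varphi}\in D(M^{2}\times\Real)$ has closed support whose slice over $0\in\Real$ equals $\supp(K_{\varphi^{0}})=\supp(k_{\Delta_{M}})=\Delta_{M}$, since $\fai{\varphi}^{0}=\mathrm{id}$.

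The step I expect to be the main obstacle is precisely this last, zero-section bookkeeping: away from $0_{M}$ the proof is purely formal, resting on Propositions~\ref{prop:gammalimit=colimit}, \ref{Prop-2.3}, \ref{prop:action_noyau} and continuity of the descended flow, whereas controlling $\supp\F$ requires the intuitively clear but slightly delicate statement that the $G$-$K$-$S$ quantization kernels $K_{\varphi^{a}}$ are supported in neighbourhoods of the diagonal collapsing to $\Delta_{M}$ as $a\to 0$.
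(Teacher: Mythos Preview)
Your proof is correct and follows essentially the same route as the paper: realize $\F$ as a homotopy colimit via Proposition~\ref{prop:gammalimit=colimit}, bound $SS(\F)$ by $\overline{\bigcup_{n\geq n_0} SS(\fai{\varphi}^{-\varepsilon_n}(\F_{j_n}))}$ using Proposition~\ref{Prop-2.3}, Remark~\ref{Rem-nine-diagram} and the triangle inequality, and exploit the freedom to pass to subsequences to upgrade the resulting $\limsup$ bound to a $\liminf$ bound. You are in fact more explicit than the paper about the effect of the $\varepsilon_n$-shifts and the zero-section case; the paper absorbs these into the sentence ``up to taking a subsequence and replacing $\F_j$ by $\fai{\varphi_g}^{\varepsilon_j}(\F_j)$'' and tacitly identifies $\limsup_j SS(\fai{\varphi}^{\varepsilon_j}(\F_j))$ with $\limsup_j SS(\F_j)$, which is exactly the content of your last two paragraphs.
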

\begin{proof}
\CorrectionBlue{  By Proposition~\ref{prop:gammalimit=colimit}, up to taking a subsequence, we can find some sequence
  $(\varepsilon_j)$ converging to $0$, such that $\hocolim \F'_j \isoto \F$, where $\F'_j =
  \fai{\varphi_g}^{\varepsilon_j}(\F_j)$.  We have $SS(\F'_j) = \varphi_g^{\varepsilon_j}(SS(\F_j))$ and
  $\varphi_g^{\varepsilon_j}$ tends to $\id$ in $C^0$ norm over compact subsets. Hence $\liminf_{j} SS(\F_j) =
  \liminf_{j} SS(\F'_j)$. }  
  The homotopy colimit is defined by a distinguished triangle where the other two terms are $\bigoplus_{n\geq
    n_0}\F'_n$ (we can indeed restrict to $\bigoplus_{n\geq n_0}$ by Remark \ref{Rem-nine-diagram}). By the
  triangle inequality for singular supports and Proposition \ref{Prop-2.3}, this implies that $SS(\F) \subset
  \limsup_{j} SS(\F'_j)$. Now let $p=(x,\xi)\not\in \liminf_j SS(\F'_j)$, we can find a subsequence
  $\sigma(j)$ such that $p\notin \limsup_j SS(\F'_{\sigma(j)})$; but $\F'_{\sigma(j)}$ has limit $\F$, so
  $p\notin SS(\F)$. As a result $SS(\F) \subset \liminf_{j} SS(\F'_j)$.
\end{proof}

\section{Proof of Theorem \ref{Thm-sheaf-gamma}}\label{sec:proofmainthm}
Let $\F \in D(N)$.  We want to prove that around a point $z_0\in SS(\F)$ there is no sequence
$(\varphi_j)_{j\geq 1}$ in $\DHam_c (B(z_{0}, \varepsilon))$ such that $\gamma-\lim_j\varphi_j=\Id$ and
$\varphi_j(SS(\F)) \cap B(z_{0},\eta)=\emptyset$ for some $\varepsilon>\eta>0$.  We argue by contradiction and
assume the existence of such a sequence $(\varphi_j)_{j\geq 1}$ and open ball $B(z_0,\eta)$.  Let
$\K_{\varphi_j}$ be the sheaf in $D^b(N\times N\times {\mathbb R} )$ inducing $\varphi_j$ as recalled in
\S\ref{sec:qhi}.  We denote by $T^*_{\tau>0}(N\times\Real)$ the subset of $T^*(N\times\Real)$ given by
$\tau>0$ and we set $\rho\colon T^*_{\tau>0}(N\times\Real) \to T^*N$, $(x,t,\xi,\tau) \mapsto (x,\xi/\tau)$.
Then $\rho(SS(\K^\convstar_{\varphi_j}( \F)) \cap T^*_{\tau>0}(N\times\Real)) = \varphi_j (SS(\F))$ does not
intersect $B(z_0, \eta)$. We want to prove that this implies that $SS(\F) \cap B(z_0,\eta)=\emptyset$.

We will use the following proposition.
\begin{prop} \label{Prop-quantization-continuity}
Let $(\varphi_j)_{j\geq 1}$ be a sequence in $\DHam_c (T^*N)$ such that $\gamma-\lim \varphi_j= \Id$ and $\K^\convstar_{\varphi_j}$ its quantization in $D^{b}(N\times N \times {\mathbb R} )$.  Then for all $\F \in D^b(N)$ we have 
$$\gamma_\tau-\lim \K^\convstar_{\varphi_j}(\F)= \K^\convstar_{\Id} ( \F)=\F \boxtimes k_{[0,+\infty[} \in D^b(N\times {\mathbb R} )$$
\end{prop} 
\begin{proof}
  This follows from Lemma~\ref{lem:distance-composition} and Proposition~\ref{prop:gamma-et-gammag2}.
\end{proof}

\begin{rem} 
Note that $\varphi_j(SS(\F))$ is not a homogeneous manifold, and  $SS(\K^\convstar_{\varphi_j} ( \F)) \subset T^*(N\times {\mathbb R} )$ represents the homogenization of  $\varphi_j(SS(\F))$. 
\end{rem} 

\CorrectionBlue{ We now claim that $SS(\F\boxtimes k_{[0,+\infty[}) \cap \{\tau>0\} = SS(\F) \times (\{0\}\times
  \{\tau>0\} )$.  Indeed we have the general bound $SS(\F\boxtimes k_{[0,+\infty[}) \subset SS(\F) \times
  SS(k_{[0,+\infty[})$ and a similar bound for $\F\boxtimes k_{]-\infty,0]}$.  Now we have the distinguished
  triangle $\F\boxtimes k_\Real \to \F\boxtimes (k_{]-\infty,0]} \oplus k_{[0,+\infty[}) \to \F\boxtimes
  k_{\{0\}} \to[+1]$ and the equalities $SS(\F\boxtimes k_{\{0\}}) = SS(\F) \times (\{0\} \times \Real)$,
  $SS(\F\boxtimes k_\Real) = SS(\F) \times (\Real\times \{0\})$, by~\cite[Prop. 5.4.4, 5.4.5]{K-S}.  The
  triangular inequality for the microsupport then gives the claim.  }

We return to the sequence $(\varphi_j)_{j\geq 1}$ and the ball $B(z_0,\eta)$.  Proposition
\ref{Prop-quantization-continuity} and Proposition \ref{Prop-g-continuity} (applied with $N\times\Real$
and $g=\tau$) give
$$
SS(\F) \times (\{0\}\times \{\tau>0\} ) \subset
SS(\K^\convstar_\id(\F)) \subset \liminf_jSS(\K^\convstar_{\varphi_j} (\F))\dispdot
$$
Applying the map $\rho$ we deduce $SS(\F) \subset \liminf_j \varphi_j(SS(\F))$.  In particular $SS(\F) \cap
B(z_0,\eta) =\emptyset$ which contradicts $z_0 \in SS(\F)$.


\section{ \texorpdfstring{$\gamma$}{Gamma}-coisotropic vs cone-coisotropic}\label{Section-8}

\subsection{A \texorpdfstring{$\gamma$}{Gamma}-coisotropic set is cone-coisotropic}
Our goal in this section is to prove Proposition \ref{Prop-gamma-to-cone}. 

Following Bouligand (see \cite{Bouligand}), one may define for a point $x$ of a  subset  $V$ in a smooth manifold,  two cones :
\begin{defn} Let $V$ be  a closed subset of the $C^{1}$-manifold $M$ and $x\in V$. We choose a $C^{1}$ chart of $M$ at $x$ and define 
\begin{enumerate} 
\item The {\bf paratingent cone} of $V$ at $x$ is 
  \begin{multline*}
    C^+(x,V)= \left \{ \lim_n c_n(x_n-y_n) \mid x_n, y_n\in V, c_n \in {\mathbb R},\; \lim_nx_n=\lim_n y_n=x, \right. \\
     \left. \lim_n c_n=+\infty \right\}
\end{multline*}
\item The {\bf contingent cone} of $V$ at $x$ is 
$$C^-(x,V)= \left \{ \lim_n c_n(x_n-x) \mid x_n \in V, c_n \in {\mathbb R}, \; \lim_nx_n=x,  \lim_n c_n=+\infty \right \}$$
\end{enumerate} 
\end{defn} 
Clearly $C^-(x,V) \subset C^+(x,V)\subset T_{x}V$. Note that $C^+(x,V)$ is invariant by $v \mapsto -v$, while it is not necessarily the case for $C^-(x,V)$. 

\CorrectionBlue{We shall say that a cone $C$ is closed (resp. open) if $C\cap S(x,1)$ is closed (resp. open). Note that a closed cone is closed in the usual sense, while an open cone is not open (but becomes open if we remove $x$). 
Then the contingent cone is closed. }
It is also easy to see that both cones are independent from the choice of the $C^{1}$ chart. We then have the following definition, which appears under the name ``involutivity'' in the book of  Kashiwara and Schapira 
\begin{defn} [Cone-coisotropic, see \cite{K-S}, definition 6.5.1 p. 271]
\label{Def-cone-coisotropic}
We shall say that $V$ is {\bf cone-coisotropic} if whenever a hyperplane $H$ is such that $C^+(x,V) \subset H$ then the symplectic orthogonal of $H$, $H^\omega$ is contained in $ C^-(x,V)$. 
\end{defn} 

\begin{figure}[ht]
 \begin{overpic}[width=6cm]{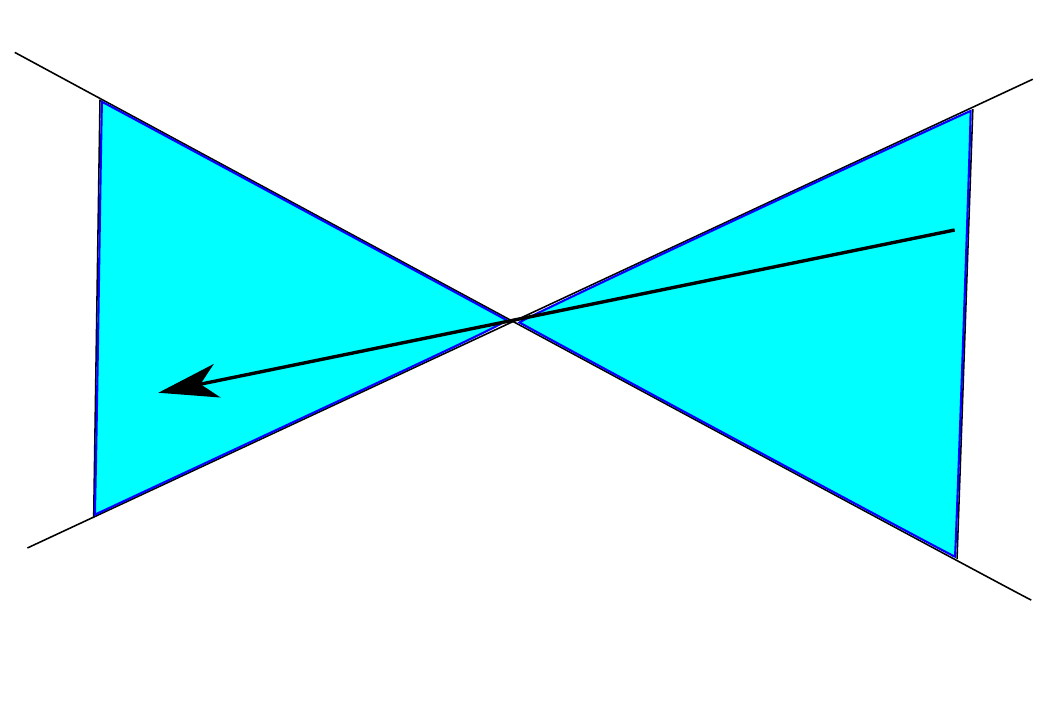}
 \put (75,35) {$\myBlue C$} 
  \put (12,45) {$\myBlue C$} 
   \put (10,35) {$ {H^\omega}$} 
 \end{overpic}
\begin{overpic}[width=6cm]{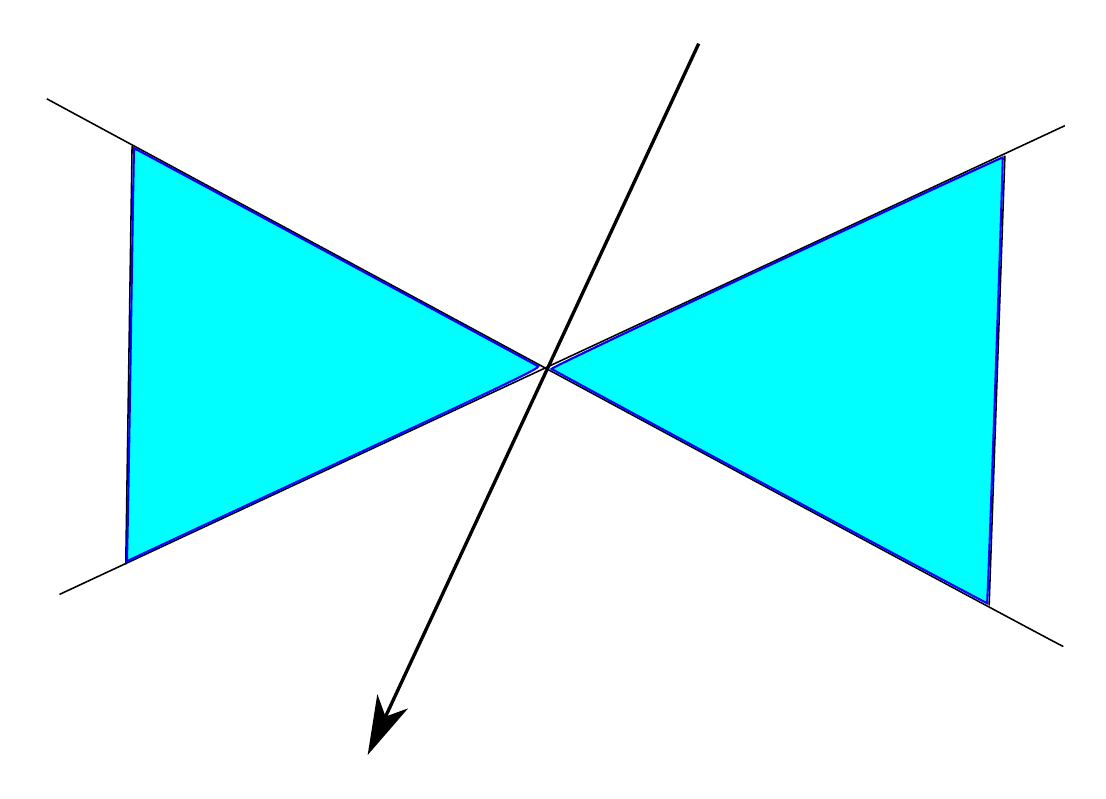}
 \put (75,35) {$\myBlue C$} 
  \put (12,35) {$\myBlue C$} 
   \put (27,10) {$ {H^\omega}$} 
 \end{overpic}
\caption{Coisotropic cone on the left, non-coisotropic on the right}
\label{fig-7}
\end{figure}

 \begin{proof} [Proof of Proposition \ref{Prop-gamma-to-cone}]
We want to prove that if $V$ is not cone-coisotropic at $z\in V$  then it is not $\gamma$-coisotropic at $z$. Using a chart, we may assume $V \subset {\mathbb R}^{d}$ and $z=0$. 
For $H$ a hyperplane we set  
$n_H$ to be  the normal to $H$ (for the usual scalar product). 
Let us assume that  $C^+(0,V)\subset H$ and $H^\omega \notin C^-(0,V)$. 
We must prove that for all $ \varepsilon >0$ there exists a ball $B(0, \eta)$ such that for all $\delta>0$ we can find  $\varphi \in \DHam_{c}(B(x, \varepsilon )$ such that  
$\varphi(V)\cap B(0,\eta)=\emptyset$ and $\gamma (\varphi)<\delta$.
We first prove
\begin{lem} \label{Lemma-8.3}
  Let $V$ be a closed set in $ {\mathbb R}^{d}$ containing $0$ and $H$ a hyperplane such that $C^+(0,V)\subset H$. Then there exists a continuous function $f_{H,V}$ defined in a neighbourhood of $0$ in $H$ such that $f_{H,V}(0)=0$ and $d_Gf_{H,V}(0)=0$, where $d_G$ is the G\^ateaux derivative, and for any  open cone
  $C_\rho ^-(0,V) := \{ \lambda x;\; \lambda \geq 0,\, x\in S^{d-1},\, d(x,C^-(0,V)) < \rho\}$,
  we have  for $\delta$ small enough
$$V\cap B(0, \delta ) \subset \left\{ x+f_{H,V}(x)n_H \mid x \in C_{ \rho}^-(0,V)   \right \}$$
\end{lem}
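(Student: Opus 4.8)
The statement is a "local graph over a hyperplane" assertion: if the paratingent cone $C^+(z,V)$ sits inside a hyperplane $H$, then $V$ near $z$ projects injectively onto $H$ and is the graph of a function $f_{H,V}$ on $H$ with values along the normal direction $n_H$, with vanishing value and Gâteaux derivative at $z$, and moreover this graph is squeezed into any prescribed conic neighbourhood of $C^+(z,V)$. I would first reduce to a convenient normalization: choose coordinates so that $z=0$ and $H=\{y=0\}$ with $n_H$ the last coordinate axis, writing points as $(x,y)$ with $x\in H$, $y\in\Real$. The plan is then to prove two things in order: (a) for $\delta$ small, the map $(x,y)\mapsto x$ restricted to $V\cap B(0,\delta)$ is injective, so $V\cap B(0,\delta)$ is a graph $\{(x,\varphi(x))\}$ over a subset $D_\delta\subset H$; (b) the function $\varphi$ thus defined is continuous, satisfies $\varphi(0)=0$, $d_G\varphi(0)=0$, and its graph lies inside any given conic neighbourhood of $C^+(0,V)$.

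\textbf{Step (a): injectivity of the projection.} Suppose not. Then there are sequences $x_n,y_n\in V\cap B(0,\delta_n)$ with $\delta_n\to 0$, $x_n\neq y_n$, having the same $H$-component. Setting $c_n = 1/|x_n-y_n|\to+\infty$ (after passing to a subsequence so that $c_n(x_n-y_n)$ converges, using compactness of the unit sphere), the limit $v=\lim_n c_n(x_n-y_n)$ lies in $C^+(0,V)$ by definition. But $v$ has zero $H$-component by construction, so $v$ is a nonzero vector along $n_H$, contradicting $C^+(0,V)\subset H$. This forces injectivity for $\delta$ small; the same argument, applied to an open conic neighbourhood $C^+_\varepsilon$ of $C^+(0,V)$ that stays uniformly away from the $n_H$-axis, actually shows the stronger quantitative statement that $V\cap B(0,\delta)$ lies in $\{(x,y)\mid (x,y)\in C^+_\varepsilon\}$ (a Lipschitz cone condition): any point of $V$ near $0$ together with $0$ itself gives a secant direction in $C^-(0,V)\subset C^+(0,V)\subset C^+_\varepsilon$; more carefully one uses pairs of points of $V$, exactly as in the definition of the paratingent cone, to get that all secants near $0$ lie in $C^+_\varepsilon$. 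From this cone condition the containment $V\cap B(0,\delta)\subset \{z+x+\varphi(x)n_H \mid x\in C^+_\varepsilon(z,V)\}$ follows directly, since every point of $V$ near $z$ has its $H$-component $x$ with $(x,\varphi(x))\in C^+_\varepsilon$.

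\textbf{Step (b): regularity of $\varphi$.} Continuity of $\varphi$ at a point $x_0\in D_\delta$ follows from the cone condition: if $x_n\to x_0$ in $D_\delta$ but $\varphi(x_n)\not\to\varphi(x_0)$, pass to a subsequence with $\varphi(x_n)\to \eta\neq\varphi(x_0)$ (bounded since $V\cap \overline{B}$ is compact and $V$ is closed, so $(x_0,\eta)\in V$); then $(x_0,\eta)$ and $(x_0,\varphi(x_0))$ are two distinct points of $V$ with the same $H$-component, contradicting injectivity. That $\varphi(0)=0$ is the normalization $z\in V$. For the Gâteaux derivative at $0$: fix a direction $u\in H$ and consider the points $(t u, \varphi(tu))\in V$ for $t\to 0^+$ (those $t$ for which $tu\in D_\delta$); the secant $\tfrac1t\big((tu,\varphi(tu))-(0,0)\big)=(u,\varphi(tu)/t)$ lies in $C^-(0,V)\subset C^+(0,V)\subset H$, which forces $\varphi(tu)/t\to 0$, i.e.\ the directional derivative is $0$; doing this for every $u$ gives $d_G\varphi(0)=0$. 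Finally set $f_{H,V}:=\varphi$ (extended by $0$ outside $D_\delta$, or simply regarded as defined on a neighbourhood of $z$ in $H$ after noting $D_\delta$ contains such a neighbourhood — this last point needs a brief argument that the projection of $V\cap B(z,\delta)$ covers a neighbourhood of $z$ in $H$, which again follows from the cone condition, since $V$ being cone-coisotropic-like forces $C^-(z,V)$ to be large; but in fact for the statement as written it suffices to have $f_{H,V}$ defined on the image, so I would phrase it that way and not claim more than needed).

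\textbf{Main obstacle.} The delicate point is making Step (a) precise: the definition of $C^+$ involves \emph{two} sequences $x_n,y_n\in V$ both tending to $z$, not one point tending to $z$ and the other fixed at $z$, so to get the Lipschitz cone condition "\emph{all} secants between nearby points of $V$ lie in $C^+_\varepsilon$" one must argue by contradiction with a diagonal/subsequence extraction and use that $C^+(z,V)$ is closed and contained in the \emph{open} set $C^+_\varepsilon$, so secants eventually enter $C^+_\varepsilon$ uniformly on a small ball. Once that uniform cone condition is in hand, injectivity, the graph property, continuity, and the vanishing Gâteaux derivative are all routine consequences; getting the uniformity right is where care is needed. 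The closedness of $V$ is used to ensure limit points of $V$ stay in $V$, which is what makes the graph function well-defined and the $H$-component map have closed fibres.
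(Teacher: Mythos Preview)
Your argument for Steps~(a) and~(b) is essentially the same as the paper's: the injectivity of the projection to $H$, the continuity of the resulting function on its domain, and the vanishing of $\varphi(x)/|x|$ as $x\to 0$ (hence of the G\^ateaux derivative) are all established in the same way, via contradiction with the definition of the paratingent cone.

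There is, however, a genuine gap at the end. The function $\varphi$ is only defined on the projection image $\Gamma = D_\delta \subset H$, and the lemma requires a continuous $f_{H,V}$ on a full \emph{neighbourhood} of $z$ in $H$ (this matters for the subsequent lemma, which needs a continuous function on all of $H$ near $z$ in order to build a Hamiltonian displacing the graph). Your two proposed fixes both fail: extending by $0$ off $\Gamma$ is generally discontinuous at boundary points $x_0\in\Gamma$ where $\varphi(x_0)\neq 0$; and $\Gamma$ need not contain a neighbourhood of $z$ in $H$ (take $V$ a half-line in $\Real^2$, so $\Gamma$ is a half-line in $H$). Your fallback---``it suffices to have $f_{H,V}$ defined on the image''---contradicts the statement as written and, more importantly, does not suffice for the intended application.

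The paper fills this gap with an explicit extension: one applies the Tietze extension theorem on each dyadic annulus $\{e^{-n-1}\leq |x|\leq e^{-n+1}\}$ in $H$, extending $\varphi$ from $\Gamma$ to a continuous function $g_n$ on the annulus with the same sup and inf as $\varphi$ there, and then patches the $g_n$ together by a partition of unity subordinate to the annuli. The control on the oscillation of each $g_n$ (inherited from $\varphi$ via $\varphi(x)/|x|\to 0$) is precisely what ensures the extended $f_{H,V}$ still satisfies $f_{H,V}(x)/|x|\to 0$, hence $d_G f_{H,V}(0)=0$. This is the missing ingredient in your proposal.
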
 
\begin{proof} 
 For $x\in H$ let $V_x= V \cap (x+ {\mathbb R}n_H)$. We claim that for $x$ close enough to $0$,  $V_x$ contains at most one point. Indeed, if this was not the case, we would have sequences $(y_n)_{n\geq 1}, (w_n)_{n\geq 1}$ converging to $0$ such that $y_n-w_n=t_nn_H$ with $t_n\neq 0$. But then by definition of $C^+(0,V)$, we would have $n_H \in C^+(0,V)$ a contradiction. 
 
As a result, on the closed set $\Gamma \subset H$ defined as the set of $x$ such that $V_x$ is non empty,  we have a function $f$ such that 
$$x\in \Gamma \Longleftrightarrow x+f(x)n_{H} \in V \dispdot $$
Let us prove $f$ is continuous on $\Gamma$, at least in a neighbourhood of $0$. If this was not the case, there would be a sequence $(z^n)_{n\geq 1}$ converging to $0$ and for each $n$ two sequences $(x_k^n)_{k\geq 1},\, (y_k^n)_{k\geq 1}$ such that $\lim_k x_k^n=\lim y_k^n=z^n$ and $\lim_k f(x_k^n) \neq \lim_k f(y_k^n)$. 
 Setting $u_k^n=x_k^n+f(x_k^n)n_{H}, v_k^n =y_k^n+f(y_k^n )n_{H}$ we have 
$$ u_k^n-v_k^n= (x_k^n-y_k^n)+( f(x_k^n)-f(y_k^n))n_{H}$$ which can be normalized so that, for any $n$, it has a limit $(0,1)$ when $k\to +\infty$. As a result for $k=k(n)$ large enough, setting $u_n=u_{k(n)}^n, v_n=v_{k(n)}^n$,  there is a $\tau_n$ so that $$\Vert \tau_n ( u_n-v_n) - (0,1) \Vert < 1/3$$ We can then find a subsequence of $\tau_{n}(u_{n}-v_{n})$  converging  to a vector $w$ which does not belong to $H$. Since $w \in C^+(0,V)$ we have a contradiction. 
From now on we replace $\Gamma$ by $\Gamma \cap B(0,r)$ with $r$ small enough so that $f$ is now continuous on $\Gamma$.

\CorrectionBlue{ To show that $\Gamma \subset C_{\rho}^{-}(0,V)$ we first show 
 that $\displaystyle\lim_{ \substack{\vert x \vert \to 0\\  x\in \Gamma}} \frac{f(x)}{ \vert x \vert }=0$.
Indeed, if we had a sequence $(x_{k})_{k\geq 1}$ in $\Gamma$ converging to $0$ and such that $\displaystyle \lim_{k} \frac{f(x_{k})}{ \vert x_{k} \vert }\neq 0$, the sequence of vectors $ \frac{1}{ \vert x_{k}\vert }(x_{k}+ f(x_{k})n_{H})$ would have a subsequence with a limit which does not belong to $H$, so $C^{+}(0,V)\not\subset H$, a contradiction. 
To prove that $\Gamma \subset C_{\rho}^{-}(0,V)$ we argue by contradiction, and assume we have a sequence $x_{n}\in \Gamma$ such that $\lim_{n}x_{n}=0$ and 
$w=\lim_{n} \frac{x_{n}}{ \vert x_{n} \vert } \notin C^{-}(0,V)$. Then there exists a sequence $x_{n}+f(x_{n})n_{H}$ converging to $0$ and belonging to $V$. 
As a result  $\displaystyle \lim_{n} \frac{x_{n}+f(x_{n})n_{H}}{ \vert  x_{n} \vert }= \lim_{n} \frac{x_{n}}{ \vert  x_{n} \vert }=W \notin C^{-}(0,V)$ a contradiction.  
}

We now will extend $f$ to $f_{H,V}$ such that
\begin{enumerate} 
\item \label{8.1-1} $f_{H,V}(x)=f(x)$ for $x\in \Gamma$
\item \label{8.1-2} $f_{H,V}$ is continuous
\item \label{8.1-3} $d_G(f_{H,V}(0))=0$
\end{enumerate}  
 
   For this we can assume for simplicity that $r=1$ and write $x=\rho\cdot \theta$ with $0\leq \rho \leq 1, \theta \in S^{d-1}$. We write $g(s,\theta)=f(e^s\cdot \theta)$ with $s \in \mathopen]-\infty, 0]$ and identify $\Gamma$ with its image in $\mathopen]-\infty, 0] \times S^{d-1}$. Now let $g_{n}(s,\theta)$ be a continuous function defined on $ C_n= \mathopen]-n-1, -n+1\mathclose[ \times S^{d-1}$ for $n \in \mathbb N$,
   coinciding with $g$ on $\Gamma \cap C_n$ and having the same bound, i.e.
$$\sup_{} \left \{g_n(s,\theta) \mid (s,\theta)\in  C_n \right \}=\sup_{} \left \{g(s,\theta) \mid (s,\theta)\in  \Gamma \cap C_n \right \}$$
$$\inf_{} \left \{g_n(s,\theta) \mid (s,\theta)\in  C_n \right \}=\inf_{} \left \{g(s,\theta) \mid (s,\theta)\in  \Gamma \cap C_n \right \}$$
Such a function exists by Tietze's extension theorem. 

Now let $\chi_{n}$ be a partition of unity subordinated to the covering of $\mathopen]-\infty, 0] \times S^{d-1}$ by the $C_{n}$. 
We set 
$$G(s,\theta)=\sum_{n\geq 0}\chi_{n}(s)
g_{n}(s,\theta)$$
Note that this is well defined since whenever $g_{n}(s,\theta)$ is not defined, we have $\chi_{n}(s,\theta)=0$, so we set $\chi_{n}(s)g_{n}(s,\theta)=0$ outside of $C_{n}$.  And since whenever $g_{n}(s,\theta)$ is defined and $(s,\theta)\in \Gamma$ we have $g_{n}(s,\theta)=g(s,\theta)$, we see that $G$ is an extension of $g$. 
We now set $$f_{H,V}(e^{s}\cdot \theta)=G(s,\theta)$$
Properties (\ref{8.1-1}) and (\ref{8.1-2}) are then obvious. As for the third one, we notice that 
\begin{multline*}  \sup_{} \left \{f_{H,V}(x) \mid  \vert x \vert \leq e^{-n} \right \}=\sup_{} \left\{G(s,\theta) \mid (s,\theta) \mid s\leq -n \right \}  \leq \\
\sup_{k\geq n} \sup_{} \left \{g_{k}(s,\theta) \mid (s,\theta)\in  C_k \right \} 
\leq  \sup_{} \left \{g(s,\theta) \mid (s,\theta)\in  \Gamma, \, s\leq -n \right \}=\\ \sup_{} \left \{f(x) \mid x\in  \Gamma , \, \vert x \vert \leq e^{-n} \right \}
\end{multline*} 
We see that since $\displaystyle \lim_{ \substack{\vert x \vert \to 0 \\  x\in \Gamma}} \frac{f(x)}{ \vert x \vert }=0$  we have 
$\displaystyle \lim_{ x  \to 0} \frac{f_{H,V}(x)}{ \vert x \vert }=0$.

As a result
$$\lim_{t\to 0} \frac{1}{t}(f_{H,V}(th)-f_{H,V}(0))=0$$ and we may conclude that $d_{G}f_{H,V}(z)=0$.
\end{proof}

\begin{lem} \label{Lemma-8.4}
Let $f$ be a continuous function  on the hyperplane $H$ such that $d_Gf(0)=0$ and $C$ a closed cone in $H$ not containing $H^\omega$. Set 
$$\widetilde C= \left\{x+f(x)n_H \mid x \in C   \right \}$$
Then for $\eta$ small enough and any positive $ \delta $,  there is a Hamiltonian map $\varphi$ supported in $B(0,1)$, such that $\gamma (\varphi) \leq \delta $ and 
$$\varphi( \widetilde C)\cap B(0,\eta) = \emptyset $$
\end{lem}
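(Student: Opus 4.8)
The plan is to displace $\widetilde C\cap B(0,1)$ by sliding it along the characteristic direction of $H$, and to observe that such a slide, carried out inside a thin slab about $H$, costs almost no $\gamma$. Working in a Darboux chart I may take $(M,\omega)=(\Real^{2n},\omega_0)$ with $0\in H$; let $J$ be the standard complex structure, so $\omega_0(u,w)=\langle u,Jw\rangle$. Let $n_H$ be the unit Euclidean normal to $H$ and set $v:=Jn_H$. Then $|v|=1$, $\langle v,n_H\rangle=0$ (hence $v\in H$), $v$ spans $H^\omega$, and the linear Hamiltonian $h(z):=\langle z,n_H\rangle$ has $X_h=-v$, so $\phi_h^t(z)=z-tv$; note that $h$ vanishes on $H$ and, since $\langle v,n_H\rangle=0$, the value $h(z)=\langle z,n_H\rangle$ is constant along the flow of $h$.

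Since $H^\omega=\Real v$ is not contained in $C$, one of $\pm v$ lies outside $C$; replacing $v$ by $-v$ I may assume $v\notin C$, and (replacing $C$ by $\overline C$, which still misses $v$ in the cases we need) that $c_0:=\operatorname{dist}(v,C)>0$; also $M:=\sup\{|f(x)|:x\in C,\ |x|\le1\}<\infty$ by continuity. Put $S:=2/c_0$. The bare translation $z\mapsto z-Sv$ already pushes $\widetilde C\cap B(0,1)$ off $B(0,1)$: a point $x+f(x)n_H-Sv$ with $x\in C$ can lie in $B(0,1)$ only if $|x-Sv|^2+f(x)^2<1$ (as $x\perp n_H$), hence $x\in B(Sv,1)\cap C$, which is impossible because $\operatorname{dist}(Sv,C)=Sc_0=2>1$. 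This step uses no smallness of $f$.

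Next I would make this translation compactly supported without changing its effect on $\widetilde C\cap B(0,1)$. For $z=x+f(x)n_H$ in that set, $|x|<1$, the trajectory $\phi_h^t(z)=x+f(x)n_H-tv$, $t\in[0,S]$, stays in $\{|z|\le1+S\}$, with $h(\phi_h^t(z))=f(x)$, $|f(x)|\le M$. Choose $\chi\in C^\infty_c(\Real^{2n})$, $0\le\chi\le1$, equal to $1$ on $\{|z|\le S+2,\ |\langle z,n_H\rangle|\le 2M\}$ and with $\supp\chi\subset\{|z|\le S+3,\ |\langle z,n_H\rangle|\le 3M\}$, and let $\varphi$ be the time-$S$ flow of $\chi h$, a compactly supported Hamiltonian map. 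On every trajectory issued from $\widetilde C\cap B(0,1)$ the cutoff is identically $1$, so $\varphi$ agrees there with $\phi_h^{S}$ and $\varphi(\widetilde C\cap B(0,1))=(\widetilde C\cap B(0,1))-Sv$ is disjoint from $B(0,1)$. Since $\varphi=\phi^1_{S\chi h}$, the energy estimate for spectral invariants (\cite{Viterbo-STAGGF}) gives $\gamma(\varphi)\le\osc(S\chi h)=S\,\osc(\chi h)\le 6SM=12M/c_0$, i.e. $\gamma(\varphi)$ is bounded by a constant depending only on $H$ and $C$ times $M$. Running the same construction in $B(0,r)$ in place of $B(0,1)$ replaces $M$ by $M_r:=\sup\{|f(x)|:x\in C,\ |x|\le r\}$ and $S$ by $2r/c_0$, and $r\,M_r\to0$ as $r\to0$ because $f$ is continuous with $f(0)=0$ (equivalently, conjugating by the dilation $z\mapsto rz$ scales $\gamma$ by $r^2$ while $C$ is unchanged); hence $\gamma(\varphi)$ can be made $\le\varepsilon$, which gives the lemma.

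The one genuinely delicate point is the oscillation estimate $\gamma(\varphi)\le12M/c_0$, and this is exactly where the hypothesis $H^\omega\not\subset C$ enters: any map taking $\widetilde C\cap B(0,1)$ out of $B(0,1)$ must move points a macroscopic distance, and the generating Hamiltonian of a generic such translation has oscillation of order $1$ over the region it sweeps. Sliding precisely along $v\in H^\omega\subset H$ keeps the swept set inside the slab $\{|\langle z,n_H\rangle|\le O(M)\}$ about $H$, on which $h=\langle\cdot,n_H\rangle$ is itself $O(M)$-small, and the rescaling then forces $M$ to zero. All the remaining verifications (the flow of $h$, the behaviour of the cutoff along trajectories, the $\gamma\le\osc$ inequality) are routine.
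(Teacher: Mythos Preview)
Your core estimate $\gamma(\varphi)\le 12M/c_0$, with $M=\sup\{|f(x)|:x\in C,\ |x|\le1\}$, is a fixed positive number and cannot be pushed below an arbitrary $\varepsilon$. The rescaling in your last paragraph does not repair this: running the construction in $B(0,r)$ yields a map that displaces $\widetilde C\cap B(0,r)$ from $B(0,r)$, not from $B(0,1)$; and conjugating by the dilation $z\mapsto rz$ sends the graph of $f$ over $C$ to the graph of $x\mapsto f(rx)/r$, which is a different set from $\widetilde C$. So at no point do you displace $\widetilde C\cap B(0,1)$ from $B(0,1)$ with $\gamma\le\varepsilon$. (For the application as well this is fatal: to contradict $\gamma$-coisotropic you must fix the inner ball $B(0,\eta)$ once and for all and then make $\gamma$ small; letting $\eta=r$ shrink with $\varepsilon$ proves nothing.)

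The missing idea is that the displacing Hamiltonian must be arranged to vanish \emph{along $\widetilde C$}, not merely to be the linear function $h=\langle\cdot,n_H\rangle$. Your slab $\{|\langle z,n_H\rangle|\le 3M\}$ sits about the hyperplane $H$, and its thickness is forced to be $\sim\sup|f|$; that is why your oscillation is stuck at $\sim M$. In the paper's coordinates $H=\{p_n=0\}$, one instead chooses a Hamiltonian $K$ with $K=0$ and $\partial K/\partial p_n=A$ along the graph $\{p_n=g\}$ of a \emph{smooth} function $g$: the $q_n$-component of the flow still moves with speed $A$, but by cutting $K$ off in a thin neighbourhood of that graph one makes $\operatorname{osc}(K)$ as small as one likes. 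For a merely continuous $f$, sandwich $g_k\le f\le g'_k$ with $g_k,g'_k$ smooth and $\|g_k-g'_k\|_{C^0(B(0,1))}\to0$; the thickened region $\widetilde C_{g_k,g'_k}$ contains $\widetilde C$ and can be displaced at cost $\lesssim A\|g_k-g'_k\|_{C^0}+\varepsilon_k\to0$. Your argument becomes correct once $h=p_n$ is replaced by such a $K$, i.e.\ once the slab is centred on the graph of $f$ rather than on $H$.
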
 
\begin{proof} \CorrectionBlue{
We shall assume w.l.o.g. that $H=\{p_n=0\}$, so that $\widetilde C=\{(q_n,\bar q, p_n, \bar p) \mid p_n=f(q_n,\bar q, \bar p)$ and $H^\omega$ is given by the direction $\frac{\partial} {\partial q_n}$. 
First let us mention that the case $f=0$ is trivial: just take a Hamiltonian $K_0(p_n)$ such that $K_0(0)=0$, $K_0'(0)=\frac{1}{2}$  and $K_0$ is $C^0$ small, $\eta$ being chosen so that  we have $$ B(0,\eta) \cap ( \frac{1}{2} \frac{\partial }{\partial q_n}+C)=\emptyset$$ 
We may then truncate $K_0$ outside the ball $B(0,1)$ so that we still have 
$$ B(0,\eta) \cap (\varphi_{K_0}^1(C))=\emptyset$$
}
 \begin{figure}[ht]
\center\begin{overpic}[width=8cm]{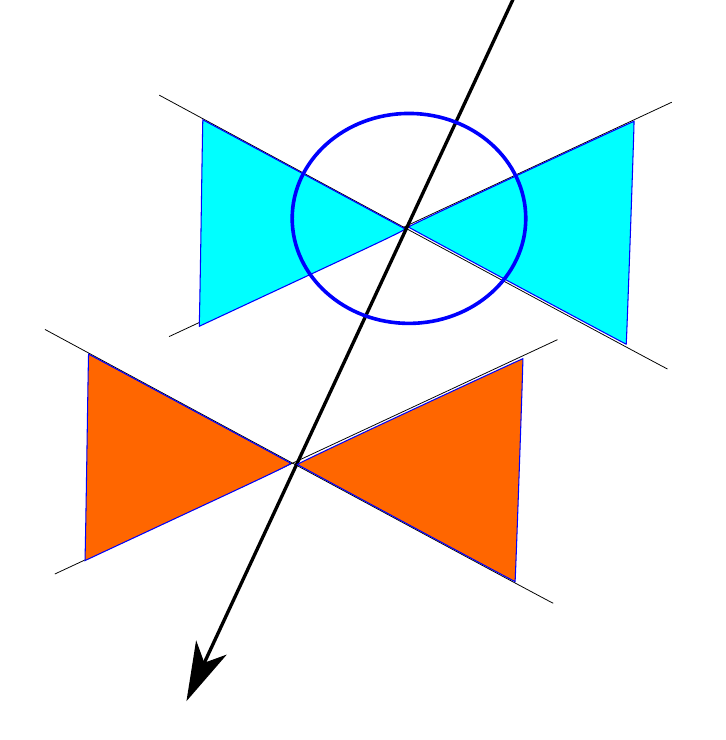}
\put (20,70){$\textcolor{Coneblue}{ C}$}
\put (-10,30){$\textcolor{Coneorange} {C+\frac{1}{2} \frac{\partial}{\partial q_{n}} }$}
\put (30,5){$\frac{\partial}{\partial q_{n}}$}
\put (33,85){$\myBlue {B(0, \eta)}$}
 \end{overpic}
\caption{ The cone $C$ and its translation by $\frac{1}{2} \frac{\partial }{\partial q_n}$}
\label{fig-8}
\end{figure}

\CorrectionBlue{
This extends to the case where $f$ is smooth.
Indeed, let us consider a hamiltonian $K(q_{n},p_{n}, \bar q, \bar p)$ to be a truncation of $$F(q_{n},p_{n},\bar q, \bar p)=\frac{1}{2}(p_{n} -f(q_{n},\bar q, \bar p))$$ 
$K(q_n,f(q_n, \bar q, \bar p),\bar q, \bar p)=0$ and $ \frac{\partial K}{\partial p_n} (q_n,f(q_n, \bar q, \bar p),\bar q, \bar p)=\frac{1}{2}$. So for any $(q_n,p_n,\bar q, \bar p)\in \widetilde C$, $\varphi_K^t(q_n,p_n,\bar q, \bar p) =(q_n+\frac{t}{2}, p_n(t), \bar q (t), \bar p(t))$ and we get $$\varphi_K^1(\widetilde C) \cap B(0, \eta)= \emptyset$$
 Since $K$ is constant on $\widetilde C$,
we can truncate the flow so that it is unchanged on the trajectory of points in $\varphi_K^t(\widetilde C)  \cap B(0,1)$ and $\gamma(\varphi_K) \leq \delta$. 
}

\CorrectionBlue{Note that if we replace $\widetilde C$ by the region
$$\widetilde C_{g, \delta}=\left \{(q_n, p_n, \bar q, \bar p) \mid (q_n,\bar q, \bar p) \in C, g(q_n, \bar q, \bar p) \leq
  p_n \leq g(q_n, \bar q, \bar p)+\delta \right \}$$ 
  we can choose $K$ to be the truncation of
  a function $F$ such that 
  \begin{enumerate} 
  \item $F(q_{n},g(q_{n},\bar q, \bar p), \bar q, \bar p)=0$
  \item $F(q_{n},g(q_{n},\bar q, \bar p)+\delta, \bar q, \bar p)= \frac{1}{2} \delta$ 
  \item $ \frac{\partial F}{\partial p_{n}} (p_{n}, q_{n}, \bar q, \bar p) =  \frac{1}{2} $ for $ g(q_{n},\bar q, \bar p) \leq p_{n}\leq g(q_{n},\bar q, \bar p)+\delta$
  \end{enumerate} 
   As a result the flow of $K$ preserves $\widetilde{C}_{g,\delta}$ and on $\widetilde{C}_{g,\delta}$ is given by $(q_{n},p_{n},\bar q, \bar p)\mapsto (q_{n}+\frac{1}{2} \frac{\partial}{\partial q_{n}},p_{n},\bar q, \bar p)$. Then  $\varphi_{K}^{1}(\widetilde{C}_{g,\delta})\cap B(0,\eta)=\emptyset$, since the projection of 
  $\varphi_{K}^{1}(\widetilde{C}_{g,\delta})$ on $H$ does not intersect $B(0,\eta)$. 
However
now we cannot have $\gamma(\varphi_K)$ arbitrarily small because $K$ has oscillation bounded from below by $2\delta$.  Indeed we must have $\frac{\partial K}{\partial p_n}=\frac{1}{2}$ on a segment in the $p_n$ direction of
length $\delta$. However we can assume $\gamma (\varphi_K) \leq \Vert K \Vert \leq \frac{1}{2} \delta+ \delta = \frac{3\delta}{2}  $.
}

\CorrectionBlue{
The general case is obtained as follows. We take a sequence $g_k$ such that $g_{k}\leq f\leq g_{k}+\frac{1}{k} $. We can then 
displace $\widetilde C_{g_k,\delta}$ hence $\widetilde C$ by a map with $\gamma (\varphi_k) \leq \frac{3}{2k} $. 
This  concludes our proof. 
}
\end{proof} 
\CorrectionBlue{
Let us now conclude the proof of the Proposition. According to Lemma \ref{Lemma-8.3} there is a  ball $B(0, \varepsilon )$ such that $V\cap B(0, \varepsilon )$ can be written as the graph of $f_{H,V}$ over $\Gamma\subset C_{\rho}^{-}(0,V) \subset H$. By assumption $C_{\rho}^{-}(0,V)$ does not contain $H^{\omega}$, so 
according to Lemma  \ref{Lemma-8.4}, where we replace $B(0,1)$ by $B(0,\varepsilon )$, we have for any $\delta>0$  a Hamiltonian map $\varphi$ supported in $B(0, \varepsilon )$, such that $\gamma(\varphi)<\delta$ and 
$\varphi(V)\cap B(0, \eta)=\emptyset$. This means that $V$ is not $\gamma$-coisotropic at $0$. 
}
\end{proof} 
\begin{rem} 
The proof actually shows that $V$ is locally rigid in the sense of Usher (see \cite{Usher}). So we have that 
$\gamma$-coisotropic implies locally rigid  which implies  cone-coisotropic.
\end{rem} 

\subsection{A cone-coisotropic set that is not \texorpdfstring{$\gamma$}{gamma}-coisotropic}
Let $K_{a}$ be the central Cantor set of ratio $a$. In other words we set \begin{gather*} F_{1}=[0,a]\cup [1-a,1]\\ F_{2}=[0,a^{2}]\cup [a(1-a),a]\cup [1-a,1-a+a^{2}]\cup [1- a^{2},1]\end{gather*} and $K_{a}=\bigcap_{k }F_{k} $.
\begin{prop} 
For $a<  2^{-2n}$, the set $X=K_{a}^{2n} \subset \Real^{2n}$ is cone-coisotropic but not $\gamma$-coisotropic. 
\end{prop}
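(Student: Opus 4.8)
The plan is to treat the two assertions separately; only the second one uses the hypothesis $a<2^{-2n}$.

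\emph{$X$ is cone-coisotropic.} I would show that at every point $x=(x_1,\dots,x_{2n})$ of $X=K_a^{2n}$ the paratingent cone $C^+(x,X)$ linearly spans $\mathbb R^{2n}$; then no hyperplane $H$ can satisfy $C^+(x,X)\subset H$, so the implication in Definition~\ref{Def-cone-coisotropic} is vacuously true and $X$ is cone-coisotropic. Since $K_a$ has no isolated point, for each index $i$ there is a sequence $(t_k)$ in $K_a$ with $t_k\to x_i$ and $t_k\neq x_i$. Applying the definition of $C^+$ to the two sequences in $X$ obtained from $x$ by replacing the $i$-th coordinate by $x_i$, resp.\ by $t_k$, and to the scalars $c_k=|x_i-t_k|^{-1}\to+\infty$, one gets $e_i\in C^+(x,X)$, hence also $-e_i$ since $C^+(x,X)$ is invariant under $v\mapsto-v$. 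Thus $\{\pm e_1,\dots,\pm e_{2n}\}\subset C^+(x,X)$, which already spans $\mathbb R^{2n}$. (This half works for every $a<1/2$.)

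\emph{$X$ is not $\gamma$-coisotropic.} The Hausdorff dimension of the central Cantor set $K_a$ equals $\log 2/\log(1/a)$, and (as $K_a$ is Ahlfors regular, its Hausdorff and upper box dimensions agree, and the latter is subadditive under products) $\dim_H(K_a^{2n})= 2n\log 2/\log(1/a)$, which is $<1$ precisely when $a<2^{-2n}$. Hence for every $x$ and every $\varepsilon>0$ the compact set $X\cap\overline{B(x,\varepsilon)}$ has vanishing one-dimensional Hausdorff measure. Fix $x\in X$ and $0<\eta<\varepsilon$; I claim the $\gamma$-coisotropy condition fails there. Given $\delta>0$, I would cover $X\cap\overline{B(x,\varepsilon)}$ by finitely many balls $B(y_j,r_j)$, $j=1,\dots,M$, with all $r_j$ small and $\sum_j r_j<\rho$ for a $\rho$ to be chosen. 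For each $j$ such that $B(y_j,r_j)$ meets $B(x,\eta)$, I push $B(y_j,r_j)$ off $\overline{B(x,\eta)}$ along a short embedded path contained in $B(x,\varepsilon)$; such a displacement is generated by a Hamiltonian supported in $B(x,\varepsilon)$ of total oscillation $O_{\varepsilon,\eta}(r_j)$, so, using $\gamma(\varphi)\le\int_0^1\osc(H_t)\,dt$ (see~\cite{Viterbo-STAGGF}), the resulting $\varphi_j$ has $\gamma(\varphi_j)\le C_{\varepsilon,\eta}\,r_j$. Choosing the (finitely many, very thin) supporting tubes generically so that they avoid the remaining covering balls and one another's target regions, the composition $\varphi=\varphi_1\cdots\varphi_M$ is supported in $B(x,\varepsilon)$, sends $X\cap\overline{B(x,\varepsilon)}$ outside $B(x,\eta)$, hence satisfies $\varphi(X)\cap B(x,\eta)=\emptyset$ (it is the identity outside $B(x,\varepsilon)$), while $\gamma(\varphi)\le\sum_j\gamma(\varphi_j)\le C_{\varepsilon,\eta}\,\rho$. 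Taking $\rho<\delta/C_{\varepsilon,\eta}$ contradicts $\gamma$-coisotropy of $X$ at $x$.

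\emph{Main obstacle.} The dimension computation and the cone computation are easy; the real work is in the last step, namely organizing the elementary pushes $\varphi_j$ into a single compactly supported map that genuinely clears $B(x,\eta)$ \emph{without} dragging, through the supporting tubes, the parts of $X$ that sit near $\partial B(x,\varepsilon)$ back into $B(x,\eta)$, all while keeping $\gamma(\varphi)=O(\rho)$. This is precisely the mechanism by which sets of vanishing one-dimensional content fail to be $\gamma$-coisotropic, and one may alternatively invoke the corresponding quantitative displacement estimates already present in~\cite{Viterbo-gammas}.
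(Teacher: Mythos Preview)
Your cone-coisotropic argument is correct and matches the paper's: you show that $C^+(x,X)$ contains all coordinate axes, hence spans $\mathbb{R}^{2n}$, so the defining implication is vacuous.

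For the non-$\gamma$-coisotropic half, your observation that $\dim_H(K_a^{2n})=2n\log 2/\log(1/a)<1$ exactly when $a<2^{-2n}$ is a clean explanation of why this threshold appears, but the displacement argument you sketch has a genuine gap---one you yourself flag as the ``main obstacle''. The assertion that generic thin tubes can be chosen so that the composite $\varphi=\varphi_1\cdots\varphi_M$ clears $B(x,\eta)$ without later pushes dragging earlier images (or other parts of $X$) back in is not justified: a general ball-cover gives no control on the relative positions of the balls, and all the target regions must be packed into the annulus $B(x,\varepsilon)\setminus B(x,\eta)$ without collisions. Deferring to an unspecified estimate in \cite{Viterbo-gammas} does not close this.

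The paper sidesteps this difficulty by exploiting the explicit product structure rather than a generic Hausdorff cover. It covers $X$ by the $2^{2nk}$ cubes of $F_k^{2n}$ (edge $a^k$), projects to the hyperplane dropping $q_1$ to obtain $2^{(2n-1)k}$ disjoint cubes in $\mathbb{R}^{2n-1}$ with disjoint neighbourhoods $U_i$, and treats one fibre $\mathbb{R}\times U_{i_0}$ at a time. Over each such fibre there are at most $2^k$ cubes, \emph{linearly ordered by $q_1$}, and they are pushed successively in the $-\partial_{q_1}$ direction, each accumulated just to the right of the previous pile. The swept region of $C_j$ then passes only through the \emph{former} positions of $C_2,\dots,C_{j-1}$ (now vacated) and stops before reaching the accumulated cubes; the cut-off $\rho_j$ can therefore be taken to vanish on all other cubes at their current positions, so nothing else is moved. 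Each push uses a Hamiltonian $h(p_1)\rho_j$ with $h'=1$ on a $p_1$-interval of width $a^k$, hence $\|h\|\sim a^k$; there are at most $2^{2nk}$ pushes in total, giving $\gamma\lesssim (2^{2n}a)^k\to 0$. The one-dimensional ordering along a single symplectic direction is precisely what makes the non-interference elementary; your abstract cover provides no such ordering, and that is the missing ingredient.
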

\begin{proof}
  We claim that $C^+(z,X)$ is not contained in any hyperplane. Indeed for $x\in K=K_{a}$ the cone $C^+(x,K)$ is never empty
  since any point in $K$ is a cluster point of $K$. In fact $C^-(x,K)$ is either $ {\mathbb R}_-, {\mathbb R}_+$ or $
  {\mathbb R} $ depending whether $x$ is the limit of points of $K$ less than $x$, the limit of points of $K$ greater
  than $x$ or both, and $C^+(x,K) = \Real$.  Even though the paratingent cone of a product does not necessarily contain the product of the
  paratingent cones of the factors, in the present case $C^+(z,X)$ contains the rays $\{0^{j-1}\} \times C^+(x_j,K)
  \times \{0^{n-j}\}$ where $z=(x_1,..,x_{2n})$, and $0^k=(0,..., 0)\in {\mathbb R}^k$. Therefore $C^+(z,X)$ is not
  contained in any hyperplane and $X$ is trivially cone-coisotropic.

We now claim $X$ is not $\gamma$-coisotropic. Pick $z\in X$ and balls $B(z,\varepsilon)$, $B(z,\eta)$, $0<\eta< \varepsilon$.  Let $k$ big enough (to be made
  precise later). Since $K$ is contained in $F_k$ which consists of $2^{k}$ intervals of length $a^k$, our $X$ is
  contained in a family $\I$ of cubes of edge length $a^k$ with $|\I| = 2^{2nk}$.  The projection $(q,p) \mapsto
  (q_2,\dots,q_n,p_1,\dots,p_n)$ maps $\I$ to a family $\I'$ of cubes of $\Real^{2n-1}$ (again these cubes have edge
  length $a^k$ and $|\I'| = 2^{(2n-1)k}$).  We choose disjoints neighbourhoods $U_i$, $i\in \I'$, of the cubes in
  $\I'$.  For a given $i_0\in \I'$, let $C_j$, $j=1,\ldots,N$, be the cubes of $\I$ contained in $B(z,\varepsilon) \cap
  (\Real\times U_{i_0})$, ordered according to the $q_1$ variable. Let $N_1$ be the maximal index $j$ such that $C_{j}$ meets $B(z,\eta)$. 
  We push all cubes $C_2,\dots,C_{N_1}$ to the left (that is, decreasing their $q_1$
    coordinates), close to $C_1$, in the space between $B(z,\varepsilon)$ and $B(z,\eta)$ (if $k$ is big
  enough), as follows.  We move $C_{2}$ by a translation $T_2$ with direction $-\partial_{q_1}$, so that
  $T_2(C_{2})$ is close to $C_{1}$.  Then we move $C_{3}$ by a similar translation $T_3$, so that $T_3(C_{3})$
  is close to $T_2(C_{2})$.  We go on until $C_{N_1}$.  We have thus accumulated at most $2^k$ cubes of edge
  length $a^k$ near $C_{1}$. Since $a<1/2$, we have room to put all these cubes in $B(z,\varepsilon) \setminus
  B(z,\eta)$ for $k$ big enough.

  Now the translation $T_j$ can be realized by a Hamiltonian function of the form $h_j = h(p_1)\rho_j(q,p)$, where
  $h'=1$ and $\rho_j$ is a bump function vanishing outside $B(z,\varepsilon) \cap (\Real\times U_{i_0})$ and over
  $C_{1}\cup\dots\cup T_{j-1}(C_{j-1}) \cup C_{j+1} \cup\dots\cup C_{N_1}$ and equals to $1$ over the path swept by $C_j$.
  We have $||h_j|| \sim a^k$ and we apply the isotopy for a length of time less than $\varepsilon< 1$.  We have to do this
  less than $|\I|$ times. Since we chose $a < 2^{-2n}$, the composition of all these isotopies has norm as small
  as required.
  \begin{figure}[ht]
 \begin{overpic}[width=6cm]{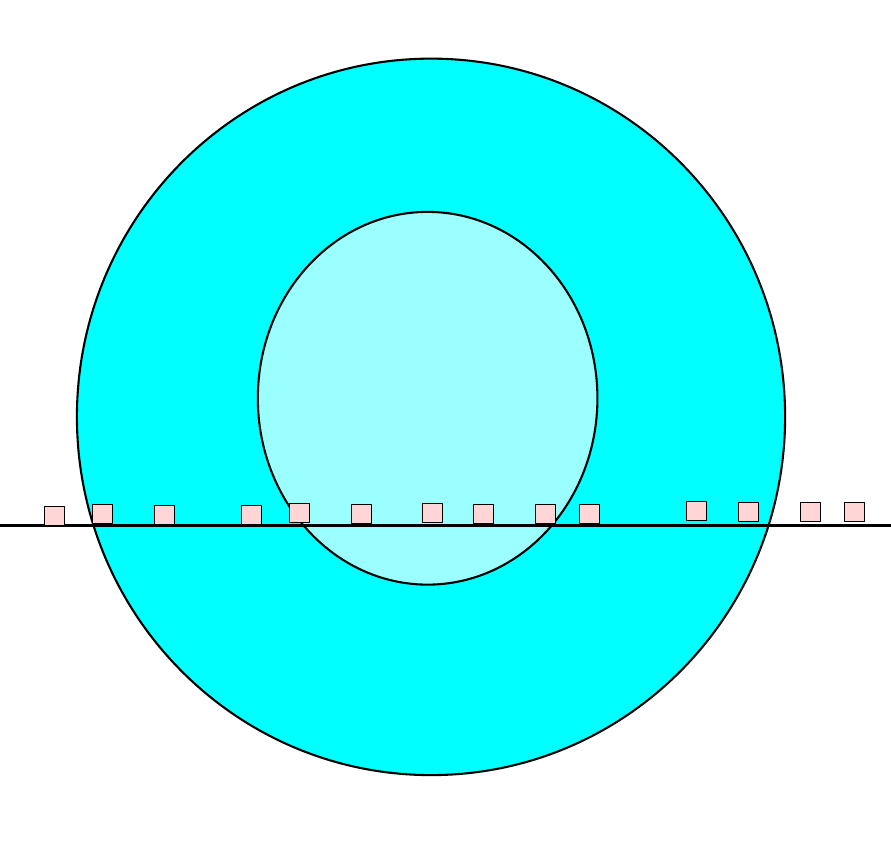}
 \end{overpic}
\begin{overpic}[width=6cm]{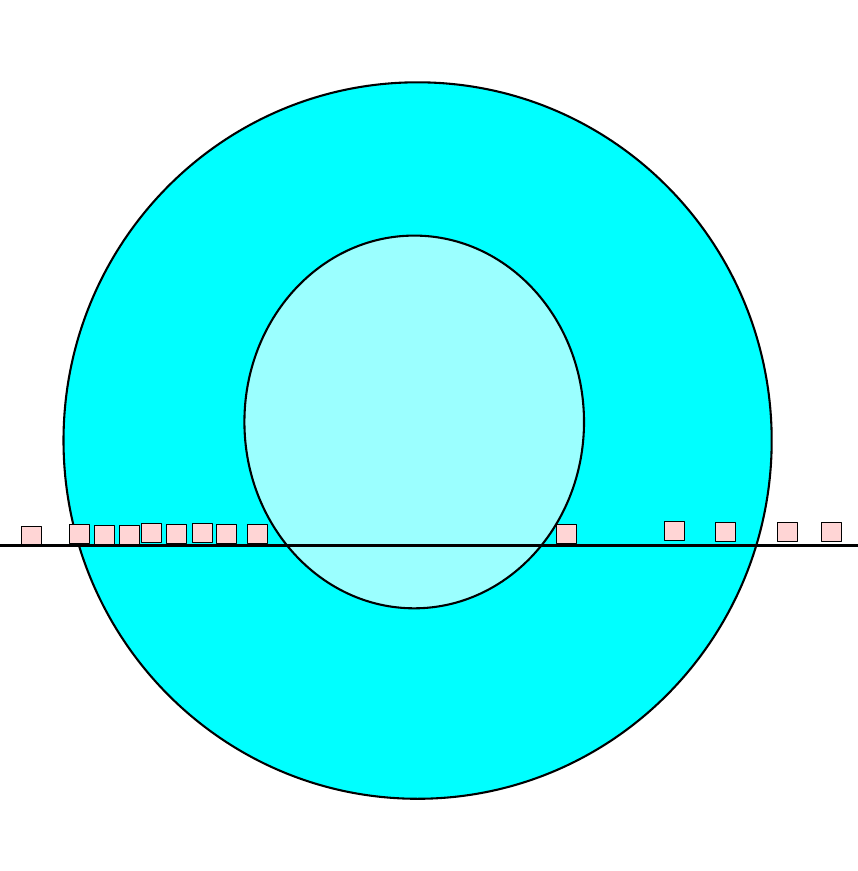}
 \end{overpic}
\caption{Moving the cubes..}
\label{fig-14}
\end{figure}

\end{proof}
\begin{rems} 
\begin{enumerate} 
\item
We can weaken the assumption $a< 2^{-2n}$ to $a<1/4$ at the expense of a more complicated proof. 
\item 
According to \cite{AGHIV} where we prove that $SS(\widehat Q(L))=\gamma-supp(L)$ and  \cite{MCA-VH-CV}, where we prove that the Birkhoff attractor is a $\gamma$-support, we may find a limit of constructibles sheaves such that $SS(\F)=\gamma-suppp(L)$ is an indecomposable continuum (i.e. a compact connected set that cannot be the non-trivial union of compact connected subsets). 
\end{enumerate} 
\end{rems} 

\begin{Question}
Find an example where the image of a smooth non-coiso\-tro\-pic submanifold $C$ by a symplectic homeomorphism, $\varphi$, is  such that $\varphi(C)$ is  cone-coisotropic (but will not be $\gamma$-coisotropic by \cite{Viterbo-gammas}). 
\end{Question}

\section{Questions and comments}
We gather in this section a number of questions that sprang up naturally while writing this paper. 
\begin{Question}
  What is the $\gamma_g$-distance of $k_X$ and $k_Y$ for two closed sets $X,Y$.  If $X,Y$ do not have the same
  cohomology, the distance is infinite. So we can focus on the case when $Y=X_ \varepsilon$ is an
  $\varepsilon$-neighbourhood of $X$. When is it true that $\gamma_g(k_X, k_{X_ \varepsilon} ) \leq C \varepsilon $ ?
  Or when do we have $\gamma_g-\lim k_{X_ \varepsilon }=k_X$ ?
\end{Question}

For the next four questions we choose on $N$ a real analytic structure.  We let $D_{lc}(N)$ be the category of limits
of constructible sheaves considered in Appendix~\ref{Appendix-B}.

\begin{Question}
  In  Appendix~\ref{Appendix-B} we describe the objects of $D_{lc}(\Real)$  with  microsupport  contained
  in $\{\tau\geq0\}$. Is there a similar description in the general case ?
\end{Question}

\begin{Question}
  Using Lemma~\ref{lem:limHom} and the proof of Lemma~\ref{lem:im_dim_finie} it is probably not hard to see that
  $D_{lc}(N)$ is a triangulated category.  If we have a map $f\colon N \to M$, can we find conditions so that
  $Rf_*(D_{lc}(N)) \subset D_{lc}(M)$, $f^{-1}(D_{lc}(M)) \subset D_{lc}(N)$?  
  
  For example this is true for a map of
  the type $f\times \id_\Real \colon N\times\Real \to M\times\Real$, if we consider the $\gamma_\tau$-distance,
  because $Rf_*$ and $f^{-1}$ are Lipschitz for this distance.  
  In general $f^{-1}$ is not Lipschitz:  take $f\colon \Real \to \Real^2$, $x \mapsto (x,0)$, and $D_\varepsilon$ the closed disc with center $(0,1)$ and
  radius $1-\varepsilon$; then $\gamma_g(k_{D_0}, k_{D_\varepsilon}) = \varepsilon$ but $\gamma_g(f^{-1}(k_{D_0}),
  f^{-1}(k_{D_\varepsilon})) = +\infty$ (here $g(q,p) = ||p||$). Hence it is not clear that $f^{-1}(D_{lc}(M))
  \subset D_{lc}(N)$.
\end{Question}

\begin{Question}
  Let $D_{dom}(N)$ be the subcategory of $D(N)$ generated in the triangulated sense by the $k_U$, where $U$ runs over
  the domains with smooth boundary.  It should not be difficult to prove that $D_{dom}(N)\subset D_{lc}(N)$ by
  approximating domains with smooth boundary by domains with analytic boundary.  Is it true that, for any subanalytic
  open subset $V$ of $X$ and any $\varepsilon>0$, there exists a domain $U$ with smooth boundary such that $U \subset
  V$ and $\fai{\varphi_g}^{\varepsilon}(k_U) \simeq k_{U_\varepsilon}$ with $U_\varepsilon$ open and $V \subset
  U_\varepsilon$ ?  In this case we can see that $D_{lc}(N)$ coincides with the $\gamma_g$-closure of $D_{dom}(N)$, which
  would imply that $D_{lc}(N)$ only depends on the smooth structure of $N$.  \end{Question}

\begin{Question}
  Let $D_{tri}(N)$ be the set of sheaves which are constructible with respect to some (non-fixed !)  triangulation of
  $N$.  Do the $\gamma_g$-closures of $D_{dom}(N)$ and $D_{tri}(N)$ coincide ? Are they equal to $D_{lc}(N)$?
\end{Question}

We remind the reader of the following (see \cite{K-S-distance}, theorem 2.11, and  \cite{Petit-Schapira}, corollary 2.4.2 )
\begin{prop} 
We have for all $1$-Lipschitz maps $f: M \longrightarrow N$ the inequality $$\gamma_g(Rf_*(\F), Rf_*(\G)) \leq \gamma_g(\F,\G)$$ 
\end{prop}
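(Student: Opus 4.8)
The plan is to run the standard ``interleaving transfers along $1$-Lipschitz maps'' argument of~\cite[Thm.~2.11]{K-S-distance} and~\cite[Cor.~2.4.2]{Petit-Schapira} in the present framework. Throughout I take $g$ to be the norm of a Riemannian metric, both on $M$ and on $N$, ``$1$-Lipschitz'' being understood with respect to the two Riemannian distances; then by Example~\ref{Example-6.1 }(\ref{Example-6.1-3}) we have $D_{g\geq 0}(M)=D(M)$ and $D_{g\geq 0}(N)=D(N)$, so $\gamma_g$ is defined on both sides and $(Rf)_*$ lands in $D(N)$. Write $\fai{\varphi}^a$, $\tau^M_{a,b}$ for the flow functors and Tamarkin morphisms of~\S\ref{sec:Gtopology} on $D(M)$ and $\fai{\psi}^a$, $\tau^N_{a,b}$ for those on $D(N)$. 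By Definition~\ref{def:dist-gammag}, $\gamma_g(\F,\G)<c$ exactly when there are $a,b\geq 0$ with $a+b=c$ and morphisms $u\colon \F\to\fai{\varphi}^a(\G)$, $v\colon \G\to\fai{\varphi}^b(\F)$ satisfying $\fai{\varphi}^a(v)\circ u=\tau^M_{0,c}(\F)$ and $\fai{\varphi}^b(u)\circ v=\tau^M_{0,c}(\G)$; the goal is to push such data forward along $(Rf)_*$.

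The heart of the matter is to produce, for each $a\geq 0$, a natural transformation $\lambda_a\colon (Rf)_*\circ\fai{\varphi}^a\To\fai{\psi}^a\circ(Rf)_*$ with $\lambda_0=\id$, satisfying the cocycle identity $\lambda_{a+b}=\fai{\psi}^a(\lambda_b)\circ\lambda_a(\fai{\varphi}^b(-))$, and compatible with the Tamarkin morphisms in the sense that, for $a\leq b$, the square with horizontal arrows $\lambda_a,\lambda_b$ and vertical arrows $(Rf)_*(\tau^M_{a,b})$, $\tau^N_{a,b}$ commutes. I would obtain $\lambda_a$ from the $1$-Lipschitz hypothesis as follows: $\fai{\varphi}^a$ is, up to a degree shift, composition with a kernel supported on (a shift of) the $a$-thickening $\{(x,y)\in M^2\mid d_M(x,y)<a\}$ of the diagonal, and $\tau^M_{a,b}$ is induced by the inclusions of these thickenings into one another; likewise on $N$. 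Since $f$ is $1$-Lipschitz, $f\times f$ maps the $a$-thickening of $\Delta_N$-preimage into the $a$-thickening of $\Delta_N$, which yields a morphism of kernels, and then base change together with the projection formula for $(Rf)_*$ converts it into $\lambda_a$ with all three properties above; the $\tau$-compatibility holds because the morphism of kernels respects the canonical maps $k_{[0,+\infty[}\to k_{\{c\}}$ in the auxiliary $\Real$-variable of~\S\ref{sec:Gtopology}, which $f\times f$ leaves untouched. Equivalently one may work with the kernels $K_\varphi\in D(M^2\times\Real)$, $K_\psi\in D(N^2\times\Real)$ of~\S\S\ref{sec:qhi}--\ref{sec:Gtopology} and build the comparison morphism directly from the inclusion of the corresponding homogenized graphs.

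Granting $\lambda_\bullet$, the rest is formal. Given $u,v$ realizing $\gamma_g(\F,\G)<c=a+b$, I would set $\tilde u=\lambda_a(\G)\circ(Rf)_*(u)\colon (Rf)_*\F\to\fai{\psi}^a((Rf)_*\G)$ and $\tilde v=\lambda_b(\F)\circ(Rf)_*(v)\colon (Rf)_*\G\to\fai{\psi}^b((Rf)_*\F)$, and then check, using in turn naturality of $\lambda_a$ applied to $v$, the relation $\fai{\varphi}^a(v)\circ u=\tau^M_{0,c}(\F)$, the cocycle identity, and the $\tau$-compatibility square with $(a,b)$ replaced by $(0,c)$ (recall $\lambda_0=\id$), that $\fai{\psi}^a(\tilde v)\circ\tilde u=\lambda_c(\F)\circ(Rf)_*(\tau^M_{0,c}(\F))=\tau^N_{0,c}((Rf)_*\F)$; the computation for $\fai{\psi}^b(\tilde u)\circ\tilde v=\tau^N_{0,c}((Rf)_*\G)$ is symmetric. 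Hence $\tilde u,\tilde v$ witness $\gamma_g((Rf)_*\F,(Rf)_*\G)\leq c$, and letting $c$ decrease to $\gamma_g(\F,\G)$ gives the stated inequality.

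The main work is the construction of $\lambda_\bullet$ in the second paragraph — identifying the right kernel description of $\fai{\varphi}^a$ and of $\tau^M$, producing the kernel morphism from the $1$-Lipschitz condition, and verifying naturality, the cocycle property and $\tau$-compatibility; everything in the third paragraph is then a diagram chase. Minor care is also needed with the open versus closed versions of the distance kernels and with the degree shifts appearing in Example~\ref{Example-6.1 }(\ref{Example-6.1-3}), but since the interleaving only ever uses $a,b\geq 0$ this does not interfere, and once the bookkeeping is fixed each of the three properties of $\lambda_\bullet$ reduces to the single geometric fact that $f\times f$ carries thickened diagonals into thickened diagonals.
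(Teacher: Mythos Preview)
The paper does not give its own proof of this proposition: it is stated as a reminder with citations to \cite[Thm.~2.11]{K-S-distance} and \cite[Cor.~2.4.2]{Petit-Schapira}, and no argument is supplied. Your proposal is precisely the strategy of those references---build a natural transformation $\lambda_a$ intertwining $(Rf)_*$ with the flow functors and push interleaving data forward---so in that sense your approach and the paper's (i.e., the cited one) coincide.

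One point deserves more care than you give it. Your description of $\fai{\varphi}^a$ as ``composition with a kernel supported on the $a$-thickening of the diagonal'' is only literally correct for $a$ below the injectivity radius (cf.\ Example~\ref{Example-6.1 }(\ref{Example-6.1-3}), which is stated under that restriction); for larger $a$ the geodesic-flow kernel is not the thickening kernel, and a $1$-Lipschitz map need not relate the two geodesic flows in any direct way. The references you cite avoid this issue by working in a setting (cones in vector spaces, or the $\tau$-direction on $N\times\Real$) where the flow \emph{is} translation and the thickening picture is exact for all $a$. In the present Riemannian setting you should either (i) build $\lambda_a$ for small $a$ from the thickening description and then extend to all $a$ via the cocycle identity $\lambda_{a+b}=\fai{\psi}^a(\lambda_b)\circ\lambda_a(\fai{\varphi}^b(-))$ and the semigroup property $\fai{\varphi}^{a+b}=\fai{\varphi}^a\circ\fai{\varphi}^b$, checking independence of the subdivision, or (ii) pass through Remark~\ref{rem:gamma_g=gamma_tau} to reduce to the $\gamma_\tau$-distance on $M\times\Real$ and $N\times\Real$, where $(f\times\id_\Real)$ is the relevant map and the $\tau$-flow is genuinely a translation, so your thickening argument applies verbatim for all $a$. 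Either route closes the gap; once $\lambda_\bullet$ is in hand your third paragraph is a correct formal verification.
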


\begin{Question}
  Can we give conditions on $\F$,$\G$ (for example $\gamma_g(\F,\G) < \infty$) such that the opposite inequality
  $$ \gamma_g(\F,\G) \leq C\sup_{f\in Lip_1(X, {\mathbb R} )} \gamma_{|\tau|}(Rf_*(\F) , Rf_*(\G))  $$
  holds for some constant $C$?
\end{Question}

  \begin{Question}\label{Question-9.9}\footnote{This question is answered positively in \cite{AGHIV}}  Let $f$ be a continuous function on $N$. Then $\F_f=k_{\{(x,t) \mid f(x) \leq t\}}$ is a sheaf on $N\times {\mathbb R}$, and 
    Vichery (in \cite{Vichery}) defined a subdifferential as
    $$
    \partial f= \rho(SS(\F_f)\cap \{\tau=1\}) \subset T^*N,
    $$
\CorrectionBlue{where $\rho(x,t,\xi,\tau) = (x,\xi/\tau)$ is defined in~\S\ref{sec:proofmainthm}.}
    Its intersection with $T_x^*N$ is $\partial f(x)$. 
  On the other hand $df \in \widehat  \LL (T^*N)$, so we may ask whether
  $$ \rho( SS(\F_f)\cap \{\tau=1\}) =\partial f = \gamma-\supp (df)$$
     \end{Question}
This leads to more general questions. 
First notice the following
\begin{prop} 
There is a functor $Q\colon \widehat \LL (T^*N) \longrightarrow D_{lc}(N\times {\mathbb R} )$ extending the usual quantization from \cite{Guillermou, Viterbo-Sheaves}. Moreover $Q$ is an isometric embedding  from $(\widehat{\LL}(T^*N), \gamma)$ to 
$(D_{lc}(N\times {\mathbb R} ), \gamma_\tau)$. 
\end{prop}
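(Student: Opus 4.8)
The plan is to obtain $Q$ by extending, by uniform continuity, the quantization $L\mapsto\F_L$ of Theorem~\ref{Thm-Quantization} (the construction of \cite{Guillermou,Viterbo-Sheaves}) from the dense subset $\mathcal L(T^*N)\subset\widehat{\mathcal L}(T^*N)$, using that $D_{\tau\geq0}(N\times\Real)$ is $\gamma_\tau$-complete. I treat $Q$ as a map of (pseudo\nobreakdash-)metric spaces extending the quantization construction; its compatibility with the Tamarkin morphisms $\tau_{a,b}$ passes to the limit and is what justifies the word ``functor''.

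First I record that $Q_0\colon\mathcal L(T^*N)\to D_{\tau\geq0}(N\times\Real)$, $L\mapsto\F_L$, is already a bi-Lipschitz embedding of ratio $2$ from $(\mathcal L(T^*N),\gamma)$ into $(D_{\tau\geq0}(N\times\Real),\gamma_\tau)$: by Proposition~\ref{prop:gamma-et-gammag1} and Remark~\ref{Rem-6.13} one has $\gamma_\tau(\F_{L_1},\F_{L_2})\leq\gamma(L_1,L_2)\leq 2\,\gamma_\tau(\F_{L_1},\F_{L_2})$ for all $L_1,L_2\in\mathcal L(T^*N)$. In particular $Q_0$ is $1$-Lipschitz, so it sends $\gamma$-Cauchy sequences to $\gamma_\tau$-Cauchy sequences, and it takes values in $D_{lc}(N\times\Real)$: every $L\in\mathcal L(T^*N)$ is a $\gamma$-limit of Lagrangians whose quantization is constructible (as in the Remarks following Theorem~\ref{Thm-Quantization}), and $Q_0$ is $\gamma_\tau$-continuous, so $\F_L$ is a $\gamma_\tau$-limit of constructible sheaves.

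Next, for $L\in\widehat{\mathcal L}(T^*N)$ I choose a $\gamma$-Cauchy sequence $(L_n)$ in $\mathcal L(T^*N)$ with limit $L$. Then $(\F_{L_n})$ is $\gamma_\tau$-Cauchy and, by Proposition~\ref{prop:completudefaisceaux} applied on $N\times\Real$ with $g=\tau$, it converges to an object $\F_L=Q(L)$ of $D(N\times\Real)$; it has singular support in $\{\tau\geq0\}$ by Proposition~\ref{Prop-g-continuity}, and it lies in $D_{lc}(N\times\Real)$ because that subcategory, being the category of $\gamma_\tau$-limits of constructible sheaves (Appendix~\ref{Appendix-B}), is stable under $\gamma_\tau$-limits. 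This is independent of the chosen sequence: a second choice $(L_n')$ satisfies $\gamma(L_n,L_n')\to0$, hence $\gamma_\tau(\F_{L_n},\F_{L_n'})\to0$, so the two limits are at $\gamma_\tau$-distance $0$ and therefore equal, since $\gamma_\tau$ is non-degenerate on $D_{lc}(N\times\Real)$ (Appendix~\ref{Appendix-B}, cf.\ Proposition~\ref{prop:unicite-limite}). Taking constant sequences shows $Q$ restricts to $Q_0$ on $\mathcal L(T^*N)$, so $Q$ extends the usual quantization.

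Finally, for $L^1,L^2\in\widehat{\mathcal L}(T^*N)$ with approximating sequences $(L_n^1),(L_n^2)$, the triangle inequalities for $\gamma$ and for $\gamma_\tau$ give $\gamma(L^1,L^2)=\lim_n\gamma(L_n^1,L_n^2)$ and $\gamma_\tau(Q L^1,Q L^2)=\lim_n\gamma_\tau(\F_{L_n^1},\F_{L_n^2})$, so passing to the limit in the ratio-$2$ inequalities of Remark~\ref{Rem-6.13} yields $$\gamma_\tau(Q L^1,Q L^2)\leq\gamma(L^1,L^2)\leq 2\,\gamma_\tau(Q L^1,Q L^2),$$ which is the asserted property. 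The only genuinely non-formal ingredients are the two facts imported from Appendix~\ref{Appendix-B} — stability of $D_{lc}(N\times\Real)$ under $\gamma_\tau$-limits and non-degeneracy of $\gamma_\tau$ there — which are exactly what makes $Q$ well defined as an honest map into $D_{lc}(N\times\Real)$ rather than merely up to $\gamma_\tau$-distance zero; everything else is the standard extension of a Lipschitz map into a complete metric space, and I expect the one point requiring care to be precisely these stability and non-degeneracy statements for the manifold $N\times\Real$.
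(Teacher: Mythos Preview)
Your proof is correct and follows essentially the same approach as the paper: extend $L\mapsto\F_L$ by continuity using the bi-Lipschitz estimate of Remark~\ref{Rem-6.13}, completeness (Proposition~\ref{prop:completudefaisceaux}), and uniqueness of limits in $D_{lc}$ (Proposition~\ref{prop:unicite-limite}). Your write-up is in fact more careful than the paper's own proof, explicitly flagging the two non-formal points (stability of $D_{lc}$ under $\gamma_\tau$-limits and non-degeneracy of $\gamma_\tau$ there); note that Proposition~\ref{prop:unicite-limite} is stated under a compact-support hypothesis, which the paper's proof also invokes without comment---this is easily handled since all $\F_{L_n}$ agree with $k_{N\times\Real}$ outside a fixed compact set.
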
 
\begin{proof} 
Let $L_n$ be a Cauchy sequence in $\LL(T^*N)$. Then $\F_{L_n}$ is a Cauchy sequence in $D_{lc}(N\times {\mathbb R} )$. The map $L \longrightarrow \F_L$ is an isometric embedding from the metric $c$ to the (pseudo-) metric $\gamma_\tau$.  As we saw in Proposition \ref{Prop-6.13}
where $c$ is the metric defined on 
$\LL (T^*N)$  in \cite{Viterbo-gammas} by $c(\widetilde L_1,\widetilde L_2)=\max\{0,c_+(L_1,L_2)\} - \min\{0,  c_-(L_1,L_2) \}$. 
Since according to Proposition \ref{prop:unicite-limite} the metric $\gamma_\tau$ is a bona fide metric in $D_{lc}(N\times {\mathbb R} )$, limits are unique and the limit of $\F_{L_n}$ is well-defined. 
\end{proof} 

  \begin{Question}\footnote{This question is answered positively in \cite{AGHIV}}
Set $\rho(x,t;\xi,\tau) = (x;\xi/\tau)$. Given $\Lambda \in \widehat{\LL}(T^*N)$, do we have
    $\gamma-\supp(\Lambda) = \rho(SS^\bullet(Q(\Lambda)))$ ?

    A positive answer would also imply a positive answer to Question \ref{Question-9.9}, i.e.  $\partial f = \gamma-\supp (df)$ by
    taking $\Lambda_n = df_n$ for some sequence of differentiable functions converging to $f$.
  \end{Question}

\begin{Question}
  Can one characterize the sheaves $\F$ such that there exists an element
  $\widehat {SS}(\F)$ in $\widehat {\LL} (T^*N)$ so that
  $SS(\F)=\gamma-\supp (\widehat {SS}(\F))$ ?  In the previous question we
  considered the $\F$ which are $\gamma$-limits of sheaves $\F_j$ quantizing some
  exact Lagrangian. 
  
  Let us give an example of such a situation : let $M,N$ be closed manifolds. For a map $f\in C^\infty(M,N)$ set $\Lambda_f$ be the correspondence in $T^*M\times T^*N$ defined by 
  $$\Lambda_f= \left\{(x,p_x,y,p_y) \mid y=f(x), p_y\circ df(x)=-p_x\right\}$$
Now let $f_n$ be a  sequence of smooth maps $C^0$-converging to $f\in C^0(M,N)$. For $\F\in D^b(M)$ the sequence
  $(R(f_n)_*\F)_{n\geq 1}$ $\gamma$-converges to $Rf_*(\F)$ and $SS (R(f_n)_*\F)\subset \Lambda_{f_n}\circ SS(\F)$. 
   Thus $SS(R(f_n)_*\F)$ ) should $\gamma$-converge to a subset of $\Lambda_{f}\circ SS(\F)$. Of course since $f$ is only $C^0$, $\Lambda_{f} $ is not defined as a set, but it is well defined in $\widehat{\LL} (T^*N\times T^*M)$, hence 
  $\Lambda_f\circ SS(\F) \in \widehat{\LL}(T^*N)$. Thus we expect $\widehat {SS}(Rf_*(\F))=\Lambda_f\circ SS(\F)$ and 
  $SS(Rf_*\F)\subset\gamma-\supp (\Lambda_f\circ SS(\F))$. 
 \end{Question}

 \begin{Question}
   Let $\F_n$ be a sequence in $D(N)$ $\gamma_g$-converging to $\F_\infty$.  We set
   $\Lambda_n = SS(\F_n)$ and we assume that $\Lambda_n$ Hausdorff converges to
   $\Lambda_\infty$. We know that $SS(\F_\infty)\subset \Lambda_\infty$ but
  when is it true that $\Lambda_\infty = SS(\F_\infty)$?  We can prove
   it if $\Lambda_\infty$ is a smooth Lagrangian or more generally minimally $\gamma$-coisotropic.
     Indeed Proposition
   \ref{Prop-g-continuity} shows that $SS(\F_\infty) \subset \Lambda_\infty$ and the
   involutivity of the singular support gives the equality (by an argument similar to the
   one in the proof of Proposition \ref{Prop-9.13}).  Assuming $N = M\times\Real$ and
   the $\F_n$ are quantizations of exact Lagrangian, Proposition \ref{Prop-9.13} shows
   that this still hold when $\Lambda_\infty$ is the conification of some
   $ \varphi(L_0)$ for $L_0$ smooth Lagrangian, $\varphi $ a symplectic homeomorphism.
 \end{Question}
  
  Note however that the answer cannot be positive in general:
  Let $f(x)$  be a smooth bounded function on $\mathbb R$, such that $\max{f'} =
1$, $\min{f'} = -1$.
Let  $f_n(x) = f(nx)/n$ and $\F_n = k_{\{t\geq f_n(x)\}}$.
Then $\F_n$ converges to  $\F_\infty = k_{\mathbb R \times [0,\infty[}$ but
$SS(\F_n)$ converges to a set  $\Lambda_\infty$ bigger than $SS(\F_\infty)$. In fact \CorrectionBlue{$\Lambda_\infty$} is the cone 
over a ``cross'' $C$
in $T^*\Real$ with  $C = 0_\Real \cup (\{0\}\times [-1,1])$.

   \begin{prop}\label{Prop-9.13} 
  Let $L= \varphi(L_0)$ where $L_0$ is smooth Lagrangian and $\varphi \in  \mathcal {H}_\gamma (M, \omega)= \widehat\DHam (M,\omega)\cap Homeo(M)$. Then any closed proper subset of $L$ is not $\gamma$-coisotropic.
   \end{prop}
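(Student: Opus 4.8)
The plan is: reduce to $\varphi=\id$ using the $\gamma$-invariance of $\gamma$-coisotropy, and then, given a proper closed $V\subsetneq L_0$ and a point $x\in V$, displace $V$ off a small ball about $x$ by a \emph{global Hamiltonian isotopy tangent to $L_0$} which slides the non-empty complement $L_0\setminus V$ over the ball, at a cost tending to $0$. Concretely, for the reduction: every element of $\mathcal H_\gamma(M,\omega)=\widehat\DHam(M,\omega)\cap\mathrm{Homeo}(M)$ is a homeomorphism preserving $\gamma$, and $\mathcal H_\gamma$ is a group; applying to $\varphi^{-1}$ the proposition recalled above (the image of a $\gamma$-coisotropic set under a $\gamma$-preserving homeomorphism is $\gamma$-coisotropic), a subset of $L=\varphi(L_0)$ is $\gamma$-coisotropic iff its image by $\varphi^{-1}$ is, and $\varphi^{-1}$ sends closed proper subsets of $L$ to closed proper subsets of $L_0$. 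So it suffices to prove: no non-empty closed $V\subsetneq L_0$ is $\gamma$-coisotropic, $L_0$ a smooth compact connected Lagrangian. (Connectedness of $L$ must be read into the hypothesis: for disconnected $L_0$ a union of whole components of $L_0$ is a proper closed subset which \emph{is} $\gamma$-coisotropic, since a smooth compact Lagrangian is, see \cite{Viterbo-gammas}.)

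\emph{The displacing maps.} Fix $V$, put $\Omega=L_0\setminus V$ (non-empty open in $L_0$) and pick $x\in V$. By Weinstein's Lagrangian neighbourhood theorem I identify a neighbourhood of $L_0$ in $M$ with a neighbourhood of $0_{L_0}$ in $T^*L_0$, under which $L_0\leftrightarrow 0_{L_0}$ and $V$ becomes a closed subset of $0_{L_0}$. Since $L_0$ is connected I fix, once and for all, an isotopy $(\psi_t)_{t\in[0,1]}$ of $L_0$ with $\psi_0=\id$ and $x\in\psi_1(B')$ for some small ball $B'$ with $\overline{B'}\subset\Omega$; let $X_t$ be the time-dependent vector field generating $\psi_t$ and set $L:=\int_0^1\|X_t\|_{C^0}\,dt<\infty$. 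Fix also a tube $\mathcal U=\{|p|<\mu_0\}$ around $0_{L_0}$ inside the chart. For $0<\mu<\mu_0$ let $\rho_\mu\colon T^*L_0\to[0,1]$ be a function of $|p|$, equal to $1$ on $\{|p|\le\mu\}$ and supported in $\{|p|<2\mu\}$, and put $H^\mu_t=\rho_\mu(p)\,\langle p,X_t(q)\rangle$, extended by $0$ to $M$. This is a compactly supported time-dependent Hamiltonian which near $0_{L_0}$ is the cotangent lift of $X_t$, so its time-one flow $\varphi^\mu$ preserves $L_0$ and restricts there to $\psi_1$; a Gronwall estimate on $\dot p=-\rho_\mu(p)\langle p,\partial_qX_t(q)\rangle$ keeps $|p|$ along the flow below $C_0\mu$ with $C_0$ independent of $\mu$, so for $\mu$ small the flow is supported in $\mathcal U$.

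\emph{Conclusion.} Choose the pair of balls for the definition of $\gamma$-coisotropy at $x$: $\varepsilon$ large enough that $\overline{\mathcal U}\subseteq B(x,\varepsilon)$, and $\eta$ small enough that $L_0\cap B(x,\eta)\subseteq\psi_1(B')$; these depend only on $V$ and $x$. For $\mu$ small we then have $\varphi^\mu\in\DHam_c(B(x,\varepsilon))$, and since $\varphi^\mu(V)=\psi_1(V)\subseteq L_0$ with $\psi_1(V)\cap\psi_1(B')=\psi_1(V\cap B')=\emptyset$ (as $B'\subseteq\Omega$), we get $\varphi^\mu(V)\cap B(x,\eta)=\psi_1(V)\cap\bigl(L_0\cap B(x,\eta)\bigr)=\emptyset$. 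Finally $\osc(H^\mu_t)\le 2\sup_{|p|<2\mu}|\langle p,X_t(q)\rangle|\le C_1\mu\,\|X_t\|_{C^0}$, so by the standard inequality $\gamma(\varphi_H^1)\le\int_0^1\osc(H_t)\,dt$,
\[
\gamma(\varphi^\mu)\;\le\;C_1\mu\int_0^1\|X_t\|_{C^0}\,dt\;=\;C_1L\,\mu ,
\]
which tends to $0$ as $\mu\to0$. Hence for every $\delta>0$ some $\varphi^\mu\in\DHam_c(B(x,\varepsilon))$ displaces $V$ from $B(x,\eta)$ with $\gamma(\varphi^\mu)<\delta$, so no $\delta$ works: $V$ is not $\gamma$-coisotropic at $x$, hence not $\gamma$-coisotropic.

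\emph{Main obstacle.} The conceptual crux is to displace $V$ not by a local perturbation near $x$ (which, when $V$ fills out a whole Lagrangian disc there, costs a fixed amount of $\gamma$) but by a global isotopy tangent to $L_0$ carrying $L_0\setminus V$ over the ball; this is precisely where properness of $V$ (and connectedness of $L_0$) enters. The technical heart I expect to need care with is the norm estimate: a ``truncated cotangent lift'' of tube width $\mu$ has Hofer, hence spectral, norm $O(\mu)$ while still realizing a \emph{fixed} isotopy of $L_0$, and one must simultaneously keep its support inside $B(x,\varepsilon)$ — which forces $\varepsilon$ to be chosen large (bigger than a tubular neighbourhood of $L_0$), something the definition of $\gamma$-coisotropy permits since the pair $B(x,\eta)\subset B(x,\varepsilon)$ is ours to choose.
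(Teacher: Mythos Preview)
Your proof is correct and follows essentially the same strategy as the paper: reduce to the smooth case by $\gamma$-invariance, then displace $V$ off a small ball using the truncated cotangent lift $\chi(|p|)\langle p, X_t(q)\rangle$ of an isotopy of $L_0$, whose Hofer (hence spectral) norm is $O(\mu)$.

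The one genuine difference is in the choice of isotopy and hence of the point at which $\gamma$-coisotropy fails. The paper works \emph{locally}: it picks a point $z'\in\partial V$ (in $L_0$), a small ball $B(z,r)\subset L_0\setminus V$ with $\overline{B(z,r)}\cap V=\{z'\}$, and a translation in a chart around $B(z,2r)$; the resulting Hamiltonian is supported in an arbitrarily small neighbourhood, so the pair $(\eta,\varepsilon)$ can be taken small. You instead work at an \emph{arbitrary} $x\in V$ and use a global isotopy of $L_0$ carrying a ball from $L_0\setminus V$ over $x$; this forces your Hamiltonian to be supported in a full tube around $L_0$, hence your $\varepsilon$ must be large enough to contain that tube. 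As you correctly note, the definition allows this choice, so nothing is lost---but the paper's local argument is a bit sharper (it exhibits failure for all small pairs of balls at $z'$). Your observation that connectedness of $L_0$ must be assumed is apt: it is needed in the paper's argument too (otherwise a union of components is a proper closed subset that \emph{is} $\gamma$-coisotropic), though the paper leaves it implicit.
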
 
   \begin{proof} 
  \CorrectionBlue{ Let $L'\varsubsetneq L$.} Since being $\gamma$-coisotropic is invariant by $\widehat\DHam(M,\omega)\cap Homeo(M)$ it is enough to prove the Proposition for $L_0$ i.e. in the smooth case. 
   Now there is a ball $B(z,r)$ in $L$ such that $\overline {B(z,r)}\cap L'=\{z'\}$. Moreover we may choose $r$ to be arbitrarily small. Let us consider a flow on $L$ such that $\rho^t(B(z,r)\subset B(z',r)$. 
   In a chart containing $B(z,2r)$ we may assume that $\rho^t$ is a translation. Then $\rho^1(L')\cap B(z',r)=\emptyset$. Since if $X_t$ is the vector field generating $\rho^t$, the Hamiltonian written in action-angle coordinates (i.e. $L$ is given locally by $p=0$) can be chosen of the form $\chi ( \vert p \vert ) \langle p , X_t(x)\rangle$ for some bump function $\chi$, and this can be made arbitrarily small. As a result, $L'$ is not $\gamma$-coisotropic. 
   \begin{figure}[H]
\centering\begin{overpic}[width=6cm]{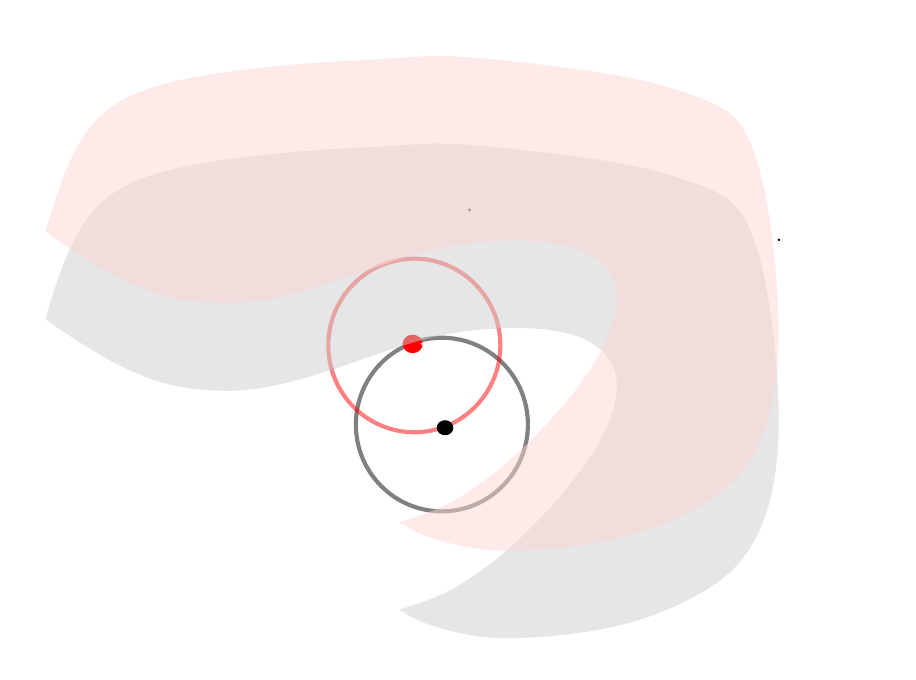}
 \put (45,42) {$\myRed {z'}$} 
  \put(15,45){$\myRed {\varphi(L)}$}
 \put(45,25){$z$}
   \put(15,25){$L$}
 \end{overpic}
\caption{ Proof of Proposition \ref{Prop-9.13}}
\label{fig-10}
\end{figure}

   \end{proof} 

\begin{Question} Let $V$ be $\gamma$-coisotropic of dimension $n$. Then for $z\in V$ and $ \varepsilon $ small enough we have
  $V\setminus B(z, \varepsilon )$ is not coisotropic.
  \end{Question}

\appendix
\section{ Persistence modules and Barcodes}\label{Appendix-barcodes-Floer}
For this section we refer also to \cite{Zhang-QTC}

We have already noticed after Definition~\ref{def:spectral-inv} that the spectral invariants of $L_1, L_2\in \mathcal
L(T^*N)$ are encoded in the sheaf over $\Real$ given by $\H = Rt_* \homst(\F, \G))$, where $t\colon
N\times \Real \to \Real$ is the projection, because of the isomorphism
$$
\rsect_{N\times \{t\}}(N\times\Real; \homst(\F_{L_1},\F_{L_2}))) \simeq \rsect_{\{t\}}(\Real; \H) \dispdot
$$
We recall that $\H$ belongs to the category $D_{\tau\geq0}(\Real)$ of sheaves with singular support in $\{\tau\geq
0\}$.

We remark that $D_{\tau\geq0}(\Real)$ contains the derived category of {\em persistence modules}, where by a
persistence module, we mean a constructible sheaf on $( {\mathbb R}, \leq)$. We refer to \cite{Barannikov,
  Chazal-CS-G-G-O, E-L-Z, K-S-distance, Z-C} for the theory and applications.  Such a persistence module is uniquely
defined by the finite-dimensional vector spaces $V_t= \mathcal F (]-\infty, t[)$ and the linear maps $r_{s,t}: V_t
\longrightarrow V_s$ defined for $s\leq t$, such that
\begin{enumerate} 
\item for $s<t<u$ we have $r_{u,t} \circ r_{s,t}=r_{s,u}$,
\item $\lim_{t>s} V_t=V_s$ where the limit is that of the directed system given by the $r_{s,t}$,
\item $r_{t,t}=\Id$.
\end{enumerate} 
It is a well-known result that persistence modules have a decomposition as sum of barcodes : this is Gabriel's
theorem (\cite{Gabriel} on quivers in the finite case and Crawley-Boevey's theorem (\cite{Crawley-Boevey}) in the
locally finite case. More precisely, using Gabriel's theorem and the fact that any complex of objects in an Abelian
category of homological dimension $1$ is the sum of its cohomology, we find: any $\F \in D_{\tau\geq0}(\Real)$ which
is constructible for a finite stratification can be written $\F \simeq \bigoplus_{j \in \I} k_{[a_j, b_j[}[-n_j]$,
where $\I$ is a finite family, and this decomposition is unique up to isomorphism.  (The hypothesis that the singular
support is in $\{\tau\geq0\}$ is useless here.)  We remark that the stratification consists only of a finite set of
points in $\Real$ and the connected components of its complement, where $\F$ is constant. This finite set is thus the
projection to $\Real$ of $SS^\bullet(\F)$.  If $L \in \LL(T^*N)$ is generic in the sense that $L$ meets $0_N$
transversely, then $Rt_*(\F_L)$ is of the above type, where the stratification is given by the set of actions of
the points in $L \cap 0_N$.

In~\cite{K-S-distance} it is explained how to go from the case of a finite stratification to the case of a locally
finite one (then $\I$ is at most countable).  By~\cite{Crawley-Boevey} the same result holds in fact (at least in
$D_{\tau\geq0}(\Real)$) under the more general hypothesis that $\F_x$ is finite dimensional for all $x\in \Real$.
However the sheaves we encounter in this paper are not necessarily of this type: if $f$ is the function $f(x) =
x^2\sin(1/x)$ and $\F = Rt_*(k_{\{t\geq f(x)\}})$, then $\F_0$ is infinite dimensional. But, by a small perturbation
of $L$, we see that all sheaves of the form $Rt_*(\F_L)$ are in the closure of the set of constructible sheaves.
Using Corollary~\ref{cor:decomp_limite} we deduce:

\begin{prop}\label{Prop-A1}
  Let $L_1, L_2\in \LL(T^*N)$. Then there exists an at most countable family $\I$ (which is finite in the case
  of transverse intersection) such that
  $$
  (Rt)_*(\homst(\F_{L_1}, \F_{L_2}))=\bigoplus_{j \in \I} k_{[a_j, b_j[}[-n_j] ,
  $$
  where for each $j\in \I$ we have $a_j \in \Real\cup\{-\infty\}$, $b_j \in \Real$, $n_j \in \Z$.  Moreover there is a
unique $j_-$ such that $a_{j_-}=-\infty$ and $n_{j_-} = -1$ and then $b_{j_-}=c_-(L_1,L_2)$ and a unique $j_+$ such
that $a_{j_+}=+\infty$ and $n_{j_+} = n-1$ and then $b_{j_+}=c_+(L_1,L_2)$.
\end{prop}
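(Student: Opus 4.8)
The plan is to recognise the sheaf $\mathcal{H} := Rt_*\homst(\F_{L_1},\F_{L_2})$ on $\Real$ as a $\gamma_\tau$-limit of sheaves that are constructible for a \emph{finite} stratification, to apply the structure theorem for such limits (Corollary~\ref{cor:decomp_limite}), and then to read off the two distinguished bars and their right endpoints from the behaviour of $\mathcal{H}$ at the two ends of $\Real$ together with the definition of the spectral invariants. First I would record this boundary behaviour: since $\homst(\F_{L_1},\F_{L_2})$ is $k_{N\times\Real}[1]$ near $N\times\{-\infty\}$ and $0$ near $N\times\{+\infty\}$ (as in \S\ref{sec:specinv}) and $t\colon N\times\Real\to\Real$ is proper over both ends ($N$ being compact), one gets $\mathcal{H}\in D_{\tau\geq0}(\Real)$ with $\mathcal{H}\simeq 0$ near $+\infty$ and $\mathcal{H}\simeq\bigl(\rsect(N;k_N)\otimes_k k_\Real\bigr)[1]$ near $-\infty$.

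For the approximation I would choose $L_2^{(m)}\in\mathcal{L}(T^*N)$ with $L_1\pitchfork L_2^{(m)}$ and $\gamma(L_2^{(m)},L_2)\to 0$ (a $C^1$-small Hamiltonian perturbation of $L_2$). By Remark~\ref{Rem-6.13} the map $L\mapsto\F_L$ is bi-Lipschitz, so $\F_{L_2^{(m)}}$ $\gamma_\tau$-converges to $\F_{L_2}$; and since both functors $\homst(\F_{L_1},-)$ and $Rt_*$ commute with the translations $T_{c*}$ and with the Tamarkin morphisms (as in Lemma~\ref{lem:fonctorialite-tau}), they are $1$-Lipschitz for $\gamma_\tau$, hence $\mathcal{H}^{(m)}:=Rt_*\homst(\F_{L_1},\F_{L_2^{(m)}})$ $\gamma_\tau$-converges to $\mathcal{H}$. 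When $L_1\pitchfork L_2^{(m)}$ the action spectrum of $L_1\cap L_2^{(m)}$ is finite, so $\mathcal{H}^{(m)}$ is locally constant outside a finite subset of $\Real$, i.e. constructible for a finite stratification of $\Real$ (this is the reasoning recalled before the statement for $Rt_*(\F_L)$ when $L\pitchfork 0_N$). Therefore $\mathcal{H}\in D_{lc}(\Real)$, and Corollary~\ref{cor:decomp_limite} gives $\mathcal{H}\simeq\bigoplus_{j\in\I}k_{[a_j,b_j[}[-n_j]$ with $\I$ at most countable, $-\infty\leq a_j<b_j\leq+\infty$, $n_j\in\Z$; in the transverse case $\I$ is already finite by Gabriel's theorem \cite{Gabriel}. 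Because $\mathcal{H}\simeq 0$ near $+\infty$ no summand has $b_j=+\infty$, so $b_j\in\Real$ for all $j$.

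To extract $j_-$, $j_+$ and the values $b_{j_\pm}$ I would restrict the decomposition near $-\infty$: the summands with $a_j$ finite vanish there and $\bigoplus_{a_j=-\infty}k[-n_j]\simeq\rsect(N;k_N)[1]$. Hence, for each $d$, the number of indices $j$ with $a_j=-\infty$ and $n_j=d$ equals $\dim H^{d+1}(N;k_N)$; in particular $H^0(N;k_N)=k\cdot 1$ gives a unique $j_-$ with $(a_{j_-},n_{j_-})=(-\infty,-1)$, and (with $N$ oriented, $n=\dim N$) $H^n(N;k_N)=k\cdot\mu_N$ gives a unique $j_+$ with $(a_{j_+},n_{j_+})=(-\infty,n-1)$. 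For the right endpoints I would use \eqref{eq:invspec-variante} and Definition~\ref{def:spectral-inv}, by which $c(\alpha,\F_{L_1},\F_{L_2})$ is minus the infimum of the $c$ for which $\alpha$, viewed in $\rsect_{[-t_0,-c]}(\Real;\mathcal{H})\simeq\rsect(N;k_N)$, lies in the image of the support-enlargement map $\rsect_{\{-c\}}(\Real;\mathcal{H})\to\rsect_{[-t_0,-c]}(\Real;\mathcal{H})$. Since the decomposition splits this off summand by summand, it is enough to check on a summand $k_{[-\infty,b[}[-n]$ that this map is an isomorphism (in the relevant degree) when $-c\leq b$, whereas for $-c>b$ its source vanishes and its one-dimensional target does not. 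Taking $\alpha=1$, which lives in the $j_-$-summand, gives $c_-(L_1,L_2)=c(1,\F_{L_1},\F_{L_2})=b_{j_-}$; taking $\alpha=\mu_N$, which lives in the $j_+$-summand, gives $c_+(L_1,L_2)=b_{j_+}$.

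The delicate point is the approximation step: one must verify carefully that $\homst(\F_{L_1},-)$ and $Rt_*$ are genuinely $1$-Lipschitz for $\gamma_\tau$ — this is where one uses that these functors intertwine the Tamarkin morphisms $\tau_{a,b}$, exactly as in Lemma~\ref{lem:fonctorialite-tau} — and that a transverse pair $L_1\pitchfork L_2^{(m)}$ really does produce a finite action spectrum, hence a finitely stratified $\mathcal{H}^{(m)}$. Once this is in place, Corollary~\ref{cor:decomp_limite} supplies the structural decomposition and all that remains is the routine but error-prone shift-and-sign bookkeeping of the last paragraph.
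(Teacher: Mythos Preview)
Your proof is correct and follows the paper's own approach: perturb $L_2$ to make the intersection transverse so that $\mathcal H^{(m)}$ is constructible for a finite stratification, pass to the $\gamma_\tau$-limit, and invoke Corollary~\ref{cor:decomp_limite}. The paper's argument is only the one-line sketch preceding the statement, so you have in fact supplied the missing details (the Lipschitz property of $\homst(\F_{L_1},-)$ and $Rt_*$, the reading of the infinite bars from $\mathcal H|_{-\infty}\simeq\rsect(N;k_N)[1]$, and the identification $b_{j_\pm}=c_\pm$ via~\eqref{eq:invspec-variante}); note also that your $a_{j_+}=-\infty$ silently corrects what is evidently a typo ``$a_{j_+}=+\infty$'' in the stated proposition.
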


More generally, with the notations of the proposition we let $\I_\infty$ be the set of $j$ such that $a_j = -\infty$.
Then $H^*_{t_0}((Rt)_*(\homst(\F_{L_1}, \F_{L_2}))) \simeq \bigoplus_{j \in \I_\infty} k[-n_j]$, for $t_0
\ll0$, and we deduce that the spectral invariants $c(\alpha, L_1,L_2)$ are the $b_j$'s with $j\in \I_\infty$.

Note that the filtered Floer cohomology, $FH^*(L_1,L_2;-\infty ,t)$ defines a persistence module equal to the one obtained from 
$\homst(\F_{L_1}, \F_{L_2})$ as follows from Theorem \ref{Thm-Quantization}. 

It will be useful to remind the reader that 

\begin{lem}[see \cite{K-S-distance}, (1.10)]\label{lem:HomkIkJ}
  $$
  \RHom (k_{[a,b[}, k_{[c,d[}) \simeq
  \begin{cases}
    k  & \text{for \ } a\leq c <b\leq d , \\
  k[-1] \quad & \text{for \ } c < a \leq d < b , \\
    0  & \text{otherwise.}
  \end{cases}
  $$
\end{lem}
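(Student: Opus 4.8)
The plan is to reduce the computation to cohomology of constant sheaves on half-open intervals of $\Real$. First I would record, for $a<b$, the short exact sequence of sheaves on $\Real$
$$
0 \to k_{]-\infty,a[} \to k_{]-\infty,b[} \to k_{[a,b[} \to 0
$$
(the case $a\ge b$ being trivial), which presents $k_{[a,b[}$ as the cone of the natural inclusion $k_{]-\infty,a[} \hookrightarrow k_{]-\infty,b[}$. Applying $\RHom(-, k_{[c,d[})$ gives a distinguished triangle
$$
\RHom(k_{[a,b[}, k_{[c,d[}) \to \RHom(k_{]-\infty,b[}, k_{[c,d[}) \to[r] \RHom(k_{]-\infty,a[}, k_{[c,d[}) \to[+1]
$$
in which $r$ is induced by $]-\infty,a[\,\subset\,]-\infty,b[$; thus $\RHom(k_{[a,b[}, k_{[c,d[})\simeq \mathrm{cone}(r)[-1]$. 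Since $k_U=j_!k$ for an open inclusion $j\colon U\hookrightarrow\Real$, one has $\RHom(k_U,\G)\simeq\rsect(U;\G)$ functorially, so everything is governed by $P(s):=\rsect(]-\infty,s[;k_{[c,d[})$ together with the restriction maps $P(b)\to P(a)$.

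Next I would compute $P(s)$ directly. We have $k_{[c,d[}|_{]-\infty,s[}\simeq k_{[c,d[\,\cap\,]-\infty,s[}$. If $s\le c$ this interval is empty, so $P(s)=0$. If $c<s\le d$, then $[c,s[$ is a closed, contractible subset of $]-\infty,s[$, whence $P(s)\simeq\rsect([c,s[;k)\simeq k$. If $s>d$, I would use the triangle $k_{[c,d[}\to k_{[c,s[}\to k_{[d,s[}\to[+1]$ inside $]-\infty,s[$: its last two terms are constant sheaves on closed contractible subsets, and the map between them induces an isomorphism on global sections, so $P(s)=0$. Hence $P(s)\simeq k$ exactly when $c<s\le d$, and $P(s)=0$ otherwise. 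Moreover, when $c<a<b\le d$ both $P(a)$ and $P(b)$ equal $k$, each generated by the class of the constant section $1$ on the corresponding interval, and $r$ carries one generator to the other, so $r$ is an isomorphism; in every other position of $\{a,b\}$ at least one of $P(a),P(b)$ vanishes.

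Plugging this into the triangle finishes the proof by a short case analysis on the relative position of $\{a,b\}$ and $\{c,d\}$: one gets $\RHom(k_{[a,b[},k_{[c,d[})\simeq k$ exactly when $P(b)\simeq k$ and $P(a)=0$, i.e. $a\le c<b\le d$; one gets $k[-1]$ exactly when $P(b)=0$ and $P(a)\simeq k$, i.e. $c<a\le d<b$; and $0$ in all remaining configurations ($\{a,b\}$ entirely left of $c$, entirely right of $d$, $[c,d[$ strictly inside $[a,b[$, or the overlap $c<a<b\le d$ where $r$ is invertible). The only step that is not pure bookkeeping is checking that $r$ is an \emph{isomorphism} rather than zero in the overlap case; this amounts to identifying the generator of $\rsect(]-\infty,s[;k_{[c,d[})$ for $c<s\le d$ with the image of $1$ under the canonical isomorphism $\rsect([c,s[;k)\simeq k$ and observing that restricting the open set preserves it.
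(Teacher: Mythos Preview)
Your argument is correct. The resolution of $k_{[a,b[}$ by the cone of $k_{]-\infty,a[}\hookrightarrow k_{]-\infty,b[}$, the identification $\RHom(k_U,\G)\simeq\rsect(U;\G)$ for open $U$, and the computation of $P(s)=\rsect(\mathopen]-\infty,s\mathclose[;k_{[c,d[})$ are all fine, including the check that the restriction map $r$ is an isomorphism in the overlap case $c<a<b\le d$.

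The paper takes a different, more compressed route. It uses the duality formula $k_{[c,d[}\simeq\rhom(k_{]c,d]},k_\Real)$ together with the tensor--hom adjunction to rewrite
\[
\RHom(k_{[a,b[},k_{[c,d[})\;\simeq\;\RHom(k_{[a,b[}\otimes k_{]c,d]},\,k_\Real)\;\simeq\;\RHom(k_I,k_\Real),
\qquad I=[a,b[\,\cap\,]c,d],
\]
and then observes that the answer depends only on the \emph{type} of the interval $I$: it is $k$ when $I$ is open and nonempty (i.e.\ $a\le c<b\le d$), $k[-1]$ when $I$ is closed and nonempty (i.e.\ $c<a\le d<b$), and $0$ when $I$ is half-closed or empty. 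Compared with your approach, this avoids the separate case analysis of $P(s)$ and the verification that $r$ is an isomorphism: all of that is absorbed into the single observation about the type of $I$. Your method, on the other hand, is entirely self-contained and does not require knowing the duality identity for half-open intervals in advance.
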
 
\begin{proof}
  The formula $k_{[c,d[} \simeq \rhom(k_{]c,d]}, k_\Real)$ and the adjunction between $\otimes$ and $\rhom$
  give $\RHom (k_{[a,b[}, k_{[c,d[}) \simeq \RHom(k_I,k_\Real)$, where $I = [a,b\mathclose[ \cap \mathopen]c,d]$.  If
  $I$ is half closed or empty, the result is $0$. The two other cases correspond to $I$ open or $I$ closed.
\end{proof}

\section{Decomposition in the completion of \texorpdfstring{$D_{lc}( {\mathbb R} )$}{DlcR}}\label{Appendix-B}

\subsection{Limits of constructible sheaves}

We assume in this section that the $\gamma_g$-distance on $D_{g\geq 0}(N)$ is given by a {\em real analytic}
function $g \colon T^*N \setminus 0_N \to \Real$.  We recall that we work with coefficients in a field $k$.
We denote by $D_c(N)$ the subcategory of $D(N)$ of constructible sheaves, that is, the sheaves $\F$ for which
there exists a locally finite subanalytic stratification of $N$ such that the restriction of $\F$ to each
stratum is locally constant of finite rank (see~\cite[Chap. VIII]{K-S}).  We let $D_{lc}(N)$ be the set of
objects which are limits of objects in $D_c(N) \cap D_{g\geq 0}(N)$ with respect to the $\gamma_g$-topology.
Since $g$ is analytic, $\varphi = \varphi_g$ is also analytic and $K_\varphi \in D_c(N^2)$. Hence
$\fai{\varphi}^t(-)$ preserves $D_c(N) \cap D_{g\geq 0}(N)$. A sheaf $\F$ belongs to $D_{lc}(N)$ if and only
if, for any $\varepsilon>0$, there exists $\F'\in D_c(N) \cap D_{g\geq 0}(N)$ and morphisms
$\fai{\varphi}^{-\varepsilon}(\F) \to \F' \to \F \to \fai{\varphi}^\varepsilon(\F') \to
\fai{\varphi}^\varepsilon(\F)$) whose compositions are the morphisms $\tau$ (notice that
$\fai{\varphi}^\varepsilon(\F')$ itself belongs to $D_c(N)$).

Our aim in this section is to prove that the distance $\gamma_g$ is non degenerate on $D_{lc}(N)$.  For $\F,\G
\in D_{lc}(N)$ with $\gamma_g(\F,\G) = 0$ we will prove the existence of an isomorphism between $\F$ and $\G$
as follows: we first see that $\Hom(\F,\G)$ is almost $\varprojlim_n \Hom(\F, \fai{\varphi}^{1/n}(\G))$ and
then we use the morphisms in $\Hom(\F, \fai{\varphi}^{1/n}(\G))$ whose existence follows from the condition
$\gamma_g(\F,\G) = 0$.  Actually we will prove
$$
\Hom(\F,\G) \simeq \varprojlim_n \Hom(\F|_{U_n}, \fai{\varphi}^{1/n}(\G) |_{U_n})
$$
where $U_n$ is an increasing family of open subsets with compact closures.  The reason for introducing these
$U_n$'s is that this enables us to check the Mittag-Leffler criterion by approximating $\F$ and $\G$ by
constructible sheaves.  We often write $\Hom(\F|_U, \G|_U)$ for $\Hom_{D(U)}(\F|_U,\G|_U)$ when the working
category (here $D(U)$) is understood. We will also use the isomorphisms, for $\F, \G \in D(N)$ and $U\subset
N$ open,
$$
\Hom(\F,\G) \simeq H^0\RHom(\F,\G), \qquad \Hom(\F|_U,\G|_U) \simeq \Hom(\F_U, \G),
$$
where $\F_U = \F \otimes k_U$.

When cutting by the open sets $U_n$ we need to compare $\fai{\varphi}^a(\F_U)$ and $\fai{\varphi}^a(\F)$.
This is the object of the next lemma.

\begin{lem}\label{lem:bound_support}
  Let $V\subset U$ be open subsets of $N$ such that $\overline{V}$ is compact and $\overline{V} \subset U$.
  There exists $a_0>0$ such that, for any $a\in [-a_0,a_0]$ and $\F\in D(N)$, the morphisms
  $$
  (\fai{\varphi}^a(\F_V))_U \to \fai{\varphi}^a(\F_V),
  \qquad
  (\fai{\varphi}^a(\F_U))_V \to (\fai{\varphi}^a(\F))_V,  
$$
  are isomorphisms.
\end{lem}
\begin{proof}
  Set $\overline V_a = \bigcup_{0\leq b \leq a} \pi( \varphi_b(\pi^{-1}(\overline V)))$, where $\pi\colon
  T^*N\setminus 0_N \to N$ is the projection.  Since $\overline{V}$ is compact, there exists $a_0>0$ such that
  $\overline V_{a_0} \subset U$.
  
  Let $\G \in D(N\times [-a_0,0])$ be the sheaf obtained from $\F_V$ by applying the whole isotopy, that is,
  $\G|_{N\times \{t\}} = K_{\varphi_t}(\F_V)$.  Since $SS(\F_V) \subset \pi^{-1}(\overline V)$, we have
  $SS(\G) \subset T^*\overline V_{a_0} \times T^*[-a_0,0]$ outside the zero-section. Hence, for any $x\in
  N\setminus \overline V_{a_0}$, $\G$ is locally constant near $x \times [-a_0,0]$, hence $0$ near $x \times
  [-a_0,0]$ since $(\G)_{(x,0)} =0$.  It follows that the support of $\fai{\varphi}^a(\F_V)$ is contained
  in $U$ and this gives the first isomorphism.

  We also see that the support of $\fai{\varphi}^a(\F_{N\setminus U})$ avoids $V$, which gives the second
  isomorphism.
\end{proof}

\begin{lem}\label{lem:im_dim_finie}
  Let $\F,\G \in D_{lc}(N)$, let $\varepsilon>0$ and let $V \subset U$ be two open subsets of $N$.  We assume
  that $\overline{V}$ is compact and $\overline{V} \subset U$.  Then the morphism $\Hom_{D(U)}(\F|_U, \G|_U)
  \to \Hom_{D(V)}(\F|_V, \fai{\varphi}^\varepsilon(\G)|_V)$ induced by $\tau_{0,\varepsilon}(\G)$ and the
  restriction from $U$ to $V$ has finite dimensional image.
\end{lem}
\begin{proof}
  We recall that $\Hom_{D(U)}(\F|_U, \G|_U) \simeq \Hom_{D(N)}(\F_U, \G)$ and our morphism is given by
  composing with $\F_V \to \F_U$ and $\tau_{0,\varepsilon}(\G)$.  For $0<\eta <\varepsilon$ we have $\Hom(\F_V,
  \fai{\varphi}^\varepsilon(\G)) \simeq \Hom(\fai{\varphi}^{-\eta}(\F_V),
  \fai{\varphi}^{\varepsilon-\eta}(\G))$.  We choose a subanalytic open subset $W$ such that $\overline{W}$ is
  compact and $\overline{V} \subset W \subset \overline{W} \subset U$.  By Lemma~\ref{lem:bound_support}
  and the commutative diagram
$$
\xymatrix@C=25mm{
  (\fai{\varphi}^{-\eta}(\F_V))_W  \ar[r]^\sim \ar[d]
  &  \fai{\varphi}^{-\eta}(\F_V) \ar[r]^-{\tau_{-\eta,0}(\F_V)} \ar@{.>}[dl] &  \F_V \ar[d]  \\
  (\fai{\varphi}^{-\eta}(\F))_W \ar[rr]^-{(\tau_{-\eta,0}(\F))_W} && \F_W
}
$$
the composition $\fai{\varphi}^{-\eta}(\F_V) \to[\tau_{-\eta,0}(\F_V)] \F_V \to \F_W$ is factorized as
$\fai{\varphi}^{-\eta}(\F_V) \to (\fai{\varphi}^{-\eta}(\F))_W \to[(\tau_{-\eta,0}(\F))_W] \F_W$, for $\eta$
small enough.  Since $\F,\G \in D_{lc}(N)$, there exist constructible sheaves $\F'$, $\G'$ and morphisms
$\fai{\varphi}^{-\eta} \F \to[u_1] \F' \to[u_2] \F$, $\G \to[v_1] \G'
\to[v_2]\fai{\varphi}^{\varepsilon-\eta}(\G)$ such that $u_2 \circ u_1 = \tau_{-\eta,0}(\F)$, $v_2 \circ v_1 =
\tau_{0,\varepsilon-\eta}(\G)$. We sum up these morphisms in the diagram
  $$
  \xymatrix{
    \fai{\varphi}^{-\eta}(\F_V) \ar[r]\ar@{.>}[rrrrrdd]_\gamma &  (\fai{\varphi}^{-\eta}(\F))_W \ar[r]
    &  \F'_W \ar[r]\ar@{.>}[rrrd]_\beta &  \F_W \ar[r]  & \F_U \ar@{.>}[r]_\alpha &  \G  \ar[d] \\
    &&&&& \G' \ar[d] \\
    &&&&& \fai{\varphi}^{\varepsilon-\eta}(\G)
  }
  $$
  We deduce that the morphism of the lemma, which we have rewritten as
  $$
  \Hom(\F_U, \G) \to
  \Hom( \fai{\varphi}^{-\eta}(\F_V), \fai{\varphi}^{\varepsilon-\eta}(\G))
  \simeq  \Hom(\F_V, \fai{\varphi}^\varepsilon(\G))  \dispdot
  $$
  and corresponds to $\alpha \mapsto \gamma$ in the diagram, factorizes as $\alpha \mapsto \beta$ and $\beta
  \mapsto \gamma$ through $\Hom(\F'_W, \G')$ which is finite dimensional.
\end{proof}

We choose an increasing family of open subsets $U_n$, $n\in \N$, of $N$ such that $N = \bigcup_{n\in \N} U_n$,
each $\overline{U_n}$ is compact and $\overline{U_n} \subset U_{n+1}$.  For $\F,\G \in D_{lc}(N)$ we set
$$
A_n(\F,\G) = \Hom(\F_{U_n}, \fai{\varphi}^{1/n}(\G) )  \dispdot
$$
The morphisms $\tau_{1/n,1/(n-1)}(\G)\colon \fai{\varphi}^{1/n}(\G) \to \fai{\varphi}^{1/(n-1)}(\G)$ and
$\F_{U_{n-1}} \to \F_{U_{n}}$ induce $a_n\colon A_n(\F,\G) \to A_{n-1}(\F,\G)$.  Lemma~\ref{lem:im_dim_finie}
implies that the projective system $(A_n(\F,\G), a_n)$, satisfies the Mittag-Leffler criterion.

By Lemma~\ref{lem:F=holim-translates} we have an isomorphism $f\colon \hocolim \fai{\varphi}^{-1/n}(\F) \isoto
\F$.  The morphisms $f_{n-1} \colon \fai{\varphi}^{-1/(n-1)}(\F_{U_{n-1}}) \to
\fai{\varphi}^{-1/n}(\F_{U_{n}})$ give a similar injective system $(\fai{\varphi}^{-1/n}(\F_{U_n}), f_n)_n$
and the morphisms $\fai{\varphi}^{-1/n}(\F_{U_n}) \to \F$ yield
$$
f'\colon \hocolim \fai{\varphi}^{-1/n}(\F_{U_n}) \to \F
$$
which is also an isomorphism: indeed this can be checked locally and we can restrict to a relatively compact
open subset $V$; for $n$ big enough we have $(\fai{\varphi}^{-1/n}(\F))|_V \simeq
(\fai{\varphi}^{-1/n}(\F_{U_n}))|_V$ by Lemma~\ref{lem:bound_support} and $f'$ coincides with $f$.  Applying
$\RHom(-,\G)$ we obtain
\begin{equation}\label{eq:homotlimit_rhomFG}
  \RHom(\F,\G) \isoto \holim_n \RHom(\F_{U_n}, \fai{\varphi}^{1/n}(\G)) \dispdot
\end{equation}
We can deduce $H^i\RHom(\F,\G) \simeq \varprojlim_n H^i\RHom(\F_{U_n}, \fai{\varphi}^{1/n}(\G))$ using the
Mittag-Leffler criterion. Since this criterion is not often stated in the framework of homotopy limits, we
recall it in the next lemma (which is a variation on the usual Mittag-Leffler condition; see for
example~\cite[Prop.~1.12.4]{K-S}).  Here $\Ab$ denotes the category of Abelian groups and $D(\Ab)$ its derived
category.
\begin{lem}\label{lem:ML}
  Let $(A_n, a_n)$, $a_n\colon A_n \to A_{n-1}$, be a projective system in $D(\Ab)$ and define $\holim_n A_n$
  by the distinguished triangle $\holim_n A_n \to \prod_n A_n \to[\alpha] \prod_n A_n \to[+1]$ where $\alpha =
  \id - \prod_n a_n$.  We assume that $H^iA_n$ satisfies the Mittag-Leffler criterion, for some $i$, that is,
  for each $n$ the image of $H^iA_m \to H^iA_n$ stabilizes when $m\to\infty$. Then $H^{i+1}(\holim_n A_n)
  \simeq \varprojlim_n H^{i+1}A_n$.
\end{lem}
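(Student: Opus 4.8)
The plan is to prove Lemma~\ref{lem:ML} by splicing together the long exact cohomology sequence of the defining triangle $\holim_n A_n \to \prod_n A_n \xrightarrow{\alpha} \prod_n A_n \to[+1]$ with the classical $\varprojlim$--$\varprojlim^1$ computation for the shift map on a product. First I would apply the cohomological functor $H^\bullet$ to the triangle, obtaining for each $j$ the exact sequence
$$
H^j(\holim_n A_n) \to \prod_n H^j A_n \xrightarrow{\alpha^j} \prod_n H^j A_n \to H^{j+1}(\holim_n A_n) \to \prod_n H^{j+1}A_n \xrightarrow{\alpha^{j+1}} \cdots,
$$
where $\alpha^j = \id - \prod_n H^j(a_n)$ is the map whose kernel is $\varprojlim_n H^j A_n$ and whose cokernel is $\varprojlim^1_n H^j A_n$ (this is just the standard description of derived limits of abelian groups indexed by $\N$). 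So the key input is the identification, for every $j$, of $\ker \alpha^j = \varprojlim_n H^j A_n$ and $\operatorname{coker}\alpha^j = \varprojlim^1_n H^j A_n$, which I would recall as a known fact (it is exactly the content of the cited~\cite[Prop.~1.12.4]{K-S}, or can be proved directly by a one-line diagram chase on the product).

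Next I would assemble these pieces around the degree $i+1$. Breaking the long exact sequence into short exact pieces, we get
$$
0 \to \operatorname{coker}\alpha^{i} \to H^{i+1}(\holim_n A_n) \to \ker\alpha^{i+1} \to 0,
$$
that is, a short exact sequence $0 \to \varprojlim^1_n H^i A_n \to H^{i+1}(\holim_n A_n) \to \varprojlim_n H^{i+1}A_n \to 0$. The Mittag-Leffler hypothesis on $H^i A_n$ is now exactly what kills the left-hand term: for a projective system of abelian groups indexed by $\N$ satisfying Mittag-Leffler one has $\varprojlim^1 = 0$. Hence the map $H^{i+1}(\holim_n A_n) \to \varprojlim_n H^{i+1}A_n$ is an isomorphism, which is the assertion.

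**The only genuine subtlety**, and the step I would be most careful about, is making sure the degree bookkeeping is right: the Mittag-Leffler condition is imposed on $H^i A_n$ (degree $i$), and it is the $\varprojlim^1$ of that system — appearing via $\operatorname{coker}\alpha^i$ — that contributes to $H^{i+1}(\holim_n A_n)$, while the $\varprojlim$ that survives is that of the degree $i+1$ system via $\ker\alpha^{i+1}$. Everything else is formal: the existence of the long exact sequence is the defining property of a distinguished triangle, and the $\varprojlim/\varprojlim^1$ interpretation of $\ker\alpha/\operatorname{coker}\alpha$ together with the vanishing of $\varprojlim^1$ under Mittag-Leffler are standard and may be quoted. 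I would therefore keep the write-up to a few lines, citing~\cite[Prop.~1.12.4]{K-S} for the product-level statement and leaving the splicing to the reader.
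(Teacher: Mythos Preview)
Your proof is correct and follows essentially the same route as the paper: both apply $H^\bullet$ to the defining triangle (using exactness of $\prod$ in $\Ab$ to identify $H^j(\prod A_n)\simeq\prod H^jA_n$), and both reduce the claim to the vanishing of $\operatorname{coker}\alpha^i$. The only cosmetic difference is that you invoke the identification $\operatorname{coker}\alpha^i=\varprojlim^1_n H^iA_n$ together with the classical ``Mittag-Leffler $\Rightarrow \varprojlim^1=0$'' as a black box, whereas the paper proves the surjectivity of $\alpha^i$ by hand (splitting $H^iA_n$ into its stable image $B_n^\infty$ and the quotient, and checking $\id-\prod b_n$ is surjective on each piece).
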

\begin{proof}
  In $\Ab$ the product $\prod$ is exact, hence $H^i$ and $\prod$ commute and we have a long exact sequence $\prod_n
  H^iA_n \to[H^i\alpha] \prod_n H^iA_n \to H^{i+1}(\holim_n A_n) \to \prod_n H^{i+1} A_n \to \prod_n H^{i+1} A_n$.
  Hence the statement is equivalent to the surjectivity of $H^i\alpha$.  We write for short $B_n = H^iA_n$, $b_n =
  H^ia_n$ and $B_n^\infty = \operatorname{im}(B_m \to B_n)$ for $m\gg n$.  We let $b_n^\infty$, $\bar b_n$ be the
  maps induced by $b_n$ on $B_n^\infty$, $B_n / B_n^\infty$.  We have $H^i\alpha = \id - \prod b_n$ and the
  commutative diagram
  $$
  \xymatrix@C=1.5cm{
    0 \ar[r]  &  \prod B_n^\infty  \ar[r] \ar[d]^{\id - \prod b_n^\infty}
    &   \prod B_n \ar[r] \ar[d]^{\id - \prod b_n}
    &   \prod B_n/B_n^\infty  \ar[r] \ar[d]^{\id - \prod \bar b_n} & 0  \\
        0 \ar[r] &  \prod B_n^\infty  \ar[r] &   \prod B_n \ar[r]
    &   \prod B_n/B_n^\infty  \ar[r] & 0\dispdot }
  $$
  We see that $\id - \prod b_n^\infty$ is surjective because the $b_n^\infty$'s are surjective.  Let us write $t =
  \prod \bar b_n$.  By the Mittag-Leffler criterion, for each $n$, there exists $m\geq n$ such that $\bar b_{n+1}
  \circ \bar b_{n+2} \circ \dots \circ \bar b_m$ is the zero map.  It follows that $t' = \sum_{i=0}^\infty t^i$ is
  well-defined.  Hence $\id - t$ is an isomorphism, with inverse $t'$.  We deduce that $\id - \prod b_n$ is also
  surjective, as required.
\end{proof}

We apply this result to the homotopy limit in~\eqref{eq:homotlimit_rhomFG}.  By Lemma~\ref{lem:im_dim_finie}
the maps $H^i\RHom(\F_{U_{n+1}}, \fai{\varphi}^{1/(n+1)}(\G)) \to H^i\RHom(\F_{U_n}, \fai{\varphi}^{1/n}(\G))$
have finite dimensional image (recall that $H^i\RHom(X,Y) \simeq \Hom(X,Y[i])$ in any derived category).
Hence we can indeed apply Lemma~\ref{lem:ML} and we obtain
\begin{lem}\label{lem:limHom}
  Let $\F,\G \in D_{lc}(N)$ and let $U_n$, $n\in \N$, be a sequence of open subsets as above.  We have a natural
  isomorphism
  $$
  \Hom(\F,\G) \isoto  \varprojlim_n \Hom(\F_{U_n}, \fai{\varphi}^{1/n}(\G))  \dispdot
  $$
\end{lem}

We recall that $A_n(\F,\G) = \Hom(\F_{U_n}, \fai{\varphi}^{1/n}(\G) )$.  We set
$$
B_n(\F,\G) = \operatorname{Im}(\Hom(\F, \G) \to A_n(\F,\G) )  \dispdot
$$
which is a finite dimensional vector subspace of $A_n(\F,\G)$ by Lemma~\ref{lem:im_dim_finie}.  For $u\in
\Hom(\F, \G)$, we denote its image in $B_n(\F,\G)$ by
$$
[u]_n \in B_n(\F,\G)  \dispdot
$$
The map $a_n\colon A_n(\F,\G) \to A_{n-1}(\F,\G)$ induces $b_n\colon B_n(\F,\G) \to B_{n-1}(\F,\G)$. Since
$A_n(\F,\G)$ satisfies Mittag-Leffler, the images of the maps $A_r(\F,\G) \to A_n(\F,\G)$, for a given $n$ and
$r\geq n$, stabilize when $r\to\infty$; hence there exists $r(n)$ such that
\begin{equation}\label{eq:rang_stable}
  B_n(\F,\G) = \operatorname{Im}( A_{r(n)}(\F,\G) \to A_n(\F,\G) )  \dispdot
\end{equation}

\begin{lem}\label{lem:compoHom0}
  For $\F, \G, \H \in D_{lc}(N)$ the composition induces a well-defined map, for any $n$,
  \begin{align*}
    \circ_n \colon B_n(\F,\G) \times B_n(\G,\H) &\to B_n(\F,\H) \\
    (\bar u, \bar v) &\mapsto  \bar v \circ_n \bar u := [v \circ u]_n ,
  \end{align*}
  where $u,v$ in $\Hom(\F,\G)$, $\Hom(\G,\H)$ satisfy $[u]_n = \bar u$, $[v]_n = \bar v$. This turns
  $B_n(\F,\F)$ into an algebra which is a finite dimensional quotient of $\Hom(\F,\F)$ and whose unit is
  $\tau_{0,1/n}(\F)$ (assuming $\F\not=0$ and $n$ is big enough so that $\tau_{0,1/n}(\F) \not=0$ -- see Lemma
  \ref{lem:proprietes-gammag}). This also turns $B_n(\F,\G)$ into a finite dimensional quotient of
  $\Hom(\F,\G)$ which is a left $B_n(\F,\F)$-module and right $B_n(\G,\G)$-module.
\end{lem}
\begin{proof}
  It is enough to see that if $[u]_n=0$ or $[v]_n=0$, then $[v\circ u]_n = 0$. This follows from the
  commutative diagram (displayed twice for readability)
  $$
  \xymatrix@C=1cm{
    \F_{U_n} \ar[r]^{u'} \ar[d] & \G_{U_n} \ar[r]^{v'} \ar[d] & \H_{U_n} \ar[d] \\
    \F \ar[r]^u  & \G \ar[r]^v \ar[d]_{\tau_{0,1/n}(\G)} & \H \ar[d]_{\tau_{0,1/n}(\H)} \\
    & \fai{\varphi}^{1/n}(\G) \ar[r]_{ \fai{\varphi}^{1/n}(v)} & \fai{\varphi}^{1/n}(\H))
  }
 \xymatrix@C=6mm{
   \F_{U_n} \ar[r] \ar[ddr]_{[u]_n} \ar[ddrr]^{w}  & \G_{U_n} \ar[r] \ar[ddr]^{[v]_n} & \H_{U_n}  \\
   \\
     & \fai{\varphi}^{1/n}(\G) \ar[r] & \fai{\varphi}^{1/n}(\H))
  }  
  $$
  where $w = [v \circ u]_n$.
\end{proof}

\begin{lem}\label{lem:surj-Hom0prime}
  Let $\F , \G\in D_{lc}(N)$ be such that $\gamma_g(\F,\G) = 0$.  We let $B_n^\times(\F,\F)$ be the subset of
  invertible elements in $B_n(\F,\F)$. We also define
  \begin{multline*}
    C_n = \big\{(u,v) \in B_n(\F,\G) \times B_n(\G,\F) ; 
 v \circ_n u = \tau_{0,1/n}(\F)$, $u \circ_n v = \tau_{0,1/n}(\G) \big\} \dispdot
\end{multline*}
  Then
  \begin{enumerate}
  \item\label{lem:surj-Hom0prime1} for any $n$ the set $C_n$ is non empty, 
  \item\label{lem:surj-Hom0prime2} for any $n$ and any $(u,v) \in C_n$, the maps
    \begin{alignat*}{2}
\alpha_{u,v} \colon  B_n^\times(\F,\F) &\to C_n, &\qquad \beta_{u,v} \colon C_n &\to B_n^\times(\F,\F),  \\
 a &\mapsto (u\circ_n a, a^{-1} \circ_n v) &  (u',v') &\mapsto v\circ_n u'
    \end{alignat*}
are mutually inverse bijections,
  \item\label{lem:surj-Hom0prime3} for $m \geq n$ the natural map  $C_m \to C_n$ is surjective.
  \end{enumerate}
\end{lem}
\begin{proof}
  (\ref{lem:surj-Hom0prime1}) Let $r(n)$ be as in~\eqref{eq:rang_stable} and define $s(n)$ in the same way,
  with $\F$ and $\G$ switched.  We choose $m\geq \max\{r(n), s(n)\}$.  Since $\gamma_g(\F,\G) = 0$, we can
  find $u_1\colon \F \to \fai{\varphi}^{1/2m}(\G)$ and $v_1\colon \G \to \fai{\varphi}^{1/2m}(\F)$ such that
  $u_1 \circ \fai{\varphi}^{1/2m}(v_1)$ and $v_1 \circ \fai{\varphi}^{1/2m}(u_1)$ are the morphisms $\tau$.
  We let $u_2\in A_{2m}(\F,\G)$ be the composition of $\F_{U_{2m}} \to \F$ and $u_1$ and we let $u$ be the
  image of $u_2$ in $A_n(\F,\G)$. Then $u$ belongs to $B_n(\F,\G)$.  We define $v$ in the same way.  Since
  $v_1 \circ u_1 = \tau_{0,1/m}(\F)$ we have $v \circ_n u = \tau_{0,1/n}(\F)$.  Similarly $u \circ_n v =
  \tau_{0,1/n}(\G)$.  Hence $(u,v) \in C_n$.
  
  \smallskip\noindent (\ref{lem:surj-Hom0prime2}) We first remark that $\beta_{u,v}(u',v')$ belongs to $
  B_n^\times(\F,\F)$: indeed the inverse of $v\circ_n u'$ is $v' \circ_n u$.  Now we compute
  $\beta_{u,v}(\alpha_{u,v}(a)) = v \circ_n u \circ_n a = a$ and $\alpha_{u,v}(\beta_{u,v}(u',v')) = (u
  \circ_n v \circ_n u', v'\circ_n u \circ_n v) = (u',v')$.
  
  \smallskip\noindent (\ref{lem:surj-Hom0prime3}) We pick $(u,v) \in C_m$ and let $(\bar u, \bar v)$ be its image by
  the natural map $C_m\to C_n$.  We obtain a commutative diagram
  $$
  \xymatrix{
    B_m^\times(\F,\F) \ar[r]^-{\alpha_{u,v}} \ar[d]_q & C_m \ar[d] \\
    B_n^\times(\F,\F) \ar[r]^-{\alpha_{\bar u,\bar v}} & C_n
  }
  $$
  where the horizontal maps are bijections.  The map $B_m(\F,\F) \to B_n(\F,\F)$ is surjective by definition.
  Hence the map $q$ is surjective by Lemma~\ref{lem:inversible-algebre} and the result follows.
\end{proof}

\begin{lem}\label{lem:inversible-algebre}
  Let $A$, $B$ be finite dimensional algebras over $k$ and let $f\colon A \to B$ be a surjective algebra morphism.
  Then the map induced on the sets of invertible elements $f^\times \colon A^\times \to B^\times$ is also surjective.  
\end{lem}
\begin{proof}
  We pick $x\in A$ and assume that $f(x)$ is invertible. We have to prove that there exists $z\in\ker(f)$ such that
  $x+z$ is invertible.  We let $A' = \langle x \rangle$ be the subalgebra of $A$ generated by $x$ and define
  similarly $B' = \langle f(x) \rangle$.  The maps $u\colon k[X] \to A$, $X \mapsto x$, and $f\circ u$ identify $A'$
  and $B'$ with quotients of $k[X]$.  Since $k[X]$ is principal, we can write $A' = k[X]/\langle P Q \rangle$, with
  $x=[X]_{A'}$, and $B' = k[X]/\langle Q \rangle$, with $f(x) = [X]_{B'}$.

  Since $f(x)$ is invertible in $B$, it is not a zero divisor and we have $X\nmid Q$.  Let us write $P = X^n P'$ with
  $X\nmid P'$.  We set $R = X + P'Q$. Then $f([R]_{A'}) = f(x)$. We claim that $[R]_{A'}$ is invertible, which
  concludes the proof.  We have to check that $\langle R, PQ \rangle = k[X]$.  Let $D$ be a generator of $\langle R,
  PQ \rangle$.  Then $D \mid X^nR - PQ = X^{n+1}$. Either $X\mid D$, but then $X \mid R$ and finally $X\mid P'Q$
  which is false, or $D \in k^\times$ and we are done.
\end{proof}

\begin{prop}\label{prop:unicite-limite}
  We recall that we take coefficients in a field.  Let $\F, \G$ be elements in $D_{lc}(N)$. Assume that that
  $\gamma_g(\F,\G) = 0$. Then $\F \simeq \G$.
\end{prop}
\begin{Question}
  Is the Proposition true if we replace the field by $\mathbb Z$ or a more general ring ?
\end{Question}

\begin{proof}
  If $\F=0$, then $\gamma_g(0,\G) = 0$ and we obtain $\G=0$ by Lemma~\ref{lem:proprietes-gammag}.  We assume
  $\F\not=0$, hence $\tau_{0,1/n}(\F) \not=0$ for $n$ big enough.  We use the notation $C_n$ of
  Lemma~\ref{lem:surj-Hom0prime}.  By this lemma the inductive system $\cdots\to C_n \to C_{n-1} \to\cdots$ is
  made of surjective maps between non empty sets for $n\gg0$. Hence its limit is non empty and we deduce a
  pair $(u,v) \in \Hom(\F,\G) \times \Hom(\F,\G)$ such that $([u]_n, [v]_n) \in C_n$ for all $n$, that is,
  $[v\circ u]_n = \tau_{0,1/n}(\F)$ and $[u\circ v]_n = \tau_{0,1/n}(\G)$.  Let $\C$ be the cone of $u$.  We
  want to prove that $\C\simeq 0$. This can be done locally and we can restrict to some open subset $V$ with
  compact closure.  For any $n$ big enough so that $V \subset U_n$ we have the following morphism of triangles
  given by the morphisms $\tau_{0,1/n}(-)$ (recall that we work on $V$, so $\F_{U_n} \simeq \F$, $\G_{U_n}
  \simeq \G$):
  $$
  \xymatrix{
    \F \ar[r]^u \ar[d] &  \G \ar[r] \ar[d] \ar[dl]_{[v]_n}  &   \C \ar[r] \ar[d]  &  \F[1] \ar[d]  \\
    \fai{\varphi}^{1/n}(\F) \ar[r] &  \fai{\varphi}^{1/n}(\G) \ar[r]
    &   \fai{\varphi}^{1/n}(\C) \ar[r] &  \fai{\varphi}^{1/n}(\F)[1]  ,
  }
  $$
  where both maps $\tau_{0,1/n}(\F)$ and $\tau_{0,1/n}(\G)$ factorize through $[v]_n$.  Using the same argument as in
  the proof of Lemma~\ref{lem:comparaison-distances}-(\ref{lem:comparaison-distances-i}) we can deduce that
    $\tau_{0,2/n}(\C) = 0$.  Hence $\gamma_g(0,\C) \leq 2/n$, for any $n$. Thus the cone of $u$ vanishes and $u$ is
    an isomorphism.
\end{proof}

\subsection{Decomposition in \texorpdfstring{$D_{lc}(\Real)$}{DlcR}} \label{subsection-2.5}

\newcommand{\Mod}{\operatorname{Mod}}
\newcommand{\Mat}{\operatorname{Mat}}
\newcommand{\Intplus}{Int^+}

Let $\Mod(\Real)$ be the category of sheaves of $k$-vector spaces on $\Real$.  We consider it as usual as the full
subcategory of $D(\Real)$ of complexes concentrated in degree $0$.  A sheaf $\F \in \Mod(\Real)$ which is
constructible with respect to a finite stratification of $\Real$ is described by a quiver representation of type
$A_n$.  Hence Gabriel theorem implies that $\F$ is decomposed as $\F \simeq \bigoplus_{I \in \I} k_I$ where $\I$ is a
finite family of intervals.  In this section we extend this result to $\gamma_g$-limits of constructible sheaves
under the assumption that the microsupport is contained in $\{\tau\geq0\}$.

Let $\Intplus$ be the set of intervals of the form $[a,b[$, $a\in \Real$, $b\in \Real \cup\{+\infty\}$.  We endow
$\Intplus$ with the order
\begin{equation}
  \label{eq:ordre-intervalles}
  [a,b\mathclose[ \leq [a',b'[
\qquad  \text{if and only if} \qquad
  a\leq a' \text{ and } b\leq b' \dispdot
\end{equation}
By Lemma~\ref{lem:HomkIkJ}, for $I,I' \in \Intplus$ we have $\Hom(k_I , k_{I'}) \simeq k$ canonically if
and only if $I\cap I' \not= \emptyset$ and $I \leq I'$ and in this case we denote by $e_I^{I'}$ its generator.  Otherwise
$\Hom(k_I, k_{I'}) \simeq 0$ and we set $e_I^{I'}=0$. 
For $I \leq I' \leq I''$ we have $e_{I'}^{I''} \circ e_I^{I'} = e_I^{I''}$.  For two finite families $\I$, $\I'
\subset \Intplus$ and $\G = \bigoplus_{I \in \I} k_I$, $\G' = \bigoplus_{J \in \I'} k_J$, we obtain a natural morphism
$$
E = E_{\I}^{\I'} \colon \Mat_{lt}(\I' \times \I) \to \Hom(\G,\G'), \quad
(a_{I',I})_{I'\in \I', I\in \I} \mapsto \sum a_{I',I} e_I^{I'},
$$
where $\Mat_{lt}$ denotes the space of lower triangular matrices ($a_{I',I}\not=0$
implies $I \leq I'$).  The map $E$ is surjective and, for three families $\I$, $\I'$,
$\I''$, we have $E_{\I'}^{\I''}(B) \circ E_{\I}^{\I'}(A) = E_{\I}^{\I''}(BA)$.

\begin{lem}\label{lem:forme-canonique-morphisme}
  Let $\I$, $\I' \subset \Intplus$ be two finite families and $\G = \bigoplus_{I \in \I} k_I$, $\G' = \bigoplus_{J
    \in \I'} k_J$.  Let $\varepsilon>0$ be given and let $u \colon \G \to \G'$, $v \colon \G' \to
  T_{\varepsilon*}(\G)$ be such that $v\circ u = \tau_{0,\varepsilon}(\G)$.  We assume that each $I\in \I$ is of
  length $> \varepsilon$.  Then there exist an isomorphism $\phi\colon \G' \isoto \G'$ and an injective map $\sigma
  \colon \I \to \I'$ such that $I \leq \sigma(I)$ for all $I\in \I$ (for the order $\leq$ defined
  in~\eqref{eq:ordre-intervalles}) and $\phi \circ u \colon \G \to \G'$ is the sum of the natural morphisms $k_I \to
  k_{\sigma(I)}$.
\end{lem}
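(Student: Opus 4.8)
The plan is to translate the statement into a question about matrices over the incidence algebra of intervals, and then argue by induction on $|\I|$, clearing the matrix of $u$ column by column by a Gaussian-elimination-type procedure that only uses automorphisms of the target.

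First I would set up the matrix description. Using the generators $e_I^{I'}$ introduced above, $u$, $v$ and $\tau_{0,\varepsilon}(\G)$ are recorded by matrices $M=(M_{J,I})_{J\in\I',I\in\I}$, $P=(P_{I,J})_{I\in\I,J\in\I'}$ and a diagonal matrix $D=(D_I)_{I\in\I}$, subject to: $M_{J,I}\neq 0$ forces $I\leq J$ and $I\cap J\neq\emptyset$; $P_{I,J}\neq 0$ forces $J\leq T_\varepsilon I$ and $J\cap T_\varepsilon I\neq\emptyset$. The hypothesis that every $I\in\I$ has length $>\varepsilon$ is precisely what makes $\tau_{0,\varepsilon}(k_I)\neq 0$, so $D$ is invertible; after rescaling the generators of the summands of $T_{\varepsilon*}\G$ we may assume $D=\mathrm{Id}$, so that $v\circ u=\tau_{0,\varepsilon}(\G)$ becomes $PM=\mathrm{Id}$. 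In particular $M$ is left invertible. I would also record that, by Lemma~\ref{lem:HomkIkJ}, the automorphisms of $\G'$ are exactly the invertible matrices $\phi=(\phi_{J',J})$ with $\phi_{J',J}\in\Hom(k_J,k_{J'})$ (so $\phi_{J',J}\neq 0$ requires $J\leq J'$ and $J\cap J'\neq\emptyset$), and that left composition with such a $\phi$ realizes exactly the elementary row operations ``rescale a row'' and ``add a multiple of row $J$ to row $J'$ provided $J\leq J'$ and $J\cap J'\neq\emptyset$''.

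Then I would induct on $|\I|$. If $\I=\emptyset$ there is nothing to prove. Otherwise I pick $I_0\in\I$ with a suitably extremal endpoint (minimal for $\leq$, or minimal right endpoint — the exact rule is part of what has to be tuned). From $(PM)_{I_0,I_0}=1$ there is $J_0\in\I'$ with $M_{J_0,I_0}\neq 0$ and $P_{I_0,J_0}\neq 0$, hence $I_0\leq J_0\leq T_\varepsilon I_0$, so that $J_0$ is within $\varepsilon$ of $I_0$ at both ends; I choose $J_0$ to be $\leq$-minimal among all $J$ with $M_{J,I_0}\neq 0$. The crucial claim — and the place where the hypotheses really enter — is that with $I_0$ and $J_0$ chosen this way every row $J$ in which the $I_0$-column is nonzero satisfies $J_0\leq J$ and $J_0\cap J\neq\emptyset$, so the row operations needed to kill all entries of column $I_0$ other than $M_{J_0,I_0}$ are legal; performing these and a rescaling turns column $I_0$ into $e_{I_0}^{J_0}$ (and zeros elsewhere). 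A further legal automorphism then decouples row $J_0$ and column $I_0$ from everything else. I set $\sigma(I_0)=J_0$ (note $I_0\leq J_0$ automatically) and restrict to $\bigoplus_{I\neq I_0}k_I\to\bigoplus_{J\neq J_0}k_J$; the decoupling, together with the fact that $P_{I_0,J_0}\neq 0$ lets one simultaneously correct $v$, ensures the restricted $u$, $v$ still satisfy $v\circ u=\tau_{0,\varepsilon}$ and that $\I\setminus\{I_0\}$ still consists of bars of length $>\varepsilon$. The induction hypothesis then produces $\sigma$ on $\I\setminus\{I_0\}$ and an automorphism of $\bigoplus_{J\neq J_0}k_J$; composing automorphisms and extending $\sigma$ by $I_0\mapsto J_0$ finishes the proof. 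Injectivity of $\sigma$ is automatic since later pivot rows are chosen among the remaining $J$'s, and $I\leq\sigma(I)$ holds because $M_{\sigma(I),I}\neq 0$.

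The main obstacle is the clean-up step: justifying that the row operations needed to reduce the $I_0$-column to a single pivot entry are legal for the triangular structure, and that restricting to the complementary families preserves all three hypotheses. For an arbitrary left-invertible triangular matrix this reduction can fail, so the argument must use the interleaving datum $v$ (equivalently, the triangular left inverse $P$) together with the length bound in an essential way to pin down the pivot row $J_0$; making this combinatorial point precise — and, if necessary, first performing an auxiliary normalization of $u$ with the help of $v$ before choosing the pivot — is the technical heart of the lemma.
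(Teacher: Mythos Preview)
Your inductive column-clearing approach differs from the paper's, which argues globally: it shows that the matrix $BA$ representing $v\circ u$ (after identifying $\I$ with $T_\varepsilon\I$) is unipotent lower-triangular for the partial order $\leq$, hence invertible in $\Mat_{lt}(\I\times\I)$, and then asserts in one sentence that this lets one choose $\sigma$ and an invertible $A'\in\Mat_{lt}(\I'\times\I')$ putting $A'A$ in the required form. You are right to flag the ``clean-up step'' --- making the row operations legal for the partial order --- as the crux; this is precisely what the paper also leaves unjustified.

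In fact that step cannot be completed as written, because the statement as formulated is false. Take $\varepsilon=1$, $\I=\{I_0\}$ with $I_0=[0,10[$, and $\I'=\{J_0,J\}$ with $J_0=[0.8,10.5[$, $J=[0.3,20[$. By Lemma~\ref{lem:HomkIkJ} the maps $e_{I_0}^{J_0}$, $e_{I_0}^{J}$, $e_{J_0}^{T_\varepsilon I_0}$ are all nonzero while $\Hom(k_J,k_{T_\varepsilon I_0})=0$, so $u=(e_{I_0}^{J_0},e_{I_0}^{J})$ and $v=(e_{J_0}^{T_\varepsilon I_0},0)$ satisfy $v\circ u=\tau_{0,\varepsilon}(\G)$ and all hypotheses hold. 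But $J_0$ and $J$ are $\leq$-incomparable, hence $\Hom(k_{J_0},k_J)=\Hom(k_J,k_{J_0})=0$ and every automorphism of $\G'$ is diagonal; therefore $\phi\circ u$ always has both components nonzero and can never equal a single $e_{I_0}^{\sigma(I_0)}$. So your ``crucial claim'' about legal pivots fails in this example, and no $\phi$ as in the conclusion exists. A natural repair is to weaken the conclusion to: the $\sigma(\I)\times\I$ block of the matrix of $\phi\circ u$ is the identity and $I\leq\sigma(I)\leq T_\varepsilon I$ for all $I$, allowing the remaining entries to be nonzero; this is what the paper's global argument is really pointing at, and the application in Proposition~\ref{prop:decomp_limite} would then have to be adjusted accordingly.
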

\begin{proof}
  We set $\I'' = \{T_\varepsilon(I)$; $I\in \I\}$.  In view of the discussion
  after~\eqref{eq:ordre-intervalles} we can find lower triangular matrices
  $A \in \Mat_{lt}(\I' \times \I)$, $B \in \Mat_{lt}(\I'' \times \I')$ representing
  $u$, $v$.  Then $BA \in \Mat_{lt}(\I'' \times \I)$ is lower triangular.  We
  identify $\I$ and $\I''$ through $T_\varepsilon$ and we claim that
  $BA \in \Mat_{lt}(\I \times \I)$ is still lower triangular.  We take
  $I_1, I_2 \in \I$ and assume that
  $0\not= (BA)_{I_1,I_2} = \sum_{J\in \I'} B_{T_\varepsilon(I_1),J} A_{J,I_2}$.
  There exists $J$ such that $B_{T_\varepsilon(I_1),J}, A_{J,I_2} \not= 0$, hence
  $I_2 \leq J \leq T_\varepsilon(I_1)$.  If we assume moreover that
  $I_2 \not\leq I_1$ we must have $T_\varepsilon(I_1) \cap I_2 \not=\emptyset$
  because $I_1$ and $I_2$ are of length $>\varepsilon$.  Hence
  $e_{I_2}^{T_\varepsilon(I_1)} \not=0$ and we deduce that the decomposition of
  $v\circ u$ on the basis of the non zero $e_I^{I''}$'s has a non zero coefficient on
  $e_{I_2}^{T_\varepsilon(I_1)}$. Since $v\circ u = \tau_{0,\varepsilon}(\G)$ this
  means that $I_1 = I_2$, contradicting $I_2 \not\leq I_1$.  Hence $I_2 \leq I_1$,
  as claimed.

  Finally $BA$ is lower triangular (in $\Mat_{lt}(\I \times \I)$) and has entries $1$
  along the diagonal since it represents $\tau_{0,\varepsilon}(\G)$. Hence $BA$ is
  invertible.  In particular $|\I| \leq |\I'|$, $A$ has rank $|\I|$, we can decompose
  $\I'$ as $\I' = \I \sqcup \I'_1$ and find an invertible matrix
  $A' \in \Mat_{lt}(\I' \times \I')$ such that the block $\I\times \I$ in $A'A$ is
  the identity matrix. Now we let $\sigma$ be the map given by the partition
  $\I' = \I \sqcup \I'_1$ and $\phi$ be the morphism induced by $A'$.
\end{proof}

\begin{prop}\label{prop:decomp_limite}
  Let $\F \in \Mod(\Real)$ such that $SS(\F) \subset \{(t;\tau) \in T^*\Real;$ $\tau\geq 0\}$.  We assume that
  $\F$ is a $\gamma_\tau$-limit of constructible (for finite stratifications) sheaves $\F_n \in \Mod(\Real)$
  such that $SS(\F_n) \subset \{(t;\tau) \in T^*\Real;$ $\tau\geq 0\}$.  Then there exists an at most
  countable set of intervals $\I$ such that $\F \simeq \bigoplus_{I \in \I} k_I$.
\end{prop}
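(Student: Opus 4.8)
The plan is to exhibit $\F$ as a homotopy colimit of an inductive system of finite direct sums of interval modules whose transition maps have been put in a ``monomial'' normal form, and then to read off the decomposition from that normal form.

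First I would set up the system. By Proposition~\ref{prop:gammalimit=colimit}, after passing to a subsequence there are $\varepsilon_n\to 0^{+}$ and morphisms $f_n\colon \G_n\to \G_{n+1}$, $u_n\colon \G_n\to\F$, where $\G_n:=\fai{\varphi}^{-\varepsilon_n}(\F_n)$, such that $u_{n+1}\circ f_n=u_n$ and the induced map $\hocolim_n\G_n\to\F$ is an isomorphism. On $\Real$ the flow moves the two (positive) endpoint covectors of an interval in the same direction, so $\fai{\varphi}^{-\varepsilon}(k_{[a,b[})$ is again an interval module for every $\varepsilon\geq0$ (Example~\ref{Example-6.1 }); combined with the finite barcode decomposition of constructible sheaves of $\Mod(\Real)$ with microsupport in $\{\tau\geq0\}$ (Appendix~\ref{Appendix-barcodes-Floer}), this shows that each $\G_n$ is a finite sum $\bigoplus_{I\in\I_n}k_I$ with $I\in\Intplus$. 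Refining the subsequence I may assume $\sum_n\gamma_g(\C(f_n),0)<\infty$; then by Lemma~\ref{lem:gammalim=hocolim0} $\gamma_g(\C(u_n),0)\to0$ and is summable, so Lemma~\ref{lem:comparaison-distances} provides morphisms $w_n\colon\F\to\fai{\varphi}^{\delta_n}(\G_n)$ with $w_n\circ u_n=\tau_{0,\delta_n}(\G_n)$ for some summable $\delta_n\to0$. Setting $v_n:=w_n\circ u_{n+1}\colon\G_{n+1}\to\fai{\varphi}^{\delta_n}(\G_n)$ we get $v_n\circ f_n=\tau_{0,\delta_n}(\G_n)$, i.e.\ every $f_n$ has an approximate inverse at scale $\delta_n$.

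Next I would iterate Lemma~\ref{lem:forme-canonique-morphisme}. At stage $n$ write $\G_n=\G_n^{L}\oplus\G_n^{S}$ according to whether an interval has length $>\delta_n$ or not; since $\tau_{0,\delta_n}(\G_n^{S})=0$, Lemma~\ref{lem:proprietes-gammag} gives $\gamma_g(\G_n^{S},0)\leq\delta_n$, and composing $v_n$ with the projection onto $\fai{\varphi}^{\delta_n}(\G_n^{L})$ yields an approximate inverse at scale $\delta_n$ of the composite $\G_n^{L}\hookrightarrow\G_n\xrightarrow{f_n}\G_{n+1}$, whose source now consists only of intervals of length $>\delta_n$. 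Lemma~\ref{lem:forme-canonique-morphisme} then produces an automorphism of $\G_{n+1}$ and an injection $\sigma_n\colon\I_n^{L}\hookrightarrow\I_{n+1}$ with $I\leq\sigma_n(I)$ and $I\cap\sigma_n(I)\neq\emptyset$, after which $f_n|_{\G_n^{L}}$ becomes the sum of the natural maps $k_I\to k_{\sigma_n(I)}$. The main work is to make these choices coherent in $n$: using $\sum_n\delta_n<\infty$, the nine-diagram, Lemma~\ref{lem:gammalim=hocolim0} and the properties of $\gamma_g$, one must check that discarding the ``noise'' objects $\G_n^{S}$ does not change the homotopy colimit up to isomorphism, so that $\F$ becomes isomorphic to $\hocolim_n(\widetilde\G_n,\widetilde f_n)$ for a \emph{monomial} system in which every summand $k_I$ of $\widetilde\G_n$ is sent either to a natural map $k_I\to k_{\sigma(I)}$ (with $I\cap\sigma(I)\neq\emptyset$) or to $0$. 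I expect this coordination to be the main obstacle, precisely because removing the short ``noise'' intervals already costs $\gamma_g$ of the same order $\delta_n$ that Lemma~\ref{lem:forme-canonique-morphisme} needs the surviving intervals to exceed, so the subsequence, the translates $\varepsilon_n$ and the length thresholds have to be chosen together very carefully, and one has to make sure that a summand that temporarily becomes short still gives rise to a well-defined thread.

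Finally, for a monomial system the homotopy colimit is explicit: in the distinguished triangle defining $\hocolim_n\widetilde\G_n$ as the cone of $\id-\widetilde f$ on $\bigoplus_n\widetilde\G_n$, all three terms split over the orbits (``threads'') of the partial maps $\sigma_n$. A thread that is eventually sent to $0$ contributes nothing, since on the corresponding finite direct sum $\id-\widetilde f$ is the identity minus a nilpotent map, hence invertible (as in Remark~\ref{Rem-nine-diagram}). An infinite thread $I_{n_0}\leq I_{n_0+1}\leq\cdots$, with consecutive overlaps, contributes $\varinjlim_k k_{I_k}\simeq k_{[a_\infty,b_\infty[}$, where $a_\infty=\sup_k a_k$ and $b_\infty=\sup_k b_k\in\Real\cup\{+\infty\}$ (the left shifts annihilate the part below $a_\infty$, and the colimit sheaf has stalk $k$ exactly on $[a_\infty,b_\infty[$). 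Since every $\I_n$ is finite, $\bigsqcup_n\I_n$, and hence the set $\I$ of infinite threads, is at most countable, and we obtain $\F\simeq\bigoplus_{I\in\I}k_I$ with every $I$ of the form $[a,b[$, $b\leq+\infty$, as claimed.
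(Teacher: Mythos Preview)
Your strategy is exactly the one the paper follows: pass to a subsequence, realize $\F$ as a colimit of finite interval sums, use Lemma~\ref{lem:forme-canonique-morphisme} iteratively to diagonalize the transition maps on the ``long'' summands, and then read off the decomposition thread by thread. The computation of the colimit of a monomial system at the end is correct and matches the paper.

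The one place where your write-up is incomplete is precisely the coordination step you flag as the main obstacle; the paper resolves it by a specific numerical choice rather than a soft argument. Take $\varepsilon_n=2^{-n}$ (so the approximate-inverse scale satisfies $\varepsilon_n=2\varepsilon_{n+1}$) and set the length threshold at stage $n$ to be $2\varepsilon_n$, not $\varepsilon_n$. Then for $I=[a,b[\in\J_n$ (length $>2\varepsilon_n$) and $\sigma_n(I)=[a',b'[$, the relations $a\leq a'\leq a+\varepsilon_n$, $b\leq b'\leq b+\varepsilon_n$ forced by $g_n\circ f_n=\tau_{0,\varepsilon_n}$ give
\[
b'-a'\;\geq\;(b-a)-\varepsilon_n\;>\;2\varepsilon_n-\varepsilon_n\;=\;\varepsilon_n\;=\;2\varepsilon_{n+1},
\]
so $\sigma_n(\J_n)\subset\J_{n+1}$: once an interval is long it stays long and no ``temporarily short'' threads ever occur. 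This eliminates the delicate tuning you were worried about.

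For the comparison of the two colimits, the paper's argument is again short: since $\gamma_g(\widetilde\F_n,\F_n)\leq 2\varepsilon_n$ (the discarded part satisfies $\tau_{0,2\varepsilon_n}=0$), both $\widetilde\F_n$ and $\F_n$ $\gamma_g$-converge to $\F$; by Lemma~\ref{lem:gammalim=hocolim0} and Lemma~\ref{lem:proprietes-gammag} the natural map $\varinjlim\widetilde\F_n\to\varinjlim\F_n$ has cone with $\gamma_g$ arbitrarily small, hence vanishes. You do not need the nine-diagram here.
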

We notice that the singular support hypothesis in the proposition implies that our
intervals are of the type $[a,b[$, $[a,\infty[$ or $\mathopen]-\infty,b[$.
\begin{proof}
  (i) Up to taking a subsequence we can assume $\gamma_\tau(\F,\F_n) \leq \varepsilon_n = 2^{-n}$.  For each $n$ there
  exists a finite family $\I_n \subset \Intplus$ such that $\F_n := \bigoplus_{I \in \I_n} k_I$.  By
  Proposition~\ref{prop:gammalimit=colimit} we can assume, up to translating $\F_n$ by less than $\varepsilon_n$,
  that there exist compatible morphisms $f_n\colon \F_n \to \F_{n+1}$ and that $\hocolim \F_n \isoto \F$.  Since
  $\F_n \in \Mod(\Real)$, we have $\hocolim \F_n \simeq \varinjlim \F_n$.  We can also assume (maybe changing
  $\varepsilon_n$ by $4\varepsilon_n$) that there exist morphisms $g_n \colon \F_{n+1} \to T_{\varepsilon_n *}(\F_n)$
  such that $g_n \circ f_n = \tau_{0,\varepsilon_n}(\F_n)$.
  
  \medskip\noindent(ii) It remains to check that $\varinjlim \F_n$ is decomposed.  For this we modify the $f_n$'s
  using Lemma~\ref{lem:forme-canonique-morphisme}.  We let $\J_n \subset \I_n$ be the set of intervals of length $>
  2\varepsilon_n$ and we set $\widetilde \F_n = \bigoplus_{I \in \J_n} k_I$.  We first put the $f_n$'s in ``diagonal
  form'', $\widehat f_n$ (see the diagram~\eqref{eq:diag_decomp_limite}).  Lemma~\ref{lem:forme-canonique-morphisme}
  applied with $f_0|_{\widetilde\F_0} \colon \widetilde\F_0 \to \F_1$ gives an injective map $\sigma_0 \colon \J_0
  \to \I_1$ and an isomorphism $\phi_1 \colon \F_1 \isoto \F_1$ such that $\phi_1 \circ f_0|_{\widetilde\F_0}$ has a
  ``diagonal form'' (that is, it is the sum of the natural morphisms $k_I \to k_{\sigma_0(I)}$).  We set $\widehat
  f_0 = \phi_1 \circ f_0$ and $f'_1 = f_1 \circ \phi_1^{-1}$.  Now we apply the lemma with $f'_1|_{\widetilde\F_1}
  \colon \F_1 \to \F_2$ and obtain $\phi_2 \colon \F_2 \isoto \F_2$ and $\sigma_1 \colon \I_1 \to \I_2$. We set
  $\widehat f_1 = \phi_2 \circ f'_1$ and $f'_2 = f_2 \circ \phi_2^{-1}$. We go on inductively and end up with a
  sequence of injective maps $\sigma_n \colon \J_n \to \I_{n+1}$ and morphisms $\widehat f_n\colon \F_n \to \F_{n+1}$
  such that $\widehat f_n$ restricted to $\widetilde \F_n$ has a ``diagonal form''.
  \begin{figure}[ht]
  \begin{equation}\label{eq:diag_decomp_limite}
    \begin{split}
  \xymatrix{
    \widetilde \F_0 \ar@{=}[r]  \ar[d]_{\widetilde f_0} & \bigoplus_{I_0 \in \J_0} k_{I_0} \ar[r] \ar[dr]
    & \bigoplus_{I_0 \in \I_0} k_{I_0} \ar@{=}[r] \ar[d]^{\widehat f_0} & \F_0 \ar[d]^{f_0} \\
    \widetilde \F_1 \ar@{=}[r]  \ar[d]_{\widetilde f_1} & \bigoplus_{I_1 \in \J_1} k_{I_1} \ar[r] \ar[dr]
    & \bigoplus_{I_1 \in \I_1} k_{I_1}  \ar@{-}[r]_-\sim^-{\phi_1} \ar[d]^{\widehat f_1} & \F_1 \ar[d]^{f_1} \\
    \widetilde \F_2  \ar@{=}[r]  \ar[d]_{\widetilde f_2} & \bigoplus_{I_2 \in \J_2} k_{I_2} \ar[r] \ar[dr]
    & \bigoplus_{I_2 \in \I_2} k_{I_2}  \ar@{-}[r]_-\sim^-{\phi_2} \ar[d]^{\widehat f_2} & \F_2 \ar[d]^{f_2} \\
    \widetilde \F_3  \ar@{=}[r] & \bigoplus_{I_3 \in \J_3} k_{I_3} \ar[r]
    & \bigoplus_{I_3 \in \I_3} k_{I_3} \ar@{-}[r]_-\sim^-{\phi_3}  & \F_3  \\
  }
\end{split}
\end{equation}
\end{figure}
  
\medskip\noindent(iii) With our definition of $\J_n$ we have moreover $\sigma_n(\J_n) \subset \J_{n+1}$.  Indeed,
modifying $g_n$ as $\widehat g_n = T_{\varepsilon_n *}(\phi_n) \circ g_n \circ \phi_{n+1}^{-1}$, we see that
$\widehat g_n \circ \widehat f_n = \tau_{0,\varepsilon_n}(\F_n)$.  It follows that, for any $[a,b\mathclose[ \in
\J_n$ and $[a',b'\mathclose[ = \sigma_n([a,b[)$, we must have $a\leq a' \leq a+\varepsilon_n$ and $b\leq b' \leq
b+\varepsilon_n$.  Hence $b'-a' \geq b-a-\varepsilon_n \geq 2\varepsilon_n - \varepsilon_n = 2 \varepsilon_{n+1}$,
which shows that $[a',b'\mathclose[ \in \J_{n+1}$.

  We can then define $\widetilde f_n = \widehat f_n|_{\widetilde \F_n} \colon \widetilde \F_n \to \widetilde
  \F_{n+1}$ and $\widetilde f_n$ has a ``diagonal form'' with respect to the inclusion $\sigma_n|_{\J_n} \colon \J_n
  \hookrightarrow \J_{n+1}$.  For a given $I \in \J_n$ we write $\sigma_k \circ \sigma_{k-1} \circ\dots\circ
  \sigma_n(I) = [a_k,b_k[$; the sequences $(a_k)$, $(b_k)$ are non decreasing and have limits, say $a_\infty$,
  $b_\infty$.  Then $\varinjlim k_{[a_k,b_k[} \simeq k_{[a_\infty, b_\infty[}$. We set $I_\infty = [a_\infty,
  b_\infty[$.  We let $\J$ be the increasing union of the $\J_n$'s. Then each element of $\J$ corresponds to an
  interval $I_\infty$ as above and we have proved $\varinjlim \widetilde \F_n \simeq \bigoplus_{I_\infty \in \J}
  k_{I_\infty}$.  By Lemma~\ref{lem:gammalim=hocolim0} the maps $\widetilde \F_k \to \varinjlim \widetilde \F_n$ and
  $\F_k \to \varinjlim \F_n$ have cones which go to $0$ when $k\to \infty$. It follows by
  Lemma~\ref{lem:proprietes-gammag} that the natural map $\varinjlim \widetilde \F_n \to \varinjlim \F_n$ has a cone
  which is arbitrarily close to $0$, hence it is an isomorphism.
\end{proof}

\begin{cor}\label{cor:decomp_limite}
  We recall that $k$ is a field.  Let $\F \in D_{lc}(\Real) \cap D_{\tau\geq0}(\Real)$ be a limit of
  constructible (for finite stratifications) objects of $D_{\tau\geq0}(\Real)$.  Then there exists an at most
  countable set of intervals $\I$ and integers $d_I$, $I\in \I$, such that $\F \simeq \bigoplus_{I \in \I}
  k_I[d_I]$.
\end{cor}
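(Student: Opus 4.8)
The plan is to deduce the statement from Proposition~\ref{prop:decomp_limite} by passing to cohomology sheaves and then reassembling $\F$ from its cohomology.

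First I would set up the approximants. Since $\F$ is constant outside $[-A,A]$, one can choose the constructible objects $\F_n\in D_{\tau\geq0}(\Real)$ with $\gamma_g(\F_n,\F)\to0$ to be constant outside a fixed compact set, hence finite direct sums of interval sheaves of the types $[a,b[$, $[a,\infty[$, $\mathopen]-\infty,b[$. As constructible sheaves on $\Real$ form an abelian category of homological dimension~$1$ (this is what is used in Appendix~\ref{Appendix-barcodes-Floer}), each $\F_n$ splits as $\F_n\simeq\bigoplus_d H^d(\F_n)[-d]$ where $H^d(\F_n)\in\Mod(\Real)$ is again a finite direct sum of interval sheaves with $SS(H^d(\F_n))\subset\{\tau\geq0\}$ --- exactly the objects Proposition~\ref{prop:decomp_limite} deals with.

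Second I would transport the convergence to each cohomological degree. On $D_{\tau\geq0}(\Real)$ the flow $\fai{\varphi_g}$ acts by the translation functor $T_{*}$ (Example~\ref{Example-6.1 }), which is exact, so it preserves the standard $t$-structure and commutes with the cohomology functors $H^d$. By Lemma~\ref{lem:comparaison-distances} (as in the proof of Proposition~\ref{prop:completudefaisceaux}) I may pass to a subsequence and translate the $\F_n$ slightly so that there are morphisms $u_n\colon\F_n\to\fai{\varphi}^{\varepsilon_n}(\F)$ with $\varepsilon_n\to0$ and $\gamma_g(\C(u_n),0)=:\mu_n\to0$. Applying $H^d$ gives $H^d(u_n)\colon H^d(\F_n)\to\fai{\varphi}^{\varepsilon_n}(H^d(\F))$. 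The technical heart is to estimate $\gamma_g(\C(H^d(u_n)),0)$: the cohomology sheaves of $\C(u_n)$ lie in the abelian heart $\mathcal A$ of $D_{\tau\geq0}(\Real)$, which is hereditary, and in $\mathcal A$ the function $\gamma_g(-,0)$ does not increase under subobjects or quotients (cancel $T_{\varepsilon*}$ of a monomorphism, resp. an epimorphism); since $H^{-1}$ and $H^0$ of $\C(H^d(u_n))$ are subquotients of $H^{d-1}(\C(u_n))$ and $H^d(\C(u_n))$ by the long exact sequence, and $\gamma_g(H^i(\C(u_n)),0)\leq\gamma_g(\C(u_n),0)=\mu_n$, the triangle inequality (Lemma~\ref{lem:proprietes-gammag}) gives $\gamma_g(\C(H^d(u_n)),0)\leq2\mu_n\to0$. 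Hence $H^d(\F_n)$ $\gamma_g$-converges to $H^d(\F)$, so $SS(H^d(\F))\subset\{\tau\geq0\}$ by Proposition~\ref{Prop-g-continuity}, and Proposition~\ref{prop:decomp_limite} applies: $H^d(\F)\simeq\bigoplus_{I\in\I_d}k_I$ for an at most countable family of intervals $\I_d$.

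Finally I would reassemble. Put $\I=\bigsqcup_d\I_d$ and $d_I=d$ for $I\in\I_d$, an at most countable set, so that $\bigoplus_d H^d(\F)[-d]\simeq\bigoplus_{I\in\I}k_I[-d_I]$; it remains to prove the formality statement $\F\simeq\bigoplus_d H^d(\F)[-d]$. For interval sheaves $k_I,k_{I'}$ of the allowed types the sheaf $\rhom(k_I,k_{I'})$ is concentrated in degree~$0$ (a local computation, underlying Lemma~\ref{lem:HomkIkJ}), hence so is $\rhom$ between any two direct sums of such sheaves, and since $\Real$ has cohomological dimension~$1$ this forces $\RHom(H^j(\F),H^i(\F)[m])=0$ for all $m\geq2$. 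Therefore the truncation triangle $\tau^{\leq n-1}\F\to\tau^{\leq n}\F\to H^n(\F)[-n]\to[+1]$ splits for every $n$ (its connecting morphism lies in a direct sum of groups $\RHom(H^n(\F),H^d(\F)[n-d+1])=0$, $n-d+1\geq2$), so by induction $\tau^{\leq n}\F\simeq\bigoplus_{d\leq n}H^d(\F)[-d]$; passing to the homotopy colimit over $n$ and using $\F\simeq\hocolim_n\tau^{\leq n}\F$ (which is checked on cohomology) gives $\F\simeq\bigoplus_d H^d(\F)[-d]$, as wanted. The two places that need genuine care are the estimate $\gamma_g(\C(H^d(u_n)),0)\leq2\mu_n$ --- so that after taking cohomology one is really in the hypotheses of Proposition~\ref{prop:decomp_limite} --- and the formality argument for a possibly unbounded complex $\F$.
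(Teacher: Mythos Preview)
Your argument is essentially correct but takes a route different from the paper's, and the comparison is instructive.

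For the convergence of cohomology, the paper observes in one line that since $T_{a*}$ is $t$-exact, applying $H^i$ to a pair $(u,v)$ realizing $\gamma_g(\F_n,\F)<c$ immediately gives a pair realizing $\gamma_g(H^i\F_n,H^i\F)<c$; your cone estimate $\gamma_g(\C(H^d(u_n)),0)\leq 2\mu_n$ is valid but unnecessarily elaborate.

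The genuine divergence is in the reassembly step. The paper does not prove $\F\simeq\bigoplus_d H^d(\F)[-d]$ by a formality argument. Instead it first reduces to compact support by tensoring with $k_{[-A-1,A+1[}$, then notes that $\F_n\simeq\bigoplus_i H^i\F_n[-i]$ (finite sum, constructible case) $\gamma_g$-converges simultaneously to $\F$ and to $\bigoplus_i H^i\F[-i]$ (the morphisms for the latter being $\bigoplus_i H^i(u),\bigoplus_i H^i(v)$ with the \emph{same} $a,b$), and concludes by the uniqueness of $\gamma_g$-limits in $D_{lc}$ with compact support (Proposition~\ref{prop:unicite-limite}). This avoids any discussion of $\operatorname{Ext}^{\geq2}$ or of unbounded complexes.

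Your formality route is more elementary in that it does not invoke Proposition~\ref{prop:unicite-limite}, but it trades this for two obligations you correctly flag: (i) the vanishing of $\operatorname{Ext}^{\geq2}$ between arbitrary sheaves on $\Real$ (true, since $\operatorname{Sh}(\Real;k)$ has global dimension~$1$ over a field, but your justification via $\rhom$ of infinite direct sums being in degree~$0$ is not quite the right way to see it --- better to use directly that any sheaf on $\Real$ has injective dimension~$\leq 1$); and (ii) the formality of a possibly unbounded complex, where your induction on $\tau^{\leq n}$ followed by $\hocolim$ handles unboundedness above but not below. The paper's approach sidesteps both issues at once.
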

\begin{proof}
  As in the case of constructible sheaves the result follows from Pro\-position~\ref{prop:decomp_limite} and a
  decomposition of a complex as sum of its cohomology. However we didn't prove that limits of constructible
  sheaves have no $\operatorname{Ext}^2$ so we have to be careful to decompose the cohomology before we take
  the limit, as follows.
  
  There exists a sequence of constructible objects $\F_n \in D_{\tau\geq0}(\Real)$ which $\gamma_\tau$-converges to $\F$.
  Since $\operatorname{Ext}^2(\G,\G') = 0$ for any two constructible $\G,\G' \in \Mod(\Real)$, we have $\F_n \simeq
  \bigoplus_{i \in \Z} H^i\F_n[-i]$ (see~ \cite[Cor.~13.1.20]{K-S06} for example).

  In $D_{\tau\geq0}(\Real)$ we have $\fai{\varphi}^a(-) = T_{a*}(-)$, where $T_a$ is as usual the translation by $a$,
  and this functor commutes with the cohomology, that is, $H^i(T_{a*}(\G)) \simeq T_{a*}(H^i(\G))$.  It follows that
  $\gamma_\tau(H^i\G, H^i\G') \leq \gamma_\tau(\G,\G')$, for any $\G, \G' \in D_{\tau\geq0}(\Real)$.  Hence $H^i\F_n$
  $\gamma_\tau$-converges to $H^i\F$.

  It follows that $\F_n \simeq \bigoplus_i H^i\F_n[-i]$ $\gamma_\tau$-converges to $\bigoplus_i H^i\F[-i]$.  By uniqueness
  of the limit (Proposition~\ref{prop:unicite-limite}) we have $\F \simeq \bigoplus_i H^i\F[-i]$. Now the result
  follows from Proposition~\ref{prop:decomp_limite}.
\end{proof}

\section{Comparing different interleaving metrics on \texorpdfstring{$D (N\times {\mathbb R})$}{D(NtimesR)}.}

To prove Theorem \ref{Thm-sheaf-gamma} we have used the distance $\gamma_\tau$ on sheaves over $N\times\Real$ and the
fact that its restriction to sheaves associated with Hamiltonian maps coincides with the spectral distance (see
Proposition~\ref{prop:gamma-et-gammag2}).  We can also consider the distance $\gamma_g$ on sheaves, where $g$ is some
Riemannian metric on $N$.  For the sake of completeness we describe the distance $\gamma_g$ from the point of view of
spectral distance and we prove that the distances $\gamma_\tau$ and $\gamma_g$ induce equivalent topologies (when
restricted to sheaves associated with Hamiltonian maps).

In the following we let $H \colon T^*N \to \Real$ be a function which coincides with $(q,p) \mapsto ||p||_g$ outside
a compact neighbourhood of $0_N$ and $\varphi = \varphi_H$ denotes its Hamiltonian flow.  We will use spectral
invariants and sheaves for $\varphi^s$ but in~\S\ref{sec:qhi} and \S\ref {sec:specinv} we only defined them for
compactly supported isotopies.  However we can extend them to a positive Hamiltonian $H$ as follows. Let $H_r$ be a
truncation of $H$ to the disc bundle of $T^*N$ of radius $r$, so that $H_r$ is an increasing sequence of Hamiltonians
converging to $H$.  Then the homogeneous lift of $s \mapsto \varphi_{H_r}^{-s} \circ \varphi_{H_{r+1}}^{s}$ to
$T^*(N\times\Real) \setminus 0_{N\times\Real}$ is a non negative Hamiltonian isotopy. Hence we have a natural map
$\K_{\id} \to \K_{\varphi_{H_r}^{-s}} \circ \K_{\varphi_{H_{r+1}}^{s}}$, or equivalently, $\K_{\varphi_{H_r}^{s}} \to
\K_{\varphi_{H_{r+1}}^{s}}$.  We define $\K_{\varphi_s}$ as the homotopy colimit of this inductive system (say $r$
runs over the integers).  In the same way $c_+(\varphi_r^{-s}\psi)$ is a decreasing sequence, and we say that
$c_+(\varphi^{-s}\psi)=0$ if $\lim_r c_+(\varphi_r^{-s}\psi)=0$. We shall prove that the sequence is in fact
stationary, so this means $c_+(\varphi_r^{-s}\psi)=0$ for $r$ large enough.

Now Lemma \ref{Lemma-5.5} extends as follows: there exist morphisms $\K_\psi \longrightarrow \K_{\varphi^s}
\longrightarrow \K_{\varphi^{2s}\circ \psi}$ if and only if $c_-(\varphi^s\psi)=0$ and $c_-(\varphi^s\psi^{-1})=0$.
Using Corollary \ref{cor:restriction-a-infini} we obtain
\begin{lem} 
  If $c_-(\varphi^s\psi)=0$ and $c_-(\varphi^s\psi^{-1})=0$, the composition of the map of the map $\K_\psi
  \longrightarrow \K_{\varphi^s} $ and $\K_{\varphi^s} \longrightarrow \K_{\varphi^{2s}\circ \psi} $ coincides with
  the canonical map $\K_\psi \longrightarrow \K_{\varphi^{2s}\psi} $.
\end{lem}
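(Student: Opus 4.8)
The plan is to identify both maps with a single Tamarkin morphism and then to invoke the rigidity furnished by Corollary~\ref{cor:restriction-a-infini}. Write $g_1\colon\K_\psi\to\K_{\varphi^s}$, $g_2\colon\K_{\varphi^s}\to\K_{\varphi^{2s}\circ\psi}$ for the two morphisms, which exist by the extension of Lemma~\ref{Lemma-5.5} recalled just above, precisely because $c_-(\varphi^s\psi)=c_-(\varphi^s\psi^{-1})=0$, and $\kappa\colon\K_\psi\to\K_{\varphi^{2s}\circ\psi}$ for the canonical morphism, which is the Tamarkin morphism $\tau_{0,2s}$ attached to the non-negative homogeneous isotopy lifting $t\mapsto\varphi^t\psi$ (its singular support lies in $\{\tau\geq0\}$ since $\varphi$ is the normalized geodesic flow at infinity, so the $\tau_{0,c}$ of~\S\ref{sec:Gtopology} is available). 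The point is that $g_2\circ g_1$ and $\kappa$ agree after restriction to $N^2\times\,]t_0,+\infty[$ for $t_0\gg0$: on that region $\K_\psi$, $\K_{\varphi^s}$, $\K_{\varphi^{2s}\circ\psi}$ are all the constant sheaf $k_{\Delta_N\times\Real}$, the relevant translations $T_{c*}$ act there by the identity, and each of $g_1,g_2,\kappa$ restricts by construction to $\id_{k_{\Delta_N\times\Real}}$. So it suffices to show that a morphism $\K_\psi\to\K_{\varphi^{2s}\circ\psi}$ which is $\id$ near $N^2\times\{+\infty\}$ equals $\kappa$.

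For this uniqueness I would first pass to the compactly supported truncations $\varphi_{H_r}$ in order to make sense of the argument for the non-compact flow: since $\K_{\varphi^s}=\hocolim_r\K_{\varphi_{H_r}^s}$ and $g_1,g_2,\kappa$ are compatible with the structure maps of this homotopy colimit (functoriality of $\convstar$ and of $\tau_{a,b}$, Lemma~\ref{lem:fonctorialite-tau}), it is enough to prove the statement with $\varphi_{H_r}$ in place of $\varphi$, for every $r$ large enough that $c_-(\varphi_{H_r}^s\psi)=c_-(\varphi_{H_r}^s\psi^{-1})=0$, and then take the colimit. For such an $r$, convolving on the right by $\K_{\psi^{-1}}$ (an equivalence commuting with $T_{c*}$ and $\tau_{a,b}$, again by Lemma~\ref{lem:fonctorialite-tau}) turns the problem into comparing, as morphisms $\K_{\id}\to\K_{\varphi_{H_r}^{2s}}$, the image of $g_2\circ g_1$ and the image of $\kappa$, this last being the canonical ($\tau$-type) morphism. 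Now $SS^\bullet(\K_{\varphi_{H_r}^{2s}})$ is $\widehat\Lambda_{\varphi_{H_r}^{2s}}$ together with the conormal-to-the-diagonal piece at $\tau=0$; after a small generic perturbation of $H_r$ we may assume $\widehat\Lambda_{\varphi_{H_r}^{2s}}$ carries no Reeb chord, so that $SS(\K_{\varphi_{H_r}^{2s}})\cap SS(T_{c*}\K_{\varphi_{H_r}^{2s}})\subset 0_{N^2\times\Real}$ for every $c\neq0$, the $\tau=0$ piece contributing nothing over $N^2\times\,]c+t_0,+\infty[$. Corollary~\ref{cor:restriction-a-infini} then shows that the restriction map $\RHom(\K_{\id},\K_{\varphi_{H_r}^{2s}})\to\rsect(N;k_N)$ is an isomorphism, whence a morphism which is $\id$ near $+\infty$ equals the canonical one; unwinding the two reductions yields $g_2\circ g_1=\kappa$.

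The step I expect to be the main obstacle is precisely the last one: Corollary~\ref{cor:restriction-a-infini} and the underlying Proposition~\ref{prop:homFtGt_indpdtt} were stated for sheaves $\F_L$ quantizing compact exact Lagrangians, and one must genuinely check that they still apply to the kernels $\K_{\varphi_{H_r}^{2s}}$ — in particular that the $\{\tau=0\}$ component of $SS(\K_\varphi)$ and the possible closed characteristics of $H_r$ do not spoil the non-characteristic deformation argument. Once this is granted, the homotopy-colimit bookkeeping, the convolution by $\K_{\psi^{-1}}$, and the identification of the restrictions at $+\infty$ are all routine, parallel to the arguments already carried out for $\F_L$ and $\K_\varphi$ in~\S\ref{sec:specinv} and~\S\ref{sec:Gtopology}.
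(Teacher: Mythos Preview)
Your approach is the one the paper intends: it states the lemma with no proof beyond the sentence ``Using Corollary~\ref{cor:restriction-a-infini} we obtain'', so you are supplying precisely the argument the authors leave implicit --- both $g_2\circ g_1$ and $\kappa$ restrict to $\id_{k_{\Delta_N\times\Real}}$ near $+\infty$, and the restriction-to-infinity map on $\Hom$ is injective. The paper already invokes this same extension of Corollary~\ref{cor:restriction-a-infini} to kernels, without comment, in deducing Proposition~\ref{prop:gamma-et-gammag2} from Proposition~\ref{prop:gamma-et-gammag1}.

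Two remarks. First, your convolution by $\K_{\psi^{-1}}$ is a reasonable simplification but does not literally land in the setting of Corollary~\ref{cor:restriction-a-infini}, which compares $\F$ with $T_{c*}\F$ for the \emph{same} $\F$; you are comparing $\K_{\id}$ with $\K_{\varphi_{H_r}^{2s}}$. What you actually need is the obvious two-sheaf variant of that corollary, obtained by re-running its proof with $\G=\delta^{-1}(\K_{\varphi_{H_r}^{2s}})$ in Proposition~\ref{prop:homFtGt_indpdtt}; this costs nothing, and in fact the convolution step is then unnecessary --- one may work directly with $\K_\psi$ and $\K_{\varphi^{2s}\psi}$. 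Second, the technical point you flag about the $\{\tau=0\}$ component of $SS(\K_\varphi)$ is genuine: the set $\{(q,p,q,-p,t,0)\mid t\geq 0\}$ is common to all the kernels and is not contained in the zero section, so the hypothesis of Corollary~\ref{cor:restriction-a-infini} fails as written. The paper does not address this (here or before Proposition~\ref{prop:gamma-et-gammag2}); it is handled by observing that this $\tau=0$ locus is constant along the family $c\mapsto T_{c*}(\cdot)$, so it contributes to $SS(\rhom(\cdot,\cdot))$ only over $0_I$ and does not obstruct the non-characteristic deformation in Proposition~\ref{prop:homFtGt_indpdtt}. You have identified exactly the right gap; the fix is local and routine once seen.
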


Now we may define the distance $\gamma^g$ on $\Ham(T^*N)$ as follows
\begin{defn} 
We set $$c_-^g(\psi)= -\inf \left\{s\geq 0 \mid c_-(\varphi^s\psi)=0\right \}$$
and $$c_+^g(\psi)=-c_-^g(\psi^{-1})=\inf \left\{s\geq 0 \mid c_+(\varphi^{-s}\psi)=0\right \}$$ Finally we set
$$\gamma^g(\psi)=c_+^g(\psi)-c_-^g(\psi)$$
\end{defn} 

\begin{prop} We have
\begin{enumerate} 
\item The function $\gamma^g$ defines a metric by $\gamma^g(\psi_1,\psi_2)=\gamma^g(\psi_1\psi_2^{-1})$. 
\item The topologies defined by $\gamma^g$ and $\gamma$ coincide.
\end{enumerate} 
\end{prop}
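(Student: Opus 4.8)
The plan is to establish both claims by carefully exploiting the constructions already set up for compactly supported Hamiltonians, extended to the positive Hamiltonian $H$ via the colimit construction described just above. I will write $\varphi = \varphi_H$ for the flow of $H$ throughout.

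\textbf{Step 1: $\gamma^g$ is finite and the relevant sequences stabilize.}
First I would show that for $\psi \in \Ham(T^*N)$ compactly supported, the decreasing sequence $c_+(\varphi_r^{-s}\psi)$ is eventually constant, and more precisely eventually zero for $s$ large enough. The point is that $\psi$ is supported in the disc bundle of some radius $r_0$, so for $r \geq r_0$ the isotopy $\varphi_r^{-s}\psi$ moves points of $T^*N\setminus 0_N$ along the (reversed) geodesic flow composed with $\psi$, and for $s$ exceeding the diameter-type quantity $\mathrm{diam}_g(N) + (\text{size of }\supp\psi)$ one has $\varphi^{-s}\psi(0_N) \cap 0_N = \emptyset$ at the homogeneous level outside the zero section, which forces $c_-(\varphi^{-s}\psi) = c_+(\varphi^{-s}\psi) = 0$ by a Reeb-chord / non-degeneracy argument. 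Hence $c_\pm^g(\psi)$ are finite and realized (the infimum is a minimum), and the colimit defining $\K_{\varphi^s}$ stabilizes for $r$ large, so $\K_{\varphi^s}$ is just $\K_{\varphi_r^s}$ for $r \gg 0$, a genuine object of $D^b(N^2\times\Real)$ to which all the earlier results apply.

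\textbf{Step 2: $\gamma^g$ is a metric.}
Symmetry $\gamma^g(\psi_1,\psi_2) = \gamma^g(\psi_2,\psi_1)$ follows from $c_+^g(\psi) = -c_-^g(\psi^{-1})$. The triangle inequality I would deduce from the concatenation Lemma just stated (composition of the canonical maps $\K_\psi \to \K_{\varphi^s} \to \K_{\varphi^{2s}\psi}$ is canonical), exactly as the ordinary triangle inequality for spectral invariants is proved: if $c_-(\varphi^{s_1}\psi_1)=0$ and $c_-(\varphi^{s_2}\psi_2)=0$ then composing the associated morphisms and using the product structure (Theorem~\ref{Thm-Quantization}(\ref{5}) adapted to kernels) gives $c_-(\varphi^{s_1+s_2}\psi_1\psi_2)=0$, hence $c_-^g(\psi_1\psi_2) \geq c_-^g(\psi_1) + c_-^g(\psi_2)$ and dually for $c_+^g$; subtracting gives $\gamma^g(\psi_1\psi_2) \leq \gamma^g(\psi_1)+\gamma^g(\psi_2)$. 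Non-degeneracy: if $\gamma^g(\psi)=0$ then $c_-(\varphi^s\psi)=0$ and $c_-(\varphi^s\psi^{-1})=0$ for all $s>0$, so by Lemma~\ref{Lemma-5.5} (extended version) there are morphisms $\K_\psi \to \K_{\varphi^s} \to \K_{\varphi^{2s}\psi}$ for all $s>0$; comparing with the Tamarkin morphisms exactly as in Lemma~\ref{lem:comparaison-distances}-(\ref{lem:comparaison-distances-i}) shows $\gamma_g(\K_\psi, \K_{\id})$ is arbitrarily small, hence $\K_\psi \simeq \K_{\id}$ (here one needs Proposition~\ref{prop:unicite-limite}, valid since these kernels are limits of constructible sheaves with compact-in-$N^2$ support after the usual cutoff), and then $\psi = \id$ by the uniqueness of quantization.

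\textbf{Step 3: the topologies of $\gamma^g$ and $\gamma$ coincide.}
This is where I expect the main work. The natural bridge is Remark~\ref{rem:gamma_g=gamma_tau}: the functor $\K_\varphi(-)$ embeds $D_{g\geq 0}(N^2)$ fully faithfully into $D_{\tau\geq 0}(N^2\times\Real)$ and intertwines $\fai{\varphi}^a$ with $T_{a*}$, so $\gamma^g$ is, essentially by unwinding Definition~\ref{def:dist-gammag}, comparable to $\gamma_g(\K_\psi, \K_{\id})$ in the $D_{g\geq 0}$ sense. One direction is cheap: a compactly supported $\psi$ with small $\gamma(\psi)$ has small $\gamma_\tau(\K_\psi,\K_{\id})$ by Proposition~\ref{prop:gamma-et-gammag2}, and a general base-change / Lipschitz argument (as in Lemma~\ref{lem:distance-composition} and Remark~\ref{rem:gamma_g=gamma_tau}) shows $\gamma^g(\psi)$ is controlled by $\gamma_\tau(\K_\psi, \K_{\id})$, hence by $\gamma(\psi)$; so $\gamma$-convergence implies $\gamma^g$-convergence. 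For the converse I would argue that since $\psi$ is compactly supported, the geodesic part of $H$ only contributes through a bounded window in the $\Real$-direction, so a morphism $\K_\psi \to \K_{\varphi^s}$ realizing $c_-^g$ small can be restricted/compared to a morphism $\K_\psi \to T_{s*}\K_{\id}$ (near $\infty$ in $N^2\times\Real$ both kernels are $k_{\Delta_N\times[c,\infty[}$), giving a bound $c_-(\psi) \geq -C\,\gamma^g(\psi)$, and dually for $c_+$, whence $\gamma(\psi) \leq C\,\gamma^g(\psi)$. The delicate point, and the main obstacle, is making this last comparison rigorous: the geodesic flow is not compactly supported and $g$-translation mixes the cotangent directions in $N$ with the $\tau$-direction, so one must control the singular support of $\K_{\varphi^s}$ near $\{\tau = 0\}$ and near $N^2\times\{\pm\infty\}$ and verify that the Reeb-chord-free condition (needed to invoke Corollary~\ref{cor:restriction-a-infini}) genuinely holds for $\varphi^s\psi$ for small $s>0$ — i.e. that a small geodesic-flow perturbation of the compactly supported graph $\Gamma_\psi$ still meets $0_{N^2}$ only where $\Gamma_\psi$ did outside a compact set. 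I would handle this by the same pseudoconvexity / non-characteristic deformation estimates used for the local Floer comparison in the Darboux-chart discussion, restricting attention to $s$ smaller than the injectivity radius so that $\varphi^s$ is a genuine diffeomorphism near the relevant region.
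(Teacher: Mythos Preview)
Your approach to part~(2) is genuinely different from the paper's, and as written it has real gaps. The paper does not use sheaf-theoretic comparisons at all for this step; it works directly with spectral invariants of Hamiltonians. For the direction $\gamma^g$-small $\Rightarrow$ $\gamma$-small, the key input is the differentiability formula
\[
\frac{d}{ds}\, c_+(\varphi^{-s}\psi) = -K_s(z)
\]
at a fixed point $z$ of $\varphi^{-s}\psi$ with nonnegative action (as in~\cite[Lemma~4.7]{Viterbo-STAGGF}). Since $H$ is $1$-homogeneous, any such fixed point outside $\supp\psi$ has zero action, so only the values of $H$ on a neighbourhood $W$ of $\supp\psi$ matter, and one gets $c_+(\psi)\le C_W\,c_+^g(\psi)$. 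For the direction $\gamma$-small $\Rightarrow$ $\gamma^g$-small, the paper splits the flow as $\varphi^s=\xi_r^s\circ\eta_r^s$ where $\xi_r^s$ is generated by a function $\ge \tfrac{3r}{2}$ and $\eta_r^s$ has support in a ball of radius $\sim r$; optimizing in $r$ gives $c_+^g(\psi)\lesssim \sqrt{c_+(\psi)}$. Neither of these ideas appears in your sketch.

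Your Step~3 is where the trouble lies concretely. You call the direction $\gamma\Rightarrow\gamma^g$ ``cheap'' and defer the other direction as ``delicate'', but in the paper both require specific symplectic estimates, and in fact your ``cheap'' direction is the one the paper handles with the more intricate $f_r$-decomposition trick. Your proposed route via Remark~\ref{rem:gamma_g=gamma_tau} and Lemma~\ref{lem:distance-composition} does not obviously yield a bound on $c_+^g(\psi)$ in terms of $c_+(\psi)$: those results compare $\gamma_g$ and $\gamma_\tau$ on sheaves, whereas $\gamma^g$ here is defined through \emph{vanishing} of $c_\pm(\varphi^{\pm s}\psi)$, and passing from an interleaving bound to a vanishing statement is exactly the step you have not supplied. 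Likewise your converse, relying on restriction of kernel morphisms near $N^2\times\{+\infty\}$ and a Reeb-chord hypothesis, is not made precise and would need an argument equivalent in strength to the derivative formula above. For part~(1) the paper simply invokes~\cite[\S3]{Petit-Schapira}; your more explicit sketch is reasonable but not what the authors do.
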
 
\begin{proof} 
(1) Apply section 3 of \cite{Petit-Schapira} to the Kernels  $\K_{\varphi^s}$ (or the symplectic flow $\varphi^s$).

\smallskip\noindent (2)
We shall first prove that $ c_+(\psi) \leq C_{W} c_+^g(\psi) $ and $\gamma(\psi) \leq C_W \gamma^g(\psi)$ for all $\psi$ such that $\supp (\psi) \subset K$. 
We consider $c_+(\varphi^{-s}\psi)$ and remember that $\varphi^s$ is generated by a positive homogeneous Hamiltonian $H$. 
By a classical argument (see e.g. \cite{Viterbo-STAGGF}, lemma 4.7 p. 699) the map  $s \mapsto c_+(\varphi^{-s}\psi)$ is piecewise $C^1$ and where it is $C^1$ we have
$$ \frac{d}{ds} c_+(\varphi^{-s}\psi)_{\mid s=s_0} = -K_s(z)$$ where $K_s$ is the Hamiltonian associated to the map $\varphi^{-s}\psi$ and $z$ is some fixed point of 
$\varphi^{-s}\psi$ having non-negative action. Indeed, if $S_s$ is a Generating function Quadratic at infinity for $\varphi^{-s}\psi$ we know that $c_+(\varphi^{-s}\psi) $ is a critical value of 
$S_s$ and is non-negative. It thus corresponds to fixed points of $\varphi^{-s}\psi$ with non-negative action and its derivative is 
$ \frac{d}{ds} S_s(\xi_z)$ where $\xi_z$ corresponds to $z$, one of the fixed points such that $S_s(\xi_z)=c_+(\varphi^{-s}\psi)$. 

In our case, if $z$ is outside the support of $\psi$, the fixed point is a fixed point of $\varphi^{-s}$ and these have action $0$ since $H$ is $1$-homogeneous (as on an orbit
$p\dot q-H(q,p)=p \frac{\partial H}{\partial p}- H(q,p)=0$). So we only need to estimate $H$ in the support of $\psi$ and we set $W$ to be a neighbourhood of this support. Then if $C_W=\sup \{ H(x,p) \mid (x,p)\in W\}$ we have  
$$ \frac{d}{ds} c_+(\varphi^{-s}\psi)_{\mid s=s_0}\geq -C_W$$
so that $$c_+(\varphi^{-s}\psi)-c_+(\psi)\geq -C_Ws$$ and if $c_+(\varphi^{-s}\psi)=0$ we have
$$C_Ws \geq c_+(\psi)$$ hence 
$$c_+(\psi) \leq C_W c^g(\psi)$$
Applying the same to $\psi^{-1}$, we get 
$$\gamma(\psi) \leq C_{W}\gamma^{g}(\psi)$$
Note that we also proved that $\gamma (\varphi_{r}^{-s}\psi)$ is stationary for $r$ such that $B(0,r)$ contains the support of $\psi$. 

Conversely, let us write $\varphi^{s}=  \xi_{r}^{s}\circ \eta_r^s$ where $\xi_r^s$ (resp. $\eta_r^s$)  is the flow of $f_r(H)$ (resp. $(1-f_r)(H)$) where $f_r$ is a function given by 
\begin{enumerate} 
\item $f_r(t)=t$ for $t\geq 2r$
\item $f_r= \frac{3r}{2}$ for $t\leq r$ 
\item $f_r$ is non-decreasing 
\end{enumerate} 
 Note that the flows $\xi_r^s, \eta_r^s$ are autonomous, commute, and $f_r(H)$ is bounded from below by $ \frac{3r}{2}$ and $\eta_r^s$ is supported in a ball of radius $K\cdot r$.  
 
 Then 
$$c_{+}(\varphi^{-s}\psi)=  c_{+}(\xi^{-s}\eta_{r}^{-s}\psi) \leq c_{+}(\eta_{r}^{-s}\psi)-\frac{3r}{2} s$$ as long as $c_{+}(\varphi^{-s}\psi)>0$
so we have 
$$\frac{3r}{2} c_{+}^g(\psi) \leq c_{+}(\eta_{r}^{-s}\psi) \leq \pi K^2\cdot r^2+ c_+(\psi)$$
As a result if $c_+(\psi_j)$ converges to zero,  let $r_j=\sqrt{c_{+}(\psi_{j)}}$ . Then 
$$c_{+}^g(\psi_j) \leq \frac{2}{3} \left (\pi K\cdot r_j + \frac{c_+(\psi_j)}{r_j}\right ) \leq \frac{2}{3}(\pi K+1)  \sqrt{c_{+}(\psi_{j})}$$
and we deduce that $\lim_{j}c_{+}^g(\psi_j)=0$. 
\begin{figure}[ht]
 \begin{overpic}[width=8cm]{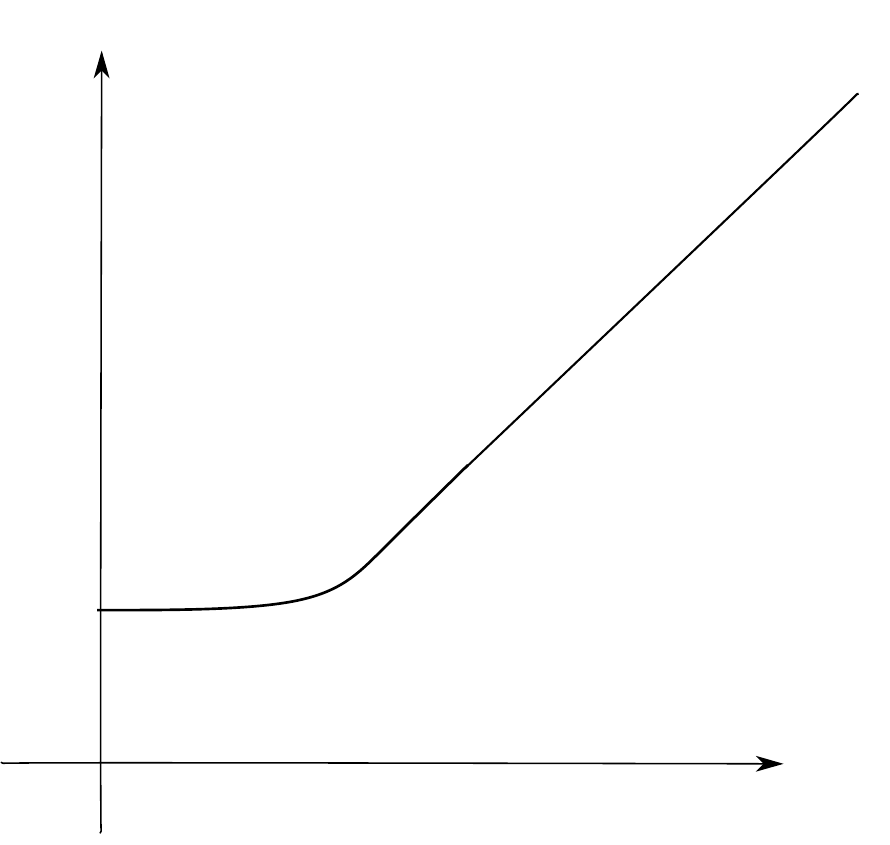}
  \put (6,85) {$f_{r}$} 
   \put (0,25) {$ \frac{3r}{2} $} 
   \put (77,2) {$t$} 
   \put(27,7) {$\mid$}
    \put (27,2) {$r$} 
 \end{overpic}
\caption{The function $f_{r}$}
\label{Fig-f_{r}}
\end{figure}
\end{proof}

\begin{rem} \begin{enumerate} 
\item 
We do not know if the metrics $\gamma$ and $\gamma^g$ are equivalent, only that they define the same topology. 
\item  Here is an analogy for the difference between $\gamma$ and $\gamma^g$. 
Consider the following norm on $C_b^0(X, {\mathbb R} )$ : we first define $$\max_f (g)=\inf\left\{ s\geq 0 \mid \max (g-sf)\leq 0\right \}$$ and 
$$\min_f(g)-\max_f(-g)=\inf\left\{ s\geq 0 \mid \min (g+sf)\geq 0\right \}$$ Then we set $ \Vert g \Vert _f= \max_f(g)-\min_f(g)$. Provided 
$$0<\inf_{x\in X} f(x) \leq \sup_{x\in X} f(x) < +\infty$$ it is obvious that  $ \Vert \bullet \Vert_f$ and $ \Vert \bullet \Vert $ are equivalent. 

\end{enumerate}  
\end{rem}

\section{Local Floer cohomology}\label{Appendix-local-Floer}
Our goal is to prove the following statement, which was used in Subsection \ref{Subsection-Floer-Darboux-chart}. 

Let $\varphi$ be a Hamiltonian map generated by a Hamiltonian supported in a sufficiently small  Darboux chart, $V$. Since $\Gamma(\varphi)$ coincides with the diagonal, $\Gamma(\Id)$ outside
$W=V\times V$, we can  consider two types of spectral numbers. 
%


Consider $\varphi^{t}$ as a compact supported Hamiltonian isotopy defined on $ {\mathbb R}^{2n}$ and denote by $\widetilde\varphi^{t}$ this isotopy.
 Let $\widetilde S(q,P,\xi)$ be a Generating Function Quadratic at Infinity for $\Gamma(\widetilde\varphi)$ that we compactify as a Lagrangian in $T^{*}S^{2n}$. In the sequel we will shorten Generating Function Quadratic at Infinity  to \GFQI and refer to \cite{Viterbo-STAGGF} for their properties. 
 
 Let $S : \Delta_{M}\times {\mathbb R}^{k} \longrightarrow {\mathbb R} $ be a \GFQI for $\Gamma (\varphi)$ over $\Delta_{M}$. 
 Then we want to prove that  $$c(1_{S^{2n}}, \widetilde S)= c(1_{\Delta}, S)$$
 Since by \cite{Viterbo-FCFH2} $c(1_{\Delta},S)$ corresponds to the $c_{-}(\varphi)$ obtained by using Floer cohomology  (note that this is also a consequence of Theorem \ref{Thm-Quantization}, (\ref{3})), this will imply 
 $$c_{-}(\widetilde \varphi^{1})=c(1_{S^{2n}}, \widetilde S)=c(1_{\Delta},S)=c_{-}( \varphi^{1})$$

We are thus in a special case of the more general situation where $L$ is Hamiltonianly isotopic to the zero section $0_{N}$ by a Hamiltonian isotopy supported in $W$

 \begin{defn} 
 Let $S(q,P,\xi)$ be a \GFQI  for $\Gamma(\varphi)$  over a bounded connected codimension $0$ submanifold $W$. We set $c^{W}_{-}(\varphi)=c(1_{W}, S)$ where $1_{W}\in H^{0}(W)$. the spectral number $c(1_{W}, S)$ was defined  in \cite{Viterbo-stochastic}, Definition 5.1  and $c(\mu_{W},S)=c_{+}^{W}(L)$ where $\mu_{W}\in H^{n}(W, \partial W)$ is the fundamental class.  They are obtained by setting 
 \begin{enumerate} 
 \item  For $\alpha \in H^*(W)$ $$c(\alpha, S)= \inf \left \{ t \mid T\otimes \alpha \neq 0 \;\text{in}\; H^*(S_{\mid W}^t, S_{\mid W}^{-\infty}) \right\}$$
\item For $\alpha \in H_c^*(W)=H^*(W,\partial W)$  $$c(\alpha, S)= \inf \left \{ t \mid T\otimes \alpha \neq 0 \;\text{in}\; H^*(S_{\mid W}^t, S_{\mid W}^{-\infty}\cup S_{\mid \partial W}^t)\right \}$$
\end{enumerate} 

\end{defn} 

 \begin{prop} Let $W$ be a codimension zero connected submanifold of the closed manifold $N$. 
 Let $\varphi^t$ be an isotopy supported in $T^{*}W \subset T^{*}N$. Let $S(q;\xi)$ be a \GFQI for $L=\varphi^{1}(0_{N})$ normalized so that outside $W$ the critical points of $S$ have critical value $0$. 
 Set
 $c_{-}^{W}(L)=c(1_{W}, S)$. Then we have $c_{-}^{W}(L)=c_{-}(L)$ and by duality $c_{+}^{W}(L)=c_{+}(L)$. 
 \end{prop} 
 \begin{proof} 
 We prove the first equality. Since $L$ coincides with the zero section outside of $W$, we know that $c_{-}(L)\leq 0$. Let $\psi^{t}$ be flow of a fiber preserving vector field on $S^{- \varepsilon } $ where $ \varepsilon $ is some small positive real number. 
Let $\psi^{t}$ be a flow on $N \times {\mathbb R}^{k}$ such that
 \begin{enumerate} 
 \item $\psi^{t}$ is the flow of $ - \frac{ \nabla_{\xi}S(x,\xi)}{ \left \vert  \nabla_{\xi}S(x,\xi) \right \vert^{2}}$ outside of $W\times {\mathbb R}^{k}$
  \item $S(\psi^{t}(x,\xi))\leq S(x,\xi)$ for $t\geq 0$
 \end{enumerate} 
 This is easy to construct by noticing that in  $(N\setminus W)\times {\mathbb R}^{k}\cap S^{- \varepsilon }$ the vector field $ - \frac{ \nabla_{\xi}S(x,\xi)}{ \left \vert  \nabla_{\xi}S(x,\xi) \right \vert^{2}}$ is well defined, since $S$ has no fiberwise critical points in $(N\setminus W)\times {\mathbb R}^{k}$. Note that $\psi^{t}$ is then obtained by multiplying this vector field by a smooth function equal to $1$ on $(N\setminus W)\times {\mathbb R}^{k}\cap S^{- \varepsilon }$ and vanishing outside a neighbourhood of it. 
 Remember that we denote by $S^{-\infty}$ the set $S^{-c}$ for some large $c$. Then for $t< - \varepsilon $ we have 
 $$H^{*}(S^{t}, S^{-\infty})=H^{*}(\psi^{s}(S^{t}) , \psi^{s}(S^{-\infty}))$$ but since for $s$  large enough 
$$\psi^{s}(S^{t}) \subset S^{t}_{\mid W\times {\mathbb R}^{k}}\cup S^{-\infty}$$

we have by excision,  $$ H^{*}(\psi^{s}(S^{t}), \psi^{s}(S^{-\infty}))\simeq H^{*}(\psi^{s}(S^{t}_{\mid W\times {\mathbb R}^{k}}), \psi^{s}(S^{-\infty}_{\mid W\times {\mathbb R}^{k}}))$$ and applying the isotopy $\psi^{-s}$ we get that for $t<- \varepsilon $
 $$H^{*}(S^{t}, S^{-\infty}) \simeq H^{*}(S^{t}_{\mid W\times {\mathbb R}^{k}}, S^{-\infty}_{\mid W\times {\mathbb R}^{k}})$$
 We therefore have the following diagram for $t< - \varepsilon  $
 $$ \xymatrix{
 H^{*-d}(N) \ar[d]^{\simeq} \ar[r] & H^{*-d}(W) \ar[d]^{\simeq} \\
 H^{*}(S^{\infty}, S^{-\infty})  \ar[r]\ar[d] & H^{*}(S^{\infty}_{\mid W\times {\mathbb R}^{k}}, S^{-\infty}_{\mid W\times {\mathbb R}^{k}})\ar[d]\\
 H^{*}(S^{t}, S^{-\infty}) \ar[r]^{\simeq} & H^{*}(S^{t}_{\mid W\times {\mathbb R}^{k}}, S^{-\infty}_{\mid W\times {\mathbb R}^{k}})
 }$$
 Since the top horizontal map sends  $1_{N}$  to $1_{W}$, we have that (when $t< - \varepsilon $) the composition of the two vertical left-hand side maps sends $1_{N}$ to zero  if and only if the
  composition of the two vertical left-hand side maps sends $1_{W}$ to zero. In other words if $t< - \varepsilon $ and $t< c(1_{N},S)$ then $t\leq  c(1_{W},S)$   so that 
 $\min\{c(1_{N},S), - \varepsilon \} \leq c(1_{W},S)$ and conversely if   $t< - \varepsilon $ and  $t< c(1_{W},S)$ then $t\leq c(1_{N},S)$ in other words $\min\{c(1_{W},S), - \varepsilon \} \leq c(1_{N},S)$. It is however well-known that $c(1_{W},S)\leq 0$ and $c(1_{N},S) \leq 0$ and since $ \varepsilon >0$ is arbitrarily small,  we get $c(1_{W},S)=c(1_{N},S)$. 
 \end{proof} 
 From this it immediately follows 
 \begin{cor} 
 Let $U$ be a codimension $0$ submanifold of the closed manifolds $N_{1}, N_{2}$  and $L_{1}=\varphi_{1}^{1}(0_{N_{1}})$, $L_{2}=\varphi_{2}^{1}(0_{N_{2}})$ where $\varphi_{1}^{t}=\Id$ and $\varphi_{2}^{t}=\Id$ outside of $T^{*}U$ and $\varphi_{1}^{t}=\varphi_{2}^{t}$ on $T^{*}U$. Then $$c_{\pm}^{1}(L_{1})=c_{\pm}^{2}(L_{2})$$ In other words $c_{\pm}$ does not depend on the extension of $\varphi_{1}^{1}(0_{U})=\varphi_{2}^{1}(0_{U})$. 
 \end{cor} 
 
 Applying this to $\Gamma(\varphi)$ we get, provided the small Darboux chart is such that $V\times V$ is contained in a Weinstein neighbourhood of the diagonal (i.e. a neighbourhood of the diagonal that is symplectomorphic to a neighbourhood of the zero section in $T^{*}\Delta$). Using the remark at the beginning of this Appendix, we get 
 \begin{cor}Let $\varphi^{t}$ be a Hamiltonian isotopy of $(M,\omega)$ supported in a small Darboux chart and consider $\varphi^{t}$ as a compact supported Hamiltonian isotopy defined on $ {\mathbb R}^{2n}$. Denote by $\widetilde\varphi^{t}$ this isotopy and 
  $\widetilde S(q,P,\xi)$ be a \GFQI for $\Gamma(\widetilde\varphi)$ that we compactify as a Lagrangian in $T^{*}S^{2n}$. 
 Let $S : \Delta_{M}\times {\mathbb R}^{k} \longrightarrow {\mathbb R} $ be a \GFQI for $\Gamma (\varphi)$ over $\Delta_{M}$. Then 
 $$c_{-}(\widetilde \varphi^{1})=c(1_{S^{2n}}, \widetilde S)=c(1_{\Delta_M},S)=c_{-}( \varphi^{1})$$
 We similarly get $$c_{+}(\widetilde \varphi^{1})=c_{+}( \varphi^{1})$$
 \end{cor} 
\begin{color}{black}
\section{ Duality for sheaf spectral invariants and connection with  Floer's spectral theory}
\newcommand{\DD}{\operatorname{D}}

Our goal in this Appendix is twofold. On one hand we look for a replacement of the duality yielding the equality $c_-(\varphi)=-c_+(\varphi^{-1})$ in the sheaf framework. As will be clear the duality involves spectral invariants associated to compact supported cohomology classes. On the other hand we shall prove in general that the Floer cohomology of a symplectic map is isomorphic to the corresponding sheaf cohomology. In particular this implies that the spectral invariants coincide, which we only proved for symplectic maps supported in a Darboux ball in Subsection \ref{Subsection-5.2}.

Recall that the Tamarkin category $\cD(N)$ is the left orthogonal of $\SD_{\{\tau\leq 0\}}(k_{N\times\Real})$
and the convolution $k_{N\times [0,+\infty[} \cstar - \colon \SD(k_{N \times \Real}) \to \SD(k_{N \times
  \Real})$ is a projector onto $\cD(N)$.  We can define a duality in $\cD(N)$ as follows.  We first recall the
usual duality functors, for a manifold $M$, $\DD'(F) = \rhom(F,k_M)$ and $\DD(F) = \rhom(F,\omega_M)$, where
$\omega_M \simeq or_M[d_M]$ is the dualizing sheaf.  We define $i \colon N\times \Real \to N\times\Real$ by
$i(x,t) = (x,-t)$.  For $F \in \cD(N)$, we set
$$
\DD^T(F) = k_{N\times [0,+\infty[} \cstar i^{-1} \DD'(F)
$$
For $L \in \LL(T^*M)$ and $F_L = Q(L)$ we then have $\DD^T(F_L) \simeq F_{-L}$.

\subsection{Not really a duality}\label{sec:notduality}
If we restrict to constructible objects, we also have a biduality statement $F \isoto \DD^T(\DD^T(F))$ and
$\DD^T(-)$ is a contravariant equivalence of categories.  We deduce, for $F,G$ constructible
$$
\RHom(F, T_{c*}(G)) \simeq \RHom(\DD^T(T_{c*}(G)), \DD^T(F)) \simeq \RHom( \DD^T(G), T_{c*}(\DD^T(F)))
$$
It follows that $c(\alpha, F ,G) = c(\alpha, \DD^T(G) ,\DD^T(F))$.  In particular $c(\alpha, L, L') =
c(\alpha, -L', -L)$.

\subsection{Compact support}
For $F, G \in \cD(N)$ which coincide with $k_{N\times\Real}$ near $N\times \{+\infty\}$ we recall
that $c(\alpha, F ,G)$ is obtained as follows.  For $c\leq d$ and $d$ big enough we have
$$
u_c^1 \colon \RHom(F, T_{c*}(G)) \to \RHom(F, T_{d*}(G)) \simeq \rsect(N;k_N)
$$
and, for $\alpha\in H^i(N;k_N)$, we can define $c(\alpha, F,G)$ as the infimum on the set of $c$ such that
$\alpha$ is in the image of $H^i(u_c^1)$.  To express a dual statement we set for short
$$
\RHom_c(F, G) = \rsect_c(N\times \Real; \rhom(F,G))[1]
$$
Then $\RHom_c(F, T_{d*}(G) ) \simeq \rsect_c(N; k_N)$ for $d\ll 0$.  For $d\leq c$ and $d$ small enough we
have
$$
u_c^2 \colon \rsect_c(N;k_N) \simeq \RHom_c(F, T_{d*}(G)) \to \RHom_c(F, T_{c*}(G))
$$
and we can define spectral invariants in this way, say $c'(\alpha, F,G)$, for $\alpha\in H^i_c(N;k_N)$: we
let $c'(\alpha, F,G)$ be the infimum on the set of $c$ such that $H^i(u_c^2)(\alpha) = 0$.  If $N$ is compact,
we have $c'(\alpha, F,G) = c(\alpha, F,G)$: we can check that there is a distinguished triangle
$$
\RHom(F, T_{c*}(G)) \to[u_c^1]  \rsect(N;k_N) \to[u_c^2] \RHom_c(F, T_{c*}(G)) \to[+1]
$$
and, taking the long exact sequence, we see that a given $\alpha \in H^*(N)$ belongs to
$\operatorname{im}(u_c^1)$ exactly when $u_c^2(\alpha) = 0$.

\subsection{Duality}\label{sec:duality}
For $F,G$ constructible and with disjoint microsupports we also have $\DD'\rhom(F,G) \simeq \rhom(G,F)$.
Tensoring with $p^{-1}\omega_{N}$ and using the Poincaré-Verdier duality we get $(\RHom(F, G))^* \simeq
\RHom_c(G, F \otimes p^{-1}\omega_{N})$.  Hence for $F,G$ constructible, with microsupports in
$\{\tau>0\}$, and for a generic $c$ we have the commutative diagram
$$
\xymatrix@C=20mm{
\RHom(F, T_{c*}(G)) \ar@{-}[d]^\wr  \ar[r]^{u_c^1} & \rsect(N;k_N) \ar@{-}[d]^\wr \\
(\RHom_c(G, T_{-c*}(F) \otimes p^{-1}\omega_{N}) )^* \ar[r]^-{(u_c^2)^*} & \rsect_c(N; \omega_N)^*
}
$$
To find $c(1, F,G)$ we look for the values $c$ such that $H^0(u_c^1)$ is surjective and, similarly,
$c'(\mu_N, F,G)$ is related with the injectivity of $H^{d_N}(u_c^2)$.

If $N$ is orientable, then $\omega_{N} \simeq k_N[d_N]$ and we deduce $c(1,F,G) = -c'(\mu_N, G, F)$.

If $N$ is compact and orientable, we obtain finally
$$
c(1,F,G) = -c'(\mu_N, G, F) = -c(\mu_N, G, F) = -c(\mu_N, \DD^T(F), \DD^T(G))
$$

\subsection{Case of Hamiltonian maps}
Now we work on $N^2$ and our sheaves $\K,\ldots \in \mathcal{D}(N^2)$ represent graphs of Hamiltonian maps.
Hence the hypothesis ``$F \simeq k_{N\times\Real}$ near $N\times \{+\infty\}$'' is changed into ``$\K \simeq
k_{\Delta_N \times\Real}$ near $N^2\times \{+\infty\}$''.  Most (iso)morphisms from the above paragraphs~\S\ref{sec:notduality}-\S\ref{sec:duality} have an analog in this framework. In particular we have,
denoting by $\omega_{\Delta_N | N^2}$ is the relative dualizing sheaf, $\omega_{\Delta_N | N^2} \simeq or_N[-d_N]$,
\begin{gather*}
\DD^T(\K_\varphi) \simeq \K_{\varphi^{-1}} \otimes \omega_{\Delta_N| N^2} \\  
  c(\alpha, \K_\varphi ,\K_\psi) = c(\alpha, \DD^T(\K_\psi) ,\DD^T(\K_\varphi))
  = c(\alpha, \K_{\psi^{-1}} , \K_{\varphi^{-1}})
\\
\begin{split}
u_c^1 \colon \RHom(\K, T_{c*}(\K')) &\to \RHom(\K, T_{d*}(\K')) \\
&\simeq \rsect(N^2;k_{\Delta_N}) \simeq  \rsect(N;k_N)
\end{split}
\\
\begin{split}
u_c^2 \colon \rsect_c(N^2;k_{\Delta_N}) \simeq \rsect_c(N;k_N)
  \simeq \RHom_c(\K&, T_{d*}(\K')) \\
  &\to \RHom_c(\K, T_{c*}(\K'))
\end{split}
\end{gather*}
But now we don't have $\DD'\rhom(\K, \K') \simeq \rhom(\K', \K)$, not even up to shift
by $d_N$ (the microsupports are not disjoint even after a small translation). So we don't have duality; instead we get
``new'' invariants. Let us set
$$
\RHom^D(\K, \K') = \rsect_c(N^2\times \Real; \DD \rhom(\K', \K)) 
$$
Then $\RHom^D(\K, T_{d*}(\K')) \simeq \rsect_c(N^2; \omega_{\Delta_N}) \simeq \rsect_c(N; \omega_N)$ for
$d\ll 0$ and we can use this do define $c''(\alpha, \K, \K')$ for $\alpha \in H^*_c(N; \omega_N) \simeq
(H^*(N; k_N))^*$ (in the same way as $c'(\alpha, \K, \K')$).  So the complete set of invariants for
$(\K, \K')$ is now $c(\alpha, \K, \K')$, $c''(\beta, \K, \K')$, for $\alpha \in H^*(N; k_N)$
and $\beta \in H^*_c(N; \omega_N)$.

We have the diagram
$$
\xymatrix@C=20mm{
\RHom(\K, T_{c*}(\K')) \ar@{-}[d]^\wr  \ar[r]^-{u_c^1} & \rsect(N^2;k_{\Delta_N}) \ar@{-}[d]^\wr \\
(\RHom^D(\K', T_{c*}(\K)) )^* \ar[r]^-{(u_c^3)^*} & \rsect_c(N^2; \omega_{\Delta_N})^*
}
$$
which says that $c''(\mu_N,\K, \K') = -c(1, \K', \K)$.  In particular $c''(\mu_N,\K_\varphi,
\K_\psi) = -c(1, \K_\psi, \K_\varphi) = -c(1, \K_{\varphi^{-1}}, \K_{\psi^{-1}} )$.

\subsection{Sheaf and Floer cohomology for Hamiltonian maps}
Let $\varphi$ be a Hamiltonian map of $T^*N$ with compact support. Let us consider 
the cohomology groups $H^*(N\times N \times [a,b[, \K_\varphi, \K_\psi)$ by which we mean \linebreak
$H^*(R\Gamma_{N\times N \times [a,b[}(R\Hom (\K_\varphi, \K_\psi)))$. 

Similarly we can consider the Floer cohomology $FH^*(\Gamma(\varphi), \Delta_N; a,b)$ that is the Floer cohomology filtered between action values $a$ and $b$ with $-\infty<a<b<+\infty$.  
In \cite{Viterbo-Sheaves} it is proved that for $L$ a closed Lagrangian in $T^*N$ and $F_L$ its quantized sheaf, we have  $$H^*(N\times [a,b[, F_{L_1}, F_{L_2})=FH^*(L_1,L_2;a,b)$$
The issue here is that the graph of $\varphi$ is not closed (it is diffeomorphic to the diagonal of $\overline{T^*N}\times T^*N$). So our goal is to prove
\begin{thm}\label{thm-6.1}
    We have for $\varphi\in DHam_c(T^*N)$ that 
   $$ FH^*(\Gamma(\varphi), \Delta_N; a,b)=H^*_{N\times N \times [a,b[}(R\Hom (\K_\varphi, \K_\psi))$$
\end{thm}
Since the sheaf theoretic version of the spectral invariants $c_s(\alpha,\varphi_1, \varphi_2)$ is obtained by looking at the barcode associated to the persistence module $H^*(R\Gamma_{N\times N \times [a,b[}(R\Hom (\K_{\varphi_1}, \K_{\phi_2})))$, we obtain, denoting by $c(\alpha, \varphi_1,\varphi_2)$ the Floer theoretic spectral invariants associated to the persistence module $FH^*(\Gamma(\varphi_1), \Gamma(\varphi_2); a,b)$

\begin{cor}
    We have for $\alpha \in H^*(N)\setminus \{0\}$ $$c_s(\alpha, \varphi_1, \varphi_2)=c(\alpha, \varphi_1,\varphi_2)$$
\end{cor}
    The proof of the theorem is based on the analogous result for closed Lagrangians. Indeed, let $M$ be a Riemannian manifold and let $S$ be a submanifold of $M$, and 
    $d(x,S)$ be the distance from $x$ to $S$. Let $\chi_\rho : \mathbb R \longrightarrow \mathbb R$ be a function such that
    \begin{itemize}
        \item $\chi_\rho(t)=t$ for $t\leq \rho$ 
        \item $\forall t \in \mathbb R, 0\leq \chi'_\rho(t)$
        \item $\chi'_\rho(t)=0$ for $t\geq 2 \rho$
    \end{itemize}
    Finally we set $f_{\varepsilon, \rho}=\chi_\rho \left( \frac{d(x,S)^2}{ \varepsilon}\right)$ and $S_{ \varepsilon,\rho}=\gra(df_{\varepsilon,\rho})$.  Using that 
    $$df_{\varepsilon,\rho}(x)= \frac{1}{\varepsilon}\chi'_\rho\left (\frac{d(x,S)^2}{ \varepsilon }\right )d(x,S)\nu_S(x)$$ 
    where $\nu_S(x)$ is the normal to the level set of $d(x,S)$ through $x$, i.e. $\nu_S(x)=d(d(x,S))\in T^*_xM$, we see that, for $0<A<\sqrt{\rho/\varepsilon}$,
    \begin{itemize}
    \item $S_{\varepsilon,\rho}\cap \{(x,p)\mid \vert p \vert \leq A, \, d(x,S) \leq \sqrt{ \varepsilon\rho} \}$ corresponds to $\{x \mid d(x,S) \leq A \varepsilon\}$, and in this region $0\leq f_{ \varepsilon, \rho}(x) \leq A^2 \varepsilon$
        \item Over $A \varepsilon \leq d(x,S)\leq \sqrt{\rho\varepsilon}$ we have $S_{\varepsilon,\rho}\cap \{(x,p)\mid \vert p \vert \leq A\}=\emptyset$
        \item Over $\sqrt{ \varepsilon\rho} \leq d(x,S)$ we have 
        $f_{\varepsilon,\rho} (x) \geq \rho $
    \end{itemize}

\begin{figure}[ht]
\centering
 \begin{overpic}[width=9cm]{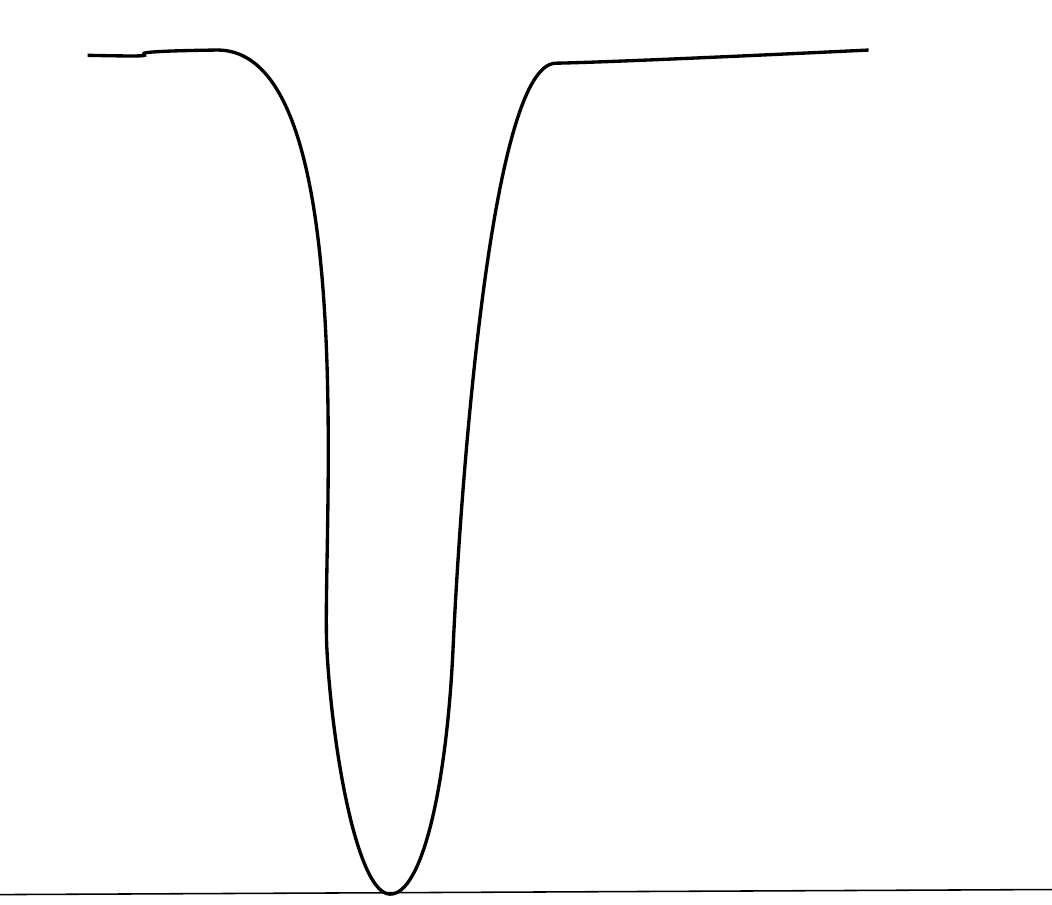}
  \put (20,85) {$\rho$} 
   \put (35,-5) {$S$} 
 \end{overpic}
 \caption{The function $f_{\varepsilon, \rho}$}\label{Figure-1}
\end{figure}

As a result for a fixed Lagrangian $L$ contained in $\vert p \vert \leq A/2$ and such that $f_L$ is bounded by $\rho/2$, the intersection points $L\cap S_{ \varepsilon, \rho}$ are 
\begin{itemize}
    \item over $\{x \mid d(x,S) \leq A \varepsilon\}$ and $0\leq f_{\varepsilon,\rho}(x)\leq A^2 \varepsilon$
    \item $\sqrt{\varepsilon\rho}\leq d(x,S)$ and then $f_{\varepsilon,\rho}(x)\geq \rho$ 
\end{itemize}
Let us set $V_\varepsilon=\gra (df_{\varepsilon,\rho})\cap \{x \mid d(x,S)\leq A \varepsilon\}$.
As a result, the intersection points of $L\cap D_{ \varepsilon, \rho}$ contributing to $FH^*(L, D_{\varepsilon, \rho}; a,b)$ are, for $\varepsilon$ small enough,  in one-to-one correspondence with the points $(x,p)$ of $L\cap V_{\varepsilon}$ such that $f_L(x,p)\in [a,b]$. 
\begin{prop}\label{Prop-6.3} For $a,b$ finite and $\rho$ sufficiently large we have
   $$\lim_{\substack{\varepsilon \to 0\\ \varepsilon'\to 0}} FH^*((\varphi\times \id)D_{\varepsilon,\rho},D_{\varepsilon',\rho};a,b) = FH^*(\Gamma(\varphi), \Delta_{T^*N};a,b) $$
\end{prop}
\begin{proof}
    Note that 
$\lim_{\varepsilon \to 0}V_{\varepsilon}=\nu^*S$, the limit being understood as $C^1$-con\-ver\-gence on compact subsets. 
Indeed $df_{\varepsilon, \rho}(x)=\frac{1}{\varepsilon}d(x,S)\nu_S(x)$
so if $x=\exp_y{(t\nu)}$ where $\nu\perp T_yS$, and $t\leq A \varepsilon$, 
$df_{\varepsilon, \rho}(x)= \frac{t}{\varepsilon}\nu_S(x)$ where $\nu_S(x)=d\exp(t\nu)\nu$. 

In local coordinates $y\in V, \nu\in (T_yS)^\perp$ we may consider the map $$(x,\nu) \mapsto (\exp_y( \varepsilon\nu), \nu)$$
Set $D_AT^*M=\{(q,p) \in T^*M \mid \vert p \vert 
\leq A\}$. The above map defines a smooth diffeomorphism $C^1$ close to the identity sending $\nu^*S \cap D_AT^*M$
to $S_{\varepsilon, \rho}\cap D_AT^*M$. 

Now we set $S=\Delta_N$ and $\nu^*\Delta_{N}=\Delta_{T^*N}$ and $D_{\varepsilon,\rho}=S_{\varepsilon, \rho}$. 
We have to be careful with the definition of $FH^*(\Gamma(\varphi), \Delta_{T^*N};a,b)$ since $\Gamma(\varphi)$ and $\Delta_{T^*N}$ coincide at infinity. Indeed we must then slightly perturb $\Gamma(\varphi)$ so that we have transversality. 
Then we slightly change the action of the intersection points at infinity. This action is zero, and we replace them with finitely many points with arbitrarily small action. We assume $a,b\neq 0$ and the perturbation so small that no critical value crosses $a$ or $b$. 

It is now enough to prove that - after this perturbation- the holomorphic strips involved in the definition of $FH^*(\Gamma(\varphi), \Delta_{T^*N};a,b)$
must remain in $D_AT^*M$. Here $a,b$ are fixed (and finite !), and we may increase $A$. This follows from Lemma \ref{Lemma-E3} below.

Finally, since $(\varphi\times \id) D_{\varepsilon, \rho}$ $C^1$ converges to $(\varphi\times \id)\Delta_{T^*N}$
and $D_{\varepsilon', \rho}$ $C^1$-converges to $\Delta_{T^*N}$, we may conclude our proof. 

\end{proof}

\begin{lem}\label{Lemma-E3} Let $N$ be a complete Riemannian manifold. 
Let $L$ be a Lagrangian coinciding with $0_N$ outside a compact set, and $f$ a $C^1$ bounded function without critical points at infinity (i.e. $10 \delta_0 > \vert df(x) \vert > \delta_0>0$  for $x$ large enough and some $\delta_0>0$). We define 
$FC_J^*(L,0_N;a,b)$ to be the limit as $ \varepsilon $ goes to $0$ of 
$FC_J^*(L, \gr (\varepsilon df); a,b)$. Then the holomorphic curves defining $FC_J^*(L, \gr (\varepsilon df); a,b)$ for $ \varepsilon $ small enough are contained in some bounded set independent from $ \varepsilon $. 
 \end{lem} 
  \begin{proof} 
  We shall choose an almost complex structure such that outside a bounded set, $J$ is analytic and in a neighbourhood $$W=D_r(T^*N)=\{(q,p)\in T^*N \mid \vert p \vert \leq r\}$$ of $0_N$,  there is an antisymplectic involution such that $\tau^*J=-J$. This is always possible by taking a complex structure that is a fixed point of $J \mapsto -\tau^*(J)$ where $\tau(q,p)=(q,-p)$. Since the set of almost complex structures is a convex set, such a fixed point for the above involution must exist. 
  
  We now claim that if $C_ \varepsilon $ is a holomorphic strip with boundaries in $L$ and $ \varepsilon \gra (df)$, and containing a point $x_ \varepsilon $ going to infinity as $ \varepsilon $ goes to $0$, then the area of $C_ \varepsilon  \cap W$ is going to infinity. Indeed, the areas of the curves is bounded by $b-a$, so in the limit $ \varepsilon $ going to $0$ we would get a curve $C_0$ with boundary in $L\cup0_N$. Then 
 using the reflection principle (see \cite{Sibony}, Theorem 3.5 and also \cite{Chirka-1, Chirka-2, Ivashkovich-Sukhov}) we get a closed curve in $W$ through a point $x_ \varepsilon $ and exiting from the ball of radius $R_ \varepsilon $  centered at $x_ \varepsilon $.  Here $\lim_{ \varepsilon \to 0} R_ \varepsilon =+\infty$ since we only need that in $B(x_ \varepsilon , R_ \varepsilon )$ $L=0_N$ and $J$ is analytic and invariant by $-\tau^*$. Then the area of such a curve goes to $\infty$, by the standard monotonicity argument\footnote{see \cite{White}, p. 8 and 9. We have   $\frac{\area (B(x_\varepsilon, r)\cap C}{\pi r^2}$ is increasing with $r$ and greater than $1$. A similar statement holds in the general case for $r$ less than the injectivity radius $\rho$. But we may cover $B(x_ \varepsilon , R_ \varepsilon )$ by balls of radius $\rho$, and $C$ will intersect an increasing number of these balls.}. But then the area of $C$ would be infinite. 
  
  \end{proof} 
Similarly setting $U_{\varepsilon, \rho}=\{(x,t)\mid f_{\varepsilon,\rho}(x)\leq t\}$, and noticing that 
$SS (k_{U_{\varepsilon,\rho}})=D_{\varepsilon,\rho}$, we prove
\begin{prop}\label{Prop-6.4}
    We have $$\lim_{\varepsilon\to 0}\lim_{\varepsilon'\to 0}H^*_{N\times N \times [a,b[}(RHom (K_\varphi\circ k_{U_{\varepsilon,\rho}}, k_{U_{\varepsilon',\rho}}))=H^*_{N\times N \times [a,b[}(RHom (\K_{\varphi} , \K_{\id} ))$$
\end{prop}
 \begin{proof}
     Indeed this follows from the fact that clearly $$\varinjlim_{\varepsilon\to 0}{k_{U_{\varepsilon,\rho}}}(W)=k_{V\times [0,+\infty[}(W)$$
on bounded sets (i.e. bounded in the $ \mathbb R$ component). But since the direct limits of sheaves are sheafification of the direct limit of the presheaves, we get 
$$\varinjlim_{\varepsilon\to 0}{k_{U_{\varepsilon,\rho}}}=k_{V\times [0,+\infty[}$$ and in our case
$$\varinjlim_{\varepsilon\to 0}{k_{D_{\varepsilon,\rho}}}=k_{\Delta_N\times [0,+\infty[}=\K_{\id}$$
Because direct limits is an exact functor, we get 
$$\lim_{\varepsilon'\to 0}H^*_{N\times N \times [a,b[}(RHom (K_\varphi\circ k_{U_{\varepsilon,\rho}}, k_{U_{\varepsilon',\rho}}))=H^*_{N\times N \times [a,b[}(RHom (\K_{\varphi}\circ k_{U_{\varepsilon,\rho}} , \K_{\id} ))$$ and taking the limit as $\varepsilon$ goes to $0$ we get the Proposition. 
\end{proof}
\begin{proof}[Proof of Theorem \ref{thm-6.1}]
Now since the quantization of $D_{\varepsilon, \rho}$ is $k_{U_{\varepsilon,\rho}}$ and the quantization of 
$(\varphi\times \id)D_{\varepsilon,\rho}$ is $\K_{\varphi}\circ k_{U_{\varepsilon,\rho}}$  the combination of Proposition \ref{Prop-6.3} and \ref{Prop-6.4} yields Theorem \ref{thm-6.1} 

\end{proof}
\end{color}
\printbibliography
\end{document}